\def\Qp{\mathbb{Q}_p}
\renewcommand{\P}{\mathbb P}
\def\N{\mathbb{N}}
\def\RG{\rR\Gamma}
\def\FHom{\mathscr{H}om}
\def\FEnd{\mathscr{E}nd}
\def\Ab{\mathbf{Ab}}
\def\Gm{\mathbb{G}_{m}}
\def\cG{\mathcal{G}}
\def\cH{\mathbf{H}}
\def\fc{\mathfrak{c}}
\def\fg{\mathfrak{g}}
\def\ft{\mathfrak{t}}
\def\fh{\mathfrak{h}}
\def\dia{\lozenge}
\def\bX{\mathbb{X}}
\def\BdR{\textnormal{B}_{\textnormal{dR}}}
\def\TT{\mathbb{T}}
\newcommand{\whJ}{\widehat{J}}
\def\sl{\mathfrak{sl}}
\newcommand{\A}{{\mathbf A}}
\DeclareMathOperator{\Res}{Res}
\DeclareMathOperator{\Sect}{Sect}
\DeclareMathOperator{\Et}{\textnormal{\'{E}t}}
\newcommand{\et}{{\textnormal{\'{e}t}}}
\newcommand{\profet}{{\textnormal{prof\'{e}t}}}
\DeclareMathOperator{\Eq}{Eq}
\DeclareMathOperator{\rk}{rank}
\DeclareMathOperator{\can}{can}
\DeclareMathOperator{\ad}{ad}
\DeclareMathOperator{\Gal}{Gal}
\DeclareMathOperator{\Hom}{Hom}
\DeclareMathOperator{\Sym}{Sym}
\DeclareMathOperator{\Ext}{Ext}
\DeclareMathOperator{\Spec}{Spec}
\DeclareMathOperator{\Spa}{Spa}
\DeclareMathOperator{\Spf}{Spf}
\DeclareMathOperator{\cont}{cont}
\newcommand{\rH}{{\mathrm H}}
\DeclareMathOperator{\rW}{W}
\newcommand{\rR}{\mathrm{R}}
\DeclareMathOperator{\dR}{dR}
\DeclareMathOperator{\id}{id}
\newcommand{\alg}{{\mathrm{alg}}}
\newcommand{\an}{{\mathrm{an}}}
\newcommand{\aeq}{{\stackrel{a}{=}}}
\DeclareMathOperator{\GL}{GL}
\DeclareMathOperator{\SL}{SL}
\DeclareMathOperator{\Sp}{Sp}
\DeclareMathOperator{\SO}{SO}
\DeclareMathOperator{\Bun}{Bun}
\DeclareMathOperator{\pr}{pr}
\DeclareMathOperator{\Aut}{Aut}
\DeclareMathOperator{\uAut}{\underline{Aut}}
\DeclareMathOperator{\uEnd}{\underline{End}}
\newcommand{\uHig}{\underline{\mathrm{Hi}}\mathrm{g}}
\DeclareMathOperator{\im}{Im}
\DeclareMathOperator{\Lie}{Lie}
\DeclareMathOperator{\aff}{aff}
\DeclareMathOperator{\Pic}{Pic}
\DeclareMathOperator{\LS}{LS}
\newcommand{\reg}{{\mathrm{reg}}}
\DeclareMathOperator{\Perf}{Perf}
\DeclareMathOperator{\kos}{kos}
\DeclareMathOperator{\HTlog}{HTlog}
\DeclareMathOperator{\HT}{HT}
\DeclareMathOperator{\Exp}{Exp}
\renewcommand{\O}{\mathcal O}
\newcommand{\Z}{\mathbb Z}
\newcommand{\wt}{\widetilde}
\newcommand{\wtOm}{\widetilde{\Omega}}
\newcommand{\Om}{\Omega}
\newcommand{\B}{{\mathcal B}}
\newcommand{\G}{{\mathbb G}}
\newcommand{\cts}{{\mathrm{cts}}}
\newcommand{\uZp}{{\underline{\Z}_p}}
\newcommand{\proet}{\mathrm{pro\acute{e}t}}
\newcommand{\wh}{\widehat}
\newcommand{\bfBun}{\mathbf{Bun}}
\newcommand{\bfHig}{\mathbf{Hig}}
\newcommand{\bfPic}{\mathbf{Pic}}
\newcommand{\bfP}{\mathbf{P}}
\newcommand{\CHig}{\mathscr{H}ig}
\newcommand{\CBun}{\mathscr{B}un}
\newcommand{\CRep}{\mathscr{R}ep}
\newcommand{\CPic}{\mathscr{P}ic}
\newcommand{\CP}{{\mathscr{P}}}
\newcommand{\CH}{{\mathscr{H}}}
\newcommand{\CS}{{\mathscr{S}}}
\newcommand{\cP}{\mathbf P}
\newcommand{\gI}{\mathcal{I}}
\newcommand{\gJ}{\mathcal{J}}
\newcommand*\isomarrow{%
	\xrightarrow{\raisebox{-0.35em}{\smash{\ensuremath{\sim}}}}
}
\numberwithin{equation}{subsection}
\newtheorem{theorem}[equation]{Theorem}
\newtheorem{prop}[equation]{Proposition}
\newtheorem{lemma}[equation]{Lemma}
\newtheorem{coro}[equation]{Corollary}
\theoremstyle{definition}
\newtheorem{example}[equation]{Example}
\newtheorem{rem}[equation]{Remark}
\newtheorem{definition}[equation]{Definition}
\newtheorem{secnumber}[equation]{}
\newcolumntype{L}{>{$}l<{$}} % math-mode version of "l" column type
\newcommand*{\relrelbarsep}{.386ex}
\newcommand*{\relrelbar}{%
	\mathrel{%
		\mathpalette\@relrelbar\relrelbarsep
	}%
}
\newcommand*{\@relrelbar}[2]{%
	\raise#2\hbox to 0pt{$\m@th#1\relbar$\hss}%
	\lower#2\hbox{$\m@th#1\relbar$}%
}
\providecommand*{\rightrightarrowsfill@}{%
	\arrowfill@\relrelbar\relrelbar\rightrightarrows
}
\providecommand*{\leftleftarrowsfill@}{%
	\arrowfill@\leftleftarrows\relrelbar\relrelbar
}
\providecommand*{\xrightrightarrows}[2][]{%
	\ext@arrow 0359\rightrightarrowsfill@{#1}{#2}%
}
\providecommand*{\xleftleftarrows}[2][]{%
	\ext@arrow 3095\leftleftarrowsfill@{#1}{#2}%
}
\newcommand{\quash}[1]{}
\title{$p$-adic non-abelian Hodge theory for curves via moduli stacks}
\author{Ben Heuer, Daxin Xu}
\begin{document}
	\selectlanguage{english}
	\maketitle
	
	\begin{abstract}
		For a smooth projective curve $X$ over $\mathbb C_p$ and any reductive group $G$, we show that the moduli stack of $G$-Higgs bundles on $X$ is a twist of the moduli stack of v-topological $G$-bundles on $X_v$ in a canonical way. We explain how a choice  of an exponential trivialises this twist on points. This yields a geometrisation of Faltings' $p$-adic Simpson correspondence for $X$, which we recover as a homeomorphism between the points of moduli spaces. We also show that our twisted isomorphism sends the stack of $p$-adic representations of $\pi_1(X)$ to an open substack of the stack of semi-stable Higgs bundles of degree $0$.
	\end{abstract}
	\section{Introduction}
	Let $K$ be any algebraically closed non-archimedean field over $\mathbb Q_p$. Let $X$ be a connected smooth projective curve over $K$. The starting point of this article is Faltings' $p$-adic Simpson correspondence \cite[Theorem~6]{Fal05}. Following \cite[Theorem~1.1]{Heu23}, this is an equivalence of categories 
	\begin{equation}\label{eq:paS}
		S:\{\text{vector bundles on $X_v$}\}\isomarrow \{\text{Higgs bundles on $X$}\}
	\end{equation}
	depending on the choice of a $B_{\dR}^+/\xi^2$-lift $\bX$ of $X$ and the datum of an exponential for $K$. Here we regard $X$ as an adic space over $\mathbb Q_p$ and $X_v$ is the v-site of the diamond associated to $X$ as defined by Scholze \cite{Sch18}.
	
	The choice of the lift $\bX$ of $X$ can be interpreted as a splitting of a Hodge--Tate sequence, and there is a canonical such choice in arithmetic settings. 
	In contrast, the exponential is a more mysterious datum: It is defined as a continuous splitting of the $p$-adic logarithm $1+\mathfrak m_K\to K$, and there is no canonical such choice.
	
	\subsection{The $p$-adic Simpson correspondence as a twisted isomorphism of moduli spaces}
	
	The goal of this article is to upgrade the $p$-adic Simpson correspondence \eqref{eq:paS} to a ``twisted isomorphism'' between analytic moduli spaces, in a way that explains all choices in a geometric fashion. In order to explain what we mean by this, we first state our main results in the special case of $\GL_n$ in terms of coarse moduli spaces: Let $n\in \N$ and consider the sheaves on the site $\Perf_{K,v}$ of perfectoid spaces over $K$ with the v-topology
	\begin{alignat*}{2}
		\bfBun_{n,v}:&\text{ sheafification of }&& \big(T\mapsto \{\text{v-vector bundles on $X\times T$ of rank $n$}\}/\sim\big),\\
		\bfHig_n:&\text{ sheafification of }&& \big(T\mapsto \{\text{Higgs bundles on $X\times T$ of rank $n$}\}/\sim\big).
	\end{alignat*}
	We can use these to endow the sets of isomorphism classes $\bfBun_{n,v}(K)$ and $\bfHig_n(K)$ of either side in \eqref{eq:paS} with natural topologies, by testing on perfectoid spaces associated to profinite sets. Moreover, \cite[\S1.5]{Heu} has constructed natural ``Hitchin maps'' to the classical Hitchin base $ \A:=\oplus_{i=1}^n\rH^0(X,\Omega_X^{\otimes i}(-i))\otimes_K \G_a$, considered as an adic space:
	\[ 	\wt H: \bfBun_{n,v}\rightarrow  \A\leftarrow \bfHig_n :H\]
	We can now state the first version of our main result: 	Let $\bX$ be a flat $B_{\dR}^+/\xi^2$-lift of $X$.
	\begin{theorem}[\Cref{t:homeom-moduli}]\label{t:intro-1}
		There is a natural Zariski-constructible v-sheaf $\cH_{\bX}\to  \A$ that induces a canonical isomorphism
			\[ \mathbf{ S}:\bfBun_{n,v}\times_{ \A}\cH_{\bX}\isomarrow  \bfHig_{n}\times_{ \A}\cH_{\bX}.\]
			Any choice of exponential for $K$ induces a section $\Exp: \A(K)\to \cH_{\bX}(K)$ that induces a homeomorphism
			\[ \bfBun_{n,v}(K)\isomarrow \bfHig_{n}(K).\]
	\end{theorem}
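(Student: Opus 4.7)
The strategy is to upgrade Faltings' correspondence fiberwise over the Hitchin base $\A$ by exploiting the spectral cover description of both sides. From \cite{Heu} or earlier sections of this paper, one expects that the Hitchin fiber of $\bfHig_n$ over $a\in \A$ is (essentially) the Picard stack of line bundles on the spectral curve $X_a\to X$, while the corresponding fiber of $\bfBun_{n,v}$ is the Picard v-stack of v-line bundles on $X_a$. The failure of these two Picard objects to coincide is governed by a Hodge--Tate type short exact sequence whose cokernel is a vector group built from $\rH^0$ of differentials on $X_a$; a splitting of this sequence, depending functorially on $a$, is precisely what produces the Simpson equivalence on that fiber.

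Accordingly, I would define $\cH_{\bX}\to \A$ as the relative v-sheaf of Hodge--Tate splittings compatible with the lift $\bX$. Over $\cH_{\bX}$ one has a tautological splitting, and transporting v-bundles through it yields a canonical isomorphism between the base-changed Hitchin fibers on both sides, which then glues over $\A$ into $\mathbf{S}$. I would establish Zariski-constructibility by stratifying $\A$ according to the discriminant locus of the spectral cover, and on each stratum describing $\cH_{\bX}$ as a torsor under an affine pro-unipotent group built from relative $\rH^0$'s of twists of $\Omega$.

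For the second assertion, a choice of exponential $\exp\colon K\to 1+\mm_K$ determines on each Hitchin fiber a splitting of the Hodge--Tate sequence, obtained by transporting the logarithmic uniformisation of the kernel along $\exp$; these splittings package functorially in $a$ into a section $\Exp\colon \A(K)\to \cH_{\bX}(K)$. The induced bijection on isomorphism classes then agrees, by construction, with the Faltings correspondence \eqref{eq:paS}. Bicontinuity for the topology defined by profinite perfectoid test spaces is then automatic: $\mathbf{S}$ is a morphism of v-sheaves, and its pullback by $\Exp$, together with that of $\mathbf{S}^{-1}$, gives continuous mutually inverse maps between the $K$-point topologies.

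The hard part will be making the construction genuinely relative over $\A$, rather than merely fiberwise. Concretely, one must verify that the Hodge--Tate sequences of Picard v-stacks of the varying spectral curves assemble into a single exact sequence of v-group-stacks over the Hitchin base which is Zariski-constructibly split, and that the fiberwise Simpson equivalences extend to a morphism of v-sheaves after the base change along $\cH_{\bX}\to\A$. A secondary subtlety is matching a $K$-valued exponential with splitting data that genuinely lives in $\cH_{\bX}(K)$ rather than in some larger ambient sheaf: this is where the role of the Hitchin base, as opposed to individual bundles, should buy the canonicity that was absent in \eqref{eq:paS}.
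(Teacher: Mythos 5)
Your overall shape (abelianise along the spectral cover, isolate the discrepancy in a torsor $\cH_{\bX}$ over $\A$, split it with an exponential) matches the paper, but three steps in your plan would not go through as stated. First, defining $\cH_{\bX}$ as ``the relative v-sheaf of Hodge--Tate splittings'' is not what is needed and would destroy constructibility: the sheaf of splittings of the Hodge--Tate sequence is a torsor under a Hom-group of vector-group type, which is far from Zariski-constructible. The paper instead takes $\cH_{\bX}$ to be the \emph{fibre} of the Hodge--Tate logarithm $\rR^1 f_{v\ast}\whJ \to \rR^1 f_{v\ast}\Lie J$ (cut down by the lift $\bX$) over the \emph{tautological section} $\tau_{\Omega}:\A\to\A_{J,\Omega}$ of Ng\^o--Chen--Zhu; this makes $\cH_{\bX}$ a torsor under $\nu^{\ast}\rR^1 f_{\et\ast}J[p^{\infty}]$, which is Zariski-constructible because $J$ is algebraic. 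The section $\tau_{\Omega}$, and the identity $da_{E,\varphi}(\tau_{\varphi})=\varphi$ recovering the Higgs field from the $J$-action, are the ingredients your sketch is missing; without them there is no canonical point of $\A_{J,\Omega}$ over which to take the fibre. Second, your fibrewise identification of the Hitchin fibre of $\bfHig_n$ with the Picard stack of the spectral curve only holds over the regular locus $\A^{\circ}$; over singular or non-reduced spectral curves one must work with torsors under the regular centralizer $J_b=\pi_{\ast}\O_{Z_b}^{\times}$ (equivalently invertible $\pi_{\ast}\O_{Z_b}$-modules) and define $\mathbf S$ by \emph{twisting} $\nu^{\ast}E$ by such a torsor, rather than by identifying both fibres with Picard objects. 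On the Betti side this twisting requires the canonical Higgs field on v-bundles to produce the action $J_b\to\uAut(V)$, another input absent from your outline.

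Third, the claim that an exponential for $K$ ``determines on each Hitchin fiber a splitting'' and that bicontinuity is ``automatic'' hides the main analytic content of the second assertion. An exponential splits $\log:1+\mm_K\to K$, whereas $\cH_{\bX}\to\A$ is a torsor under $p$-power torsion of $\rR^1 f_{\et\ast}\whJ$; to convert one into the other the paper proves (via the primitive comparison theorem on the spectral, resp.\ cameral, cover and proper base change) that $\cH_{\bX}$ is the pullback of $\Lambda\otimes_{\uZp}\widehat{\G}_m\xrightarrow{\log}\Lambda\otimes_{\uZp}\G_a$ along a canonical map $\A\to\Lambda\otimes_{\uZp}\G_a$, where $\Lambda=\rR^1 f'_{v\ast}\uZp$ is the \'etale cohomology of the spectral curve. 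Only after this identification does an exponential, applied functorially over strictly totally disconnected test spaces $\underline S$, yield a section of $\cH_{\bX}(\underline S)\to\A(\underline S)$ and hence, via the condensed formalism, a homeomorphism on $K$-points. You would need to supply this comparison; it is not formal.
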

	This homeomorphism is a close $p$-adic analogue of a Theorem of Simpson in complex geometry, see \S\ref{s:comp-cpx}.
	\newpage
	
	In fact, we can be more precise:
	Let $\pi:Z\to  \A$ be the spectral curve, considered as an adic space. Then $\cH_{\bX}$ is a torsor under $\cP[p^\infty]:=\nu^\ast \rR^1\pi_{\et\ast}\mu_{p^\infty}$ on $\A_v$. Both $\bfBun_{n,v}$ and $\bfHig_{n}$ receive natural $\cP[p^\infty]$-actions.
	\begin{theorem}\label{t:intro-2}
		There is a natural isomorphism of v-sheaves over the Hitchin base $\A$
		\[\cH_{\bX}\times^{\cP[p^\infty]}\bfBun_{n,v}\isomarrow  \bfHig_{n}.\]
	\end{theorem}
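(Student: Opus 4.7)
The plan is to deduce this from Theorem~\ref{t:intro-1} by upgrading the base-changed isomorphism $\mathbf{S}$ to a $\cP[p^\infty]$-equivariant morphism and then descending along the v-cover $\cH_{\bX}\to \A$.

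From Theorem~\ref{t:intro-1} we already have a natural isomorphism
\[
\mathbf{S}:\bfBun_{n,v}\times_{\A}\cH_{\bX}\isomarrow \bfHig_n\times_{\A}\cH_{\bX}
\]
of v-sheaves over $\cH_{\bX}$. Composing with the projection to $\bfHig_n$ gives a morphism $\tilde{\mathbf{S}}:\cH_{\bX}\times_{\A}\bfBun_{n,v}\to \bfHig_n$ over $\A$. The key step is to establish that $\tilde{\mathbf{S}}$ is invariant under the antidiagonal action of $\cP[p^\infty]$: for every section $g$ of $\cP[p^\infty]$ and every pair $(h,b)$ one should have $\tilde{\mathbf{S}}(g\cdot h,b)=\tilde{\mathbf{S}}(h,g\cdot b)$, functorially in $h$ and $b$. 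Here $\cP[p^\infty]$ acts on $\cH_{\bX}$ via the torsor structure and on $\bfBun_{n,v}$ and $\bfHig_n$ via the natural Picard torsion action coming from the spectral curve.

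Granting this invariance, $\tilde{\mathbf{S}}$ factors through a morphism
\[
\bar{\mathbf{S}}:\cH_{\bX}\times^{\cP[p^\infty]}\bfBun_{n,v}\to \bfHig_n.
\]
To verify that $\bar{\mathbf{S}}$ is an isomorphism it suffices, since both sides are v-sheaves over $\A$, to check this after pullback along the v-cover $\cH_{\bX}\to \A$. The canonical trivialization $\cH_{\bX}\times_{\A}\cH_{\bX}\cong \cH_{\bX}\times \cP[p^\infty]$ then identifies the pullback of the source with $\bfBun_{n,v}\times_{\A}\cH_{\bX}$ in such a way that $\bar{\mathbf{S}}\times_{\A}\id_{\cH_{\bX}}$ is identified with the isomorphism $\mathbf{S}$ from Theorem~\ref{t:intro-1}, and v-descent concludes.

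The main obstacle will be the $\cP[p^\infty]$-equivariance, which carries essentially all of the new content beyond Theorem~\ref{t:intro-1}. I expect it to follow from a close inspection of the construction of $\mathbf{S}$, showing that the natural Picard actions on either side are intertwined by $\mathbf{S}$ up to the torsor action on $\cH_{\bX}$; conceptually this should amount to identifying $\cH_{\bX}$ with the torsor of compatible identifications between these two actions, an interpretation that is implicit in the definition $\cP[p^\infty]=\nu^\ast \rR^1\pi_{\et\ast}\mu_{p^\infty}$ and in the construction of $\cH_{\bX}$ as the relevant lifting torsor over the spectral curve.
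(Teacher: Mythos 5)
Your reduction runs in the opposite direction to the paper's logic, and the step you defer is exactly where all the content lies. In the paper, \Cref{t:intro-2} is the primary statement: it is obtained from the stacky twisted isomorphism $\CS:\CH\times^{\CP}\CHig_G\to\CBun_{G,v}$ of \Cref{t:fCiso} by passing to sheaves of isomorphism classes (\Cref{c:twisted-isom-moduli}) and then rewriting $\cH=\cH_{\bX}\times^{\cP[p^\infty]}\cP$ (\Cref{l:cHbX}), which gives $\cH_{\bX}\times^{\cP[p^\infty]}\bfHig_G\simeq\bfBun_{G,v}$ (\Cref{t:coarse-moduli-twisted}); the pullback statement of \Cref{t:intro-1} is then \emph{deduced} from this by applying $-\times_{\A}\cH_{\bX}$ and using $\cH_{\bX}\times_{\A}\cH_{\bX}\simeq\cP[p^\infty]\times_{\A}\cH_{\bX}$ (\Cref{c:comp-coares-moduli-pullback}). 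So within the paper's architecture, deducing \Cref{t:intro-2} from \Cref{t:intro-1} would be circular. (Note also that the formulation of \Cref{t:intro-2} twists $\bfBun_{n,v}$ to obtain $\bfHig_n$, whereas the body result twists $\bfHig_G$ to obtain $\bfBun_{G,v}$; one passes between the two by replacing $\cH_{\bX}$ with its inverse torsor, as with $\CH^{-1}$ in the proof of \Cref{t:fCiso}.)

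More importantly, your argument has a genuine gap: the antidiagonal $\cP[p^\infty]$-invariance of $\tilde{\mathbf S}$ is not a formal consequence of the \emph{statement} of \Cref{t:intro-1} --- an isomorphism over $\cH_{\bX}$ carries no a priori compatibility with the group actions on either side --- and you do not prove it, you only state that you expect it. Establishing it requires opening up the construction of $\mathbf S$, which in the paper is the contracted product $(F,(E,\varphi))\mapsto F\times^{\wh J_b}\nu^\ast E$ built from the abelianisation homomorphisms $a_{E,\varphi}$ and $a_V$ of \S\ref{s:abelianisation}; the equivariance is then automatic from associativity of twists as in \eqref{eq:Q-equivar-of-CS}, but at that point one has already proved \Cref{t:intro-2} directly and \Cref{t:intro-1} drops out as a corollary. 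Your closing heuristic --- that $\cH_{\bX}$ is ``the torsor of compatible identifications between the two actions'' --- is a reasonable gloss on the theorem but is not how $\cH_{\bX}$ is defined (it is the fibre of $\cP_{\bX,v}\to\A_{J,\Omega}$ over $\tau_{\Omega}$, cf.\ \Cref{def:cHX}), so it cannot be invoked to supply the missing equivariance. The descent skeleton itself (factoring through the quotient and checking after pullback along the v-cover $\cH_{\bX}\to\A$) is fine once equivariance is granted.
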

	This exhibits $\bfBun_{n,v}$ as a twist of $\bfHig_{n}$, giving a precise technical meaning to the ``twisted isomorphism''. To give a third incarnation, assume that the genus of $X$ is $\geq 2$ and let $ \A^\circ\subseteq  \A$ be the regular locus, i.e.\ the dense Zariski-open subspace where the fibre $\pi:Z^\circ\to  \A^\circ$ of $\pi$ is smooth. Let $\cP:=\mathbf{Pic}_{Z^\circ| \A^\circ}^\dia$ be the v-sheaf associated to the relative Picard functor of $\pi$. Let $\bfBun_{n,v}^\circ$ and $\bfHig^\circ_{n}$ be the fibres of $ \A^\circ$ under $\wt H$ and $H$.
	\begin{theorem}[\Cref{t:moduli-spaces-circ}]\label{t:intro-regular-locus}
		\begin{enumerate}
			\item 
			The morphisms  $H:\bfHig^\circ_{n}\to  \A^\circ$ and $\wt H:\bfBun_{n,v}^\circ\to  \A^\circ$ are $\cP$-torsors. As such, the former is a split $\cP$-torsor, whereas the latter is non-split.
			\item $\bfHig^\circ_{n}$ and $\bfBun_{n,v}^\circ$ are represented by smooth rigid spaces. The restriction $\cH_{\bX| \A^\circ_{\et}}$ is locally constant. 
			\item The fibre over $\A^\circ$ of the isomorphism $\mathbf{S}$  in \Cref{t:intro-1} is an isomorphism of smooth rigid spaces.
		\end{enumerate}
	\end{theorem}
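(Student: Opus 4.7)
The plan is to reduce both moduli spaces over the regular locus to spectral data on the smooth spectral family $\pi\colon Z^\circ\to \A^\circ$, and then to transport geometric structure across the twisted isomorphism of \Cref{t:intro-2}. On the Higgs side, the Beauville--Narasimhan--Ramanan correspondence, applied fibrewise and in families, identifies $\bfHig^\circ_n$ with a component of the relative Picard $\cP=\bfPic^{\dia}_{Z^\circ|\A^\circ}$: a Higgs bundle $(E,\theta)$ with characteristic polynomial $a\in\A^\circ$ corresponds to a line bundle $L$ on the smooth spectral curve $Z_a$ via $E=\pi_\ast L$, with $\theta$ induced by the tautological section. The natural tensor action of $\cP$ realises $H$ as a $\cP$-torsor, and a uniform section is provided by the universal line bundle of the degree matching the prescribed determinant over $\A^\circ$; hence $H$ is split. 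Representability of $\bfHig^\circ_n$ by a smooth rigid space then follows from representability of the algebraic relative Picard scheme of a smooth projective curve family, via analytification.

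The key step for transferring this to the v-side is to prove that $\cH_{\bX}|_{\A^\circ_\et}$ is locally constant. For this I first argue that the coefficient sheaf $\cP[p^\infty]$ is itself an étale local system on $\A^\circ$: the Kummer sequence on $Z^\circ$ together with smooth proper base change for $\pi$ identifies $\cP[p^\infty]$ with the relative Tate module $T_p\cP$, which is an étale $\Z_p(1)$-local system of rank $2g_{\mathrm{spec}}$ on $\A^\circ$, where $g_{\mathrm{spec}}$ is the arithmetic genus of the spectral curve. Any $\cP[p^\infty]$-torsor on $\A^\circ_v$ is then étale-locally trivial, using the comparison between v- and étale cohomology for locally constant coefficients on rigid spaces \`a la Scholze.

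Applying \Cref{t:intro-2} over $\A^\circ$ yields
\[
\cH_\bX \times^{\cP[p^\infty]} \bfBun^\circ_{n,v} \isomarrow \bfHig^\circ_n.
\]
Since the left-hand side is étale-locally, on $\A^\circ$, isomorphic to $\bfHig^\circ_n$, and the latter is a smooth rigid space, $\bfBun^\circ_{n,v}$ inherits the structure of a smooth rigid space. The $\cP$-action on $\bfHig^\circ_n$ transports through the twisted isomorphism to a $\cP$-action on $\bfBun^\circ_{n,v}$ making $\wt H$ a $\cP$-torsor, whose class in $\rH^1(\A^\circ_\et,\cP)$ is the image of $[\cH_\bX]\in\rH^1(\A^\circ_\et,\cP[p^\infty])$ under the natural map. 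This class is non-trivial in general, reflecting the non-existence of a canonical exponential and hence of a canonical splitting of $\cH_\bX$, so $\wt H$ is a non-split $\cP$-torsor. Part (3) is then immediate: over $\A^\circ$, the map $\mathbf{S}$ from \Cref{t:intro-1} is an isomorphism of v-sheaves between smooth rigid spaces, hence an isomorphism in the rigid analytic category.

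The main obstacle is the local-constancy statement in the second paragraph: this is a family version of the fact that the $p$-adic Tate module of a Picard variety in characteristic zero is a local system, but it has to be formulated for the v-topology on a diamond base, and it requires invoking smooth proper base change on the étale site of $Z^\circ$ together with the comparison between v- and étale cohomology of locally constant sheaves. Once this is in hand, the remaining assertions of the theorem follow essentially formally from the twisted isomorphism and classical BNR.
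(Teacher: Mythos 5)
Your overall architecture coincides with the paper's: the Higgs side is identified with the relative Picard functor of the smooth spectral family via BNR (in the paper this step is not free in perfectoid families --- it rests on the perfectoid GAGA theorem, \Cref{t:perf-GAGA}, via \Cref{c:BNR-an} and \Cref{p:Hitchin-for-Higgs}); local constancy of $\cH_{\bX}$ over $\A^\circ$ is deduced from smooth proper base change for the finite \'etale sheaves $\rR^1\pi_\ast\mu_{p^m}$ on the smooth spectral family together with \Cref{c:exp-splits-L_X} and \Cref{t:struct-thm-R1fvastU}; and the v-side is handled by transporting structure through the twisted isomorphism. A minor slip: $\cP[p^\infty]=\varinjlim_n\cP[p^n]$ is not the relative Tate module $T_p\cP$ but rather $T_p\cP\otimes_{\Z_p}(\Qp/\Z_p)$; what is actually needed is only that each finite level $\cP[p^n]$ is a finite \'etale local system on $\A^\circ$.

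The substantive gap is the sentence asserting that $\bfBun^\circ_{n,v}$ ``inherits the structure of a smooth rigid space'' because it is \'etale-locally on $\A^\circ$ isomorphic to $\bfHig^\circ_n$. This is precisely where the real difficulty of the theorem sits. First, to trivialise $\cH_{\bX}$ \'etale-locally you must reduce the torsor under the ind-group $\varinjlim_j\cP[p^j]$ to a torsor under some finite level $\cP[p^j]$, which requires a quasi-compactness argument to commute cohomology with the colimit. Second, and more seriously, effectivity of \'etale descent for rigid analytic spaces is not automatic: a v-sheaf that becomes representable by a smooth rigid space after pullback along an \'etale cover need not visibly be representable, and here the group $\bfP^\circ$ is built from abelian varieties, so one cannot appeal to descent for affine morphisms (representability of torsors under abelian schemes is a delicate question even algebraically). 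The paper avoids descent altogether via \Cref{l:representability-Bun-axiomatised}: writing $\rR^1f_{v\ast}\wh{\B^\times}|_{\A^\circ}$ as an extension of the vector group $\G_a^d$ by $\bfP^\circ[p^\infty]$, over each closed polydisc $\mathbb B_r\subseteq \G_a^d$ the torsor reduces to level $p^j$ by quasi-compactness; pushing out along $[p^j]$ then splits the extension, so the level-$(r+j)$ piece is literally $\bfP^\circ\times\mathbb B_{r+j}$, and the level-$r$ piece maps to it by an \'etale morphism of v-sheaves, hence is representable because $Y^\dia_\et=Y_\et$ for a rigid space $Y$. One then realises $\bfBun^\circ_{n,v}$ as a fibre product of rigid spaces over $\rR^1f_{v\ast}\Lie J$. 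If you wish to keep your descent-based route, you must supply an effectivity argument (e.g.\ exploiting that the descent datum is a translation twist by a finite \'etale subgroup of a torsor under an abelian scheme); as written, the claim is unsupported. Your justification of the non-splitness of $\wt H$ is also only heuristic, but the paper itself substantiates this point essentially only through the $n=1$ example, so I do not count it as an additional gap.
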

	This explains the first instance of our main results, formulated in terms of coarse moduli spaces.
	But in fact, our setup is more general: Instead of  working with $\GL_n$, i.e.\ with vector bundles, we allow general reductive groups $G$. Moreover, we can work with moduli stacks instead of coarse moduli spaces.
	\subsection{Twisting for the  moduli stack of v-topological $G$-torsors} \label{ss:twisting-action}
	We now explain the main results of this article in full generality and in more detail. 
	Let $G$ be a reductive group over $K$. Generalising from the case of $G=\GL_n$ of vector bundles of rank $n$, we work with the stack of v-$G$-torsors
	\[ \CBun_{G,v}:T\mapsto \{\text{$G$-torsors on $(X\times T)_v$}\}\]
	over $\Spa(K)_v$, defined by
	sending any perfectoid space $T\to \Spa(K)$ to the groupoid of $G$-torsors for the v-topology on the adic space $X\times T$.  Here we interpret $G$ as an adic group over $\Spa(K)$, hence as a v-sheaf.
	
	Based on the preparations from \cite{Heu}, there is also a  good notion of $G$-Higgs bundles on $X\times T$: Let $\wtOm$ be the $(-1)$-Tate twist of the pullback $\pr_X^{\ast}\Omega^1_{X|K}$ along the projection $\pr_X:X\times T\to X$. Then a $G$-Higgs bundle on $X\times T$ is a pair $(E,\theta)$ consisting of a $G$-torsor $E$ on $(X\times T)_\et$ and a section $\theta\in \rH^0(X\times T,\ad(E)\otimes \wtOm)$, where $\ad(E)$
	is the adjoint bundle of $E$. By \cite[Theorem~1.4]{Heu}, the functor fibred in groupoids
	\[ \CHig_{G}:T\mapsto \{\text{$G$-Higgs bundles on $(X\times T)_{\et}$}\}\]
	is then a small v-stack on $\Spa(K)_v$, as is $\CBun_{G,v}$. By \cite[\S1.5]{Heu}, both of these admit a Hitchin morphism 
	\[\CBun_{G,v} \to  \A\leftarrow  \CHig_{G}\]
	of v-stacks, where $ \A$ is the Hitchin base for $G$. The goal of this article is to compare these two morphisms.
	
	Following Ng\^o \cite{Ngo}, the regular centralizer of $G$ induces a commutative relative group scheme
	\[  J\to X\times  \A\]
	which again we may view as a rigid space. It has the fundamental property that for any perfectoid space $T$ and any Higgs bundle $(E,\theta)$ on $X\times T$ with associated morphism $b:=H(E,\theta):T\to  \A$, the base-change $J_b\to X\times T$ admits a natural action on $(E,\theta)$ via a homomorphism of adic groups over $X\times T$
	\begin{equation}\label{eq:intro-J-action-on-Higgs}
		J_b\to \underline{\Aut}(E,\theta).
	\end{equation}
	
	Our first result is that a similar construction is possible on the ``Betti side'', i.e.\ for $\CBun_{G,v}$:
	Let $V$ be a v-topological $G$-torsor on $X\times T$. Generalising a construction of Rodr\'iguez Camargo \cite{camargo2022geometric}, we show that $V$ can be endowed with a canonical Higgs field $\theta_V\in \rH^0(X\times T,\ad(V)\otimes \wtOm)$. We show that $\underline{\Aut}(V)\to X\times T$ is a smooth relative adic group, and that $\theta_V$ induces a canonical homomorphism of adic groups over $X\times T$
	\begin{equation}\label{eq:intro-J-action-on-vG-torsor}
		J_b\to \underline{\Aut}(V).
	\end{equation}
	It follows from this that we can twist both $G$-Higgs bundles and v-topological $G$-torsors on $X\times T$ in the fibre of $b\in  \A(T)$ with $J_b$-torsors. This is what we will use to set up the comparison of moduli stacks.
	
	\subsection{The twisted isomorphism of moduli stacks} \label{ss:twisted-isom}
	The fundamental conceptual idea behind our construction is to use the phenomenon of ``abelianisation'': Roughly, we will reduce the non-abelian Hodge theory of $G$ on $X$ to the relative Hodge theory of the commutative relative group $J\to X\times \A$. 
	
	In the case of $G=\GL_n$, this is related to the BNR-correspondence, which relates Higgs bundles on $X$ to line bundles on the spectral curve $Z$. Indeed, in this case, $J$ is given by $\mathrm{Res}_{Z|X\times \A}\G_m$. If $\pi_T:Z_T\to X\times T$ denotes the base-change along $b:T\to \A$, then $J_b$-torsors are in this case equivalent to line bundles on $Z_T$.
	
	\medskip
	
	To obtain such a relative Hodge theory for $J$, we develop a general theory of smooth relative adic groups over adic spaces like $X\times T$. We then show that the topologically $p$-torsion subsheaf
	$\whJ:=\FHom(\underline{\Z}_p,J)\to J$
	is represented by an open subgroup of $J\to X\times \A$. 
		For any morphism $b:T\to  \A$, we denote the base-change by $\wh J_b\to X\times T$. For $\tau\in \{\et,v\}$, let $\CP_{\tau}\to  \A$
		be the Picard v-stack sending any perfectoid space $b:T\to  \A$ to the groupoid of $\wh J_b$-bundles on $(X\times T)_{\tau}$. We set $\CP:=\CP_{\et}$.
	The canonical $J$-actions in \eqref{eq:intro-J-action-on-Higgs} and \eqref{eq:intro-J-action-on-vG-torsor} define natural $\CP$-actions on both $\CHig_{G}$ and $\CBun_{G,v}$.
	Our fundamental technical result on $J$ is now:
	\begin{theorem}[\Cref{p:leray-seq-for-UJ}]\label{t:fund-ses}
		There is a short exact sequence of Picard stacks on $\Perf_{K,v}$:
		\[1\to \CP\to  \CP_v\to \A_{J,\Omega}\to 0\]
		where $\A_{J,\Omega}$ is the abelian v-sheaf defined by $\pr_{\A,\ast}(\Lie J\otimes \wtOm)$ for the projection $\pr_\A:X\times \A\to \A$.
	\end{theorem}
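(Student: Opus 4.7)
The plan is to deduce the short exact sequence from a Leray-type computation for the morphism of sites $\nu\colon (X\times \A)_v \to (X\times \A)_\et$, combined with a Hodge--Tate style identification of $R^1\nu_\ast \wh J$. Since $\wh J$ is an open subgroup of the smooth commutative relative adic group $J\to X\times \A$, it is itself a smooth commutative adic group, and one expects
\[ \nu_\ast \wh J \;=\; \wh J,\qquad R^1\nu_\ast \wh J \;\cong\; \Lie J\otimes \wtOm\]
as abelian sheaves on $(X\times \A)_\et$. Granting this, pushing forward along $\pr_\A\colon X\times \A\to\A$ and interpreting the resulting two-term complex as a Picard stack produces the claimed sequence. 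Throughout the proof one should think of this as abelianising, via the logarithm, the étale--v comparison of $B\wh J$ in families over $X\times \A$.

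\textbf{Hodge--Tate for $\wh J$.} The central input is the identification of $R^1\nu_\ast \wh J$. V-locally on $X\times\A$, the $p$-adic logarithm identifies $\wh J$ up to controlled $p$-power torsion with an open $\O$-submodule of $\Lie J$. This reduces the computation to the corresponding statement $R^1\nu_\ast \Lie J \cong \Lie J\otimes \wtOm$, a relative version of the Hodge--Tate comparison for locally finite free $\O$-modules on $(X\times T)_\et$, pulled back from $(X\times \A)_\et$. The steps to carry out are: (i) verify compatibility of the logarithm with the commutative group structure on $\wh J$, so that the identification of $R^1\nu_\ast$ is additive; (ii) check that it is compatible with base change along any $T\to \A$; (iii) assemble these local statements into a global one on $X\times \A$, using the smoothness of $J$ both in the $X$- and in the $\A$-direction.

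\textbf{Passage to Picard stacks.} With the above identifications in hand, the Leray spectral sequence for $\nu$ applied to $R\pr_{\A,\ast}\wh J$ gives, after restricting to low degrees, an exact sequence of abelian sheaves on $\A_v$
\[ 0 \to R^1\pr_{\A,\et\ast}\wh J \to R^1\pr_{\A,v\ast}\wh J \to \pr_{\A,\et\ast}(\Lie J\otimes \wtOm) \to R^2\pr_{\A,\et\ast}\wh J,\]
together with the equality $\pr_{\A,\et\ast}\wh J=\pr_{\A,v\ast}\wh J$. The Picard stacks $\CP$ and $\CP_v$ are, by definition, the classifying stacks for $\wh J$-torsors in the étale and v-topologies on $X\times T$, so their sheaves of objects and automorphisms are precisely the $R^1$ and $R^0$ appearing above. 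Translated into the Picard-stack dictionary, the sequence yields the claim, provided one shows that the last arrow vanishes, i.e.\ that $\CP_v \to \A_{J,\Omega}$ is surjective. For this one constructs, given any section $\theta$ of $\pr_{\A,\ast}(\Lie J\otimes \wtOm)$ over $T$, a v-local $\wh J$-torsor whose Hodge--Tate class is $\theta$, by exponentiating $\theta$ against the log map v-locally.

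\textbf{Main obstacle.} The crux is the Hodge--Tate computation $R^1\nu_\ast \wh J \cong \Lie J\otimes \wtOm$ in the relative setting over $X\times \A$. In contrast to the case of a smooth $p$-divisible group, $\wh J$ is only the topologically $p$-torsion subsheaf of a relative commutative adic group whose geometry genuinely varies with the Hitchin parameter; to make the Sen/Hodge--Tate argument run in families one must exploit the smoothness of $J\to X\times\A$ uniformly in both directions, and control the errors introduced by the logarithm at the level of torsion. Once this is in place, the remaining steps --- the Leray spectral sequence, the vanishing of the last map via the exponential construction, and the translation to Picard stacks --- are essentially formal.
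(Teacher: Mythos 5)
Your left-exact part follows the paper closely: the identification $\rR^1\nu_\ast\whJ\cong\Lie J\otimes\wtOm$ via the logarithm (using that $J[p^n]$ is \'etale over the base so its higher $\rR\nu_\ast$ vanish, and that open subgroups of $\Lie J\otimes\G_a$ have the same $\rR^n\nu_\ast$), followed by the Leray sequence for $X\times\A\to\A$, is exactly how the paper obtains
$1\to\mu^\ast\rR^1f_{\Et\ast}\whJ\to\rR^1f_{v\ast}\whJ\to f_{v\ast}(\Lie J\otimes\Om)$. That part is fine.

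The genuine gap is the surjectivity of $\HTlog$, which you dispose of in one sentence by ``exponentiating $\theta$ against the log map v-locally.'' This does not work as stated. The exponential of $\whJ_b$ converges only on an open neighbourhood of $0$ in $\Lie J_b$, so exponentiating a section $\theta\in\rH^0(X_T,\Lie J_b\otimes\Om)$ requires first localizing on $X_T$ (toric charts, finite \'etale covers shrinking $\Delta_f$); this produces $\whJ_b$-torsors only locally on $X_T$, whereas surjectivity of the sheaf map on $\A_v$ demands a \emph{global} torsor on $X_T$ after a v-cover of $T$ alone. The obstruction to gluing the local torsors is a degree-two class (the paper's remark after the theorem identifies the fibre $\CH(b)$ as a $\whJ_b$-gerbe whose class $\lambda(\tau_\Omega(b))\in\rH^2(X_\et,\whJ_b)$ is in general nonzero), so the local exponentiation argument begs precisely the question being asked. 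Note also that the relative Hodge--Tate sequence is \emph{not} right-exact in general (e.g.\ for $\G_a^+$), so surjectivity is a genuine property of $J$ that must be established. The paper's proof instead splits the problem in two: (i) $\rR^1f_{v\ast}\whJ\to\rR^1f_{v\ast}\Lie J$ is surjective because the boundary maps of the logarithm sequence $0\to J[p^\infty]\to\whJ\to\Lie J\to0$ vanish --- their source $\rR^mf_{v\ast}\Lie J$ is a connected vector group $\G_a^d$ while their target $\rR^{m+1}f_{v\ast}J[p^n]$ is (the pullback of) a Zariski-constructible \'etale sheaf, so any such map is zero; and (ii) $\rR^1f_{v\ast}\Lie J\to f_{v\ast}(\Lie J\otimes\Om)$ is surjective because it admits a splitting $s_{\bX}$ induced by a flat $\BdR^+/\xi^2$-lift $\bX$ of $X$, which always exists. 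Neither ingredient --- the connectedness-versus-constructibility argument, nor the splitting coming from a lift of $X$ --- appears in your proposal, and without them (or a genuine substitute) the surjectivity, and hence the short exactness, is unproven.
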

	Here we can think of $\A_{J,\Omega}$ as being the Hitchin base for the commutative relative adic group $J$, and we can think of the morphism $\CP_v\to \A_{J,\Omega}$ in \Cref{t:fund-ses} as an analogue of the Hitchin morphism for $\whJ$. 
	
	Following Chen--Zhu \cite{CZ15}, there is a canonical section $\tau: \A\to \A_{J,\Omega}$. The key definition is now:
	\begin{definition}
		Let $\CH\to  \A$ be the v-stack defined as the fibre product
		\[\begin{tikzcd}
			\CH \arrow[r] \arrow[d] & \A \arrow[d,"\tau"] \\
			{\CP_v} \arrow[r] & \A_{J,\Omega}.
		\end{tikzcd} \]
	\end{definition}
	As a consequence of \Cref{t:fund-ses},
	there is a natural action of $\CP$ on 	$\CH$ making it a $\CP$-torsor.
	We can now state our main theorem, the $p$-adic Simpson correspondence as a twisted isomorphism of moduli stacks:
	\begin{framed}
		\vspace{-7pt}
	\begin{theorem}[\Cref{t:fCiso}]\label{t:intro-main-thm}
		There is a canonical and functorial equivalence of v-stacks
		\[\CS:\CH\times^{\CP}\CHig_G  \to \CBun_{G,v}.\]
	\end{theorem}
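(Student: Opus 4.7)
\emph{Construction of $\CS$.} My plan is to give $\CS$ explicitly via the canonical $J$-actions on both sides. Let $T$ be a perfectoid space and $(\mathcal{L},(E,\theta))$ a $T$-valued point of $\CH\times_{\A}\CHig_G$ with common Hitchin image $b:T\to \A$. Unwinding the pullback square defining $\CH$, the class $\mathcal{L}$ amounts to a v-$\whJ_b$-torsor $\widetilde{\mathcal{L}}\in\CP_v(T)$ on $X\times T$ whose image in $\A_{J,\Omega}(T)$ equals $\tau(b)$. Restricting the canonical $J_b$-action \eqref{eq:intro-J-action-on-Higgs} on $(E,\theta)$ to the open subgroup $\whJ_b\subset J_b$, I form the contracted product
\[V:=\widetilde{\mathcal{L}}\times^{\whJ_b} E,\]
which is a v-$G$-torsor on $(X\times T)_v$ and provides the sought $T$-point of $\CBun_{G,v}$. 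Because the diagonal $\CP$-action on $(\mathcal{L},(E,\theta))$ cancels in the contracted product, $\CS$ descends to $\CH\times^{\CP}\CHig_G$; naturality in $T$ and compatibility with the Hitchin morphism are immediate from the construction.

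\emph{Showing $\CS$ is an equivalence.} The statement is v-local on $\A$, so it may be verified after pullback along a v-cover trivialising $\CH$. I would first treat the abelian case $G=\whJ$, where the theorem is essentially a direct consequence of Theorem~\ref{t:fund-ses} together with the defining pullback of $\CH$: these identify $\CH\times^{\CP}\CP$ with the restriction of $\CP_v$ along $\tau$, which in turn coincides with the image of $\CBun_{\whJ,v}$ over $\A$ under abelian abelianisation. For general $G$, one works v-locally on $\A$ where $\CH$ is trivial. In this regime, the fibres of $\CBun_{G,v}$ and $\CHig_G$ over $b\in\A(T)$ become torsors under the automorphism sheaves $\uAut(V)$ and $\uAut(E,\theta)$ respectively. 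By \eqref{eq:intro-J-action-on-vG-torsor} and \eqref{eq:intro-J-action-on-Higgs}, both sheaves receive canonical homomorphisms from $\whJ_b$, and a key compatibility — that the canonical Higgs field $\theta_V$ introduced in \S\ref{ss:twisting-action} matches the Higgs field $\theta$ under the construction of $\CS$ — ensures that these $\whJ_b$-structures are intertwined by $\CS$. The abelian case then upgrades this v-local identification to the desired equivalence of stacks.

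\emph{Main obstacle.} I expect the hardest step to be full faithfulness of $\CS$, i.e.\ checking that the canonical $J_b$-actions exhaust the relevant part of the automorphism sheaves so that $\uAut(V)$ and $\uAut(E,\theta)$ are canonically matched via Theorem~\ref{t:fund-ses}. This requires the representability and smoothness of $\uAut(V)$ established earlier in the paper, combined with a careful check that $\theta_V$ on a v-$G$-torsor agrees with the Higgs field on the étale Higgs bundle produced by the inverse construction. Once this is in place, essential surjectivity follows v-locally by reversing the contracted product: from a v-$G$-torsor $V$ together with its canonical Higgs field $\theta_V$, one recovers a candidate pair $(E,\theta)\in \CHig_G(T)$ together with a class $\mathcal{L}\in \CH(T)$ measuring the obstruction to $V$ being the ``untwisted'' image of $(E,\theta)$, and Theorem~\ref{t:fund-ses} applied to the resulting $\whJ_b$-torsor closes the loop.
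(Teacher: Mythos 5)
Your construction of $\CS$ is exactly the paper's: the contracted product $F\times^{\whJ_b}\nu^\ast E$ along the restriction of $a_{E,\varphi}$ to $\whJ_b$, descending through the antidiagonal $\CP$-action. The gap is in the argument that $\CS$ is an equivalence. Your reduction to a v-cover of $\A$ trivialising $\CH$ is legitimate but does not by itself reduce the difficulty, and the mechanism you then invoke is not available: the fibres of $\CHig_G$ and $\CBun_{G,v}$ over $b\in\A(T)$ are \emph{not} torsors under $\uAut(E,\theta)$ or $\uAut(V)$ (these are sheaves on $X\times T$, not on $T$, and in any case a Hitchin fibre generally contains many non-isomorphic objects), so there is no ``$\whJ_b$-structure on the fibres'' to intertwine. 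Consequently the step where ``the abelian case upgrades this v-local identification to the desired equivalence'' has no content for non-abelian $G$.

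What is actually needed — and what you flag as a ``key compatibility'' without supplying a proof — is the comparison of $\CS$ with the local $p$-adic Simpson correspondence (the paper's \Cref{l:LC-via-twisting}): on a toric chart where $(E,\theta)$ is small, one writes down the explicit $\whJ_b$-cocycle $\exp(\rho_f(\tau_\theta))$, uses $da_{E,\theta}(\tau_\theta)=\theta$ (\Cref{p:can-section-tau_theta}) and the functoriality of $\exp$ to identify $F\times^{\whJ_b}\nu^\ast E$ with $\LS_f(E,\theta)$, and uses the Hitchin normalisation $\HTlog(F)=\tau_\Omega(b)$ together with \Cref{p:leray-seq-for-UJ} to identify $F$ with that explicit cocycle étale-locally. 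This single computation is where the choice of $\tau_\Omega$ in the definition of $\CH$ is used, and it simultaneously yields essential surjectivity and the inverse functor: twisting a v-$G$-bundle $V$ by an object of $\CH^{-1}$ produces an \emph{étale-locally trivial} torsor precisely because of this local identification, after which one takes $\nu_\ast$ and descends the canonical Higgs field. Without this local input your outline cannot show that the twist lands on the correct v-bundles, nor that every v-$G$-bundle is hit.
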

	\vspace{-10pt}
	\end{framed}
	In other words, this exhibits $ \CBun_{G,v}$ as a \textit{twist} of $\CHig_G$ in a natural way.
	We emphasize that this isomorphism does not depend on any choices, in contrast to the more classical formulation of the $p$-adic Simpson correspondence \eqref{eq:paS}. Instead, we can explain these choices in a geometric fashion, as follows.
	\subsection{The constructible sheaf associated to a lift $\bX$, and the exponential}
	To explain the role of the choices in \Cref{t:intro-1}, let us for simplicity switch back to coarse moduli spaces.
	Suppose we are given a $B_{\dR}^+/\xi^2$-lift $\bX$ of $X$. This defines a Faltings extension for $X$ that we can use to define a canonical splitting 
	$s_{\bX}: \A\to \bfBun_{\Lie J,v}$.
	The v-sheaf $\cH_{\bX}$ used in Theorems
	\ref{t:intro-1} and \ref{t:intro-2} is then defined as the fibre product
	\[\begin{tikzcd}
		\cH_{\bX} \arrow[d] \arrow[r] &  \A \arrow[d, "s_{\bX}"] \\
		{\bfBun_{\whJ,v}} \arrow[r, "\log"] & \bfBun_{\Lie J,v}.
	\end{tikzcd}\]
	At the small expense of requiring the additional datum of $\bX$, this gives a ``finer'' comparison between the two moduli spaces: Indeed, we show that $\cH_{\bX}\to  \A$ is a constructible sheaf.

	To explain the role of the exponential, let now $G=\GL_n$ and let $f:Z\to  \A$ be the spectral curve for $\GL_n$. Let 
	$\widehat{\G}_m$ be the open unit disc inside $\G_m$. The key result about the exponential is the following:
	\begin{theorem}[\Cref{c:exp-splits-L_X}] \label{t:intro-exp-splits-L_X}
		Let $\Lambda=\rR^1f_{v\ast}\uZp$ be the \'etale cohomology of the spectral curve.  There is a canonical isomorphism
		$\psi: \rR^1f_{v\ast}\Lie J\isomarrow \Lambda\otimes_{\uZp}\G_a$
	 that induces a Cartesian diagram of v-sheaves on $ \A$
		\[\begin{tikzcd}
			\cH_{\bX} \arrow[r] \arrow[d]     &  \A\arrow[d,"\psi\circ s_{\bX}"] \\
			\Lambda\otimes_{\uZp}\widehat{\G}_m \arrow[r,"\log"] & \Lambda\otimes_{\uZp}\G_a.
		\end{tikzcd}\]
	\end{theorem}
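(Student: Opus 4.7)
My plan is to reduce everything to the spectral curve using the $\GL_n$-structure. For $G = \GL_n$, the regular centralizer is $J = \Res_{Z|X\times\A}\G_m$, so $\Lie J \cong \pi_\ast \G_a$ and $\whJ \cong \pi_\ast \widehat{\G}_m$ for the finite spectral cover $\pi: Z \to X\times\A$. By Leray along $f = \pr_\A\circ\pi$, this gives $\rR^i(\pr_\A)_{v\ast}\Lie J \cong \rR^i f_{v\ast}\G_a$ and similarly for $\whJ$ with $\widehat{\G}_m$, reducing the v-sheaves $\bfBun_{\Lie J,v}|_\A$ and $\bfBun_{\whJ,v}|_\A$ to v-cohomology of $\G_a$ and $\widehat{\G}_m$ on the spectral curve.

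The isomorphism $\psi$ should come from the relative Hodge--Tate decomposition for $f: Z \to \A$. The lift $\bX$ base-changes to a lift of $X\times\A$ which restricts along $Z^\circ \hookrightarrow X\times\A^\circ$ to a canonical $B_{\dR}^+/\xi^2$-lift of the smooth locus of the spectral curve over the regular locus. The standard recipe then produces a canonical splitting of the relative HT filtration on $\rR^1 f_{v\ast}\G_a|_{\A^\circ}$, identifying it with $\Lambda|_{\A^\circ}\otimes\G_a$. This extends uniquely from $\A^\circ$ to all of $\A$ using the Zariski-constructibility of $\Lambda$ (noted in the introduction) together with functoriality, giving $\psi$.

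For the Cartesian diagram, I would push the log sequence $1 \to \mu_{p^\infty} \to \widehat{\G}_m \xrightarrow{\log} \G_a \to 0$ on $Z$ forward by $f_v$. Combining the Kummer-type computation of $\rR^\ast f_{v\ast}\mu_{p^\infty}$ in terms of twists of $\Lambda$ with the HT identification $\psi$, one obtains a canonical identification $\rR^1 f_{v\ast}\widehat{\G}_m \simeq \Lambda\otimes\widehat{\G}_m$ compatibly with $\log$ --- the exponential on $K$ entering as the splitting that pins down the integral structure on the $\widehat{\G}_m$-side. Translating the defining Cartesian square of $\cH_{\bX}$ through these identifications then yields the claimed Cartesian diagram, since the left-hand map $s_{\bX}$ becomes $\psi\circ s_{\bX}$ by construction.

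The main obstacle will be extending the HT identification $\psi$ from the regular locus $\A^\circ$ to all of $\A$: over the non-regular locus, the spectral curve acquires singularities at ramification points and the smooth relative HT comparison does not apply directly. This extension will require a careful combination of the Zariski-constructibility of $\Lambda$ and $\rR^1 f_{v\ast}\G_a$ with v-descent and functoriality, or else appeal to an extension of the HT formalism handling the singular fibers. A secondary technical point is ensuring the Kummer and HT identifications assemble compatibly in the long exact sequence to produce the canonical $\rR^1 f_{v\ast}\widehat{\G}_m \simeq \Lambda\otimes\widehat{\G}_m$ that makes the log square commute.
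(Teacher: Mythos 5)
Your proposal has the right overall shape---compare the pushforward of the logarithm sequence of $\whJ$ with the sequence $1\to\Lambda\otimes\mu_{p^\infty}\to\Lambda\otimes\widehat{\G}_m\to\Lambda\otimes\G_a\to 0$ by passing through the spectral curve---but the central reduction you rely on is false. The sheaf $\Lie J$ is $\nu^\ast\pi_{\et\ast}\O_Z$, not $\pi_{v\ast}\O_Z$, and for a ramified finite cover the comparison map $\rR^1 f_{v\ast}(\nu^\ast\pi_{\et\ast}\O_Z)\to\rR^1\pi'_{v\ast}\O_Z$ is in general neither injective nor surjective; this is exactly the content of the Remark following \Cref{c:exp-induces-splitting}, and it already fails over the regular locus because the smooth spectral curve is still ramified over $X$. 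Accordingly, the paper only constructs comparison \emph{maps} $\psi_{\G_a},\psi_{\G_m},\psi_{\mu_{p^\infty}}$ and proves that just $\psi_{\mu_{p^\infty}}$ is an isomorphism (\Cref{p:top-part-of-comp-fund-seq-to-log}), since $\pi$ is finite and Leray does degenerate for \'etale torsion coefficients. The Cartesianness of the right-hand square in \Cref{t:struct-thm-R1fvastU} is then forced by this torsion isomorphism together with the exactness of both rows---not by any isomorphism on the $\G_a$- or $\widehat{\G}_m$-parts. Your outline never isolates this torsion statement, which is the actual crux; nor does it address why the top row is exact (surjectivity of $\log$ on $\rR^1f_{v\ast}$ requires the vanishing of boundary maps into $\rR^2 f_{v\ast}J[p^\infty]$, \Cref{p:log-seq-R^1fv}).

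Second, the identification $\Lambda\otimes\G_a\simeq\rR^1\pi'_{v\ast}\O_Z$ is not obtained from a relative Hodge--Tate splitting attached to a lift of $Z^\circ$: it is the Primitive Comparison Theorem (\Cref{c:PCT-spectral-curve} and \Cref{p:bottom-part-of-comp-fund-seq-to-log}), which applies to the proper map $\pi'\colon Z\to\A$ uniformly, singular fibres included, is canonical, and does not involve $\bX$. The obstacle you flag---extending an HT-type identification from $\A^\circ$ across the discriminant---is therefore real for your construction and is not resolved by constructibility of $\Lambda$ alone; it simply disappears once one uses the Primitive Comparison Theorem. The lift $\bX$ enters only through the splitting $s_\bX$ on $X$ itself, i.e.\ in the right-hand vertical arrow of the square, and the exponential plays no role in the Cartesian diagram at all: it is used only afterwards, on strictly totally disconnected test objects, to split $\log$.
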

	\begin{coro}
		Let $S=\Spa(R,R^+)$ be any strictly totally disconnected space. By an exponential for $S$, we mean a continuous splitting $\Exp$ of the logarithm $\log:1+R^{\circ\circ}\to R$. Then any exponential for $S$ induces a section of $\cH_\bX(S)\to \A(S)$. In particular, it induces a bijection, natural in $\Exp$,
		\[ \{\text{$G$-torsors on $(X\times S)_v$}\}/\!\sim\; \isomarrow \;  \{\text{$G$-Higgs bundles  on $X\times S$}\}/\!\sim. \]
	\end{coro}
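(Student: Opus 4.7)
The plan is to combine Theorem~\ref{t:intro-exp-splits-L_X} (to construct a section of $\cH_\bX(S) \to \A(S)$ from $\Exp$) with Theorem~\ref{t:intro-main-thm} (to turn this section into the desired bijection on isomorphism classes).

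First, by Theorem~\ref{t:intro-exp-splits-L_X}, producing a section of $\cH_\bX(S) \to \A(S)$ is equivalent to producing a section of the logarithm map
\[
\log \colon (\Lambda \otimes_{\uZp} \widehat{\G}_m)(S) \to (\Lambda \otimes_{\uZp} \G_a)(S)
\]
over the image of $\psi \circ s_\bX$. For $\Lambda = \uZp$ this is precisely $\Exp$ by hypothesis. For the general $\Zp$-local system $\Lambda$ coming from the spectral (or cameral) cover, I would trivialise $\Lambda$ on a pro-finite-\'etale refinement, which admits a section since $S$ is strictly totally disconnected, and then describe the sections of $\Lambda \otimes_{\uZp} \widehat{\G}_m$ and $\Lambda \otimes_{\uZp} \G_a$ over $S$ in terms of $R$ and the $\pi_0(S)$-indexed family of $\Zp$-modules underlying $\Lambda$. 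Extending $\Exp$ by $\Zp$-linearity and continuity produces the desired section, and the continuity of both $\Exp$ and $\log$ ensures that the result is independent of the trivialisation and natural in $\Exp$.

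Composing this section with $\psi \circ s_\bX$ yields a section $\A(S) \to \cH_\bX(S)$, natural in $\Exp$. The sheaf $\cH_\bX \to \A$ carries a canonical torsor structure compatible with the $\CP$-torsor structure of $\CH \to \A$ from Theorem~\ref{t:intro-main-thm}, inherited from the parallelism of their defining Cartesian diagrams, so this section trivialises the torsor over $S$. Specialising the equivalence $\CS \colon \CH \times^{\CP} \CHig_G \isomarrow \CBun_{G,v}$ to $S$-points via this trivialisation produces an equivalence of groupoids $\CHig_G(S) \isomarrow \CBun_{G,v}(S)$; passing to isomorphism classes gives the stated bijection, and naturality in $\Exp$ follows from the naturality of the section at each step.

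The main obstacle I anticipate is the construction of the section of $\log$ for the general local system $\Lambda$: one needs an explicit description of sections of $\Lambda \otimes_{\uZp} \widehat{\G}_m$ and $\Lambda \otimes_{\uZp} \G_a$ on strictly totally disconnected affinoids, and must verify that the $\Zp$-linear continuous extension of $\Exp$ is well defined on these tensor products and respects the image of $\psi \circ s_\bX$. Once this is established, everything else is formal, using only the torsor structure of $\cH_\bX$ and the stacky equivalence of Theorem~\ref{t:intro-main-thm}.
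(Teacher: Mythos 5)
Your proposal is correct and follows essentially the same route as the paper: the paper deduces the section from the Cartesian square of \Cref{c:exp-splits-L_X} by noting that evaluation on the strictly totally disconnected $S$ preserves Cartesianness (with the $\uZp$-linear continuous extension of $\Exp$ across $\Lambda\otimes_{\uZp}(-)$ left implicit, exactly the step you flag), and then obtains the bijection from the twisted isomorphism of coarse moduli sheaves (\Cref{c:comp-coares-moduli-pullback}) rather than from the stacky $\CS$ followed by passage to isomorphism classes, which amounts to the same thing.
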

	We deduce \Cref{t:intro-1}.(2) by applying this  to profinite sets $S$ and taking a condensed perspective.
	
	\subsection{Representation variety} \label{ss:representation}
	While v-vector bundles on rigid spaces are now objects of independent interest, Faltings' original motivation for \eqref{eq:paS} was that it gives rise to a natural fully faithful functor 
	\begin{equation}\label{eq:Faltings-functor}
		{\Big\{\begin{array}{@{}c@{}l}\text{ continuous representations}\\\pi_1(X,x)\to \GL_n(K)\end{array}\Big\}}\hookrightarrow  {\Big\{\begin{array}{@{}c@{}l}\text{vector bundles}\\\text{of rank $n$ on $X_v$}\end{array}\Big\}}\xrightarrow[\sim]{S}  {\Big\{\begin{array}{@{}c@{}l}\text{Higgs bundles}\\\text{of rank $n$ on $X$}\end{array}\Big\}}
	\end{equation}
	where $\pi_1(X,x)$ is the \'etale fundamental group of $X$ for some fixed base point $x\in X(K)$.
	Describing the essential image of \eqref{eq:Faltings-functor} is a major open problem in $p$-adic non-abelian Hodge theory. In \cite{Heu22b}, it was solved for $n=1$, and the proof hinged on the geometric study of moduli spaces on either side.
	We therefore believe that the results of this article will help to study the problem for $n>1$. As a step in this direction, we construct a moduli stack $\CRep_{G}$ of continuous $G$-representations of $\pi_1(X,x)$ and an embedding $\CRep_{G}\to \CBun_{G,v}$ which geometrises the first functor in \eqref{eq:Faltings-functor} for $G=\GL_n$. We then prove:
	\begin{theorem}[\Cref{t:profet-locus-open}]\label{t:introCRep-open-substack}
		The natural map $\CRep_{G}\to \CBun_{G,v}$ is an open immersion.
	\end{theorem}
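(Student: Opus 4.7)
The plan is to verify the two defining properties of an open immersion of v-stacks: that the morphism $\CRep_G\to \CBun_{G,v}$ is a monomorphism, and that its image is an open substack, in the sense that for any perfectoid space $T$ equipped with a map to $\CBun_{G,v}$ classifying a v-$G$-torsor $V$ on $X\times T$, the fibre product $\CRep_G\times_{\CBun_{G,v}}T$ is an open subspace of $T$.

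For the monomorphism, I would recover the representation from $V$ as follows. Any $V$ in the image of $\CRep_G$ is trivialised after pullback to $\wt X\times T$, where $\wt X\to X$ is the pro-\'etale universal cover with its continuous $\pi_1(X,x)$-action. The trivialisation is unique up to $G(T)$, and the associated descent datum for $\wt X\times T\to X\times T$ is precisely a continuous homomorphism $\pi_1(X,x)\to G(T)$ up to $G(T)$-conjugation. This recovers the $T$-point of $\CRep_G$ up to isomorphism and gives full faithfulness on groupoids; representability of the diagonal then follows from standard facts about small v-stacks.

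For openness, I would use the twisted isomorphism $\CS$ of \Cref{t:intro-main-thm} to translate to the Higgs side: it suffices to show that $\CS^{-1}(\CRep_G)$ is an open substack of $\CH\times^{\CP}\CHig_G$. Since $\CH\to \A$ is a $\CP$-torsor and the formation of the twist is v-local over $\A$, one may argue v-locally and reduce to showing that a certain substack of $\CHig_G$ is open. By the abstract, this substack lies inside the semi-stable Higgs bundles of degree $0$. For $G=\GL_n$ one can make this explicit via the BNR correspondence: a Higgs bundle corresponds to a coherent sheaf on the spectral curve, and the image of $\CRep_G$ should be characterised by the topological type of this sheaf, which is locally constant on the base and hence cut out by an open (and closed) condition; openness of the accompanying semi-stability condition can then be handled by Harder--Narasimhan-type arguments. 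For general reductive $G$, I would reduce further to the abelianised picture via \Cref{t:fund-ses} and the relative Picard stack $\CP$, where openness follows from the local constancy of topological invariants of $\wh J$-torsors.

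The principal obstacle I expect is this last step: precisely identifying the substack of $\CHig_G$ corresponding to $\CRep_G$, and verifying its openness in the v-topological moduli setting. One must separate the discrete topological-type condition from a semi-stability condition, and track them through the non-canonical choices in $\CS$. A secondary subtlety is the very definition of $\CRep_G$: continuity of representations should be encoded in a condensed or profinite-set-parametrised fashion so that $\CRep_G$ genuinely defines a small v-stack with a morphism to $\CBun_{G,v}$.
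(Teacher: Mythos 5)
Your first step (the identification of $\CRep_G$ with a full substack of $\CBun_{G,v}$ via the universal pro-finite-\'etale cover $\wt X$) matches the paper's \Cref{p:Thm5.2-in-families}, with one caveat: the pullback $q^\ast V$ to $\wt X\times T$ need not be \emph{trivial}, only pulled back from a $G$-torsor $E$ on $T_v$; this is why the paper defines $\CRep_G$ as pairs $(E,\rho:\underline{\pi}\to\uAut_G(E))$ rather than as bare homomorphisms $\pi\to G(T)$. With that adjustment, this part is sound and is essentially the paper's argument.

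The openness part, however, has a genuine gap. You propose to transport the question through $\CS$ to the Higgs side and then characterise the image of $\CRep_G$ inside $\CHig_G$ by a discrete topological invariant together with semi-stability. But identifying which Higgs bundles correspond to pro-finite-\'etale v-bundles is precisely the open problem emphasised in \S\ref{ss:representation}: the paper only proves the one-way containment into the semi-stable degree-$0$ locus (\Cref{p:image-sst-deg-0}) and explicitly warns that already for $G=\G_m$ the image is \emph{strictly smaller} than that locus, so it is not cut out by topological type plus semi-stability. Your plan therefore routes the proof through a statement that is not available (and, as stated, false). The paper instead proves openness entirely on the Betti side, with no reference to $\CS$ or $\CHig_G$: it establishes a fibrewise criterion (\Cref{p:fibre-wise-criterion}) and a spreading-out lemma (\Cref{l:spreading-out-profet-property}) saying that if $V$ is pro-finite-\'etale over one geometric point of $T$, it is so over an open neighbourhood. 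The mechanism is that triviality of $V$ on $\wt X\times\Spa(C,C^+)$ gives a reduction of structure group to a small open subgroup $U\subseteq G$, such reductions spread out \'etale-locally on $T$ by \cite[Corollary~4.12]{heuer-G-torsors-perfectoid-spaces}, and $U$-torsors on $\wt X\times S$ are trivial because $\rH^1_v(\wt X\times S,\O^+/p^\epsilon)$ almost vanishes (\Cref{l:U-torsors-over-wtX}). If you want to salvage your approach, you would need to replace the Higgs-side characterisation by some such direct argument; as written, the central step is missing.
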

	We deduce that the sought-for condition on Higgs bundles describing the essential image of Faltings' functor cuts out an open sub-v-stack in the moduli space $\CHig_G^{\mathrm{sst},0}$ of semi-stable Higgs bundles of degree $0$ (\Cref{p:image-sst-deg-0}).
	That said, we caution that it is known in the easier special case of $G=\G_m$ that the essential image of the twist of $\CRep_{G}$ is in general strictly smaller than $\CHig_G^{\mathrm{sst},0}$,  see \cite[Theorem~1.1]{Heu22b}.
	\subsection{Comparison to the small $p$-adic Simpson correspondence}\label{s:comp-small}
	In contrast to Faltings' approach to \eqref{eq:paS}, our construction of the twisted isomorphism \Cref{t:intro-main-thm} is ``global'', in the sense that it is not obtained from gluing local isomorphisms. Indeed, the existence of the canonical Higgs field, or equivalently the abelianisation map \eqref{eq:intro-J-action-on-vG-torsor}, is essentially the only place in the proof of \Cref{t:intro-main-thm} where we use input from local non-abelian Hodge theory. In particular, we do not use any analogue of Faltings' ``global $p$-adic Simpson correspondence of small objects'' depending on the choice of an integral lift $\mathbb X$ of $X$ but not on an exponential, as described in \cite[Theorem 5]{Fal05} and studied in detail in \cite{AGT-p-adic-Simpson}. In this sense, we think of this small global correspondence and \Cref{t:intro-main-thm} as two independent results of independent interest. 
	
	That being said, the two kinds of correspondences can be compared to each other when $X$ is proper: Indeed, the small $p$-adic Simpson correspondence can be interpreted as an \textit{isomorphism} of moduli stacks of small objects  \cite[Theorem~1.1]{AHLB-small}.  From our perspective, when  $X$ is a proper curve, this is obtained by restricting the isomorphism \Cref{t:intro-main-thm} to the largest open subdisc $\A^+\subseteq \A$ of the Hitchin base that maps via \Cref{t:intro-exp-splits-L_X} to the open disc of $\Lambda \otimes \G_a$ where the $p$-adic exponential converges. This condition yields a canonical splitting of $\cH_\bX$ over $\A^+$, explaining why no choice of exponential is required in this context.
	
	One interesting aspect of the small $p$-adic Simpson correspondence is that it has an arithmetic counterpart for smooth rigid spaces over discretely valued $p$-adic fields, classifying small v-vector bundles in terms of small ``Higgs--Sen modules'', see \cite[\S2]{LiuZhu_RiemannHilbert}\cite[\S15]{tsuji2018notes}\cite{he2022sen}\cite[\S3]{MinWang22}\cite[\S5]{AHLB-small}. It would be interesting to see if this also has a geometric incarnation in terms of moduli spaces, with the arithmetic Hitchin morphism defined by spectral data of the Sen operator (see  \cite[\S5.5]{AHLB-small}) playing the role of the Hitchin morphism.
	\subsection{Comparison to  non-abelian Hodge theory over $\mathbb C$ and $\mathbb F_p$}\label{s:comp-cpx}
	Following the pioneering work of Faltings \cite{Fal05}, due to its relation to representations of $\pi_1(X,x)$ described in \eqref{eq:Faltings-functor}, the $p$-adic Simpson correspondence \eqref{eq:paS} is regarded as a $p$-adic analogue of the complex non-abelian Hodge correspondence of Corlette and Simpson: We recall that for a smooth projective variety $Y$ over $\mathbb C$, this is an equivalence of categories between the finite dimensional $\mathbb C$-linear representations of $\pi_1(Y)$ and the category of semi-stable Higgs bundles on $Y$ with vanishing rational Chern classes \cite{SimpsonCorrespondence}. Simpson has shown in \cite{SimpsonModuliII} that this induces a homeomorphism between the natural complex analytic moduli spaces on either side, which is however not complex analytic. Moreover, he also gives a generalisation from $\GL_n$ to any reductive $G$.
	
	Not unlike the step from \cite{SimpsonCorrespondence} and \cite{SimpsonModuliII}, the premise of the present article is to study to what extent the $p$-adic Simpson correspondence \eqref{eq:paS} can be understood in terms of moduli spaces. From this perspective, \Cref{t:intro-1}.(2) yields a very close analogue of the complex situation: There are natural $p$-adic analytic moduli spaces for either side, and there is a homeomorphism between $K$-points, but this homeomorphism is not $p$-adic analytic (by \Cref{t:intro-regular-locus}). Surprisingly, however, the situation seems to be better behaved than over $\mathbb C$, as there is additionally a twisted isomorphism  between the two moduli spaces as in \Cref{t:intro-1}.(1) and \Cref{t:intro-main-thm}. These appear to have no analogue in the complex setting.
	
	Instead, \Cref{t:intro-1} is reminiscent of a result in mod $p$ non-abelian Hodge theory due to Groechenig \cite[Theorem 3.29]{Groechenig_modp-Simpson}, while \Cref{t:intro-main-thm} is inspired by the twisted isomorphism between the de Rham stack and the Higgs stack in the mod $p$ theory, due to Chen--Zhu \cite[Theorem 1.2]{CZ15}. 
	This seems to hint at a relation between the moduli theoretic aspects of the $p$-adic and the mod $p$ theory, yet to be discovered.
	
	\medskip
	
	In complex and mod $p$ algebraic geometry, the moduli space of Higgs bundles and the Hitchin fibration are much-studied objects that are part of a very active area of research, for example in the context of the geometric Langlands program, especially the fundamental lemma, or in the context of the P=W conjecture.  It is therefore important to understand the relation of our v-stack $\CHig_{G}$ to its algebraic counterpart, the algebraic stack $\CHig^{\alg}_{G}$ of algebraic Higgs bundles on $X$. 
	We prove that the relation is as close as one could hope for: 
	Let $(-)^{\dia}$ be the diamondification functor from algebraic stacks over $K$ to v-stacks \eqref{eq:diamondification-stacks}. 
	\begin{theorem}[\Cref{t:comparison-Higgs-stacks}]\label{t:intro-comparison-alg}
		There is a canonical isomorphism of v-stacks $(\CHig^{\alg}_{G})^\dia=\CHig_{G}$.
	\end{theorem}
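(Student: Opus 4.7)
The plan is to construct a canonical comparison morphism $\Phi\colon (\CHig^{\alg}_G)^\dia \to \CHig_G$ and then verify it is an equivalence on perfectoid points, the main technical content being a form of non-Noetherian relative GAGA.

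First, I would construct $\Phi$ by unwinding the diamondification of an algebraic stack. Choose a smooth atlas $U \to \CHig^{\alg}_G$ by a $K$-scheme, so that $(\CHig^{\alg}_G)^\dia$ is the quotient v-stack of the groupoid $(U\times_{\CHig^{\alg}_G}U)^\dia \rightrightarrows U^\dia$. For an affinoid perfectoid $T$, a $T$-point of the target arises v-locally from a morphism $T \to U^\dia$, equivalently a morphism to the analytification $U^{\an}$. Pulling back the universal algebraic $G$-Higgs bundle on $X\times U$, analytifying, and base-changing along $X\times T \to X\times U^{\an}$ yields an étale $G$-torsor together with a Higgs field on $X\times T$, i.e., an object of $\CHig_G(T)$. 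The universal property of analytification of algebraic stacks, together with the cocycle relations, shows this construction is independent of the atlas and functorial in $T$, giving the desired $\Phi$.

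Next, to verify $\Phi$ is an equivalence, it suffices, since both sides are small v-stacks, to check fully faithfulness and essential surjectivity on $T$-points for $T = \Spa(R,R^+)$ strictly totally disconnected. Both reduce to the following assertion: the groupoid of algebraic $G$-Higgs bundles on $X\times_K \Spec R$ is equivalent, via analytification, to the groupoid of analytic étale $G$-Higgs bundles on $X\times T$. For $G = \GL_n$ this is a form of relative GAGA for the proper smooth morphism $X\times T \to T$. The obstacle is that $R$ is not topologically of finite type, so Köpf's classical relative GAGA does not apply directly. My strategy is an approximation argument: an analytic coherent sheaf $\mathcal F$ on $X\times T$ is of finite presentation, hence étale-locally on $T$ descends to some topologically finitely generated affinoid subalgebra $R_0 \subseteq R$; on $X\times \Spa R_0$ one invokes classical GAGA to algebraize, and then base-changes along $R_0\to R$, with uniqueness of GAGA guaranteeing the algebraization is canonical. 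The Higgs field $\theta \in \rH^0(X \times T, \ad(E)\otimes \wtOm)$ descends by the same mechanism, since $\Omega^1_{X|K}$ is defined algebraically over $K$ and the $(-1)$-Tate twist is compatible with base change.

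Finally, one extends from $\GL_n$ to an arbitrary reductive $G$. Fix a faithful representation $G \hookrightarrow \GL_n$; then a $G$-Higgs bundle is equivalent to a $\GL_n$-Higgs bundle together with a Higgs-compatible reduction of structure group to $G$, which is a section of an étale $\GL_n/G$-fibration. Since $\GL_n/G$ is a quasi-projective $K$-variety, such reductions also satisfy GAGA over the smooth proper family $X \times T \to T$, so the equivalence transfers from vector bundles to $G$-torsors. Alternatively, one uses the Tannakian description to propagate the $\GL_n$-case representation by representation. The main obstacle throughout is therefore the non-Noetherian GAGA in the previous paragraph; the remaining arguments are essentially formal manipulations with analytification and v-descent, building on the classical rigid-analytic GAGA results of Köpf and Conrad.
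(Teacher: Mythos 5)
Your overall architecture (construct the comparison functor, reduce to strictly totally disconnected $T=\Spa(R,R^+)$, prove a relative GAGA statement for $\GL_n$, then pass to general $G$ Tannakian-ly) matches the paper's, and the framing of the non-Noetherian GAGA as the main obstacle is exactly right. But your proposed resolution of that obstacle has a genuine gap: you assert that an analytic vector bundle on $X\times T$, being of finite presentation, ``descends to some topologically finitely generated affinoid subalgebra $R_0\subseteq R$'' after localising on $T$. This is not a consequence of finite presentation; it is essentially the whole theorem. Writing $R=(\varinjlim_i R_i^+)^\wedge[\tfrac1p]$ with $R_i$ topologically of finite type and covering $X$ by finitely many affinoids $U_j$, the results on finite projective modules over filtered colimits (Gabber--Ramero) do let you descend each module $E|_{U_j\times T}$ to a finite stage, because $\O(U_j\times T)$ is the completion of a colimit of affinoid algebras. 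But the gluing isomorphisms live in $\GL_r$ of completed colimits, and an element of a completion need not lie in the colimit: the transition matrices can only be \emph{approximated} at finite level, the approximants satisfy the cocycle condition only up to an error in $1+\mathfrak m_K M_r(\O^+)$, and one must prove a rigidity statement that nearby cocycles give isomorphic bundles. That rigidity is not formal; it is what the paper's explicit computation achieves. Note also that if your soft approximation argument worked it would prove perfectoid GAGA for arbitrary smooth proper $X$, whereas the paper explicitly flags that case as open (expected to require Clausen--Scholze's condensed complex-geometry methods) and only proves it for curves.

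What the paper actually does (\Cref{t:perf-GAGA}) is: reduce an arbitrary smooth projective curve to $\mathbb P^1$ via a choice of finite flat map $X\to\mathbb P^1$ (pushing forward $E$ as a module over $A=f_{S*}\O$), and then prove GAGA for $\mathbb P^1\times\Spa(R)$ by hand. The $\mathbb P^1$ argument uses the cover by two discs, L\"utkebohmert's theorem (via Quillen--Suslin and the Gabber--Ramero colimit result, which \emph{does} apply there since $\mathbb B^d_S$ is affinoid) to trivialise $E$ on each disc after an open cover of $S$, the vanishing $\rH^1_{\et}(\mathbb P^1_S,1+\mathfrak m_KM_r(\O^+))=1$ to correct the transition matrix to one in $T^{-m}M_r(R\langle T\rangle)$, and from this a Grothendieck/Serre-style presentation $\O(-n)^t\to\O(-m)^s\to E\to 0$ by globally generated twists, which algebraises $E$ as a cokernel of a map of algebraic bundles. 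If you want to salvage your approach, you would need to supply the missing rigidity/correction step for approximate cocycles on $X\times T$; as written, the proposal assumes the key point.
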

	
	Hence $\CHig_{G}$ and its Hitchin fibration are essentially algebraic objects, and one can use classical results like the BNR-correspondence to study them. This provides the final ingredient for the proof of \Cref{t:intro-regular-locus}.
	
	 On the other hand, this opens up new ways to study the complex moduli space of Higgs bundles from a $p$-adic perspective, where new phenomena like \Cref{t:intro-2} occur.
	Our proof of  \Cref{t:intro-comparison-alg} relies on a  perfectoid GAGA result for vector bundles on curves, \Cref{t:perf-GAGA}, that is of independent interest.
	\subsection{Structure of the article}
	In
	\S\ref{HT-rel-grp}, we introduce the logarithm of commutative relative adic groups and use it to construct  a relative Hodge--Tate sequence. In \S\ref{s:can-Higgs}, we define the canonical Higgs field on v-$G$-bundles via the local $p$-adic Simpson correspondence. 
	In \S\ref{s:abelianisation}, we construct the twisting action on the stacks of $G$-Higgs bundles and v-$G$-torsors (\S\ref{ss:twisting-action}). 
	\S\ref{s:naHC-vstacks} proves \Cref{t:intro-main-thm}, the twisted isomorphism between moduli v-stacks.
	In \S\ref{s:exponentials}, we prove \Cref{t:intro-exp-splits-L_X} and establish the homeomorphism between topological moduli spaces, \Cref{t:intro-1}.
	In \S\ref{s:v-stack-Higgs}, we prove \Cref{t:intro-comparison-alg} and deduce the structure of the Hitchin fibration over the regular locus. Finally,  
	\S\ref{s:representations} is devoted to the proof of \Cref{t:introCRep-open-substack} about the v-stack of representations. 

	\subsection*{Acknowledgments}
	This project started with an exchange of ideas when the first author was visiting Peking University, and we wish to thank Ruochuan Liu for the kind invitation which made this possible.
	
	 The idea that the stack of v-vector bundles could be a twist of the stack of Higgs bundles via the Hitchin map is due to Peter Scholze, and we  thank him heartily for related discussions and comments on an earlier version. We moreover thank Ahmed Abbes, Johannes Ansch\"utz, Bhargav Bhatt, Jingren Chi, Arthur-C\'esar Le Bras, Annette Werner, Mingjia Zhang and Xinwen Zhu for helpful conversations. 
	
	The first author is funded by Deutsche Forschungsgemeinschaft (DFG) -- Project-ID 444845124 -- TRR 326. 
	The second author is supported by National Natural Science Foundation of China Grant (Nos. 12222118, 12288201), CAS Project for Young Scientists in Basic Research, Grant No. YSBR-033. 
	The second author is also supported by Charles Simonyi Endowment for his stay at the Institute for Advanced Study. 

	\section{Notation, conventions and recollections} \label{s:notation}
	\subsection{Setup}\label{s:setup}
		Let $K$ be a non-archimedean field extension of $\mathbb Q_p$. Let $\O_K$ be its ring of integers and let $\mathfrak m_K$ be its maximal ideal. Fix any ring of integral elements $K^+\subseteq \O_K$.
		In this article, by a rigid space over $K$ we mean an adic space locally of finite type over $\Spa(K,K^+)$ in the sense of Huber.
		For any locally of finite type scheme $S$ over $K$, we denote by $S^{\an}$ its rigid analytification, considered as an adic space over $\Spa(K,K^+)$. We will often drop $K^+$ from notation when this is clear from the context.

		Let $G$ be a rigid group over $K$ and $\Lie G$ the Lie algebra of $G$, this is a finite dimensional $K$-vector space. Let $\fg:=(\Lie G\otimes_K \mathbb A^1)^\an$ be the affine space over $K$ associated to $\Lie G$, considered as an adic space.
		
		 From \S\ref{s:abelianisation} onwards, we will assume that $K$ is algebraically closed and that $G$ is connected reductive. 
			
		We use perfectoid spaces in the sense of \cite{perfectoid-spaces}.
		We denote by $\Perf_K$ the category of affinoid perfectoid spaces $T$ over $K$. When we endow it with the v-topology, we obtain a site $\Perf_{K,v}$. For any adic space $Y$ over $K$, Scholze defines an associated locally spatial diamond $Y^\dia$ in \cite[\S15]{Sch18}, which we may regard as a sheaf on $\Perf_{K,v}$. On all categories of adic spaces that we consider, this ``diamondification'' functor $Y\mapsto Y^\dia$ will be fully faithful. We will therefore often switch back and forth freely between adic spaces and their associated diamonds. For any adic space $Y$, let us denote by $Y_v$ the site of perfectoid spaces over $Y$ endowed with the v-topology. We sometimes also consider the ``big \'etale site'' $Y_{\Et}$ consisting of all perfectoid spaces over $Y$ endowed with the \'etale topology.  There is then a natural morphism of sites $\mu:Y_v\to Y_{\Et}$.
		
		Let $Y$ be any locally spatially diamond, then we denote by $Y_\et$ the \'etale site of \cite[Definition 14.1]{Sch18}.
		There exists a canonical morphism of sites $\nu_Y:Y_v\to Y_{\et}$ that we often simply  denote by $\nu$. 
		Given a v-sheaf $F$ on $Y_v$, we sometimes abusively denote $F$ by the \'etale sheaf $\nu_*F$ when this is clear from the context.
		
			\subsection{Twists by torsors}\label{sss:contracted}
		We briefly review the notion of twists and contracted products, and refer to \cite[\S2]{Breen} for some more background. Let $\mathcal C$ be any site and let  $\mathcal{G}$, $\mathcal H$, $\mathcal K$ be any sheaves of groups on $\mathcal C$.
		\begin{definition}
			A (right) $G$-torsor on $\mathcal C$ is a sheaf $E$ on $\mathcal C$ with a right-action $E\times \mathcal{G}\to E$ such that locally on $\mathcal C$, there is a $\mathcal{G}$-equivariant isomorphism $\mathcal{G}\isomarrow E$, where  $\mathcal{G}$ acts on itself by translation on the right.
		\end{definition}
		\begin{definition}\label{d:twist}
			Let $E$ be any $\mathcal{G}$-torsor on $\mathcal C$ and let $V$ be any sheaf on $\mathcal{C}$ with a left-action by $\mathcal G$. The \textit{twist} of $V$ by $E$ is the quotient sheaf $E\times^GV:=(E\times V)/G$ on $\mathcal C$ for the left-action $g\cdot (e,v):=(eg^{-1},gv)$.
		\end{definition}
		In general, the twist does not itself have a $G$-action, unless $E$ is equipped with further structure:
		\begin{definition}
			A \textit{$(\mathcal{G},\mathcal{H})$-bitorsor} is a sheaf $\mathcal{P}$ in $\mathcal{T}$ with a left action of $\mathcal{G}$ and a right action of $\mathcal{H}$, commuting with each other, such that
			$\mathcal{P}$ is an  $\mathcal{H}$-torsor and the $\mathcal{G}$-action $\mathcal{G}\to \uAut_{\mathcal{H}}(\mathcal{P})$ is an isomorphism.
		\end{definition}
		\begin{definition}[{\cite[\S2.3]{Breen}}]\label{d:contracted-prod}
			Let $\mathcal{Q}$ be an $(\mathcal{H},\mathcal{K})$-bitorsor. 
			The twist $\mathcal{P}\times^{\mathcal{H}}\mathcal{Q}$ is also called the \textit{contracted product}.
			The left action of $\mathcal{G}$ on $\mathcal{P}$ and the right action of $\mathcal{K}$ on $\mathcal{Q}$ make this a $(\mathcal{G},\mathcal{K})$-bitorsor in $\mathcal{T}$. 
		\end{definition}
		In general, any $\mathcal G$-torsor $\mathcal P$ is a bi-torsor under $(\uAut(\mathcal{P}),\mathcal{G})$, and if $\mathcal G$ is abelian, we can identify $\uAut(\mathcal{P})$ with $\mathcal{G}$. 
		In particular, if $\mathcal G=\mathcal H=\mathcal K$ are abelian, the contracted product of two $\mathcal G$-torsors is again a $\mathcal G$-torsor.
	
	\subsection{$G$-bundles}\label{sss:Gbundle}
		
		Let $Y$ be a sousperfectoid space and $\tau\in \{\et,v\}$. We regard $G$ as a diamond over $\Spa(K)$, hence it represents a sheaf on $Y_\tau$. There are then two equivalent notions of $G$-bundles on $Y_\tau$: 
		\begin{definition}
			\begin{enumerate}
			\item 
		By a (cohomological) $\tau$-$G$-bundle on $Y_\tau$, we shall mean a right $G$-torsor on $Y_\tau$: Explicitly, this is a sheaf $E$ on $Y_\tau$ with a right $G$-action $E\times_YG\to E$ such that $\tau$-locally on $Y$, there is a $G$-equivariant isomorphism $G\isomarrow E$. 
		We also refer to \'etale $G$-bundles simply as $G$-bundles.
		\item 
		On the other hand, a geometric $\tau$-$G$-bundle on $Y$ is a morphism of v-sheaves $E\to Y$ on $\Perf_K$ with a left $G$-action $E\times G\to E$ over $Y$ such that there is a $\tau$-cover $Y'\to Y$ in $Y_\tau$ over which there is a section $Y'\to E$ that induces a $G$-equivariant isomorphism $G\times Y'\xrightarrow{\sim} E\times_Y Y'$. 
		\end{enumerate}
		\end{definition}
		\begin{prop}[{\cite[Theorem 19.5.1]{ScholzeBerkeleyLectureNotes}, \cite[\S3.3]{heuer-G-torsors-perfectoid-spaces}}]
		For any geometric $\tau$-$G$-bundle $E\to Y$, the sheaf of sections $Y\to E$ over $Y$ is a (cohomological) $\tau$-$G$-bundle, inducing an equivalence of the two notions. 
		\end{prop}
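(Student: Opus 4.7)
The plan is to construct mutually inverse functors between the two categories, keeping in mind the key observation that a geometric $\tau$-$G$-bundle $E\to Y$ and its sheaf of sections $\mathcal{E}$ agree as v-sheaves over $Y$ (up to the implicit extension of an étale torsor to a v-sheaf); the two structures differ only by the conversion between left actions on the space and right actions on sections, implemented by the involution $g\mapsto g^{-1}$ of $G$.

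In the geometric-to-cohomological direction, given $\pi:E\to Y$ with left $G$-action, I would define $\mathcal{E}$ on $Y_\tau$ by $\mathcal{E}(Y'):=\Hom_{/Y}(Y',E)$ with right $G$-action $s\cdot g:=g^{-1}\cdot s$. A $\tau$-cover $Y'\to Y$ together with a $G$-equivariant isomorphism $G\times Y'\cong E\times_Y Y'$ yields a canonical section $s_0$ corresponding to the identity, and the map $g\mapsto g^{-1}\cdot s_0$ defines an isomorphism $G|_{Y'}\isomarrow \mathcal{E}|_{Y'}$ of right $G$-torsors, showing $\mathcal{E}$ is cohomological. Conversely, starting from a right $G$-torsor $\mathcal{E}$ on $Y_\tau$, I would regard it as a v-sheaf over $Y$ in $\Perf_K$: this is immediate for $\tau=v$, and for $\tau=\et$ follows from v-descent because $\mathcal{E}$ is étale-locally isomorphic to the v-sheaf $G$. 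I then equip $\mathcal{E}$ with the left $G$-action $g\cdot s:=s\cdot g^{-1}$, and any $\tau$-local trivialisation as a right torsor yields a $\tau$-local trivialisation as a geometric left $G$-bundle via the involution $g\mapsto g^{-1}$ on $G$.

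Finally, I would check that these functors are mutually inverse: the sheaf of sections of $E$, viewed as a v-sheaf over $Y$, is tautologically $E$ itself by Yoneda's lemma, while the left/right action conversion $g\mapsto g^{-1}$ is involutive, so both round-trip compositions are canonically isomorphic to the identity. The main (relatively mild) obstacle is the v-descent step in the case $\tau=\et$, justifying that an étale $G$-torsor canonically extends to a v-sheaf on $\Perf_K/Y$ with compatible $G$-action; this is handled by the fact that $G$ is itself a v-sheaf on $\Perf_K$ and that morphisms into v-sheaves satisfy v-descent, so étale-local trivialisations glue to a well-defined v-sheaf over $Y$.
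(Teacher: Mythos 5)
Your argument is correct and is essentially the standard proof given in the cited references (the paper itself only cites \cite[Theorem 19.5.1]{ScholzeBerkeleyLectureNotes} and \cite[\S3.3]{heuer-G-torsors-perfectoid-spaces} rather than proving the statement): one passes between the two notions via the sheaf of sections and the left/right conversion $g\mapsto g^{-1}$, with the only substantive point being the v-descent argument that an \'etale torsor under the v-sheaf $G$ glues to a v-sheaf over $Y$, which you handle correctly. No gaps.
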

		
		\begin{definition}\label{d:ad-bundle}
		For any  $G$-bundle $E$ on $Y_{\tau}$, we denote by $\ad(E)$ the adjoint bundle associated to $E$, defined as the vector bundle $\ad(E):=E\times^{G}\mathfrak g$ on $Y_v$ where $\mathfrak g$ is the Lie algebra with its adjoint action by $G$.
		\end{definition}

	\subsection{Smoothoid spaces}\label{s:rel-adic-groups}
	Next, we recall smoothoid spaces from \cite[\S 2]{Heu}.
	For any $d\in \N$, let $\TT^d:=\Spa(K\langle T_1^{\pm 1},\cdots, T_d^{\pm 1}\rangle)$ be the $d$-dimensional affinoid torus. Let $\mathbb B^d_S$ be the closed unit ball over $K$.
	Let $S$ be a sousperfectoid space over $K$. There is a good notion of smooth morphisms over $S$ by \cite[Def 1.7.10]{huber2013etale}: 
	\begin{definition}
		Let $h:X\to S$ be a morphism of adic spaces over $K$.
		\begin{enumerate}
			\item We say that $h$ is \textit{standard-\'etale} if $X$ and $S$ are affinoid and $h$ is the composition of finite \'etale maps with rational open immersions. We say that $h$ is \textit{\'etale} if locally on $X$ and $S$, it is standard-\'etale.
		\item We say that  is \textit{standard-smooth} if there exists a factorisation $X\xrightarrow{f} \mathbb T^d_S\to S$ for some $d$, where $f$ is standard-\'etale. We say that $h$ is \textit{smooth} if locally on $X$ and $S$, it is standard-smooth.
		\end{enumerate}
	\end{definition} 
	Note that for any smooth morphism $X\to S$ over a sousperfectoid space $S$, the space $X$ is itself sousperfectoid, hence sheafy. This also shows that the category of smooth adic spaces over $S$ has fibre products.
	
	\begin{definition}[{\cite[Definition 2.3]{Heu}}]\label{d:smoothoid}
		Let $Y$ be an adic space over $K$. A \textit{toric chart of $Y$} is a standard-\'etale morphism $f:Y\to \TT^d\times T$, where $T$ is an affinoid perfectoid space over $K$. We call $Y$ \textit{toric} if it admits a toric chart.
		We call an adic space $Y$ over $K$ \textit{smoothoid} if it admits an open cover by toric spaces.
	\end{definition}
	The prototypical example of a smoothoid space  is the product $X\times T$ of a smooth rigid space with a perfectoid space, or any object of its \'etale site. 
	We recall some basic properties of smoothoid spaces that we will need throughout, and refer to \cite[\S2]{Heu} for proofs and more details:
	Smoothoid spaces are sousperfectoid, in particular sheafy, and are moreover diamantine in the sense of \cite[\S 11]{HK}. In particular, the functor from smoothoid spaces over $K$ to diamonds over $K$ is fully faithful and identifies structure sheaves and \'etale sites. This will allows us to pass back and forth freely between a smoothoid space $Y$ and its associated diamond $Y^\dia$.
	Finally, there is a good notion of differentials on smoothoid adic spaces:
	\begin{definition}\label{d:differentials-smoothoids}
		Let $Y$ be a smoothoid space and let $\nu:Y_v\to Y_{\et}$ be the natural morphism of sites. Then \[\widetilde{\Omega}_Y:=\rR^1\nu_* \O_Y,\quad \widetilde{\Omega}^n_Y:=\rR^n\nu_* \O_Y\simeq\wedge^n\widetilde{\Omega}_Y\] are vector bundles on $Y_\et$ for any $n\in \N$. If $Y=X\times T$ for a smooth rigid space $X$ over $K$ and a perfectoid space $T$, then there is a canonical isomorphism $\widetilde{\Omega}_Y=\pi_1^{\ast}\Omega^1_X(-1)$, where $\pi_1:X\times T\to X$ is the projection.
	\end{definition}
		\subsection{Higgs bundles on smoothoid spaces}
	The intrinsic notion of differentials on smoothoid spaces allows us to define Higgs bundles in this setting: Recall that $G$ is any rigid group over $K$. Let $\tau\in \{\et,v\}$.
	\begin{definition}
		A \textit{$\tau$-$G$-Higgs bundle} on a smoothoid $Y$ is a pair $(E,\theta)$ of a $G$-bundle $E$ on $Y_\tau$ and a section $\theta\in \rH^0(Y, \ad(E)\otimes_{\O_Y} \widetilde{\Omega}_Y)$ such that $\theta\wedge \theta=0$, where $\ad(E)$ is the adjoint bundle (\Cref{d:ad-bundle}).
	\end{definition}
	
	\begin{secnumber}\label{s:Higgs-bundle-curve}
		We note that if $X$ is a smooth curve and $Y$ is an object of the \'etale site of $X\times T$ for some perfectoid space $T$, then $\widetilde{\Omega}_Y=\pi_1^{\ast}\Omega^1_X(-1)$ is a line bundle, rendering the Higgs field condition $\theta\wedge \theta=0$ vacuous for the definition of Higgs bundles on $Y$. Throughout most of this article, this is the situation that we encounter. 
	\end{secnumber}
	\begin{definition}\label{d:Aut-Higgs}
		Let $(E,\varphi)$ be a $\tau$-$G$-Higgs bundle on a smoothoid $Y$. We write $\uAut(E,\varphi)$ for the presheaf  
		on $Y_{v}$ defined by
		$
		U\mapsto \Aut_U( (E,\varphi)|_U)$, the automorphisms of $E$ over $U$ preserving $\varphi$.
		 Here, as usual, even if $E$ is an \'etale torsor, we identify $E$ with the v-sheaf represented by its , so this is always a v-sheaf of groups.
	\end{definition}
	\subsection{Background on Picard stacks}\label{s:Picard-stacks}
	From \S\ref{s:naHC-vstacks} onwards, we will freely use the notion of  (strictly commutative) Picard stacks on the v-site $Y_v$ of an adic space $Y$, or in fact, any v-stack $Y$. We briefly recall all the relevant definitions and refer to \cite[\S1.4]{Del73}\cite[\S A]{CZ17} for more details:
	\begin{definition}
	Let $Y$ be any v-stack. A \textit{Picard stack} $\CP$ on $Y_v$ is a v-stack $\CP\to Y$ with a bi-functor
	\[ \otimes:\CP\times_Y\CP\to \CP\]
	and the datum of natural equivalences of functors expressing the associativity and commutativity of $\otimes$,
	such that for every $U\in Y_v$, the bi-functor $\otimes$ turns $\CP(U)$ into a symmetric monoidal category in which every object admits an inverse with respect to $\otimes$. The symmetric monoidal structure means in particular that $\CP$ has a unit section $e:Y\to\CP$. There is an obvious notion of homomorphisms between Picard stacks. 
	\end{definition}
	An example is the v-stack of $\mathcal G$-torsors on $Y_v$ for an abelian sheaf $\mathcal G$ on $Y_v$, with $\otimes$ the contracted product.
	\begin{definition}[{\cite[\S A.2]{CZ17}}]\label{d:picstack-ses}
		A sequence of homomorphism of Picard stacks
		$\CP_1\xrightarrow{f} \CP_2\xrightarrow{g} \CP_3$
		on $Y_v$ is called \textit{left-exact} if $g\circ f\simeq e$ is trivial and the natural maps induce an equivalence of Picard stacks $\CP_1\simeq e\times_{\CP_3} \CP_2$. The sequence is called \textit{exact} if moreover $g$ is essentially surjective locally on $Y_v$.
	\end{definition}
	\begin{definition}\label{d:picstack-action}
		Let $\CP$ be a Picard stack on $Y_v$ and let $\mathcal F$ be any v-stack on $Y_v$. Then an \textit{action} of $\CP$ on $\mathcal F$ is the datum of a bi-functor
		\[ \otimes:\CP\times_Y\mathcal F\to \mathcal F\]
		together with  a natural equivalence of functors expressing the associativity, and an equivalence $e\otimes -\simeq \id_{\mathcal F}$.
		\end{definition}
		\begin{definition}\label{d:picstack-torsor}
		A $\CP$-\textit{torsor} is a v-stack $\mathcal F$ over $Y_v$ with a $\CP$-action satisfying the following properties: (i) every $U\in Y_v$ admits a cover $V\to U$ such that $\mathcal F(V)\neq \emptyset$, and (ii) the following functor is an equivalence:
		\[ \CP\times_Y\mathcal F\to \mathcal F\times_Y\mathcal F,\quad (P,F)\mapsto (P\otimes F,F).\]
		This is clearly equivalent to the definition of $\CP$-torsors given in \cite[\S A.5]{CZ17}.
	\end{definition}
	\begin{lemma}\label{d:picstack-ses-induces-torsor}
		Let $\CP_1\xrightarrow{f} \CP_2\xrightarrow{g} \CP_3$ be a short exact sequence of Picard stacks. Let $s:Y\to \CP_3$ be any section and let $\CH:=Y\times_{s,\CP_3}\CP_2$
		be the fibre of $g$ over $s$. Then the restriction of the group structure on $\CP_2$ to $\CP_1\times \CH$ makes $\CH$ into a $\CP_1$-torsor.
	\end{lemma}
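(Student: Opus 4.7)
The plan is to construct the $\CP_1$-action on $\CH$ by restricting the group operation $\otimes$ of $\CP_2$ along $f:\CP_1\to \CP_2$, then verify the two torsor axioms of \Cref{d:picstack-torsor} directly from the two clauses in \Cref{d:picstack-ses} defining a short exact sequence.

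For the action, I will unfold the fibre product: an object of $\CH(U)$ is a pair $(h,\alpha)$ with $h\in \CP_2(U)$ and $\alpha:g(h)\isomarrow s|_U$, and similarly on morphisms. Given $P\in \CP_1(U)$ and $(h,\alpha)\in \CH(U)$, I set
\[P\otimes (h,\alpha):=\bigl(f(P)\otimes h,\;\beta\bigr),\]
where $\beta$ is the composition $g(f(P)\otimes h)\simeq g(f(P))\otimes g(h)\simeq e\otimes g(h)\simeq g(h)\xrightarrow{\alpha} s|_U$, using the canonical trivialisation $g\circ f\simeq e$ from left-exactness together with the monoidality of $g$ and the unit axiom for $\otimes$. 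The associativity and unit constraints for this $\otimes$-action follow from the corresponding constraints for $\CP_2$ and the coherence of the trivialisation $g\circ f\simeq e$; this is formal.

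For local non-emptiness, the essential surjectivity of $g$ required by exactness gives, for each $U\in Y_v$, a cover $V\to U$ over which $s|_V$ lifts to some $h\in \CP_2(V)$, producing an object of $\CH(V)$. For the second torsor axiom, I need the functor
\[\CP_1\times_Y \CH \longrightarrow \CH\times_Y \CH,\qquad (P,H)\longmapsto (P\otimes H,\,H),\]
to be an equivalence. Inverting it amounts, given $(h_1,\alpha_1)$ and $(h_2,\alpha_2)$ in $\CH(U)$, to producing a canonical $P\in \CP_1(U)$ with $f(P)\otimes h_2\simeq h_1$ compatibly with the $\alpha_i$. Equivalently, one considers $h_1\otimes h_2^{-1}\in \CP_2(U)$, which carries a canonical isomorphism $g(h_1\otimes h_2^{-1})\simeq g(h_1)\otimes g(h_2)^{-1}\xrightarrow{\alpha_1\otimes \alpha_2^{-1}} s|_U\otimes s|_U^{-1}\simeq e$, hence defines an object of the fibre $e\times_{\CP_3}\CP_2$. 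By the left-exactness clause of \Cref{d:picstack-ses}, this fibre is canonically equivalent to $\CP_1$, which produces the desired $P$ uniquely up to unique isomorphism.

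The only mildly subtle step is verifying that the two constructions above are mutually inverse as functors (and not merely on objects), which amounts to checking compatibility of the various coherence isomorphisms: the associator and symmetry of $\otimes$ in $\CP_2$, the monoidality of $g$, and the chosen equivalence $\CP_1\simeq e\times_{\CP_3}\CP_2$. This is a diagram chase that is entirely formal once one records the data, and this is really the only place where I expect a bit of bookkeeping; no new geometric input is needed beyond the two clauses of exactness.
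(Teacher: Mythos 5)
Your proof is correct and follows essentially the same route as the paper: the same $\CP_1$-action via $f$ and $\otimes$, condition (i) from essential surjectivity of $g$, and condition (ii) via the quasi-inverse $(F_1,F_2)\mapsto(F_1\otimes F_2^{-1},F_2)$ landing in $e\times_{\CP_3}\CP_2\simeq\CP_1$ by left-exactness. You simply spell out the coherence data that the paper leaves implicit.
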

	\begin{proof}
		It is clear that $f$ endows $\CP_2$ with a natural $\CP_1$ action that restricts to an action on $\CH$. For this action,
		condition (i) of \Cref{d:picstack-torsor} holds because $g$ is essentially surjective. Condition (ii) holds because the natural morphism $(e\times_{\CP_3} \CP_2)\times_{Y}\CH\to \CH\times_{Y}\CH$ has a quasi-inverse given by $(F_1,F_2)\mapsto (F_1\otimes F_2^{-1},F_2)$.
	\end{proof}
	We then have the following 2-categorical version of \Cref{d:twist}. Here we note that since $\CP$ is commutative, we are allowed to be less careful about whether we consider left actions vs right actions.
	\begin{definition}\label{d:picstack-twist}
		Let now $\CP$ be a Picard stack on $Y_v$, let $\mathcal F$ be a v-stack with a $\CP$-action, and let $\CH$ be a $\CP$-torsor. Then we can construct the \textbf{twist} $\CH\times^{\CP}\mathcal F$ of $\mathcal F$ by $\CH$: Consider the antidiagonal action of $\CP$ on the product $\CH\times \mathcal F$, i.e.\ the action on the second factor is via $[-1]:\CP\to \CP$. Following \cite[p420]{Ngo}, we can form the prestack quotient $[(\CH\times \mathcal F)/\CP]^{\mathrm{pre}}$ by this action: Indeed, for each $U\in Y_v$, we can form the 2-quotient $[\CH(U)\times \mathcal F(U)/\CP(U)]$. This is a priori a 2-category, but 
		since $\CH$ is a $\CP$-torsor, it is in fact a 1-category (cf.\ \cite[Lemma 4.7]{Ngo}).  By stackifying $[(\CH\times \mathcal F)/\CP]^{\mathrm{pre}}$, we thus obtain a v-stack 
		\[ \CH\times^{\CP}\mathcal F:=[(\CH\times \mathcal F)/\CP].\]
		This is a twist of $\mathcal F$ in the sense that over any v-cover of $Y$ where $\CH$ becomes $\simeq \CP$, it is equivalent to $\mathcal F$.
	\end{definition}
	\section{Hodge--Tate theory for commutative relative adic groups}\label{HT-rel-grp}
	A general theme of this article is that of ``abelianisation'', which means to study the Hodge theory of non-abelian group varieties by relating it to the relative  Hodge theory of families of commutative group varieties. The goal of this section is to establish the necessary foundations on relative rigid group varieties.
	
	Namely, our first aim in this section is to generalize a result of Fargues on the logarithm map of a rigid group \cite[Th\'eor\`eme 1.2]{Far19} to the relative setting of smooth rigid groups in sousperfectoid families.
	Second, we use this to prove a relative version of the $\HTlog$ exact sequence from \cite[Theorem~1.3.1]{HeuSigma}.

	\begin{definition}\label{d:smooth-rel-group}
		Let $K$ be any non-archimedean field extension of $\Qp$.
		Let $S$ be any sousperfectoid adic space or any rigid space over $K$. A \textit{smooth relative group over $S$} is a group object $\mathcal{G}\to S$ in the category of  smooth morphisms over $S$. We also call this a smooth $S$-group. 
		By an open subgroup of $\mathcal{G}$ we mean an open adic subspace $U\subseteq \mathcal{G}$ that has an $S$-group structure so that $U\subseteq \mathcal{G}$ is a homomorphism of smooth $S$-groups.
	\end{definition}
	
	\subsection{Local structure near the identity of smooth relative adic groups }
	Throughout the following, we fix an adic space $S$ over $K$ as in \Cref{d:smooth-rel-group}.
	\begin{lemma}\label{l:zero-section}
		Let $f:\mathcal{G}\to S$ be a smooth relative group. Then locally on $S$, there exists an open subspace $U\subseteq  \mathcal{G}$ containing the image of the identity section $e:S\to U$ with an isomorphism of adic spaces $U\cong \mathbb B^d_S$ over $S$ that identifies $e$  with the origin $0:S\to \mathbb B^d_S$.
	\end{lemma}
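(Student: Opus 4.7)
The plan is to use the local structure of smooth morphisms together with the group structure on the torus to reduce to a standard-\'etale morphism over $\mathbb B^d_S$ admitting a section at the origin, and then to straighten this morphism near the section.

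First I would work locally on $S$. Smooth morphisms are separated (both rational open immersions and finite \'etale morphisms are), so the identity $e$ is a closed immersion. Passing to the open cover of $S$ obtained by pulling back, via $e$, an open cover of a neighborhood of $e(S)\subseteq\mathcal G$ on which standard-smooth factorizations exist, I may assume there is a standard-\'etale morphism $g=(g_1,f):\mathcal G\to \mathbb T^d\times S$ over $S$. The group structure on $\mathbb T^d$ supplies an automorphism $(t,s)\mapsto (t\cdot g_1(e(s))^{-1},s)$ of $\mathbb T^d\times S$ over $S$, and post-composing $g$ with this automorphism yields a new standard-\'etale morphism $g'$ satisfying $g'\circ e=(1,\id_S)$. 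Pulling back along the rational open immersion $\mathbb B^d\hookrightarrow\mathbb T^d$, $X_i\mapsto 1+\varpi X_i$ (for a pseudo-uniformizer $\varpi\in K$), one obtains a standard-\'etale morphism $h:V\to\mathbb B^d\times S$ with $V\subseteq\mathcal G$ an open neighborhood of $e(S)$ and $h\circ e=(0,\id_S)$.

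Second I would straighten $h$ near $e(S)$. Since $h$ is \'etale, its diagonal is an open and closed immersion; hence the fibre $h^{-1}(0\times S)\to S$ is \'etale with section $e$, forcing $e(S)$ to be clopen in $h^{-1}(0\times S)$. The complement is closed in $V$, and removing it yields an open $V'\subseteq V$ containing $e(S)$ with $h^{-1}(0\times S)\cap V'=e(S)$. Decomposing the standard-\'etale $h|_{V'}$ into its constitutive rational open immersions and finite \'etale pieces, and invoking local constancy of the degree for the finite \'etale factors, each such factor becomes an isomorphism on an open neighborhood of the corresponding portion of the origin section. Composing back yields an open $U'\subseteq V'$ containing $e(S)$ that maps isomorphically onto an open $W\subseteq\mathbb B^d\times S$ containing $0\times S$. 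Finally, passing to a further open cover of $S$, I inscribe a smaller $\mathbb B^d_{S_j}\subseteq W$, whose preimage under the isomorphism is the desired $U\cong\mathbb B^d_{S_j}$ identifying $e$ with the origin.

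The main obstacle will be the second step: propagating the pointwise ``single-sheet'' condition $h^{-1}(0\times S)\cap V'=e(S)$ to an open neighborhood on which $h$ is an isomorphism. This requires carefully tracking how the section interacts with both the finite \'etale and the open-immersion components of the standard-\'etale factorization, and appealing to local constancy of the degree in families of adic spaces to turn the single-sheet condition over $0\times S$ into an open condition around $0\times S$.
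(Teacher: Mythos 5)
Your first step (translating the chart so that the identity section goes to the origin, after restricting to an open cover of $S$) is fine and matches what the paper does. The problem is in your second step, and it sits exactly at the point you yourself flag as the main obstacle: propagating the condition $h^{-1}(0\times S)\cap V'=e(S)$ from the closed subspace $0\times S$ to an open neighbourhood. ``Local constancy of the degree'' does not do this. If $h$ factors as $V\hookrightarrow Y\xrightarrow{\pi}\mathbb B^d\times S$ with $\pi$ finite \'etale, the degree of $\pi$ near $0\times S$ is locally constant equal to its \emph{total} degree, not to $1$; what you actually need to spread out is the clopen decomposition $\pi^{-1}(0\times S)=e(S)\sqcup C$, i.e.\ an idempotent of $\O(\pi^{-1}(0\times S))$, to an idempotent over a tube around $0\times S$. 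Moreover, once you shrink $V$ to $V'$ the restriction $h|_{V'}$ no longer inherits the standard-\'etale factorisation in any useful way (an open subspace of the source of a finite \'etale map is not finite \'etale over the target), so there are no ``finite \'etale factors'' of $h|_{V'}$ to which a degree argument could even be applied. As stated, the step does not go through.

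The missing ingredient is a genuine spreading-out input. The paper's proof supplies it as follows: write $S=\varprojlim_{\epsilon\to 0}\mathbb B^d_{S,\epsilon}$ in the category of diamonds (closed balls of shrinking radius around the origin section), so that $S_{\mathrm{\acute{e}t}\text{-}\mathrm{qcqs}}=2\text{-}\mathrm{colim}_{\epsilon}(\mathbb B^d_{S,\epsilon})_{\mathrm{\acute{e}t}\text{-}\mathrm{qcqs}}$ by \cite[Prop.~14.9]{Sch18}. The tautological section $S\to V\times_{\mathbb B^d_S,0}S$ of the \'etale map $V\times_{\mathbb B^d_S,0}S\to S$ is an object/morphism in this colimit, hence extends to a section $e_\epsilon:\mathbb B^d_{S,\epsilon}\to V_\epsilon$ of the base-change $V_\epsilon\to\mathbb B^d_{S,\epsilon}$ for some $\epsilon>0$. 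This $e_\epsilon$ is a closed immersion and \'etale (\cite[Prop.~1.6.7.(iii)]{huber2013etale}), hence an open immersion, and its image is the desired open $U\cong\mathbb B^d_{S,\epsilon}$. If you want to salvage your argument you must replace the appeal to local constancy of degree by some version of this limit argument (equivalently, by lifting the relevant idempotent along $\O(\pi^{-1}(0\times S))=\varinjlim_\epsilon\O(\pi^{-1}(\mathbb B^d_{S,\epsilon}))$), which is the actual content of the lemma.
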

	\begin{proof}
		Let $x$ be any point in the image of the identity section $e:S\to \mathcal{G}$. Then locally on $S$ there is a quasi-compact open neighbourhood $x \in V$ for which $f|_V$ admits a factorisation $V\xrightarrow{g} \mathbb B_S^d\to S$ such that $g$ is \'etale and quasi-compact. Replacing $S$ by the preimage of $V\subseteq \mathcal{G}$ under $e$, we may assume that $e$ factors through $V$. Let $y:S\xrightarrow{e}V\xrightarrow{g} \mathbb B_S^d$ be the composition. After replacing $V\to \mathbb B_S^d$ with its base-change along the isomorphism $\mathbb B_S^d\xrightarrow{+y}\mathbb B_S^d$, we can assume that $S\xrightarrow{e}V\xrightarrow{g} \mathbb B_S^d$ coincides with the origin $0\in \mathbb B_S^d$.
		
		Consider now the inverse system of closed balls $\mathbb B_{S,\epsilon}^d\subseteq \mathbb B_S^d$ of radius $\epsilon$. We have $S=\varprojlim_{\epsilon\to 0} \mathbb B_{S,\epsilon}^d$ in the category of diamonds, and hence 
		\[ S_{\mathrm{\acute{e}t}\text{-}\mathrm{qcqs}}=2\text{-}\mathrm{colim}_{\epsilon\to 0} (\mathbb B_{S,\epsilon}^d)_{\mathrm{\acute{e}t}\text{-}\mathrm{qcqs}}\]
		by \cite[Prop.~14.9]{Sch18}.
		Here on the right, the transition maps are given by base-change. In particular, the \'etale map $V\to \mathbb B_{S}^d$ gives rise to the system of base-changes $V_\epsilon:=V\times_{\mathbb B^d_S}\mathbb B_{S,\epsilon}^d$. 
		Considering the splitting $S\xrightarrow{e} V\times_{\mathbb{B}^d_S,0} S\to S$ inside $S_{\mathrm{\acute{e}t}\text{-}\mathrm{qcsep}}$, we deduce that  it extends to a splitting $\mathbb B_{S,\epsilon}^d\xrightarrow{e_{\epsilon}} V_\epsilon\to \mathbb B_{S,\epsilon}^d$ for some $\epsilon>0$. Here the map $e_{\epsilon}:\mathbb B_{S,\epsilon}^d\to  V_\epsilon$ is a closed immersion, but also \'etale by \cite[Prop.~1.6.7.(iii)]{huber2013etale}, in particular open. Hence the image $U$ of $e_\epsilon$ is an open subspace of $V_\epsilon\subseteq \mathcal{G}$ isomorphic to $\mathbb B_{S,\epsilon}^d$ via $e_{\epsilon}$. As $\varprojlim_{\epsilon\to 0}e_{\epsilon}=e$ by construction, $U$ still contains $x$. We thus get the desired open subspace of $\mathcal{G}$.
	\end{proof}
	\begin{lemma}\label{l:open-nbhd-of-origin-contains-ball}
		Assume that $S$ is quasi-compact and let $U\subseteq \mathbb B_{S}^d$ be an open subspace that contains the image of the origin $0:S\to\mathbb B_{S}^d$. Then  $U$ contains $\mathbb B_{S,\epsilon}^d$ for some $\epsilon>0$.
	\end{lemma}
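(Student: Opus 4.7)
The plan is to deduce the statement from the identity $S^\dia = \varprojlim_{\epsilon \to 0}(\mathbb B_{S,\epsilon}^d)^\dia$ in the category of diamonds, already invoked in the proof of \Cref{l:zero-section}, combined with the compactness of qcqs spatial diamonds under cofiltered inverse limits.

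Working locally on $S$, I reduce to the case where $S$ is affinoid, so that $\mathbb B_S^d$ is affinoid and qcqs. Fix a pseudo-uniformizer $\varpi$ of $K$ and set $\epsilon_n := |\varpi|^n$, so that each $\mathbb B_{S,\epsilon_n}^d$ is a qcqs rational subset of $\mathbb B_S^d$. Passing to diamonds, let $Z_n := (\mathbb B_{S,\epsilon_n}^d)^\dia \setminus U^\dia$ be the closed complement of the qcqs open immersion $U^\dia \cap (\mathbb B_{S,\epsilon_n}^d)^\dia \hookrightarrow (\mathbb B_{S,\epsilon_n}^d)^\dia$; this is a qcqs spatial sub-diamond, and the $Z_n$ form a cofiltered inverse system indexed by $n$, with transition maps given by the closed immersions $Z_{n+1}\hookrightarrow Z_n$.

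Since formation of open complement commutes with cofiltered limits of qcqs diamonds along qcqs transition maps, one computes
\[\varprojlim_n Z_n \;=\; \Bigl(\varprojlim_n (\mathbb B_{S,\epsilon_n}^d)^\dia\Bigr) \setminus U^\dia \;=\; S^\dia \setminus U^\dia \;=\; \emptyset,\]
the final equality being the hypothesis $0(S) \subseteq U$. By the standard fact that a cofiltered inverse limit of nonempty qcqs spatial diamonds (with qcqs transition maps) is nonempty (see \cite[\S11]{Sch18}), some $Z_n$ must already be empty, i.e.\ $(\mathbb B_{S,\epsilon_n}^d)^\dia \subseteq U^\dia$. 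By the full faithfulness of diamondification on smoothoid adic spaces recalled in \S\ref{s:rel-adic-groups}, this descends to the inclusion $\mathbb B_{S,\epsilon_n}^d \subseteq U$ of adic spaces, so we may take $\epsilon := \epsilon_n$.

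The only slightly delicate point is the commutation of open complement with the cofiltered limit, but this is formal once one observes that the relevant open immersions are qcqs; the rest of the argument is pure bookkeeping around the diamond limit identity already used in \Cref{l:zero-section}.
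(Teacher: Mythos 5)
Your argument is correct, but it takes a genuinely different route from the paper's. The paper proves the lemma by hand: it first reduces to the case where $U$ is a rational subset $\{|f_i|\leq|g|\neq 0\}$ of $\mathbb B_S^d$, and then shows by an explicit estimate on the power-series coefficients that the defining inequalities, which hold along the zero section because the constant terms dominate there, persist on $\mathbb B_{S,\epsilon}^d$ for $\epsilon$ small. Your proof replaces this computation by a soft compactness argument built on the identity $S=\varprojlim_{\epsilon\to 0}\mathbb B_{S,\epsilon}^d$ already used in \Cref{l:zero-section}; the essential inputs are (a) the identification $\bigcap_n|\mathbb B_{S,\epsilon_n}^d|=|0(S)|$ inside $|\mathbb B_S^d|$ (which follows from that limit identity together with \cite[Lemma~11.22]{Sch18}, or directly from the continuity condition on valuations, which forbids points with $0<|X_i(x)|<|\varpi|^n$ for all $n$), and (b) the nonemptiness of a cofiltered limit of nonempty spectral spaces along spectral maps — equivalently, compactness of the constructible topology, under which each $|Z_n|$ is closed. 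What your approach buys is that it avoids the reduction to rational $U$ and any explicit estimates; what the paper's buys is that it is elementary and self-contained. Two small corrections to your write-up: the transition maps $Z_{n+1}\to Z_n$ are \emph{not} closed immersions ($\mathbb B_{S,\epsilon_{n+1}}^d$ is a quasi-compact \emph{open}, not closed, subset of $\mathbb B_{S,\epsilon_n}^d$), but they are spectral maps of spectral spaces, which is all the nonemptiness statement requires; and the detour through ``closed complements of diamonds'' is heavier than necessary, since the entire argument lives on the underlying topological spaces $|Z_n|$, where every step is standard.
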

	\begin{proof}
		Using that $S$ is quasi-compact,
		we reduce to the case that $S$ is affinoid and $U$ is rational, defined as the locus inside $\mathbb B_{S}^d=\Spa(R\langle X_1,\dots,X_d\rangle)$  where $|f_i(x)|\leq |g(x)|\neq 0$ for some elements $f_1,\dots,f_r,g\in R^\circ\langle X_1,\dots,X_d\rangle$. For this the statement can be seen from the usual calculation: Write $f_i=\sum_{n\in \mathbb Z_{\ge 0}^d} a_{n,i}X^n$ and $g=\sum_{n\in \mathbb Z_{\ge 0}^d} b_{n}X^n$, then the condition that $0\in U$ implies that $b_0\in R^\times$ and $|a_{0,i}(x)|\leq |b_0(x)|$ for all $x\in S$. 
		Then for $\epsilon>0$ small enough, we have for any $x\in S$ that $\epsilon\leq |b_0(x)|$ and hence
		\[\textstyle |f_i(x)|\leq \max(|a_{0,i}(x)|,|\sum_{n\neq 0} a_{n,i}(x)\epsilon^n|)\leq |b_0(x)| = \max(|b_0(x)|,|\sum_{n\neq 0} b_{n}(x)\epsilon^n|)=|g(x)|.\]
		This shows that $\mathbb B_{\epsilon,S}^d\subseteq U$.
	\end{proof}
	\begin{lemma}\label{l:existence-of-subgroup-isom-to-ball}
		Let $f:\mathcal{G}\to S$ be a smooth $S$-group. Then locally on $S$, there exists an open $S$-subgroup $U\subseteq  \mathcal{G}$ and an isomorphism of adic spaces $U\cong \mathbb B_S^d$ over $S$ identifying $e:S\to U$ with the origin $0:S\to \mathbb B^d_S$.
	\end{lemma}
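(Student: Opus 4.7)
The plan is to start from the chart provided by Lemma \ref{l:zero-section} and shrink it into an actual subgroup by exploiting that, in these coordinates, the group law has no purely-$X$ or purely-$Y$ monomials beyond the linear identity terms.

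After replacing $S$ by an open cover, I may assume $S=\Spa(R,R^+)$ is affinoid and fix, via Lemma \ref{l:zero-section}, an open neighborhood $U\subseteq \mathcal{G}$ of $e(S)$ with an $S$-isomorphism $U\cong \mathbb B^d_S$ sending $e$ to the origin. The multiplication $m:\mathcal{G}\times_S\mathcal{G}\to\mathcal{G}$ and inversion $\iota:\mathcal{G}\to\mathcal{G}$ send $(e,e)$ and $e$ into $U$ respectively, so $m^{-1}(U)\cap(U\times_S U)$ and $\iota^{-1}(U)\cap U$ are open neighborhoods of the origins inside $U\times_S U\cong\mathbb B^{2d}_S$ and $U\cong\mathbb B^d_S$. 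By Lemma \ref{l:open-nbhd-of-origin-contains-ball} there exists $\epsilon_0>0$ with $\mathbb B^{2d}_{S,\epsilon_0}\subseteq m^{-1}(U)$ and $\mathbb B^d_{S,\epsilon_0}\subseteq \iota^{-1}(U)$.

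On this ball $m$ is represented by $d$ power series $F_i\in R\langle X/\epsilon_0,Y/\epsilon_0\rangle$. The identities $m(x,e)=x$ and $m(e,y)=y$ force $F_i(X,0)=X_i$ and $F_i(0,Y)=Y_i$, so
\[F_i(X,Y)=X_i+Y_i+G_i(X,Y),\]
where every monomial of $G_i$ has degree $\geq 1$ in $X$ and in $Y$. Similarly, in suitable coordinates $\iota$ is represented by $-X_i+K_i(X)$ with every monomial of $K_i$ of total degree $\geq 2$. Using that both $G_i$ and $K_i$ have uniformly bounded coefficient norms with respect to the radius $\epsilon_0$, one derives estimates of the form $|G_i(x,y)|\leq C(\epsilon/\epsilon_0)^2$ for $(x,y)\in \mathbb B^{2d}_{S,\epsilon}$ and $|K_i(x)|\leq C(\epsilon/\epsilon_0)^2$ for $x\in \mathbb B^d_{S,\epsilon}$, for some constant $C$ independent of $\epsilon\leq \epsilon_0$.

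Choosing $\epsilon>0$ small enough that $C\epsilon/\epsilon_0^2\leq 1$, these estimates yield $|F_i(x,y)|\leq\epsilon$ and $|\iota(x)_i|\leq \epsilon$, so $V:=\mathbb B^d_{S,\epsilon}\subseteq U$ is stable under both $m$ and $\iota$. Hence $V$ is an open $S$-subgroup of $\mathcal{G}$, and rescaling its coordinates provides an isomorphism $V\cong \mathbb B^d_S$ over $S$ which still sends $e$ to the origin. The one delicate point is the coefficient-norm estimate: this is where one must use that $R\langle X/\epsilon_0,Y/\epsilon_0\rangle$ is a Banach $R$-algebra, so the absence of pure-$X$, pure-$Y$, and low-degree terms in $G_i,K_i$ indeed translates into the desired quadratic decay as $\epsilon\to 0$, uniformly in $S$.
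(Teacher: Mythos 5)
Your proof is correct and follows essentially the same route as the paper's: pass to the chart from \Cref{l:zero-section}, use \Cref{l:open-nbhd-of-origin-contains-ball} to land the group law in a ball, and exploit that $F_i=X_i+Y_i+[\text{terms of bidegree}\geq(1,1)]$ to shrink the radius until the ball is stable. The only difference is that you also verify stability under inversion, a point the paper's proof leaves implicit; this is a welcome extra check rather than a divergence in method.
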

	\begin{proof}
		We may assume that $S=\Spa(R,R^+)$ is affinoid.
		By \Cref{l:zero-section}, there exists an open sub\textit{space} $U\subseteq \mathcal{G}$ containing the image of the identity section $e:S\to \mathcal{G}$ such that $U\cong \mathbb B_{S}^d$. It follows that $U\times_SU\cong \mathbb B_{S}^{2d}$ is an open neighbourhood of the identity in $\mathcal{G}\times_S\mathcal{G}$. Let $m:\mathcal{G}\times_S \mathcal{G}\to \mathcal{G}$ be the multiplication map, then $V:=m^{-1}(U)\cap U\times_S U\subseteq \mathcal{G}\times_S\mathcal{G}$ is an open subspace of $U\times_SU$ containing the image of the identity section. By Lemma~\ref{l:open-nbhd-of-origin-contains-ball}, it follows that $V\subseteq U\times_SU\cong \mathbb B_{S}^{2d}$ contains an open ball $\mathbb B_{S,\epsilon}^{2d}$ for some $0<\epsilon\leq 1$ with $0$ corresponding to the identity. Hence the multiplication map restricts to a map of open subspaces
		\[m:\mathbb B_{S,\epsilon}^{d}\times \mathbb B_{S,\epsilon}^{d}\to \mathbb B_{S}^{d}.\]
		On global sections, this is given in terms of coordinates $X,Y,Z$ by $Z_i\mapsto f_i=\sum_{n,k\in \mathbb Z_{\ge 0}^d} a_{n,k,i}X^nY^k$
		for some $a_{n,k,i}\in R$. As $m(0,-)$ and $m(-,0):\mathbb B_{S,\epsilon}^{d}\to \mathbb B_{S}^{d}$ are both the inclusion by construction, we know that the low degree terms of $f_i$ are
		\[ f_i=X_i+Y_i+ \text{[higher terms]}.\]
		It follows from this that for $n>0$ large enough, the function $p^{-n}f_i(p^{n}X_1,\dots,p^{n}X_d,p^{n}Y_1,\dots,p^{n}Y_d)$ is contained in $R^\circ\langle X_1,\dots,X_d,Y_1,\dots,Y_d\rangle$. This means that for $\epsilon>0$ small enough, the map $m$ restricts to 	\[m:\mathbb B_{S,\epsilon}^{d}\times \mathbb B_{S,\epsilon}^{d}\to \mathbb B_{S,\epsilon}^{d}.\]
		Thus the image of $\mathbb B_{S,\epsilon}^{d}\subseteq \mathbb B_{S}^{d}\cong U\subseteq \mathcal{G}$ is an open $S$-subgroup with the desired properties.
	\end{proof}
	This also shows that one can define for any smooth $S$-group its Lie algebra $\Lie \mathcal{G}$, a vector bundle on $S$.
	
	\subsection{The logarithm for commutative relative adic groups}
	From now on, we assume $\mathcal{G}$ is commutative. 
	\begin{prop}\label{p:nbhd-open-subgroup-ball}
		Let $S$ be a sousperfectoid adic space or a rigid space over $K$. 
		Let $\mathcal{G}\to S$ be a commutative smooth $S$-group. Then locally on $S$, there is an $S$-subgroup $U\subseteq \mathcal{G}$ and an isomorphism of smooth $S$-groups $U\xrightarrow{\sim} (\mathbb B_{S}^d,+)$ that on completions at the identity section is given by the formal Lie group logarithm.
	\end{prop}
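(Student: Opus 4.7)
My plan is to obtain the isomorphism by analytically continuing the formal group logarithm, following the strategy of Fargues \cite{Far19} but in the relative setting.

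First, I would apply \Cref{l:existence-of-subgroup-isom-to-ball} to replace $\mathcal{G}$ (after localising on $S$) by an open $S$-subgroup $U' \subseteq \mathcal{G}$ together with an isomorphism of adic spaces $U' \cong \mathbb{B}_S^d$ sending $e$ to the origin. We may assume $S = \Spa(R,R^+)$ is affinoid. Transporting the group law of $U'$ through this identification, the multiplication is encoded by a commutative formal group law
\[
F(X,Y) \;=\; X + Y + \sum_{|n|+|k|\geq 2} a_{n,k}\, X^{n} Y^{k} \;\in\; R^\circ\langle X_1,\dots,X_d,Y_1,\dots,Y_d\rangle^{\oplus d},
\]
with $a_{n,k} \in R^\circ$ by the same calculation as in the proof of \Cref{l:existence-of-subgroup-isom-to-ball}, and with linear term $X+Y$ since $0$ is the identity.

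Next, because $K$ is of characteristic $0$ and $F$ is commutative, the general theory of formal groups over $\mathbb{Q}$-algebras produces a unique formal logarithm $\ell(X) \in (R \otimes_{\mathbb{Z}} \mathbb{Q})[[X_1,\dots,X_d]]^{\oplus d}$ with $\ell(X) = X + \mathrm{(higher)}$ and $\ell(F(X,Y)) = \ell(X)+\ell(Y)$, obtained by integrating the invariant differential. The crucial input is the classical denominator estimate: if one writes $\ell_i = \sum_{n} c_{n,i} X^{n}$, then $p^{\lfloor \log_p |n|\rfloor} c_{n,i} \in R^\circ$, so the $p$-adic size of $c_{n,i}$ grows at most like $|n|$. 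Consequently, for every $0 < \epsilon < p^{-1/(p-1)}$ the series $\ell$ converges on $\mathbb{B}^{d}_{S,\epsilon}$ and defines an $S$-morphism
\[
\log\colon \mathbb{B}^{d}_{S,\epsilon} \longrightarrow \mathbb{B}^{d}_S
\]
which by construction is a homomorphism from $(U'|_{\mathbb{B}^d_{S,\epsilon}}, F)$ to $(\mathbb{B}^{d}_S,+)$. The same argument applied to the formal inverse $\exp = \ell^{-1}$ shows that $\exp$ converges on $\mathbb{B}^{d}_{S,\delta}$ for some $\delta > 0$ and defines an $S$-morphism in the other direction, which is a homomorphism from $(\mathbb{B}^{d}_S,+)$ to $(U',F)$ on its domain of convergence.

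Finally, after shrinking $\epsilon$ to some $\epsilon'>0$ small enough that $\log(\mathbb{B}^{d}_{S,\epsilon'}) \subseteq \mathbb{B}^{d}_{S,\delta}$ (which is possible because $\log$ restricts to $\mathrm{id} + \mathrm{(higher)}$), the formal identities $\exp \circ \log = \mathrm{id}$ and $\log \circ \exp = \mathrm{id}$ hold on the nose as analytic maps. Thus $\log$ is an isomorphism $\mathbb{B}^{d}_{S,\epsilon'} \xrightarrow{\sim} \mathbb{B}^{d}_{S,\epsilon'}$ of adic spaces which intertwines $F$ with $+$. Rescaling coordinates, the open $S$-subgroup $U \subseteq U' \subseteq \mathcal{G}$ corresponding to $\mathbb{B}^{d}_{S,\epsilon'}$ inherits the desired isomorphism to $(\mathbb{B}_S^d,+)$, and since $\log$ is obtained term-by-term from the formal logarithm, its completion at $e$ is by construction the formal Lie group logarithm.

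The main obstacle is the convergence estimate for $\ell$: one must check that the denominator bounds for the coefficients of the formal logarithm of a commutative formal group law — classical over $\mathbb{Z}_{(p)}$ — pass to the relative setting with coefficients in $R^\circ$, uniformly on $\Spa(R,R^+)$. This reduces to inductively solving the defining equations of $\ell$ and tracking at each step that the denominators introduced are powers of $p$ bounded in terms of $|n|$, which works over any $\mathbb{Z}_{(p)}$-algebra and in particular over $R$.
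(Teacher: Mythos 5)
Your proof is correct and follows essentially the same route as the paper: reduce via \Cref{l:existence-of-subgroup-isom-to-ball} to a formal group law on a ball with power-bounded coefficients, establish convergence of the formal isomorphism to the additive group on a small ball via explicit denominator estimates, and conclude by rescaling. The only cosmetic difference is that the paper constructs the exponential $\Phi$ (citing Schneider's estimate $\|\omega_{n,i}\|\leq\|\Phi_i'(0)\|^n$ for the divided-power coefficients) and observes that bijectivity onto a small ball is automatic, whereas you construct the logarithm by integrating the invariant differential and invert it explicitly.
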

	\begin{proof}
		By Lemma~\ref{l:existence-of-subgroup-isom-to-ball}, we may assume without loss of generality that $\mathcal{G}\cong \mathbb B^d_S$ as an adic space over $S$. The completion of $\mathcal{G}$ at the identity is then a formal $S$-group scheme 
		\[\mathcal{G}^{\wedge}_{|e}=\Spf(R[[X_1,\dots,X_d]])\]
		with group structure given by a formal group law $F_1(X,Y),\dots,F_d(X,Y)\in R[[X_1,\dots,X_d,Y_1,\dots,Y_d]]$.  Any such formal $S$-group is isomorphic to the additive formal group $(\mathbb G_a^{\wedge})^d$ via the logarithm map: More precisely, following the proof of \cite[Proposition 18.16]{Sch}, there are formal power series $\Phi_i(Z)=\sum \omega_{n,i} \frac{Z^n}{n!}\in R[[Z_1,\dots,Z_d]]$ such that 
		\[\Phi(0)=0,\quad \Phi(Y+Z)=F(\Phi(Y),\Phi(Z)).\]
		Moreover, it is shown in \text{loc.\ cit.\ }that  $\|\omega_{n,i}\|\leq \|\Phi'_i(0)\|^n$. It follows that there is $k>0$ such that $\Phi_i(p^kZ)\in R^\circ\langle Z_1,\dots,Z_d\rangle$, which means that $\Phi_i$ induces a homomorphism $(\mathbb B_{\epsilon}^d,+)\to \mathcal{G}$ for any $\epsilon<1/k$. Since the underlying morphism of adic spaces is of the form $\mathbb B_{S,\epsilon}^d\to \mathbb B_S^d$, sending $0$ to $0$, and is given by the identity on tangent spaces, it is automatic that this restricts to an isomorphism $\mathbb B_{S,\epsilon}^d\to \mathbb B_{S,\epsilon}^d$.
	\end{proof}
	
	The goal of this subsection is to give a more canonical and functorial way to describe an open subgroup $U$ of $\mathcal{G}$ related to the Lie algebra. Namely, we will show that there is a maximal open subgroup on which the logarithm converges. For its description, we need some further preparations:
	\begin{lemma}\label{c:G[p]-etale}
		Assume that $S$ is a smooth rigid space.
		Let $f:\mathcal{G}\to S$ be a commutative smooth $S$-group. Then  $[p]:\mathcal{G}\to \mathcal{G}$ is \'etale. In particular, for any $n\in \N$, the morphism $\mathcal{G}[p^n]\to  S$ is \'etale.
	\end{lemma}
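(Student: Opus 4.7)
The plan is to first show $[p]: \mathcal{G} \to \mathcal{G}$ is étale, from which the second assertion follows immediately: $\mathcal{G}[p^n] \to S$ is obtained as the base-change along the identity section $e: S \to \mathcal{G}$ of the iterated map $[p^n] = [p] \circ \cdots \circ [p]$, which is étale as a composition of étale maps.

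For the étaleness of $[p]$, the idea is to combine a local computation near the identity with translation invariance. By Proposition~\ref{p:nbhd-open-subgroup-ball}, working locally on $S$, there exists an open $S$-subgroup $U \subseteq \mathcal{G}$ together with a group isomorphism $U \cong (\mathbb{B}^d_S, +)$. Under this identification, the restriction $[p]|_U$ lands in $U$ and coincides with the multiplication-by-$p$ map $\mathbb{B}^d_S \to \mathbb{B}^d_S$. Since $K \supseteq \mathbb{Q}_p$, the element $p$ is invertible in $K$, so this factors as the isomorphism $\mathbb{B}^d_S \xrightarrow{\sim} \mathbb{B}^d_{|p|, S}$ (with inverse given by dilation by $p^{-1}$) followed by the rational open immersion $\mathbb{B}^d_{|p|, S} \hookrightarrow \mathbb{B}^d_S$. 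In particular, $[p]|_U$ is étale, and the cotangent map $d[p]_e^{\ast}: (\Lie \mathcal{G})^\vee \to (\Lie \mathcal{G})^\vee$ is multiplication by $p$, hence an isomorphism.

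To propagate étaleness from $U$ to all of $\mathcal{G}$, I would invoke the translation equivariance of the group law. Both source and target of $[p]$ are smooth over $S$ of the same relative dimension $d = \rk \Lie \mathcal{G}$, so $[p]$ is étale iff $\Omega^1_{\mathcal{G}/\mathcal{G}} = 0$. Since $\mathcal{G}$ is commutative, translation trivialises the relative cotangent bundle: there is a canonical isomorphism $\Omega^1_{\mathcal{G}/S} \cong f^{\ast}(\Lie \mathcal{G})^\vee$, under which $d[p]^{\ast}: [p]^{\ast}\Omega^1_{\mathcal{G}/S} \to \Omega^1_{\mathcal{G}/S}$ is identified with the $f$-pullback of multiplication-by-$p$ on $(\Lie \mathcal{G})^\vee$. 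As $p$ acts invertibly on this $\O_S$-module, $d[p]^{\ast}$ is an isomorphism of vector bundles on all of $\mathcal{G}$, forcing $\Omega^1_{\mathcal{G}/\mathcal{G}} = 0$.

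The main obstacle lies in making the translation-invariance trivialisation of $\Omega^1_{\mathcal{G}/S}$ rigorous for smoothoid groups in the v-topology; however, this is a formal consequence of the commutative group axioms together with the standard functoriality of $\Omega^1$, and transfers from the classical algebraic and rigid-analytic settings to the smoothoid setting without essential change. Alternatively, one can bypass it entirely by observing that Proposition~\ref{p:nbhd-open-subgroup-ball}, applied after an étale base change centred at any geometric point of $\mathcal{G}$, allows the same local computation around every point of $\mathcal{G}$.
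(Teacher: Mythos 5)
Your argument is correct in outline but takes a genuinely different route from the paper. The paper reduces to the geometric fibres of $S$, where the statement is Fargues' theorem \cite[Lemme~1]{Far19}, and then globalises via miracle flatness plus the fact that \'etaleness can be checked on points. You instead give a self-contained argument from the local structure theory already established: \Cref{p:nbhd-open-subgroup-ball} shows $[p]$ is \'etale near the identity section (multiplication by $p$ on a ball group is an isomorphism onto a rational subdomain), and translation-invariance of $\Omega^1_{\mathcal G/S}$ propagates this everywhere. This buys independence from \cite{Far19}, at the cost of two inputs you leave implicit. First, the step ``$d[p]$ an isomorphism on cotangent sheaves $\Rightarrow$ \'etale'' for smooth rigid $S$-spaces of equal relative dimension is exactly where flatness hides: the standard proof of this Jacobian criterion runs through unramifiedness, quasi-finiteness of fibres and miracle flatness, so you have not really avoided the paper's flatness argument, only repackaged it; it needs a reference in Huber's framework. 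Second, your proposed shortcut at the end is flawed as stated: a smooth morphism need not admit sections through an arbitrary point even \'etale-locally on $S$ (the point may have a large residue field extension). The correct rigorous form of the translation argument is to base-change along $f:\mathcal G\to S$ itself and translate by the tautological diagonal section, then descend \'etaleness along the smooth surjection $\mathcal G\times_S\mathcal G\to\mathcal G$; alternatively, stick with your cotangent-bundle formulation, which needs no pointwise sections.
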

	\begin{proof}
		We first note that the result holds if $S=\Spa(L,L^+)$ where $L$ is a field: By \cite[Proposition 1.7.5]{Huber-generalization}, we can immediately reduce to $L^+=\O_L$, and in this case the result holds by \cite[Lemme~1]{Far19}.
		
		Since $S$ and $f$ are smooth, $\mathcal{G}$ is smooth over $K$. Since $[p]:\mathcal{G}\to \mathcal{G}$ is \'etale in each fibre of $S$, we can now argue by miracle flatness (\cite[00R4]{StacksProject}) that $[p]$ is flat.
		By \cite[Proposition 1.7.5]{Huber-generalization}, it therefore suffices to check on $\Spa(K,\O_K)$-points of $S$ that $[p]:\mathcal{G}\to \mathcal{G}$ is \'etale. Here we have already seen the statement.
	\end{proof}
	
	We now generalise the notion of topological $p$-torsion subgroups of \cite[\S2.2]{Heu22b} to the relative setup:
	
	\begin{definition}
		A subspace $T\subseteq \mathcal{G}$ of a smooth $S$-group $\mathcal{G}$ is \textit{topologically $p$-torsion} if for any open subspace $U\subseteq \mathcal{G}$ with $\im(e)\subseteq U$ and any quasi-compact open subspace $T_0\subseteq T$, there is $n\in \N$ such that \[[p^n](T_0)\subseteq U.\]
	\end{definition}
	\begin{prop}\label{p:log-of-smooth-group}
		Let $S$ be a sousperfectoid adic space or a rigid space over $K$. 
		Let $\mathcal{G}\to S$ be a commutative smooth $S$-group. Then there exists a unique maximal  topologically $p$-torsion open subgroup $\widehat{\mathcal{G}}\subseteq \mathcal{G}$ and a unique homomorphism
		\[ \log_{\mathcal G}:\widehat{\mathcal{G}}\to \Lie \mathcal{G}\otimes_{\mathcal O_S} \mathbb G_a\]
		into the smooth $S$-group defined by $\Lie \mathcal{G}$ such that $\log_{\mathcal{G}}$ induces the identity on Lie algebras. Moreover:
		\begin{enumerate}
			\item We have $\ker \log_{\mathcal{G}} = \mathcal{G}[p^\infty]:=\varinjlim_n \mathcal{G}[p^n]$, the $p$-power torsion subgroup of $\mathcal{G}$.
			\item If $S$ is a smooth rigid space, then $\log_{\mathcal{G}}$ is \'etale, and its image  is an open subgroup of $\Lie \mathcal{G}\otimes_{\mathcal O_S} \mathbb G_a$.
			\item\label{i:log-ses} If $[p]:\mathcal{G}\to \mathcal{G}$ is surjective, then $\log_{\mathcal{G}}$ gives rise to a short exact sequence of smooth $S$-groups
			\[ 0\to \mathcal{G}[p^\infty]\to \widehat{\mathcal{G}}\xrightarrow{\log_{\mathcal{G}}}\Lie \mathcal{G}\otimes_{\mathcal O_S} \mathbb G_a\to 0\]
			for the \'etale topology. More generally, we still obtain such a short exact sequence if there exists an open subgroup $U\subseteq \mathcal{G}$ on which $[p]:U\to U$ is surjective.
			\item The assignment $\mathcal{G}\mapsto (\widehat{\mathcal{G}},\log_{\mathcal{G}})$ is functorial in $\mathcal{G}\to S$.
			\item The formation of $\widehat{\mathcal{G}}$ commutes with base-change: If $S'\to S$ is any morphism of adic spaces with $S'$ sousperfectoid or rigid, then $\mathcal{G}':=\mathcal{G}\times_SS'\to S'$ is a smooth $S'$-group and $\widehat{\mathcal{G}'}=\widehat{\mathcal{G}}\times_SS'$.
			\item In the category of sheaves on $S_v$, the evaluation at 1 defines a natural isomorphism
			\[ 	\FHom_S(\uZp, \mathcal{G})=\widehat{\mathcal{G}}.\]
		\end{enumerate}
	\end{prop}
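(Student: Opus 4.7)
The plan is to build $\widehat{\mathcal G}$ and $\log_{\mathcal G}$ by taking the local ball model supplied by \Cref{p:nbhd-open-subgroup-ball} and spreading it out along the $[p^n]$-maps. Working locally on $S$, fix by \Cref{p:nbhd-open-subgroup-ball} an open $S$-subgroup $U\subseteq\mathcal G$ together with an isomorphism $U\cong(\mathbb B^d_S,+)$ under which $[p]$ becomes scalar multiplication by $p$ on the ball. This identification realises a canonical $\log_U\colon U\to\Lie\mathcal G\otimes\mathbb G_a$ as the identity in these coordinates, and makes $U$ itself topologically $p$-torsion (since iterating $[p]$ on the ball sends any quasi-compact subspace arbitrarily close to the origin). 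I then define
\[
\widehat{\mathcal G}\;:=\;\bigcup_{n\ge0}\,[p^n]^{-1}(U),
\]
which is an open $S$-subgroup since $U$ is one and $[p]$ is a group homomorphism. A direct quasi-compactness argument (any quasi-compact subspace of $\widehat{\mathcal G}$ lies in a single $[p^n]^{-1}(U)$, and $[p^m](U)$ shrinks to the identity section) shows that $\widehat{\mathcal G}$ is topologically $p$-torsion, and that any other topologically $p$-torsion open subgroup of $\mathcal G$ is contained in $\widehat{\mathcal G}$. This maximality characterisation makes $\widehat{\mathcal G}$ independent of $U$, so the local constructions glue.

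Next, on $\widehat{\mathcal G}$ I define $\log_{\mathcal G}(x):=p^{-n}\log_U([p^n]x)$ for any $n$ with $[p^n]x\in U$; this is well-defined because $\log_U([p]u)=p\log_U(u)$ on $U$. Uniqueness of any homomorphism inducing the identity on $\Lie$ reduces, after restriction to $U$, to the uniqueness of the formal logarithm used in the proof of \Cref{p:nbhd-open-subgroup-ball}, and then propagates to $\widehat{\mathcal G}$ via the defining $[p^n]$-divisibility relation. I now verify the listed properties. (1) $\log_{\mathcal G}(x)=0$ forces $[p^n]x\in\ker\log_U=\{e\}$, while $\mathcal G[p^\infty]\subseteq\widehat{\mathcal G}$ because $\mathcal G[p^n]$ lands in $U$ under $[p^n]$. (2) By \Cref{c:G[p]-etale} each $[p^n]$ is étale, and $\log_U$ is an isomorphism of adic spaces, so $\log_{\mathcal G}$ is étale on each $[p^n]^{-1}(U)$; its image contains the open subgroup $\log_U(U)$, hence is open. (3) For $v\in\Lie\mathcal G\otimes\mathbb G_a$, choose $n$ large enough that $p^n v\in\log_U(U)$, write $p^n v=\log_U(u)$, and lift $u$ étale-locally through $[p^n]$ using \Cref{c:G[p]-etale} together with the surjectivity hypothesis on $[p]$; the lift $y$ lies in $[p^n]^{-1}(U)\subseteq\widehat{\mathcal G}$ and satisfies $\log_{\mathcal G}(y)=v$. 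Properties (4) and (5) are then immediate: functoriality is forced by the uniqueness of $\log_{\mathcal G}$, and the formation of $[p^n]^{-1}(U)$ visibly commutes with base change.

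For (6), evaluation at $1$ sends a morphism $\phi\colon\uZp\to\mathcal G$ to an element automatically in $\widehat{\mathcal G}$, since $p^n\phi(1)=\phi(p^n)\to\phi(0)=e$ by continuity and the definition of $\uZp$ as a v-sheaf. Conversely, a section $g\in\widehat{\mathcal G}(T)$ satisfies $p^n g\to e$, so the assignment $n\mapsto n\cdot g$ extends continuously from $\Z$ to $\Z_p$ in the adic topology of $\mathcal G$, giving the required $\phi$; mutual inverseness is verified on v-sheaves using density of $\Z\subseteq\Z_p$. The principal technical obstacle is the surjectivity in (3) under its stated generality, where $[p]$ is only assumed surjective on some auxiliary open subgroup $U'\subseteq\mathcal G$: one must apply \Cref{p:nbhd-open-subgroup-ball} inside $U'$ and replace $U$ by an open ball subgroup contained in $U'$, so as to simultaneously have a ball coordinate and étale-local $p$-divisibility, and then check that the resulting étale-local lift through $[p^n]$ indeed lies in $\widehat{\mathcal G}$ rather than merely in $\mathcal G$.
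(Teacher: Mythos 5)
Your construction coincides with the paper's: the same open ball subgroup $U$ from \Cref{p:nbhd-open-subgroup-ball}, the same definition $\widehat{\mathcal G}=\bigcup_n[p^n]^{-1}(U)$ with maximality giving independence of $U$, the same extension of $\log_U$ by unique $p$-divisibility of the target, and the same arguments for (1)--(5), including the reduction of surjectivity in (3) to lifting through $[p^n]$ after rescaling into $\log_U(U)$. The only place you are noticeably sketchier than the paper is part (6), where "extends continuously from $\Z$ to $\Z_p$" should be justified as the paper does, via the base case $\FHom(\uZp,\G_a^{+d})=\G_a^{+d}$ for the ball subgroup together with the amalgamation $(\underline{\Z}\times p^n\uZp)/p^n\underline{\Z}=\uZp$; this is a matter of detail, not of approach.
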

	
	\begin{definition}
		We also call the group $\widehat{\mathcal{G}}$ in \Cref{p:log-of-smooth-group} the \textit{topologically $p$-torsion subgroup} of $\mathcal{G}$.
	\end{definition}
	\begin{proof}
		All statements are local on $S$, so we may without loss of generality replace $S$ by an open cover. Let $U\subseteq \mathcal{G}$ be an open subgroup as described in Prop~\ref{p:nbhd-open-subgroup-ball}. Then we claim that the open subgroup
		\[ \textstyle\widehat{\mathcal{G}}:=\bigcup_{n\in \N}[p^n]^{-1}(U)\]
		has all desired properties. It is clear that $U$ is topologically $p$-torsion, hence so is $[p^n]^{-1}(U)$ and therefore $\widehat{\mathcal{G}}$. On the other hand, it is clear that $\widehat{\mathcal{G}}$ contains every topologically $p$-torsion subgroup.
		
		 Next, we construct $\log_{\mathcal{G}}$: Since $\Lie \mathcal{G}\otimes_{\O_S}\mathbb G_a\to S$ is a uniquely $p$-divisible $S$-group (i.e., $[p]$ is an isomorphism), there is a unique way to extend $\log:U\to (\mathbb B^d,+)$ to a map $\log_{\mathcal{G}}:\widehat{\mathcal{G}}\to \mathbb G_a^d$. By the description of the completion at the origin in \Cref{p:nbhd-open-subgroup-ball}, we see that this becomes canonical and functorial if we identify the image with $\Lie(\mathcal{G})\otimes_{\O_S}\mathbb G_a$. 
		 Part (4) is then clear from the construction. 
		
		It is clear from the fact that $\log_{|U}$ is injective that $\ker \log_{\mathcal{G}}=\mathcal{G}[p^\infty]$. This shows (1). For (2), it follows from $[p]:\mathcal{G}\to \mathcal{G}$ being \'etale by \Cref{c:G[p]-etale} that $\log_{\mathcal{G}}:[p^n]^{-1}(U)\to \Lie(\mathcal{G})\otimes_{\O_S}\mathbb G_a$ is \'etale. This shows that its image is open. In the colimit over $n$, it follows that the same is true for $\log_{\mathcal{G}}$.
		
		For (3), it suffices to prove that $[p]:\wh{\mathcal{G}}\to \wh{\mathcal{G}}$ is surjective, for which it suffices to see that $[p](\mathcal{G})\cap \widehat{\mathcal{G}} = [p](\widehat{\mathcal{G}})$. This is immediate from the definition. The case of general $U$ follows because $\Lie U=\Lie \mathcal{G}$.
		
		For part (5), observe that $U':=U\times_SS'\subseteq \mathcal{G}'$ is an open subgroup satisfying the description of \Cref{p:nbhd-open-subgroup-ball}. Let $\psi:\mathcal{G}'\to \mathcal{G}$ be the base-change map, then it follows from the construction that 
		\[\textstyle \widehat{\mathcal{G}'}=\bigcup_{n\in \N} [p^n]^{-1}_{\mathcal{G}'}(U')=\bigcup_{n\in \N} [p^n]^{-1}_{\mathcal{G}'}(\psi^{-1}(U))=\bigcup_{n\in \N} \psi^{-1}([p^n]^{-1}_{\mathcal{G}}(U))=\psi^{-1}(\widehat{\mathcal{G}}).\]

		For (6), we can argue roughly like in \cite[Proposition 2.14]{Heu22b}: We first observe that by considering varying $S$, it suffices to prove that for any perfectoid space $S$, we have
		\[ \Hom(\uZp,\mathcal{G})=\wh {\mathcal{G}}(S).\]
		We first observe that this is true for $\mathcal{G}=\G_a^{+n}$ for any $n\in \N$, where $\G_a^+:=(\mathbb B^1,+)$. Indeed, if $S$ is a perfectoid space, we have $\mathrm{Map}(\uZp,\G_a^+)=\mathrm{Map}_{\cts}(\Z_p,\O^+(S))$, and thus \[\Hom(\uZp,\G_a^+)=\mathrm{Hom}_{\cts}(\Z_p,\O^+(S))=\O^+(S)=\G_a^+(S).\]
		
		For general $\mathcal{G}$, let $\varphi:\uZp\to \mathcal{G}$ be any morphism, then there is an open subgroup $p^n\uZp$ that maps into $U\cong \G_a^{+n}$. We deduce from the case of $\G_a^+$ that $\varphi(p^n)\in \wh{\mathcal{G}}(S)$. Hence $\varphi(1)\in \wh{\mathcal{G}}(S)$ by definition of $\widehat{\mathcal{G}}$.
		
		Conversely, let $s\in \wh{\mathcal{G}}(S)$ and consider the induced map $\varphi_1:\underline{\Z}\to \wh{\mathcal{G}}$, $n\mapsto n\cdot s$. Then there is $n$ such that $p^{n}s\in U(S)$. By the case of $\G_a^{+n}$, there is then a unique homomorphism $\varphi_2:p^n\uZp\to U$ sending $p^n$ to $p^ns$. Consider the direct sum of $\varphi_1$ and $\varphi_2$
		\[\underline{\Z}\times p^n\uZp \xrightarrow{\varphi_1,\varphi_2} \mathcal{G}.\]
		Since $\varphi_1$ and $\varphi_2$ agree on their intersection $p^n\underline{\Z}$, this map admits a unique factorisation through the categorical quotient $(\underline{\Z}\times p^n\uZp)/p^n\underline{\Z}=\uZp$. This defines the desired map $\varphi:\uZp\to \mathcal{G}$ sending $1\mapsto s$.
	\end{proof}
	\begin{lemma}\label{l:log-surj-alg-case}
		The final assumption of \Cref{p:log-of-smooth-group}.(3) is always satisfied if $\mathcal{G}\to S$ is the analytification of a smooth group scheme  $G^\alg\to S^\alg$ over an algebraic $K$-variety $S^\alg$.
	\end{lemma}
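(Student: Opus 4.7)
The strategy is to take $U$ to be the analytification of the relative identity component of $G^\alg$, and then deduce surjectivity of $[p]$ on it from structure theory of smooth algebraic groups in characteristic zero.

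First I would observe that the conclusion is local on $S^\alg$, so we may assume $S^\alg$ is an affine noetherian $K$-variety. Next I would invoke the standard existence theorem for the relative identity component of smooth group schemes (SGA~3, Exp.~VI$_{\mathrm{B}}$): the union $G^{\alg,0}$ of identity components of the geometric fibers of $G^\alg\to S^\alg$ is represented by an open subgroup scheme. Since $[p]$ fixes the identity section, it preserves identity components and therefore restricts to an endomorphism $[p]: G^{\alg,0} \to G^{\alg,0}$.

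The core step is to show that this restriction is surjective. Because $K$ has characteristic zero, the differential of $[p]$ at the identity section is multiplication by $p$ on $\Lie G^\alg$, which is invertible; combined with smoothness, this implies that $[p]$ is \'etale. Its image on each geometric fiber $G^{\alg,0}_s$ is then an open subgroup, and since $G^{\alg,0}_s$ is a connected algebraic group, the only open subgroup is the whole fiber. Flatness of $G^{\alg,0}\to S^\alg$ then upgrades fiberwise surjectivity to surjectivity as a morphism of schemes.

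Finally I would pass to the rigid-analytic side: analytification preserves open immersions, so $U:=(G^{\alg,0})^\an\subseteq \mathcal G$ is an open subgroup, and it preserves surjectivity of smooth morphisms of finite type $K$-schemes (via the comparison on $\overline K$-points). Hence $[p]:U\to U$ is surjective as a morphism of adic spaces, equivalently as a morphism of \'etale sheaves, which is what \Cref{p:log-of-smooth-group}.(\ref{i:log-ses}) requires. The only genuine technical point is the existence of $G^{\alg,0}$ as an open subgroup scheme in the relative setting; once that is in hand, the argument is a direct application of the characteristic-zero structure theory plus compatibility of analytification with openness and surjectivity.
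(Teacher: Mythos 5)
Your proposal is correct and follows essentially the same route as the paper, which also takes the relative identity component $G^{\alg\circ}$ from SGA~3, Exp.~VI$_{\mathrm B}$ and observes that $[p]$ is surjective on it. You merely spell out the details the paper leaves implicit (étaleness of $[p]$ in characteristic zero, openness and closedness of its image in each connected fibre, and compatibility of analytification with openness and surjectivity), all of which are sound.
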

	\begin{proof}
		By [SGA 3.1, Exp $\mathrm{VI}_B$, Théorème 3.10, Proposition~3.11], there exists a maximal open subgroup scheme $G^{\alg\circ}\subseteq G^\alg\to S$ such that every geometric fibre is connected, and on this, $[p]$ is surjective.
	\end{proof}
	\subsection{The Hodge--Tate sequence for relative adic groups}
	We can now state the main result of \S\ref{HT-rel-grp}.

	\begin{theorem}[relative Hodge--Tate sequence for $\mathcal{G}$]\label{c:leray-seq-for-whG}
		Let $f:X\to Y$ be a smooth morphism of smooth rigid spaces over $K$. 
		Let $\mathcal{G}\to X$ be a commutative smooth relative group. Then the Leray spectral sequence for $\mu:Y_v\to Y_{\Et}$ induces a left-exact sequence of abelian sheaves on $Y_v$, functorial in $X$ and $\mathcal{G}$,
		\begin{equation} \label{eq:nu-G}
			1\to \mu^* (\rR^1f_{\Et\ast}\widehat{\mathcal{G}}) \to \rR^1f_{v\ast}\widehat{\mathcal{G}} \xrightarrow{\HTlog_{\mathcal{G}}} f_{v\ast}(\Lie G\otimes_{\O_X} \widetilde{\Omega}_X).
		\end{equation}
	\end{theorem}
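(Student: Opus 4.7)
The strategy is abelianization via the logarithm: use the short exact sequence from \Cref{p:log-of-smooth-group} to split the problem into two simpler pieces, namely the \'etale $p$-torsion $\mathcal{G}[p^\infty]$ (which will contribute the \'etale cohomology $\mu^*\rR^1 f_{\Et*}\widehat{\mathcal{G}}$) and the additive part $\Lie\mathcal{G}\otimes_{\O_X}\G_a$ (which will contribute the Hodge--Tate differentials $f_{v*}(\Lie\mathcal{G}\otimes\wtOm_X)$). The edge map $\HTlog_{\mathcal{G}}$ should then be realised as the combination of the logarithm with the Hodge--Tate comparison for the structure sheaf.

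Since the claim is local on $Y$, I first shrink $Y$ (and hence $X$) so that the logarithm sequence
\[0\to \mathcal{G}[p^\infty]\to \widehat{\mathcal{G}}\xrightarrow{\log_{\mathcal{G}}}\Lie\mathcal{G}\otimes_{\O_X}\G_a\to 0\]
is short exact on $X_{\Et}$; it then remains short exact on $X_v$. The key preliminary computation is $\rR\mu_{X,*}\widehat{\mathcal{G}}$. By \Cref{c:G[p]-etale}, the group scheme $\mathcal{G}[p^n]\to X$ is \'etale, so by the standard comparison of finite \'etale cohomology in the \'etale and v-topologies one has $\rR^i\mu_{X,*}\mathcal{G}[p^\infty]=0$ for $i>0$. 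For the vector bundle $\Lie\mathcal{G}\otimes\G_a$, the relative Hodge--Tate theorem $\rR^i\nu_{X,*}\O_X=\wtOm_X^i$, tensored with $\Lie\mathcal{G}$, gives $\rR^i\mu_{X,*}(\Lie\mathcal{G}\otimes\G_a) = \Lie\mathcal{G}\otimes\wtOm_X^i$. The long exact sequence of $\rR\mu_{X,*}$ applied to the log sequence then yields $\mu_{X,*}\widehat{\mathcal{G}} = \widehat{\mathcal{G}}$ and an isomorphism $\rR^1\mu_{X,*}\widehat{\mathcal{G}} \cong \Lie\mathcal{G}\otimes\wtOm_X$.

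Next, I plug this into the Grothendieck spectral sequence for the composition $\mu_Y\circ f_v = f_{\Et}\circ\mu_X$,
\[E_2^{p,q} = \rR^p f_{\Et*}\rR^q\mu_{X,*}\widehat{\mathcal{G}}\;\Rightarrow\;\rR^{p+q}\mu_{Y,*}(\rR f_{v*}\widehat{\mathcal{G}}),\]
whose five-term exact sequence on $Y_{\Et}$ reads
\[0\to \rR^1 f_{\Et*}\widehat{\mathcal{G}}\to \rR^1(\mu_Y f_v)_*\widehat{\mathcal{G}}\to f_{\Et*}(\Lie\mathcal{G}\otimes\wtOm_X)\to \rR^2 f_{\Et*}\widehat{\mathcal{G}}.\]
Pulling back by the exact functor $\mu_Y^*$ and then comparing with the \emph{other} Grothendieck spectral sequence $\rR^p\mu_{Y,*}\rR^q f_{v*}\widehat{\mathcal{G}}\Rightarrow\rR^{p+q}\mu_{Y,*}(\rR f_{v*}\widehat{\mathcal{G}})$ for the same composition, I identify the pullback of the middle term with $\rR^1 f_{v*}\widehat{\mathcal{G}}$ via the counit $\mu_Y^*\mu_{Y,*}\to\id$, thereby obtaining the stated left-exact sequence with $\HTlog_{\mathcal{G}}$ realised as the pullback of the Leray edge map; naturality in $X$ and $\mathcal{G}$ follows from the functoriality of the spectral sequences.

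The main obstacle is this last comparison step. The two Grothendieck spectral sequences converging to $\rR^\bullet\mu_{Y,*}(\rR f_{v*}\widehat{\mathcal{G}})$ induce a priori different filtrations on the abutment, and one must verify that the $E_2^{1,0}$ contribution $\rR^1\mu_{Y,*} f_{v*}\widehat{\mathcal{G}}$ to the second spectral sequence is annihilated after applying $\mu_Y^*$. This is a feature of the counit: morally, it vanishes because $f_{v*}\widehat{\mathcal{G}}$ is itself a commutative smooth relative $Y$-group, to which the Hodge--Tate logarithm machinery of \Cref{p:log-of-smooth-group} applies, reducing the required vanishing to an already-established case. Once this diagram chase is carried out, the identification of the pulled-back five-term sequence as the asserted one is automatic.
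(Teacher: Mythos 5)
Your overall strategy---reduce via the logarithm to the \'etale piece $\mathcal G[p^\infty]$ and the additive piece $\Lie\mathcal G\otimes\G_a$, then extract the sequence from a Leray-type argument---is the right one, but three steps do not go through as written. First, the opening reduction ``shrink $Y$ so that the logarithm sequence is short exact'' is not available: short exactness requires $[p]$ to be surjective on an open subgroup of $\mathcal G$ (\Cref{p:log-of-smooth-group}.(3)), and this fails for $\mathcal G=\G_a^+$ no matter how one localises on $Y$ --- the remark following the theorem uses exactly this example to show that \eqref{eq:nu-G} need not be right-exact. What one needs instead is the extra argument in \Cref{t:Rn-nu} that the image of $\log$, an open subgroup $U\subseteq\Lie\mathcal G\otimes\G_a$, has the same $\rR^{\geq 1}\nu_\ast$ as the full vector group. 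Second, your computation of $\rR^q\mu_{X,\ast}\widehat{\mathcal G}$ is applied to $X$ itself, and this is problematic twice over: $\mu$ is the map to the \emph{big} \'etale site, for which $\rR^{q}\mu_\ast\G_a=0$ for $q>0$ rather than $\Lie\mathcal G\otimes\wtOm^q$ (you presumably mean $\nu:X_v\to X_\et$); and even for $\nu$, the identity $\rR^q\nu_\ast\O=\wtOm^q$ is a statement about \emph{smoothoid} spaces (\Cref{d:differentials-smoothoids}, \Cref{t:Rn-nu}), whereas $X$ is a smooth rigid space over a possibly non-perfectoid $K$ and need not be smoothoid. The paper's proof evaluates on perfectoid test objects $b:T\to Y$ and works with $X_T=X\times_YT$, which \emph{is} smoothoid over $T$; that is where the smoothoid Hodge--Tate theory legitimately applies.

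Third, and most seriously, the comparison of the two Grothendieck spectral sequences --- which you correctly identify as the crux --- is left unresolved. To identify $\mu_Y^\ast\rR^1(\mu_Y\circ f_v)_\ast\widehat{\mathcal G}$ with $\rR^1f_{v\ast}\widehat{\mathcal G}$ you must control the $E_2^{1,0}$-term $\rR^1\mu_{Y,\ast}f_{v\ast}\widehat{\mathcal G}$ of the second spectral sequence, and the claim that this is handled ``because $f_{v\ast}\widehat{\mathcal G}$ is a smooth relative group'' is neither established nor sufficient: $\mu_Y^\ast\rR^1\mu_{Y,\ast}$ of a sheaf does not vanish in general, and one would additionally have to match the resulting edge map with $\HTlog_{\mathcal G}$. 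The paper sidesteps all of this: for each perfectoid $T\to Y$ it writes down the five-term Leray sequence of $\nu:X_{T,v}\to X_{T,\et}$ applied to $\widehat{\mathcal G}_T$, identifies the third term with $f_{v\ast}(\Lie\mathcal G\otimes\wtOm_X)(T)$ via \Cref{t:Rn-nu} and base change for $\wtOm$, identifies the \'etale sheafification of the first term with $\rR^1f_{\Et\ast}\widehat{\mathcal G}$, and then v-sheafifies in $T$; since sheafification is exact, left-exactness is preserved and no comparison of spectral sequences is required. I would restructure your argument along these lines.
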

	\begin{rem}
	We call this the Hodge--Tate sequence for $\mathcal{G}$ because, via the Primitive Comparison Theorem,  the case of $Y=\Spa(K)$, $\mathcal{G}=\G_a$ and proper $X$ recovers the Hodge--Tate sequence of $p$-adic Hodge theory \cite[\S3]{ScholzeSurvey}.
	We note that \eqref{eq:nu-G} is not always right-exact, i.e.\ consider $Y=\Spa(K)$ and $\mathcal{G}=\G_a^+$.
	\end{rem}
	
	The main technical input into the proof of the Theorem is the following:
	\begin{prop} \label{t:Rn-nu}
		Let $S$ be a smoothoid space and let $\mathcal{G}\to S$ be a commutative smooth $S$-group such that $[p]:\mathcal{G}\to \mathcal{G}$ is \'etale. Let $\nu:S_v\to S_{\et}$ be the natural map. Then for $n\ge 1$, there is a natural isomorphism  
		\begin{equation} \label{eq:Rn-nu}
			\rR^n\nu_{\ast}\widehat{\mathcal{G}}\xrightarrow{\sim}\Lie(\mathcal{G})\otimes_{\O_S}\widetilde{\Omega}_{S}^n
		\end{equation}
	\end{prop}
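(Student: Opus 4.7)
The plan is to localize on $S$ and exploit the logarithm to reduce to the case $\mathcal{G}=\G_a$, where $R^n\nu_{\ast}\O_S=\widetilde{\Omega}^n_S$ holds essentially by definition (\Cref{d:differentials-smoothoids}). Since the statement is étale-local on $S$, after shrinking I may assume that there is an open subgroup $U\subseteq \mathcal{G}$ isomorphic to $(\mathbb{B}^d_S,+)$ as in \Cref{p:nbhd-open-subgroup-ball}. Combining this with the hypothesis that $[p]:\mathcal{G}\to \mathcal{G}$ is étale, one checks that étale-locally the image of $[p]$ is an open subgroup of $\mathcal{G}$ containing $U$, and by iteration $[p]:\widehat{\mathcal{G}}\to \widehat{\mathcal{G}}$ becomes v-surjective. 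Together with \Cref{p:log-of-smooth-group}, this provides a short exact sequence of v-sheaves on $S$
\begin{equation*}
0\to \mathcal{G}[p^\infty]\to \widehat{\mathcal{G}} \xrightarrow{\log_{\mathcal{G}}} \Lie(\mathcal{G})\otimes_{\O_S}\G_a \to 0,
\end{equation*}
where the right-exactness uses that the image of $\log_{\mathcal{G}}$ coincides v-locally with $\bigcup_n p^{-n}\log_{\mathcal{G}}(U)=\Lie(\mathcal{G})\otimes_{\O_S}\G_a$.

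Next, I would pass to the long exact sequence for $R\nu_{\ast}$ and observe that the kernel has vanishing higher v-pushforwards. By \Cref{c:G[p]-etale}, each $\mathcal{G}[p^n]\to S$ is étale, and hence $\mathcal{G}[p^\infty]=\varinjlim_n \mathcal{G}[p^n]$ is an ind-étale sheaf on $S$. For such sheaves, $R^i\nu_{\ast}$ vanishes for $i\ge 1$, a fact which reduces to toric charts and to the agreement of étale and v-cohomology for étale sheaves there. Consequently, for every $n\ge 1$, the long exact sequence delivers a canonical and natural isomorphism
\begin{equation*}
R^n\nu_{\ast}\widehat{\mathcal{G}} \isomarrow R^n\nu_{\ast}(\Lie(\mathcal{G})\otimes_{\O_S}\G_a).
\end{equation*}

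Finally, the right-hand side can be identified with $\Lie(\mathcal{G})\otimes_{\O_S}\widetilde{\Omega}^n_S$: as a v-sheaf on $S$, the group $\Lie(\mathcal{G})\otimes_{\O_S}\G_a$ agrees with $\Lie(\mathcal{G})\otimes_{\O_S}\O$ where $\O$ denotes the v-structure sheaf, and since $\Lie(\mathcal{G})$ is locally free of finite rank over $\O_S$, a direct calculation (or the projection formula, which is straightforward for a finite free module) yields
\begin{equation*}
R^n\nu_{\ast}(\Lie(\mathcal{G})\otimes_{\O_S}\O) \cong \Lie(\mathcal{G})\otimes_{\O_S}R^n\nu_{\ast}\O = \Lie(\mathcal{G})\otimes_{\O_S}\widetilde{\Omega}^n_S,
\end{equation*}
establishing \eqref{eq:Rn-nu}. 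The main subtle point of the argument is to justify the right-exactness of the logarithm sequence as v-sheaves, which hinges on the v-surjectivity of $[p]$ on $\widehat{\mathcal{G}}$; the rest is formal long-exact-sequence chasing together with the standard identification $R^n\nu_\ast\O=\widetilde{\Omega}^n$.
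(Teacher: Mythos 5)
Your overall strategy — logarithm sequence, vanishing of the higher pushforwards of the ind-étale torsion part, projection formula for the Lie algebra — is the same as the paper's, and the second and third steps are fine. The problem is the first step: the claim that $[p]:\widehat{\mathcal{G}}\to\widehat{\mathcal{G}}$ is v-surjective, equivalently that $\log_{\mathcal{G}}$ surjects onto $\Lie(\mathcal{G})\otimes_{\O_S}\G_a$, does not follow from $[p]$ being étale and is false in general. Take $\mathcal{G}=\G_a^+=(\mathbb B^1,+)$ over $S=\Spa(K)$: here $[p]$ is étale (an isomorphism onto the rational open subset $p\mathbb B^1$), $\widehat{\mathcal{G}}=\G_a^+$, and $\log$ is the inclusion $\mathbb B^1\hookrightarrow \G_a$, whose image is a proper open subgroup. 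Passing to a v-cover does not help, since the condition $|f|\le 1$ cutting out $\G_a^+(T)\subseteq\G_a(T)$ is v-local. Your identity $\bigcup_n p^{-n}\log_{\mathcal{G}}(U)=\Lie(\mathcal{G})\otimes_{\O_S}\G_a$ would require every element of $U$ to be $p^n$-divisible inside $\widehat{\mathcal{G}}$ for all $n$, which is exactly the surjectivity being asserted; étaleness of $[p]$ only gives that the image of $[p]$ (and hence of $\log$) is open. Compare the remark after \Cref{c:leray-seq-for-whG}, which notes that right-exactness fails for $\G_a^+$ for precisely this reason, and \Cref{p:log-of-smooth-group}.(3), where surjectivity of $[p]$ is an explicit \emph{additional} hypothesis not implied by étaleness.

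The correct sequence is $0\to\mathcal{G}[p^\infty]\to\widehat{\mathcal{G}}\xrightarrow{\log} U\to 0$ with $U=\operatorname{im}(\log_{\mathcal{G}})$ an open subgroup of $\Lie(\mathcal{G})\otimes_{\O_S}\G_a$ (open because $\log$ is étale, which is where the hypothesis on $[p]$ enters). Your argument then yields $\rR^n\nu_\ast\widehat{\mathcal{G}}\cong\rR^n\nu_\ast U$ for $n\ge 1$, and the missing ingredient is that the inclusion $U\subseteq\Lie(\mathcal{G})\otimes_{\O_S}\G_a$ induces an isomorphism on $\rR^n\nu_\ast$ in these degrees. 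This is the genuinely nontrivial point the paper's proof supplies: after localising one reduces to an open subgroup $U\subseteq\G_a^d$ and shows that the quotient $\G_a^d/U$ satisfies an approximation property forcing the relevant pushforwards to vanish. Without this step, or some substitute for it, your long-exact-sequence chase does not close.
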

	\begin{proof}
		This follows from the long exact sequence of Proposition~\ref{p:log-of-smooth-group}.\eqref{i:log-ses}: For $n\ge 1$, we have
		\[\textstyle \rR^n\nu_{\ast}(\mathcal{G}[p^\infty])=\varinjlim_n\rR^n\nu_{\ast}(\mathcal{G}[p^n])=0\]
		since $\mathcal{G}[p^n]\to S$ is \'etale by assumption. On the other hand,  by the projection formula and \Cref{d:differentials-smoothoids},
		\[ \rR^n\nu_{\ast}(\Lie(\mathcal{G})\otimes_{\O_S}\mathbb G_a)=\Lie(\mathcal{G})\otimes_{\O_S}\rR^n\nu_{\ast}\mathbb G_a=\Lie(\mathcal{G})\otimes_{\O_S}\widetilde{\Omega}^n_X.\]
		
		It remains to see that for any open subgroup $U\subseteq \Lie(\mathcal{G})\otimes_{\O_S}\mathbb G_a$, the induced map on cohomologies $\rR^n\nu_{\ast}U\to \rR^n\nu_\ast(\Lie(\mathcal{G})\otimes_{\O_S}\mathbb G_a)$ is an isomorphism. As this question is local on $S$, we may assume that $\Lie(\mathcal{G})\otimes_{\O_S}\mathbb G_a\simeq \G_a^d$. Arguing exactly as in \cite[Lemma~3.10]{diamantinePicard}, we see that $\G_a^d/U$ satisfies the approximation property assumed in \cite[Proposition~2.14]{heuer-G-torsors-perfectoid-spaces}, so we may conclude that $\rR^n\nu_\ast(\G_a^d/U)=0$.
	\end{proof}
	
	\begin{proof}[Proof of \Cref{c:leray-seq-for-whG}]
		For any $b:T\to Y$ in $Y_v$, let $X_T:=X\times_YT\to T$ be the base-change of $X\to Y$ along $b$. This is a smoothoid space over $T$. Let $\mathcal{G}_T:=\mathcal{G}\times_X X_T\to X_T$ be the base-change, a smooth relative group over $X_T$. We consider the Leray sequence for the morphism $\nu:X_{T,v}\to X_{T,\et}$. For the abelian sheaf on $X_{T,v}$ represented by the smooth relative group $\widehat{\mathcal{G}}_T$, this gives a left-exact sequence
		\begin{equation} \label{eq:Leray-et-v}
			0\to \rH^1_{\et}(X_T,\widehat{\mathcal{G}}_T)\to \rH^1_{v}(X_T,\widehat{\mathcal{G}}_T)\to \rH^0(X_T,\rR^1\nu_{\ast}\widehat{\mathcal{G}}_T).\end{equation}
			By \Cref{c:G[p]-etale}, $[p]:\mathcal{G}\to \mathcal{G}$ is \'etale, hence so is $[p]:\mathcal{G}_T\to \mathcal{G}_T$. We can thus apply \Cref{t:Rn-nu}:
		\[ \rH^0(X_T,\rR^1\nu_{\ast}\widehat{\mathcal{G}}_T)=\rH^0(X_T,\Lie (\mathcal{G}_T)\otimes \widetilde{\Omega}_{X_T})=f_{v\ast}(\Lie(\mathcal{G})\otimes \widetilde{\Omega}_X)(T),\]
		where for the last equality we use that by \cite[Proposition 2.9.(2)]{Heu}, the sheaf $\widetilde{\Omega}_{X_T}$ is the pullback of $\wtOm_X$ along $X_T\to X$.
		This describes the last term in \eqref{eq:Leray-et-v}. For the first term, the \'etale sheafification of $T \mapsto \rH^1_{\et}(X_T,\widehat{\mathcal{G}}_T)$ becomes $\rR^1f_{\Et\ast}\widehat{\mathcal{G}}$: 
		Here we use that $\widehat{\mathcal{G}}_T=\widehat{\mathcal{G}}\times_XX_T$ (\Cref{p:log-of-smooth-group}). 
		
		Upon v-sheafification in $T$, the left-exact sequence thus attains the desired form \eqref{eq:nu-G}.
	\end{proof}

	\section{The canonical Higgs field on v-$G$-bundles}\label{s:can-Higgs}
	Throughout this section, let $K$ be a perfectoid field extension of $\Qp$ that contains all $p$-power roots of unity. Let $G$ be a rigid group over $K$, and let $Y$ be a smoothoid adic space over $K$ in the sense of \Cref{d:smoothoid}. The main goal of this section is to associate to any v-topological $G$-bundle $V$ on $Y$ a canonical Higgs field $\theta_V$. This generalises a construction of Rodr\'iguez Camargo \cite{camargo2022geometric} in the context of the work of Pan \cite{PanLocallyAnalytic}. For the construction, we first need to recall the local $p$-adic Simpson correspondence in this setup:

	\subsection{Local $p$-adic Simpson correspondence} We begin with some recollections from \cite{Heu}.
	\begin{secnumber}\label{s:data-induced-by-toric-chart}
		Let $Y$ be a smoothoid adic space with a toric chart $f:Y\to \TT^d\times T$ where $T$ is affinoid perfectoid (see \Cref{d:smoothoid}). Recall that the torus $\TT^d$ admits a pro-\'etale affinoid perfectoid cover
		\[
		\TT^d_{\infty}=\Spa(K\langle T_1^{\pm 1/p^{\infty}},\cdots,T_d^{\pm 1/p^{\infty}}\rangle) \to \TT^d
		\] which is a $\Z_p(1)^d$-torsor in the pro-\'etale site $\TT^d_\proet$. Pulling this back along the toric chart $f$, we obtain a pro-\'etale affinoid perfectoid cover $Y_{\infty}\to Y$ via pullback of $	\TT^d_{\infty}\to 	\TT^d$ along $f$. Let $\Delta_f:=\Gal(Y_{\infty}/Y)\simeq \Z_p(1)^d$ be the Galois group of this cover.
		Then by \cite[Lemma~2.17]{Heu}, the chart $f$ induces isomorphisms 
		\[
		\HT_f: \Hom_{\cont}(\Delta_f,\O_Y(Y))\xrightarrow{\sim} 
		\rH^1_{\cont}(\Delta_f,\O_{Y_v}(Y_{\infty})) \xrightarrow{\sim} 
		\rH^1_v(Y,\O_Y) \xrightarrow{\sim} 
		\rH^0(Y,\widetilde{\Omega}_Y)
		\]
		which can explicitly be described as follows: The chart $f$ induces a basis $\frac{\mathrm dT_1}{T_1},\dots,\frac{\mathrm dT_d}{T_d}$ of $\widetilde{\Omega}_Y$ over $\mathcal{O}_Y(Y)$. Let $\partial_1,\dots,\partial_d$ be the dual basis. Then the $\O_Y(Y)$-linear dual of $\HT_f$
		\[ \rho_f:\Delta_f\to \rH^0(Y,\widetilde{\Omega}^\vee_Y)\]
		is the $(1)$-twist of the map that sends the $i$-th basis vector $\gamma_i$ of $\Delta_f(-1)\simeq \Z_p^d$ to $\partial_i$.
		We denote by $\widetilde{\Omega}^+_{Y,f}$ the finite free $\O_Y^+$-submodule of $\widetilde{\Omega}_Y$ generated by the image of $\Hom_{\cont}(\Delta_f,\O^+_Y(Y))$ under $\HT_f$. 
		
		We also need the following more precise integral version of the first isomorphism in $\HT_f$:
		\begin{lemma}[{\cite[Lemma~2.14]{Heu22b}}]\label{d:def-gamma}
			There is a constant $\gamma\in \mathbb R_{>0}$ depending only on $Y$ and $f$ such that for any $s\in \N$ and $i\geq 1$, the following map 	has $\gamma$-torsion kernel and cokernel:
			\[\rH^i_{\cont}(\Delta_f,\O^+_Y(Y)/p^s)\xrightarrow{\sim} 
			\rH^i_{\cont}(\Delta_f,\O_{Y_v}^+(Y_{\infty})/p^s)\]
		\end{lemma}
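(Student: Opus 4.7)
The plan is to compare the two continuous cohomologies by an explicit Koszul-type computation after diagonalising the $\Delta_f$-action via the characters of the pro-\'etale torus cover. Since the statement is local on $Y$ and the chart $f$ is standard-\'etale, I would first reduce to the base case $Y = \TT^d \times T$: composing the finite-\'etale and rational-localisation steps with the almost-purity theorem shows that the resulting comparison maps on continuous cohomology $\rH^i_{\cont}(\Delta_f,-)$ with coefficients in $\O^+/p^s$ have kernel and cokernel killed by a constant depending only on $f$ (and not on $s$ or $i$). Absorbing such a constant into the final $\gamma$ reduces the problem to the torus case.

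For $Y = \TT^d \times T$, I would use the identification
\[
\O^+_{Y_v}(Y_\infty)/p^s \;=\; \Big( \bigoplus_{\alpha \in \Z[1/p]^d} \O^+(T) \cdot T^\alpha \Big)^{\wedge}\!/p^s,
\]
which is $\Delta_f$-equivariant for the action in which a topological generator of $\Delta_f(-1) \cong \Z_p^d$ in the $i$-th direction acts on the $\alpha$-summand by multiplication by $\zeta_{p^N}^{p^N \alpha_i}$ (for $N$ large enough that $p^N \alpha \in \Z^d$). The trivial character $\alpha \in \Z^d$ contributes exactly $\O^+_Y(Y)/p^s$ with trivial $\Delta_f$-action, and its Koszul complex recovers $\rH^i_{\cont}(\Delta_f,\O^+_Y(Y)/p^s)$. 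Thus the map in the statement is identified, up to the bounded torsion coming from the $p$-adic completion, with the inclusion of the $\alpha = 0$ summand into the full Koszul complex.

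The main step is then to bound uniformly the cohomology of the complementary direct sum $\bigoplus_{\alpha \in \Z[1/p]^d \setminus \Z^d}$. For such $\alpha$, writing $\alpha = m/p^N$ with $m \notin p\Z^d$, at least one $m_i$ is a unit modulo $p$, so at least one differential $\gamma_i - 1$ in the Koszul complex acts on the $\alpha$-summand as multiplication by $\zeta_{p^N}^{m_i} - 1$, an element of $\O_K$ of $p$-adic valuation $1/(p^{N-1}(p-1))$. This is positive, and a standard Koszul-complex argument then shows that both the kernel and cokernel of every differential in that direction are killed by $p^{c}$ for a constant $c$ depending only on $d$ and $p$, uniformly in $\alpha$, $s$ and the cohomological degree $i \geq 1$ (crucially one never divides by $\zeta_{p^N}^{m_i}-1$ beyond the tolerated $p^c$-loss). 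Setting $\gamma$ to be the sum of this $c$ and the constant from the reduction step gives the statement.

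The main obstacle is book-keeping: the direct-sum decomposition of $\O^+_{Y_v}(Y_\infty)/p^s$ is only an almost-equality integrally, because of the $p$-adic completion, and the reduction from a general toric chart to $\TT^d \times T$ introduces a further bounded loss. The essential point is to verify that all these errors are controlled by a \emph{single} constant $\gamma$ independent of $s$ and $i$, which is the content of \cite[Lemma~2.14]{Heu22b} and goes back to the standard Faltings--Scholze decompletion technique for the perfectoid torus.
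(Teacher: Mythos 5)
Your proposal reconstructs exactly the standard Faltings--Scholze decompletion argument (character decomposition of $\O^+_{Y_v}(Y_\infty)$ over the perfectoid torus, identification of continuous cohomology with Koszul cohomology, killing of the non-integral isotypic parts by $\zeta_{p^N}^{m_i}-1$, and a bounded-torsion base-change comparison along the standard-\'etale chart that accounts for the dependence of $\gamma$ on $Y$ and $f$), which is precisely how the cited source \cite[Lemma~2.14]{Heu22b} proves this statement; the paper under review gives no independent proof beyond that citation. One small point of precision: the uniformity in $\alpha$ rests on the fact that $v(\zeta_{p^N}^{m_i}-1)=1/(p^{N-1}(p-1))$ is bounded \emph{above} by $1/(p-1)$, so that the single element $\zeta_p-1$ kills every non-integral summand -- positivity of the valuation alone would not suffice.
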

		
	\end{secnumber}
		As before, let $G$ be any rigid group over $K$, written multiplicatively. Let $\Lie(G)$ be its Lie algebra and let $\mathfrak g:=\Lie(G)\otimes_K\mathbb G_a$ be the associated rigid group. We recall the $p$-adic Lie algebra exponential of $G$:
		\begin{lemma}[{\cite[\S 3.2, Proposition 3.5]{heuer-G-torsors-perfectoid-spaces} \cite{Sch}}]\label{l:ex-of-exp-general-G}
			There exists an open rigid subgroup $\fg^{\circ}\subseteq \mathfrak g$, isomorphic as a rigid space to a closed ball, for which there is a morphism of rigid spaces
			\[
			\exp:\fg^{\circ}\to G
			\]
			(but not a homomorphism, unless $G$ is commutative)
			that is  uniquely characterised by the following properties:
			\begin{enumerate}
				\item 
				$\exp$ is an open immersion onto an open subgroup $G_0$ of $G$.
				\item We have $\exp(0)=1$ and $\exp$ induces the identity map on tangent spaces.
				\item The group structures of $\mathfrak g^\circ$ and $G$ are related via $\exp$ by the Baker--Campbell--Hausdorff formula. 
			\end{enumerate}
		\end{lemma}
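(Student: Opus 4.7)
The plan is to construct $\exp$ via the formal exponential series extracted from the formal group law of $G$ at the identity, then exhibit explicit convergence bounds on a sufficiently small closed ball. First, since $G$ is a smooth rigid group over $\Spa(K)$, applying \Cref{l:existence-of-subgroup-isom-to-ball} yields an open subgroup $U\subseteq G$ together with an isomorphism $U\cong\mathbb{B}^d_K$ of adic spaces sending the identity $e$ to the origin. Completing at $e$ gives a formal group $\widehat{G}_e=\Spf(K[[X_1,\ldots,X_d]])$ whose multiplication is described by a formal group law $F(X,Y)\in K[[X,Y]]^d$, whose tangent space is canonically identified with $\Lie(G)$.

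Next, I would invoke the classical theory of formal Lie groups over a characteristic zero field (as used already in the proof of \Cref{p:nbhd-open-subgroup-ball}, following \cite{Sch}) to produce the unique formal power series $\Phi(Z)=\sum_{n}\omega_n Z^n/n!\in K[[Z]]^d$ with $\Phi(0)=0$ and $\mathrm d\Phi(0)=\id$ satisfying
\[F(\Phi(Y),\Phi(Z))=\Phi(\mathrm{BCH}(Y,Z)),\]
where $\mathrm{BCH}(Y,Z)=Y+Z+\tfrac{1}{2}[Y,Z]+\cdots$ is the Baker--Campbell--Hausdorff series computed using the Lie bracket on $\Lie(G)$. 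Exactly as in the commutative case, the bound $\|\omega_n\|\leq\|\mathrm d\Phi(0)\|^{|n|}=1$ combined with $|1/n!|_p\leq|p|^{-|n|/(p-1)}$ shows that $\Phi(p^k Z)\in K^\circ\langle Z\rangle^d$ for $k$ sufficiently large. Hence for some $\epsilon>0$, $\Phi$ gives a morphism of rigid spaces $\mathbb{B}^d_\epsilon\to U$, which after identifying coordinates with a basis of $\Lie(G)$ becomes a morphism $\mathfrak{g}^\circ\to G$ for $\mathfrak{g}^\circ$ the closed ball of radius $\epsilon$ in $\mathfrak{g}=\Lie(G)\otimes_K\G_a$ (an open subgroup of $\mathfrak{g}$ under addition, since $\epsilon\leq1$ ensures $|x+y|\leq\epsilon$).

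To obtain property (3), I would use the standard estimates on the BCH series — namely that its coefficients are bounded in terms of iterated brackets, so $\mathrm{BCH}$ converges on $\mathbb{B}^d_\epsilon\times\mathbb{B}^d_\epsilon$ and maps into $\mathbb{B}^d_\epsilon$ after possibly shrinking $\epsilon$ further. The identity $F\circ(\Phi\times\Phi)=\Phi\circ\mathrm{BCH}$ then holds on rigid points and, by density, as an identity of morphisms of affinoids. Property (1) reduces to showing $\exp$ is an open immersion: since $\mathrm d\Phi(0)=\id$, the map $\exp$ is étale at $0$, hence étale and open in a neighbourhood; injectivity on this neighbourhood follows because $\mathrm{BCH}(Y,-Y)=0$ yields a two-sided formal inverse (the formal logarithm) whose convergence on the image we secure by shrinking $\mathfrak{g}^\circ$ once more. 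Property (2) is built into the construction. Uniqueness follows from the uniqueness of $\Phi$ as a formal power series, together with the identity principle on rigid spaces.

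The main obstacle is tracking the constants: one must choose $\epsilon$ small enough that simultaneously (a) the formal exponential $\Phi$ converges and lands in the chosen ball $U$, (b) the BCH series converges and lands in $\mathfrak{g}^\circ$, and (c) the inverse logarithm converges on $\exp(\mathfrak{g}^\circ)$. Each of these requires an explicit bound involving the norm of the Lie bracket as a bilinear operator on $\Lie(G)$ with respect to an integral lattice; the non-commutativity of $G$ enters here through the higher-order BCH terms. Once these estimates are carried out uniformly in the same basis coming from the toric-type chart of \Cref{l:existence-of-subgroup-isom-to-ball}, all conclusions follow.
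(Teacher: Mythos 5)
The paper does not prove this lemma itself but imports it from the cited references, and your construction — formal group law at the identity, the formal exponential $\Phi$ intertwining $F$ with the BCH series, convergence estimates to descend from formal power series to a morphism on a small closed ball, then étaleness at $0$ plus the formal logarithm for the open-immersion property — is exactly the argument of those sources and mirrors the paper's own proof of the commutative analogue (\Cref{p:nbhd-open-subgroup-ball}). Your proposal is correct; the only caveat is that the coefficient bound $\|\omega_n\|\leq 1$ is not literally "as in the commutative case" but requires the bracket-norm estimates you yourself flag at the end, which is a matter of constants and not of substance.
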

		We note that the subgroup $\fg^\circ\subseteq \fg$ is not in general uniquely determined. In the following, when we deal with any rigid group $G$, we will always tacitly fix such a group $\fg^\circ$ throughout. For reductive $G$, there is in fact a canonical choice \cite[Example~3.3, Lemma~4.20]{heuer-G-torsors-perfectoid-spaces}. But as we will never need any precise estimates or radii of convergence in the following, it is harmless to just make any choice.
		For any $k\in \Z_{\geq 0}$, we then set
		\[\mathfrak g_k:=p^k\mathfrak m_K \mathfrak g^\circ,\] which is on open subgroup of $\mathfrak g$. Then its image $G_k:=\exp(\mathfrak g_k)$ is an open subgroup of $G$ by  \cite[Proposition 3.5]{heuer-G-torsors-perfectoid-spaces}, and the map
		\[\exp:\mathfrak g_k\isomarrow G_k\]
		is an isomorphism of rigid spaces. We call its inverse $\log$. 
	\begin{secnumber}
		For the formulation of the local correspondence,
		we can now recall the notion of small $G$-bundles:
	\end{secnumber}
	
	\begin{definition}[\cite{Heu} Definition 6.2, Lemma~6.3]
		Let $Y$ be smoothoid with a fixed toric chart $f$. Let $\gamma$ be as in  \Cref{d:def-gamma}. Set $c:=5\gamma$ (we refer to \cite[Proposition 5.5]{Heu} for a motivation of this constant).
		\begin{enumerate}
			\item  A $G$-bundle $V$ on $Y_v$ is \textit{small} if $V$ admits a reduction of structure group to $G_c$. 
			\item A $G$-Higgs bundle $(E,\theta)$ on $Y_\et$ is \textit{small} if $E$ is trivial and there exists a trivialisation $E\cong G$ with respect to which $\theta$ is a section of the $\O_Y^+$-submodule
			$
			\mathfrak g_c\otimes_{\O_Y^+} \widetilde{\Omega}_{Y,f}^+
			$, where $\widetilde{\Omega}_{Y,f}^+$ was defined in \Cref{s:data-induced-by-toric-chart}.
		\end{enumerate}
	\end{definition}
	We can now recall the local $p$-adic Simpson correspondence. This is a generalisation of a result of Faltings (\cite[Theorem~3]{Fal05},  \cite[\S II.13]{AGT-p-adic-Simpson}) from $\GL_n$ to general rigid groups $G$ and to smoothoid spaces:
	\begin{theorem}[Local $p$-adic Simpson correspondence for $G$, {\cite[Theorem~6.5]{Heu}}]\label{t:small-corresp}
		Let $Y$ be a toric smoothoid space over $K$ and let $f:Y\to \TT^d\times T$ be a toric chart. 
		Then $f$ induces an equivalence of groupoids
		\[
		\LS_f: \big\{\textnormal{small $G$-Higgs bundles on $Y_{\et}$}\big\} \xrightarrow{\sim} 
		\big\{\textnormal{small v-$G$-bundles on $Y_v$}\big\}.
		\]
		In the case of $\GL_n$, this extends to an equivalence of categories
		\[
		\LS_f: \big\{\textnormal{small Higgs bundles on $Y_{\et}$}\big\} \xrightarrow{\sim} 
		\big\{\textnormal{small v-vector bundles on $Y_v$}\big\}.
		\]
	\end{theorem}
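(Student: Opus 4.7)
The plan is to use the pro-\'etale $\Delta_f$-torsor $Y_\infty\to Y$, with $\Delta_f\cong\mathbb Z_p(1)^d$, to translate the equivalence into a question about continuous $\Delta_f$-cocycles and then match the two sides via the exponential $\exp:\mathfrak g_c\isomarrow G_c$ from \Cref{l:ex-of-exp-general-G} together with the isomorphism $\HT_f$ from \S\ref{s:data-induced-by-toric-chart}. To construct $\LS_f$ on a small $G$-Higgs bundle $(E,\theta)$, I use the supplied trivialisation $E\cong G$ to expand $\theta=\sum_i\theta_i\otimes\omega_i$ in the basis $\omega_i$ of $\widetilde{\Omega}^+_{Y,f}$ dual to the $\rho_f$-image of a basis $\gamma_i$ of $\Delta_f$. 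The Higgs condition $\theta\wedge\theta=0$ forces $[\theta_i,\theta_j]=0$, so the Baker--Campbell--Hausdorff formula on $\mathfrak g_c$ degenerates and the rule $\gamma_i\mapsto\exp(\theta_i)$ extends uniquely to a continuous group homomorphism $c_\theta:\Delta_f\to G_c(Y)$. I view $c_\theta$ as a continuous $1$-cocycle with trivial action on $G_c(Y_\infty)$ and let $\LS_f(E,\theta)$ be the small v-$G$-bundle obtained by descent along $Y_\infty\to Y$. A different trivialisation of $E$ modifies $\theta$ by $G(Y)$-conjugation and $c_\theta$ by the matching coboundary, so $\LS_f$ is well-defined on isomorphism classes.

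The harder direction is essential surjectivity, which will be the main obstacle. Given a small v-$G$-bundle $V$ with its chosen reduction to $G_c$, I would first show that $V|_{Y_\infty}$ is trivialisable as a $G_c$-bundle, and then that its descent cocycle $c:\Delta_f\to G_c(Y_\infty)$, up to coboundary in $G(Y_\infty)$, takes values in $G_c(Y)$ and is in fact a group homomorphism. Both steps are to be proved by iterated approximation modulo $p^n$: an error living in $\rH^i_\cts(\Delta_f,\O^+_{Y_v}(Y_\infty)/p^s)$ is reduced to one in $\rH^i_\cts(\Delta_f,\O_Y^+(Y)/p^s)$ using the $\gamma$-controlled comparison of \Cref{d:def-gamma}, corrected inside the Lie algebra $\mathfrak g$, and transported back to $G$ via $\exp$. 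The choice $c=5\gamma$ is exactly what is needed for the cumulative loss of precision to stay inside $\mathfrak g_c$, so that BCH remains valid and the sequence of corrections converges. Once $c$ has been put in the canonical form $\gamma_i\mapsto\exp(\theta_i)$, the $\theta_i\in\mathfrak g_c(Y)$ automatically commute (since the $\gamma_i$ do) and assemble via $\HT_f$ into a small Higgs field $\theta$ with $\LS_f(E,\theta)\cong V$. Full faithfulness is then proved by the same translation: morphisms of small Higgs bundles are elements of $G(Y)$ intertwining the $\theta_i$, which under $\exp$ correspond bijectively to intertwiners in $G(Y_\infty)$ of the associated cocycles.

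For the category-level equivalence in the case $G=\GL_n$, I would first observe that any small v-vector bundle, after a suitable refinement of the v-cover, admits a trivialisation of its underlying $\GL_n$-bundle, reducing the essential-surjectivity part to the groupoid statement just established. Non-isomorphism morphisms are then handled additively: such a morphism corresponds at the cocycle level to an element of $\mathfrak{gl}_n(Y_\infty)$ intertwining the two cocycles, and $\HT_f$ together with the $\O_Y(Y)$-linearity of the construction sends this bijectively to a morphism of the associated Higgs bundles, with compatibility of composition being immediate from linearity.
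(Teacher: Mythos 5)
Your construction of $\LS_f$ (expanding $\theta$ in the basis dual to $\rho_f$, using $\theta\wedge\theta=0$ to get commuting $\theta_i$, exponentiating to a homomorphism $\Delta_f\to G_c(Y)$ and descending along $Y_\infty\to Y$) reproduces exactly the construction the paper reviews in \S\ref{sss:LS}; the paper does not reprove the theorem but quotes it from the reference, whose proof of essential surjectivity and full faithfulness proceeds precisely by the iterated mod-$p^n$ approximation you describe, with the comparison constant $\gamma$ of \Cref{d:def-gamma} and the choice $c=5\gamma$ absorbing the cumulative loss of precision. So your outline is the standard/same approach and is correct, with the caveat that the analytic heart (trivialising the $G_c$-reduction over $Y_\infty$ and normalising the cocycle by coboundaries that must themselves stay in a congruence subgroup so as not to destroy smallness) is only sketched rather than carried out.
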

	We also recall for later reference that one can always apply $\LS_f$ locally on $Y$:
	\begin{lemma}[{\cite[Lemmas~6.4, 4.11]{Heu}}]\label{l:loc-small}
		For any v-$G$-bundle $V$ on $Y$, there is an \'etale cover $g:Y'\to Y$ with a toric chart $h$ of $Y'$ such that $g^\ast V$ is small with respect to $h$. The same holds for $G$-Higgs bundles.
	\end{lemma}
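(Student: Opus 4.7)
The plan is to reduce to the toric case and then exploit the continuity of descent cocycles along the canonical pro-\'etale perfectoid cover. Since both conclusions are \'etale-local on $Y$ and any smoothoid space is \'etale-locally toric, I may assume $Y$ admits a toric chart $f:Y\to \TT^d\times T$. The pullback of $\TT^d_\infty\to \TT^d$ along $f$ is a pro-\'etale affinoid perfectoid $\Delta_f$-torsor $Y_\infty\to Y$, which is the inverse limit of finite \'etale covers $Y_n\to Y$ corresponding to the quotients $\Delta_f/p^n\Delta_f\cong (\Z/p^n)(1)^d$.

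For v-$G$-bundles, I would use the equivalence between v-$G$-bundles and analytic $G$-bundles on an affinoid perfectoid space (\cite[\S3.3]{heuer-G-torsors-perfectoid-spaces}), together with the fact that analytic $G$-bundles for the smooth rigid group $G$ trivialise after an \'etale cover. Applied to $V|_{Y_\infty}$, this gives an \'etale cover $U\to Y_\infty$ on which $V$ becomes trivial. By the colimit description of the \'etale site along cofiltered inverse limits of qcqs spaces (\cite[Proposition~14.9]{Sch18}), $U$ is the pullback of an \'etale cover $U_n\to Y_n$ for some $n$; hence $Y':=U_n\to Y$ is \'etale, and $V|_{Y'}$ is trivialised on $Y'\times_YY_\infty\to Y'$ (after possibly averaging the cover over $\Delta_f/p^n\Delta_f$ to make it $\Delta_f$-equivariant). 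The descent datum is then encoded by a continuous $1$-cocycle $c:\Delta_f\to G(Y'\times_YY_\infty)$. Since $G_c\subseteq G$ is an open neighbourhood of the identity and $\Delta_f$ is a compact topological group acting continuously, $c^{-1}(G_c)$ contains an open subgroup $\Delta_f'\subseteq \Delta_f$ of finite index. Replacing $Y'$ by the corresponding intermediate finite \'etale cover, the refined cocycle takes values in $G_c$, which is precisely a reduction of structure group of $V|_{Y'}$ to $G_c$, i.e.\ smallness with respect to the toric chart $h$ on $Y'$ induced by $f$.

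For $G$-Higgs bundles, the argument runs in parallel: one first trivialises $E$ \'etale-locally (possible since $G$ is smooth), and then on a suitably refined \'etale cover $Y'\to Y$ with a new toric chart $h$, arranges the resulting Higgs field $\theta$ to lie in $\fg_c\otimes \wtOm^+_{Y',h}$ by scaling the chart and using quasi-compactness to bound $\theta$. The main obstacle is the step in the v-bundle case of making the \'etale trivialisation over $Y_\infty$ descend compatibly with the $\Delta_f$-action to an \'etale cover of $Y$ itself; this requires careful bookkeeping with the pro-system of finite \'etale intermediate covers $Y_\infty/\Delta_f'\to Y$ as $\Delta_f'$ ranges over open subgroups of finite index, and ensuring Galois equivariance of the trivialising cover.
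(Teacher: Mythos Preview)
The paper does not supply a proof of this lemma; it is quoted directly from \cite[Lemmas~6.4, 4.11]{Heu}. Your outline---trivialise $V$ over the affinoid perfectoid toric cover $Y_\infty$ up to an \'etale cover, descend that \'etale cover to a finite level $Y_n$ via \cite[Proposition~14.9]{Sch18}, and then use continuity of the resulting $\Delta_f$-valued $1$-cocycle to force it into $G_c$ after passing to a further finite subcover---is essentially the argument in the cited reference, and the Higgs case is indeed handled by trivialising $E$ and rescaling the chart.

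One point deserves a little more care than you give it. The smallness constant $c=5\gamma$ is attached to the chart, not to $G$: when you pass to $Y'$ and equip it with the composite chart $h\colon Y'\to Y\to \TT^d\times T$, you must land the cocycle in $G_{c_h}$ for the new constant $c_h$, not in $G_{c_f}$. This is not a serious obstruction---any open subgroup of $\Delta_f$ contains some $p^m\Delta_f$, and one can always shrink further---but it belongs to the ``careful bookkeeping'' you flag at the end and should not be conflated with the Galois-equivariance issue you identify. Once this is said, your sketch is correct.
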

	\begin{secnumber} \label{sss:LS}
		We briefly review the construction of $\LS_f$, see \cite[\S6]{Heu} for details.
		Let $(E,\theta)$ be a small $G$-Higgs bundle where $E\cong G$. Via $\HT_f$, the section $\theta\in 	\mathfrak g_c\otimes_{\O_Y^+} \widetilde{\Omega}_{Y,f}^+$ corresponds to a continuous homomorphism 
		\begin{equation} \label{eq:rho-theta}
			\rho:\Delta_f\to \mathfrak g_c(Y).\end{equation}
		More explicitly, it is given by interpreting any $\gamma\in \Delta_f$ via $\HT_f$ as a function $\partial(\gamma):\widetilde{\Omega}_{Y,f}^+\to \O^+_Y$ and setting $\rho(\gamma):=\partial(\gamma)(\theta)$. It follows from this that the Higgs field condition for $\theta$  translates to the statement that $\rho$ has commutative image, in the sense that for any $\gamma,\gamma'\in \Delta_f$, we have $[\rho(\gamma),\rho(\gamma')]=0$ in $\mathfrak g_c(Y)$. By \Cref{l:ex-of-exp-general-G}.(3),
		this ensures that applying $\exp$ preserves the linearity, so we obtain a homomorphism
		\[\exp(\rho):\Delta_f\to \mathfrak g_c(Y)\xrightarrow{\exp}G_c(Y).\]
		The associated v-$G$-bundle $V=V_\rho$ on $Y_v$ is now defined for any $W\in Y_v$ by 
		\begin{equation} \label{eq:LS}
			V_\rho(W):=\big\{ s\in E(Y_{\infty}\times_Y W)\big| \gamma \cdot s=\exp(-\rho(\gamma))s,\quad \forall \gamma\in \Delta_f\big\},
		\end{equation}
		where the action of $\Delta_f$ on $E(Y_{\infty}\times_Y W)$ is induced by the Galois action of $\Delta_f$ on $Y_\infty$. More geometrically,   $V_\rho$ is isomorphic to the pushout of the $\underline{\Delta}_f$-torsor $Y_\infty\to Y$ along the morphism of v-sheaves $\underline{\Delta}_f\to G_c\to G$ attached to $\rho$.
		This shows that $V_\rho$ is a small v-$G$-bundle on $Y_v$. On the other hand, \eqref{eq:LS} shows that there is a canonical isomorphism
		\begin{equation}\label{eq:comp-LS-on-toric-cover}
			(V_\rho)|_{Y_\infty}\xrightarrow{\sim} E|_{Y_\infty}.
		\end{equation}
	\end{secnumber}
	\subsection{Canonical Higgs field for v-$G$-bundles}
	The main result of  \S\ref{s:can-Higgs} is now the following:
	\begin{theorem} \label{t:canonicalHiggs}
		Let $Y$ be a smoothoid adic space over $K$ and let $G$ be any rigid group over $K$. 
		\begin{enumerate}
			\item 
			There is a unique way to associate to any v-$G$-bundle $V$ on $Y$ a canonical Higgs field 
			\[\theta_V\in\rH^0(Y,\ad(V)\otimes_{\mathcal O_Y}\widetilde{\Omega}_Y)\] 
			in such a way that the following two conditions holds:
			\begin{enumerate}
				\item The association $V\mapsto (V,\theta_V)$ defines a fully faithful functor,  natural in $Y$ and $G$,
				\[\theta:\big\{\text{v-$G$-bundles on $Y$}\big\}\to \big\{\text{v-$G$-Higgs bundles on $Y$}\big\}.\]
				\item 
				When $Y$ admits a toric chart $f$ and $V$ is small on $Y$, let $(E,\theta_E)=\LS_f^{-1}(V)$ be the associated $G$-Higgs bundle via the local $p$-adic Simpson correspondence of \Cref{t:small-corresp}.
				Then the natural isomorphism $V(Y_{\infty})\simeq E(Y_{\infty})$ of  \eqref{eq:comp-LS-on-toric-cover} identifies the pullbacks of $(V,\theta_V)$ and $(E,\theta_E)$ to $Y_{\infty}$. 
			\end{enumerate} 
			\item\label{p:Cartierdescent} The morphism of topoi $\nu:\widetilde{Y}_v\to \widetilde{Y}_\et$ induces an equivalence of categories
			\[  \big\{\text{$G$-bundles on $Y_\et$}\big\}\to \big\{\text{v-$G$-bundles $V$ on $Y$ with $\theta_V=0$}\big\}.\]
			
		\end{enumerate}
	\end{theorem}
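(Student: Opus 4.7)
The plan for part (1) is a uniqueness-then-gluing argument: condition~(b) pins down $\theta_V$ locally on a toric cover, and the global Higgs field is assembled via \'etale descent using \Cref{l:loc-small}. For part (2), I trace the triviality of $\theta_V$ through $\LS_f^{-1}$ and invoke fully faithfulness of $\nu^*$ on $G$-bundles.

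\emph{Uniqueness and local existence.} Suppose $\theta_V$ and $\theta'_V$ both satisfy~(b). By \Cref{l:loc-small}, there is an \'etale cover $g:Y'\to Y$ carrying a toric chart $f$ such that $g^*V$ is small. Condition~(b) forces both $g^*\theta_V$ and $g^*\theta'_V$ to coincide with the transport of $\theta_E$ from $(E,\theta_E):=\LS_f^{-1}(g^*V)$ along \eqref{eq:comp-LS-on-toric-cover}, hence they agree on $Y'_\infty$; since $\ad(V)\otimes\wtOm_Y$ is a vector bundle on $Y_\et$ and hence a v-sheaf, v-descent along $Y'_\infty\to Y'$ and then \'etale descent along $g$ give $\theta_V=\theta'_V$. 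For local existence on a toric $Y$ with chart $f$ making $V$ small, set $(E,\theta_E):=\LS_f^{-1}(V)$. Since $\theta_E$ already lives on $Y$, its pullback $\theta_E|_{Y_\infty}$ is $\Delta_f$-invariant, and the $\Delta_f$-equivariant isomorphism \eqref{eq:comp-LS-on-toric-cover} transports it to a $\Delta_f$-invariant section of $\ad(V)\otimes\wtOm_Y$ over $Y_\infty$, which descends to the required $\theta_V$ on $Y$.

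\emph{Global gluing.} The main obstacle is independence of the chart. Fix an \'etale cover $\{g_i:Y_i\to Y\}$ with toric charts $f_i$ for which $g_i^*V$ is small; the local construction produces $\theta_{V,i}\in\rH^0(Y_i,\ad(V)\otimes\wtOm_Y)$. On each overlap $Y_{ij}=Y_i\times_Y Y_j$, reapply \Cref{l:loc-small} to find a further toric cover $h:Y'''\to Y_{ij}$ with its own chart $f'''$ making $V|_{Y'''}$ small; by the naturality of the construction of \S\ref{sss:LS} under refinement of toric charts, both $h^*(\theta_{V,i}|_{Y_{ij}})$ and $h^*(\theta_{V,j}|_{Y_{ij}})$ must agree with the local Higgs field attached to $f'''$ on $Y'''$. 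Hence they agree on $Y'''$, and therefore on $Y_{ij}$; \'etale descent then produces the global $\theta_V$. Full faithfulness of $V\mapsto(V,\theta_V)$ is automatic since any morphism of v-$G$-bundles corresponds under $\LS_f^{-1}$ locally to a morphism of Higgs bundles, which by definition preserves the Higgs field. Naturality in $Y$ and $G$ follows from the corresponding properties of $\LS_f$.

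\emph{Proof of (2).} For $E\in\{G\text{-bundles on }Y_\et\}$, the v-$G$-bundle $\nu^*E$ is \'etale-locally trivial and hence small with respect to any toric chart $f$. Setting $\rho=0$ in \eqref{eq:rho-theta}--\eqref{eq:LS} shows that the trivial Higgs bundle $(G,0)$ maps under $\LS_f$ to the trivial v-$G$-bundle, whose descent datum from $Y_\infty\to Y$ recovers the \'etale $G$-bundle; so $\theta_{\nu^*E}=0$ by~(b). Conversely, if $\theta_V=0$, then on any toric cover $g:Y'\to Y$ making $g^*V$ small, (b) forces $\LS_f^{-1}(g^*V)=(G,0)$; by the same computation, $g^*V$ is then the image under $\nu^*$ of the trivial \'etale $G$-bundle on $Y'$. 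To descend these local trivializations to a global \'etale $G$-bundle $E$ with $\nu^*E\simeq V$, one uses fully faithfulness of $\nu^*$ on $G$-bundles: for \'etale $G$-bundles $E_1,E_2$, the sheaf $\FHom(E_1,E_2)$ is \'etale-locally a trivial torsor under the smooth rigid group $\uAut(E_2)$, and every smooth rigid group, being already a v-sheaf, has identical \'etale and v-sections.
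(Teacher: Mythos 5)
Your outline of the overall strategy (local construction via $\LS_f$, then gluing via \Cref{l:loc-small}) matches the paper, but two of the steps you treat as automatic are exactly where the real content lies, and as written both have genuine gaps.

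First, in the local existence step you claim that the isomorphism \eqref{eq:comp-LS-on-toric-cover} is ``$\Delta_f$-equivariant'' and therefore transports the $\Delta_f$-invariant section $\theta_E|_{Y_\infty}$ to a $\Delta_f$-invariant section of $\ad(V)\otimes\wtOm_Y$ over $Y_\infty$. This is not correct: $(V_\rho)|_{Y_\infty}\simeq E|_{Y_\infty}$ as sheaves, but the two descent data along $Y_\infty\to Y$ differ precisely by the cocycle $\gamma\mapsto\exp(-\rho(\gamma))$, so invariance for the $E$-action does not give invariance for the $V$-action. Descending $\theta$ to a section of $\ad(V_\rho)\otimes\wtOm_Y$ over $Y$ requires verifying $\theta=\ad(\exp(-\rho(\gamma)))\cdot\theta$ for all $\gamma\in\Delta_f$, which is a nontrivial identity: it holds because $[\rho(\gamma),\rho(\gamma')]=0$ (i.e.\ because $\theta$ is a Higgs field) together with the Baker--Campbell--Hausdorff description of $\exp$ in \Cref{l:ex-of-exp-general-G}.(3). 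This computation is the heart of the construction and cannot be replaced by an equivariance assertion.

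Second, your gluing step rests on ``naturality of the construction of \S\ref{sss:LS} under refinement of toric charts,'' but no such naturality is available: two charts $f,f'$ on the same $Y$ give different covers $Y_\infty$, different Galois groups $\Delta_f$, and different maps $\HT_f$, and the independence of $\theta_V$ from the chart is precisely the statement that must be proved. The paper handles this by noting that for $G=\GL_n$ the local definition agrees with \cite[Theorem 4.8]{Heu23}, where chart-independence is established, and then reduces general $G$ to $\GL_r$ by shrinking to an open subgroup $G_k$ admitting a locally closed immersion $\varphi:G_k\hookrightarrow\GL_r$ and using injectivity of $\Lie(\varphi)\otimes\wtOm$. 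Without some argument of this kind (or a direct computation comparing two charts), your appeal to naturality is circular. The remainder of your proposal --- uniqueness from (b), full faithfulness, and part (2) --- is essentially fine once these two points are repaired.
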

	\begin{definition} \label{d:canonicalHiggs}
		We call $\theta_V$ the \textit{canonical Higgs field} of  $V$. 
	\end{definition}
	The naturality in (1).(a) means in particular that the formation of $\theta_V$ is  compatible with localisation.

	\begin{rem}
		Part (2) means that $\theta_V$ can be viewed as a mixed characteristic analogue of the \textit{$p$-curvature} in mod $p$ geometry.
		Indeed, (2) can be regarded as an analogue of Cartier descent \cite[Theorem 5.1]{Ka71}. 
	\end{rem}
	\begin{rem}
		For $G=\GL_n$ on smooth rigid spaces, \Cref{t:canonicalHiggs} is due to Rodr\'iguez Camargo \cite{camargo2022geometric} (up to a small difference in technical setups: In \cite{camargo2022geometric}, it is assumed that $K$ is algebraically closed). Our result can thus be regarded as a generalisation to general $G$ and smoothoid $Y$, by a different proof.
	\end{rem}
	\begin{rem}
		Assume that $G$ is commutative, then $\ad(G)\otimes \wtOm=\Lie(G)\otimes \wtOm$. Therefore, in this special case,  \Cref{t:canonicalHiggs} is closely related to the short exact sequence of \Cref{c:leray-seq-for-whG}, which we may regard as a geometrisation and a generalisation to relative groups.
		Indeed, given a v-$G$-torsor $V$, we can regard $\theta:=\HTlog(V)$ as a Higgs field, and the left-exactness of the sequence corresponds to \Cref{t:canonicalHiggs}.\ref{p:Cartierdescent}.
		
		In fact, we think that there ought to be a generalisation of \Cref{t:canonicalHiggs} to relative groups on smoothoids.
	\end{rem}
	\begin{proof}[{Proof of  Theorem \ref{t:canonicalHiggs}}]
		We first assume that $Y$ is toric with a fixed toric chart $f$ and that $V$ is small with respect to $f$. Then by \Cref{t:small-corresp}, we can find a small $G$-Higgs bundle $(E,\theta)$ such that $V=\LS_f(E,\theta)$. We may thus assume that $V$ is as described in \eqref{eq:LS}. Unravelling the definition of $\ad(-)$, it  follows that
		\begin{equation}
			(\ad(V_\rho)\otimes_{\O_Y} \wtOm_Y)(Y)=\big\{ s\in \ad(E)(Y_{\infty})\otimes_{\O(Y)} \wtOm_Y(Y)\big| \gamma \cdot s=\ad(\exp(-\rho(\gamma)))\cdot s,\quad \forall \gamma\in \Delta_f\big\}
		\end{equation}
		where on the right, the adjoint action $\ad:G\to \uEnd(\ad(E))$ is obtained by deriving the action $G\to \uAut(E)$.
		We claim that $\theta$ is an element of this set. Since $\theta$ is fixed by the $\Delta_f$-action, it suffices to prove that
		\[\theta=\ad(\exp(-\rho(\gamma)))\cdot \theta.\]
		Via $\HT_f$, we can identify $\theta$ with $\rho\in \Hom(\Delta_f,\O(Y))$, so it suffices to prove that for any $\gamma,\gamma'\in \Delta_f$,
		\[\rho(\gamma')=\ad(\exp(-\rho(\gamma)))\cdot \rho(\gamma')\]
		inside $\mathfrak g_c(Y)$.
		By applying the bijection $\exp$ and using \cite[Lemma~3.10.3]{heuer-G-torsors-perfectoid-spaces}, this is equivalent to
		\[\exp(\rho(\gamma'))=\exp(\rho(\gamma))\cdot \exp(\rho(\gamma'))\cdot \exp(-\rho(\gamma)).\]
		This holds by \Cref{l:ex-of-exp-general-G}.(3) (see also \cite[Lemma~3.10.1]{heuer-G-torsors-perfectoid-spaces}) because $[\rho(\gamma),\rho(\gamma')]=0$ due to the assumption that $\theta$ is a Higgs field. This shows that $\theta$ defines a Higgs field $\theta_V$ on $V$, as we wanted to see.

		This construction is clearly functorial in $V$ and natural in $G$ and $(Y,f)$. As any v-$G$-torsor on $Y$ becomes small on some \'etale cover by \Cref{l:loc-small}, it remains to prove that $\theta_V$ is independent of the toric chart $f$. If this is the case, then the local definitions glue to a global Higgs field on $Y$. This will also show (1).(a).
		
		We note that for $G=\GL_n$, the above local definition of $\theta_V$ recovers that in \cite[Theorem 4.8, Remark~4.9]{Heu23}. This Theorem also proves  the independence of toric chart for $G=\GL_n$, as the proof still works without change for smoothoid $Y$. We are therefore left to reduce the general case to that of $\GL_n$.
		
		To this end, let $f'$ be any other toric chart of $Y$ and let $\theta'_V$ be the associated Higgs field computed with respect to $f'$. We wish to see that $\theta_V=\theta'_V$. Since we can check this on any \'etale cover of $Y$, we may shrink $G$: By \cite[Corollary~3.9]{heuer-G-torsors-perfectoid-spaces}, there are $k,r\in \N$  such that $G_k$ admits a homomorphism $\varphi:G_k\hookrightarrow \GL_r$ that is a locally closed immersion. After replacing $Y$ by some \'etale cover, we may assume that $V$ admits a reduction of structure group to $G_k\subseteq G_c$ for this $k$. By functoriality in $G$, it therefore suffices to prove the statement for $G_k$ instead of $G$. But then, as $\varphi$ is an immersion, the induced map
		\[ \Lie(\varphi)\otimes \wtOm:\mathfrak g\otimes \wtOm\to M_r(\O_Y)\otimes \wtOm\]
		is injective. We can thus indeed reduce to the case of $G=\GL_r$ to check that $\theta_V=\theta_V'$. This shows (1)(b).
		
		Part (2) follows immediately from  the local construction: Via the local $p$-adic Simpson correspondence \Cref{t:small-corresp}, the Higgs bundle $\LS_f^{-1}(V)$ has trivial Higgs field if and only $V$ is \'etale-locally trivial.
	\end{proof}
	\begin{rem}
		Alternatively, the independence of toric chart can be seen by a direct computation.
	\end{rem}

	\subsection{Moduli stacks}
	\begin{definition}[{\cite[\S7.2]{Heu}}]
		Let $X$ be a smooth rigid space over $K$. 
		\begin{enumerate}
			\item For $\tau\in \{\et,v\}$, we denote by $\CBun_{G,\tau}$ the prestack on $\Perf_K$ defined by 
			\[
			T\mapsto \{G\textnormal{-bundles on } (X\times T)_{\tau} \}. 
			\]
			\item Let $\CHig_{G,\tau}$ be the prestack of Higgs bundles defined as the fibered functor on $T\in \Perf_K$: 
			\[
			T\mapsto \{\textnormal{$\tau$-$G$-Higgs bundles on $X\times T$}\}.
			\]
		\end{enumerate}
			The key players in this article will be $\CBun_{G,v}$ and $\CHig_{G,\et}$. We therefore also set $\CHig_{G}:=\CHig_{G,\et}$.
	\end{definition}
	\begin{prop}[{\cite[Theorem~7.13]{Heu}}]\label{p:CBun-small}
		For $\tau\in \{\et,v\}$, both $\CBun_{G,\tau}$ and $\CHig_{G,\tau}$ are v-stacks. Moreover, $\CBun_{G,\et}$, $\CBun_{G,v}$ and $\CHig_{G,\et}$ are small v-stacks. 
	\end{prop}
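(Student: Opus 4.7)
The plan is to separate the two assertions cleanly: the v-stack property is essentially a descent statement, while smallness requires combining algebraic input on the \'etale side with the local $p$-adic Simpson correspondence on the v-side.

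First, I would establish the v-stack property. For $\CBun_{G,v}$ this is essentially formal: if $T'\to T$ is a v-cover in $\Perf_K$, then $X\times T'\to X\times T$ is still a cover in the v-site of the smoothoid $X\times T$, so that v-$G$-torsors glue by their very definition once one invokes the equivalence between cohomological and geometric $G$-torsors recorded in \S\ref{sss:Gbundle}. For $\CBun_{G,\et}$ the subtle point is that an \emph{\'etale} $G$-bundle on $(X\times T)_{\et}$ must satisfy descent along v-covers of $T$ only; one reduces to $G=\GL_n$ by locally embedding $G$ into some $\GL_n$ and tracking the structure group, and then uses v-descent for vector bundles on smoothoid adic spaces (a consequence of the diamantine nature of smoothoids together with Scholze's v-descent for quasi-coherent sheaves). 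Granted the v-stack property of $\CBun_{G,\et}$, the case of $\CHig_{G,\et}$ follows by adding the datum of $\theta\in \rH^0(X\times T, \ad(E)\otimes\wtOm)$ with $\theta\wedge\theta=0$: global sections of the coherent $\ad(E)\otimes\wtOm$ glue along v-covers, and the integrability condition is local.

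Next, smallness of $\CBun_{G,\et}$ and $\CHig_{G,\et}$. Here I would compare with the algebraic moduli stacks $\Bun_G^{\alg}$ and $\CHig_G^{\alg}$ of algebraic $G$-bundles and $G$-Higgs bundles on $X$ over $K$: these are well-known algebraic stacks locally of finite type, and their diamondifications $(\Bun_G^{\alg})^{\dia}$ and $(\CHig_G^{\alg})^{\dia}$ are small v-stacks. A comparison result in the spirit of \Cref{t:intro-comparison-alg} (whose proof for $\CHig$ is independent of the $\CBun$ case and only requires a perfectoid GAGA for $G$-bundles on $X$) then identifies these diamondifications with $\CBun_{G,\et}$ and $\CHig_{G,\et}$, so smallness is inherited.

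Finally, smallness of $\CBun_{G,v}$ is the main obstacle, because there is no direct algebraic model. The strategy is to use the local $p$-adic Simpson correspondence \Cref{t:small-corresp} together with \Cref{l:loc-small}: every v-$G$-torsor on $X\times T$ becomes small on some \'etale cover with a toric chart $f$, and on such a cover $\LS_f$ gives a bijection between isomorphism classes of small v-$G$-bundles and of small $G$-Higgs bundles. One then builds a presentation of $\CBun_{G,v}$ by covering it with the \'etale loci of smallness, each of which is bounded in size by the small substack of $\CHig_G$ consisting of small Higgs bundles (which is small by the previous paragraph); the gluing data across different toric charts contribute only further small v-sheaves of change-of-trivialisations. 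Putting this together exhibits $\CBun_{G,v}$ as the quotient of a small v-sheaf by a v-sheaf-in-groupoids relation, hence as a small v-stack in the sense of \cite[\S12]{Sch18}. The technical heart is to check that the local models assemble into a genuinely small presentation rather than merely a locally small one, which is ensured by the functoriality and compatibility statements already built into \Cref{t:canonicalHiggs} and \Cref{t:small-corresp}.
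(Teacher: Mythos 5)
Your treatment of $\CBun_{G,v}$ and of the Higgs stacks matches the paper in spirit: v-descent for v-$G$-torsors is essentially formal, and a Higgs field is a global section of a vector bundle on a smoothoid (hence a v-sheaf), so the Higgs stacks reduce to the corresponding bundle stacks. (You omit $\CHig_{G,v}$ from your list, but the same argument covers it.)

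The genuine gap is in your argument for $\CBun_{G,\et}$. Given a v-cover $T'\to T$ and an \emph{\'etale} $G$-bundle on $X\times T'$ with descent datum, v-descent for v-$G$-bundles produces a v-$G$-bundle $V$ on $X\times T$; the entire difficulty is to show that $V$ is again \'etale-locally trivial. This does not follow from ``Scholze's v-descent for quasi-coherent sheaves'' or from smoothoids being diamantine: on a smoothoid space, v-locally trivial vector bundles are strictly more general than \'etale-locally trivial ones --- that discrepancy is exactly what the $p$-adic Simpson correspondence measures --- so the descended object is a priori only a v-bundle, and reducing to $G=\GL_n$ does not remove the problem. The paper closes this gap with the canonical Higgs field: by \Cref{t:canonicalHiggs}.(2), \'etale $G$-bundles are precisely the v-$G$-bundles $V$ with $\theta_V=0$, and since $\theta_V$ is natural and is a section of a v-sheaf, its vanishing can be checked after pullback along $X\times T'\to X\times T$. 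Without this input (or an equivalent one) your descent step for $\CBun_{G,\et}$ does not go through.

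On smallness, note that the paper does not reprove it --- it is quoted from \cite[Theorem~7.13]{Heu}, and only the v-stack property is given a (variant) proof. Your route for the \'etale stacks via diamondification of the algebraic moduli stacks is reasonable and consistent with \Cref{t:comparison-Higgs-stacks}, and bounding $\CBun_{G,v}$ by loci of small bundles via \Cref{t:small-corresp} and \Cref{l:loc-small} is the right idea; but the claim that ``the gluing data contribute only further small v-sheaves'' is precisely the point where an actual small presentation must be exhibited, so as written this part is an outline rather than a proof.
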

	\begin{proof}
		Let us present a variant of the proof in \cite{Heu} of the first sentence:
		For $\CBun_{G,v}$, this follows from v-descent for v-$G$-bundles. For $\CHig_{G,v}$, note that
		a Higgs field $\theta$ on a v-$G$-bundle can be v-locally defined and the vanishing of $\theta\wedge \theta$ can be v-locally verified. 
		Thus the case of $\CHig_{G,v}$ follows from that of $\CBun_{G,v}$. 
		
		For  $\CBun_{G,\et}$, we can now use that by \Cref{t:canonicalHiggs}.(2), an \'etale $G$-bundle is equivalent to a v-$G$-bundle $V$ with $\theta_V=0$. We may therefore deduce v-descent for \'etale $G$-bundles from that of $(V,\theta_V)$, showing that  $\CBun_{G,\et}$ is a v-stack.
		The assertion for $\CHig_{G,\et}$ can be verified in a similar way as for $\CHig_{G,v}$. 
	\end{proof}
	We can now reinterpret
	Theorem \ref{t:canonicalHiggs} as saying that the natural morphism $\CHig_{G,v}\to \CBun_{G,v}$, defined by forgetting Higgs fields, admits a canonical section:
	\begin{equation}\label{eq:section-psi}
	\psi:\CBun_{G,v}\to \CHig_{G,v},\quad V\mapsto (V,\theta_V).
	\end{equation}

	\section{Abelianisation for reductive $G$}\label{s:abelianisation}
	Throughout this section, let $K$ be an algebraically closed complete extension of $\mathbb Q_p$. The first goal of this section is to recall the definition of the Hitchin base $\mathbf A$ and the Hitchin morphisms for $\CHig$ and $\CBun_v$.
	Second, following Ng\^o, we define a commutative smooth relative group $J\to X\times \mathbf A$ depending on $G$ that will be the key player for the phenomenon of ``abelianization'':
	Roughly speaking, \Cref{d:J-b-action}, Propositions~\ref{c:a_E-for-v-bundles} and \ref{p:can-section-tau_theta} below say that $J$ acts on both $G$-Higgs bundles and v-$G$-bundles in a natural way that remembers the Higgs field, respectively the canonical Higgs field. For $G=\GL_n$, this is closely related to the classical BNR-correspondence, as $J$ is then given by the group of units of the spectral curve.
	\subsection{Hitchin map and centralizers after Ng\^o} \label{ss:HitchinNgo}
	
	In this subsection, we review the Hitchin map over a curve following Ng\^o \cite{Ngo} (see also \cite[\S2]{CZ15}). Our setup will differ slightly from that of Ng\^o as we work in an analytic setting over $K$:
	Let $X$ be a smooth projective curve over $K$. Let $G$ be a connected reductive group over $K$. We will later consider both $X$ and $G$ as adic spaces over $K$. We fix a maximal torus of $G$ and denote by $\ft$ the associated affine group scheme over $K$. Let $W$ be the Weyl group of $G$. 

	\begin{secnumber}
		Let $\Lie G$ be the Lie algebra of $G$. We denote by $\fg=\G_a\otimes_K\Lie G$ the associated affine group scheme over $K$.
		We set $\fc=\Spec(\O(\fg)^G)$ where $\O(\fg)^G$ are the invariants for the adjoint action of $G$ on $\mathfrak g$. Let
		\begin{equation}
			\chi:\fg\to \fc
			\label{eq:Chevalley}
		\end{equation}
		be the Chevalley map induced by $\O(\fg)^G\to \O(\fg)$. 
		This is a morphism of $K$-varieties that is $G\times \Gm$-equivariant for the trivial $G$-action on $\fc$ and the $\Gm$-action on $\fc$ defined by the gradings on $\O(\ft)^W\simeq \O(\fg)^G$.
		
		We simply denote the line bundle $\widetilde{\Omega}_{X}$ on $X$ by $\Omega$, if there is no confusion. 
		Let \[\fc_{\Omega}:=\Omega\times^{\Gm}\fc\quad\text{and}\quad \fg_{\Omega}:=\Omega\times^{\Gm}\fg\] be the $\Gm$-twist of $\fc$ and $\fg$ by the geometric line bundle $\Omega$ over $X$, considered as schemes over $X$. Then the \textit{Hitchin base} ${A}_{G,X}$ may be defined as $\Sect(X,\fc_{\Omega})$, the scheme of sections of $\fc_{\Omega}$ over $X$ \cite[Lemma~2.4]{Ngo}. 
		\begin{definition}\label{d:univ-section}
			Let $u:X\times A \to \fc_{\Omega}$ be the universal section over $X$.
		\end{definition}
		\end{secnumber}
		\begin{secnumber}	
		We now pass to the analytic setup of adic spaces over $\Spa(K)$. Let $\A_{X,G}$ be the analytification of $A_{X,G}$. By abuse of notation, let us still denote by $X$ the adic space associated to $X$, and similarly for the other schemes considered above. Passing further from adic spaces over $K$ to v-sheaves over $K$, by \cite[Lemma~8.9]{Heu}, the analytified Hitchin base $\A_{X,G}$ then represents the functor of sections of $\fc_{\Omega}$ over $X$ on $\Perf_K$
		\[\A_{X,G}:\Perf_K\to \mathrm{Sets},\quad
		T\mapsto \{\textnormal{sections $X_T\to \fc_{\Omega,T}$ over $X$}\},
		\]
		where $X_T:=X\times T$ is the base-change of $X$ and similarly $\fc_{\Omega,T}$ is the base-change of $\fc_{\Omega}\to X$ to $X_T$. 
		We simply denote the Hitchin base by $\A$ if $X$ and $G$ are clear from the context. 
	\end{secnumber}
	
		The analytification $X\times \A \to \fc_{\Omega}$ of $u$ is then uniquely characterised by the property that for any $f:T\to \A$ in $\A_v$, the corresponding  section $s:X_T\to \fc_{\Omega,T}$ is given by the composition $X\times T\xrightarrow{\id\times f} X\times \A\xrightarrow{u}\fc_{\Omega}$.

	\begin{secnumber}
		We now give a construction of the Hitchin map in terms of quotient stacks, which is specific to curves:

		\begin{definition}\label{d:quotient-stack}	Let $Y$ be a smoothoid space, and let $V \to Y$ be a smooth morphism of adic spaces equipped with a left-action by $G$. Let $\tau\in \{\et,v\}$. 
		The \textit{quotient $\tau$-stack} $[V/G]_{\tau}$ of $V$ by $G$ is defined by sending each object $T\to Y$ of the big \'etale site $Y_{\Et}$ (resp. $Y_v$) to the groupoid of pairs $(E,\varphi)$, where $E$ is a $\tau$-$G$-bundle on $T$ and $\varphi$ is a section $T\to E\times^GV_T$ of the pullback of $V$ to $T$ twisted by $E$. 
		\end{definition}
		\begin{rem}\label{r:moduli-quotient-stack-equivar-version}
					This is equivalent to the datum of a $G$-equivariant morphism $\phi:E\to V_T$ with respect to the left action on $E$ by $G$ defined by $g \cdot e:=eg^{-1}$ for $g\in G$, $e\in E$. Indeed, given such a morphism $\phi$, the morphism $E\to E\times^GV_T$, $e\mapsto (e,\phi(e))$ is clearly constant and thus factors through $\varphi$.
		\end{rem} 
	\end{secnumber}
	
		We now apply this to the vector bundle $V=\fg_{\Omega}$ on $X$ equipped with the adjoint action of $G$. 
		\begin{lemma}\label{l:CHig-as-sheaf-of-sections}
			Let $T\in \Perf_K$ and let $Y\in X_{T,\et}$. Then the groupoid of $\tau$-$G$-Higgs bundles on $Y$ is naturally isomorphic to the groupoid of sections 
			\[s: Y \to [\fg_{\Omega}/G]_{\tau}\]
			over $X$.
			In particular, the v-stack $\CHig_{G,\tau}$ is isomorphic to the stack of sections of $[\fg_{\Omega}/G]_{\tau}$ over $X$.
		\end{lemma}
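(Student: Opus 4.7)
The plan is to unwind the definitions on both sides and match them up. The content is essentially a standard reformulation of Higgs bundles as sections of an adjoint quotient stack, and the only feature specific to the curve case is that the Higgs-field integrability condition becomes vacuous.

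First I would fix $T\in\Perf_K$ and $Y\in X_{T,\et}$ and spell out the right-hand side. By \Cref{d:quotient-stack}, a section $s:Y\to[\fg_{\Omega}/G]_{\tau}$ over $X$ is a pair $(E,\varphi)$ with $E$ a $\tau$-$G$-bundle on $Y$ and $\varphi$ a section of the pullback $\fg_{\Omega,Y}:=\fg_{\Omega}\times_{X}Y$ twisted by $E$, i.e.\ of $E\times^{G}\fg_{\Omega,Y}\to Y$. By \Cref{r:moduli-quotient-stack-equivar-version}, equivalently this is a $G$-equivariant morphism $E\to\fg_{\Omega,Y}$.

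Next I would identify $E\times^{G}\fg_{\Omega,Y}$ with $\ad(E)\otimes_{\O_Y}\wtOm_Y$. By definition $\fg_{\Omega}=\Omega\times^{\G_m}\fg$, and since $\G_m$ acts on $\fg$ with weight one (through the standard grading on $\O(\fg)^G\simeq\O(\ft)^W$), this $\G_m$-twist is nothing but the geometric vector bundle attached to $\Omega\otimes_{\O_X}\fg$ on $X$. Base-change along the structure morphism $Y\to X$ identifies its pullback with $\wtOm_Y\otimes_{\O_Y}(\fg\otimes_K\O_Y)$, where we use that $\wtOm$ pulls back along étale maps and along the projection $X\times T\to X$ (cf.\ \Cref{d:differentials-smoothoids} and \cite[Prop.\ 2.9.(2)]{Heu}). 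Contracting the free factor with $E$ via the adjoint action yields
\[
E\times^{G}\fg_{\Omega,Y}\;=\;(E\times^{G}\fg)\otimes_{\O_Y}\wtOm_Y\;=\;\ad(E)\otimes_{\O_Y}\wtOm_Y,
\]
using \Cref{d:ad-bundle}. Thus the datum of $\varphi$ is precisely a section $\theta\in\rH^{0}(Y,\ad(E)\otimes_{\O_Y}\wtOm_Y)$.

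Finally, to recover a $\tau$-$G$-Higgs bundle we must check $\theta\wedge\theta=0$. Here the curve hypothesis is used: since $X$ is one-dimensional, $\wtOm_Y$ is a line bundle on $Y$ (see \S\ref{s:Higgs-bundle-curve}), so $\wtOm_Y^{\otimes 2}$ admits no nonzero alternating sections and the condition is automatic. Conversely, any $\tau$-$G$-Higgs bundle $(E,\theta)$ produces a section $(E,\varphi)$ by reversing these identifications, and the construction is manifestly functorial in $Y$ and compatible with pullbacks along morphisms in $X_{T,\et}$. This gives the asserted equivalence of groupoids, and the ``In particular'' clause is then the special case $Y=X\times T$ varying over $T\in\Perf_K$, combined with the fact from \Cref{p:CBun-small} that $\CHig_{G,\tau}$ is already known to be a v-stack.

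There is no serious obstacle here; the only point that needs a little care is the correct identification of the $\G_m$-twist $\fg_\Omega$ with the vector bundle $\fg\otimes_{\O_X}\Omega$ and the verification that this identification is $G$-equivariant for the adjoint action, so that contracting with $E$ produces $\ad(E)\otimes\wtOm_Y$ naturally. Everything else is formal unpacking of definitions, and the curve hypothesis enters only to trivialise $\theta\wedge\theta=0$.
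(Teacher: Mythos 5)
Your proposal is correct and follows the same route as the paper's proof, which simply records that a section of $[\fg_{\Omega}/G]_{\tau}$ over $X$ is a pair $(E,\varphi)$ with $\varphi\in\rH^0(Y,\ad(E)\otimes\wtOm)$ and that the integrability condition is vacuous on a curve; you have merely spelled out the identification $E\times^{G}\fg_{\Omega,Y}\cong\ad(E)\otimes\wtOm_Y$ that the paper leaves implicit. The only cosmetic slip is attributing the weight-one $\G_m$-action on $\fg$ to the grading of $\O(\fg)^G$ (that grading concerns $\fc$; on $\fg$ the action is simply by homotheties), but this does not affect the argument.
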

		\begin{definition}
		Given a $\tau$-$G$-Higgs bundle $(E,\varphi)$ on $Y$, we denote the associated section by $s_{E,\varphi}$.
		\end{definition}
		\begin{proof}
			Any such section $s$ is equivalent to a pair $(E,\varphi)$ consisting of a $\tau$-$G$-bundle $E$ over $Y$ and a section $\varphi\in \rH^0(Y,\ad(E)\otimes \widetilde{\Omega}_{X_T})$. Since $X$ is a curve, by \S\ref{s:Higgs-bundle-curve}, this is precisely the datum of a $G$-Higgs bundle.
		\end{proof}
		We now come to the key definition of the two Hitchin fibrations, one for $\CHig_{G}$, one for $\CBun_{G,v}$.
		 \begin{definition}\label{d:Hitchin-map}
		The $G$-invariant morphism $\chi:\fg\to \fc$ induces by twisting with $\Omega$ a natural morphism
		\[
		[\chi_{\Omega}]: [\fg_{\Omega}/G]_{\tau} \to \fc_{\Omega}.
		\]
		Passing to the associated v-sheaves of sections over $X$ and using \Cref{l:CHig-as-sheaf-of-sections}, this induces the \textit{Hitchin map}: 
		\[
		h_{\tau}:\CHig_{G,\tau}\to \A.
		\]
		When $\tau=\et$, we shall often drop the subscript $\et$ from notation if this is clear from context.
		\end{definition}
	Using the canonical Higgs field on v-$G$-bundles from \Cref{t:canonicalHiggs}, we also get a Hitchin map for $\CBun_{G,v}$:
	\begin{definition}\label{d:Hitchin-Betti}
		The \textit{Hitchin map for $\CBun_{G,v}$} is the morphism of small v-stacks
		\begin{equation}
			\widetilde{h}:\CBun_{G,v}\xrightarrow{\psi} \CHig_{G,v} \xrightarrow{h_v} \A.
		\end{equation}
		where $h_v$ was defined in \Cref{d:Hitchin-map} and $\psi$ in \eqref{eq:section-psi}. We call $\widetilde{h}$ the \textit{Hitchin map on the Betti side}.
	\end{definition}
		\begin{rem}\label{rem:Hitchin-fibration-general}
		As explained in detail in \cite[\S8]{Heu}, one can more generally also define Hitchin morphisms $\wt h$ and $h$ when $X$ is any smooth rigid space and $G$ is any rigid group. Here $\mathbf A_{X,G}$ is in general a certain v-sheaf (see \cite[Definition~8.8]{Heu}). It is clear that for $X$ a smooth projective curve and for reductive $G$, the definition of $h$ agrees with the one given above. For $\wt h$, this easily follows from \Cref{t:canonicalHiggs}.(1).
	\end{rem}
	\subsection{The commutative group $J$}
		We need some further constructions from \cite{Ngo}, for which we switch back to the setting of schemes over $K$ for a moment. Recall that the Chevalley map $\chi:\fg\to \fc$ has a  section
		\[ \kos:\fc\to \fg,\]
		unique up to conjugation,
		called the \textit{Kostant section}. 
		We refer to \cite[\S2]{Ngo} for more details on its definition.
		
		Next, we form the centralizer
		\[\gI=\{(g,x)\in G \times \fg\mid \ad_g(x)=x\}
		\]
		 as a relative group scheme over $\fg$.  There is a natural $G$-action on $\gI$ given by $h\cdot(g,x)=(hgh^{-1},\ad_h(x))$. 
		 
		Let $\fg^{\reg}\subseteq \fg$ be the open locus where $\gI\to \fg$ has dimension $=\dim \fc$. This is a dense subspace of $\fg$ over which $\gI\to \fg$ is smooth, see \cite[Th\'eor\`eme~2.1]{Ngo}. By a Theorem of Kostant, the restriction of $\gI$ to $\fg^\reg$ is commutative \cite[Proposition~14]{kostant63}.  Moreover, $\kos:\fc\to \fg$ factors through $\fg^{\reg}$. We deduce:
			\begin{definition}\label{d:J}
		The \textit{regular centralizer} $\gJ:=\kos^*\gI\to \fc$ is a smooth commutative relative group scheme.
		\end{definition}
		
		 By  \cite[Proposition 3.2]{Ngo}, there exists a canonical isomorphism of group schemes 
		\begin{equation}\label{eq:map-a-regular-locus}
		\chi^* \gJ|_{\fg^{\reg}} \xrightarrow{\sim} \gI|_{\fg^{\reg}}
		\end{equation}
		 over the regular locus $\fg^{\reg}$ of $\fg$, which extends uniquely to a homomorphism of group schemes over $\fg$
		\begin{equation}
			a:\chi^* \gJ \to \gI.
			\label{eq:map a}
		\end{equation}
		Since $\chi$ is $G$-invariant, the pullback $\chi^\ast \gJ\to \fg$ acquires a natural $G$-action such that $\chi^\ast \gJ\to \gJ$ is $G$-invariant. With respect to this action, $a$ is $G$-equivariant: As $\chi^\ast\gJ$ is flat, this can be checked over the dense open subspace $\fg^{\reg}\subseteq \fg$, where it is clear from the explicit description of $a$ in \cite[proof of Proposition 3.2]{Ngo}.
		
		There exists a natural $\Gm$-action on $\gI$ defined by $t\cdot (g,x)=(g,t\cdot x)$. It induces a $\Gm$-action on $\gJ$ such that $\gI\to \fg$ and $\gJ\to \fc$ and $a$ are $\Gm$-equivariant. 
	As before, we form the twists of $\gI,\gJ$ by the $\G_m$-torsor $\Omega$: \[\gI_{\Omega}=\Omega\times^{\Gm}\gI  \quad \text{and} \quad\gJ_{\Omega}=\Omega\times^{\Gm}\gJ.\]
	Due to the $\G_m$-equivariance, we  obtain morphisms $\gI_{\Omega}\to \fg_\Omega$ and $\gJ_{\Omega}\to \fc_\Omega$ over $X$ regarded as a scheme.

	\begin{definition} \label{d:def-univ-J} 
		We define a smooth relative group scheme $J$ over $X\times A$ as the fiber product:
		\[
		\xymatrix{
			J\ar[r] \ar[d] & \gJ_{\Omega} \ar[d] \\
			X\times A \ar[r]^-u & \fc_{\Omega}
		}
		\]
		where $u$ is the universal section from \Cref{d:univ-section}.
	\end{definition}
	
	\begin{secnumber}
		Once again, we now switch to the  analytic setting and consider all of the above schemes as analytic adic spaces over $K$ without changing the notation. In particular, we consider $J$ as a relative adic group $J\to X\times \mathbf A$.  By passing to quotient stacks, the $G$-equivariant morphism $\gI_{\Omega}\to \fg_{\Omega}$ then descends to a morphism of $\tau$-stacks $[\gI_{\Omega}/G]_{\tau}\to [\fg_{\Omega}/G]_{\tau}$ over $X$. This is still a relative group because $\gI\to \fg$ is.
		
		Second, the $G$-equivariant homomorphism $a:\chi^*\gJ\to \gI$ from \eqref{eq:map a} induces a homomorphism
		\begin{equation}\label{eq:aOmega}
	 	[a_\Omega]:[\chi_{\Omega}]^*\gJ_{\Omega}\to [\gI_{\Omega}/G]_{\tau}
	 	\end{equation}
	 	over $[\fg_{\Omega}/G]_{\tau}$,
		where $[\chi_{\Omega}]:[\fg_{\Omega}/G]_{\tau}\to \fc_\Omega$ is the map from \Cref{d:Hitchin-map}. This is the analogue in our setting of the map in \cite[Proposition~3.3]{Ngo}. Explicitly, in terms of the moduli description of $[\gI_{\Omega}/G]_{\tau}$ given in \Cref{r:moduli-quotient-stack-equivar-version}, this map is associated to the natural $G$-torsor $\chi_\Omega^*\gJ_{\Omega}\to [\chi_{\Omega}]^*\gJ_{\Omega}$ together with the $G$-equivariant map $a_\Omega:\chi_\Omega^*\gJ_\Omega=(\chi^\ast \gJ)_\Omega\to \gI_\Omega$ obtained from $a$ by twisting with $\Omega$.
	\end{secnumber}

	\subsection{Abelianisation in terms of $J$}
		Let $T\in \Perf_K$, let $Y\in X_{T,\et}$ and $(E,\varphi)$ a $\tau$-$G$-Higgs bundle on $Y$. Via \Cref{l:CHig-as-sheaf-of-sections}, this corresponds to a section $s_{E,\varphi}: Y\to [\fg_{\Omega}/G]_\tau$ over $X$. 
		We denote by $b$ the composition
		\begin{equation}\label{d:def-b}
			b:Y\xrightarrow{ s_{E,\varphi}}[\fg_{\Omega}/G]_\tau\xrightarrow{[\chi_{\Omega}]} \fc_{\Omega}.
		\end{equation}
		If $Y=X_T$, then by \Cref{d:Hitchin-map}, this section is precisely $b=h(E,\varphi)$.
		\begin{definition}
		We denote by $J_b\to Y$ the smooth relative group given by the pullback of $\gJ_{\Omega}$ along $b$:
		\begin{equation} \label{eq:defJb}
			\xymatrix{
				J_b\ar[r] \ar[d] & \gJ_{\Omega} \ar[d] \\
				Y \ar[r]^b & \fc_{\Omega}
			}
		\end{equation}
		When $Y=X_T$, this is equivalently the pullback of $J\to X\times \mathbf A$ along the map $X_T\to X\times \mathbf A$.
		\end{definition}
		
		Note that $J_b=b^\ast\gJ_{\Omega}= s_{E,\varphi}^\ast[\chi_{\Omega}]^*\gJ_{\Omega}$. Therefore, the pullback of \eqref{eq:aOmega} along $s_{E,\varphi}^\ast$ defines a canonical morphism of sheaves of groups on $Y_\tau$:
		\begin{equation} \label{eq:Jb to Aut-v1}
			s_{E,\varphi}^\ast[a_\Omega]: J_b\to  s_{E,\varphi}^*[\gI_{\Omega}/G]_{\tau}, 
		\end{equation}
	\begin{prop} \label{p:Jb to Aut}
		In the above situation, we have a canonical isomorphism  \[s_{E,\varphi}^*[\gI_{\Omega}/G]_{\tau}\simeq \uAut(E,\varphi).\]
	\end{prop}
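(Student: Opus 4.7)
The plan is to unravel both sides via the moduli-theoretic descriptions of the quotient stacks (Definition~\ref{d:quotient-stack}, Remark~\ref{r:moduli-quotient-stack-equivar-version}) and of the centralizer $\gI$, then match them functorially at the level of $Z$-points for $Z \in Y_\tau$. The statement is local in the $\tau$-topology, but the matching will be canonical and therefore automatically globalizes.

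First, I would translate $s_{E,\varphi}$ via Lemma~\ref{l:CHig-as-sheaf-of-sections} and Remark~\ref{r:moduli-quotient-stack-equivar-version} into the datum of a $G$-equivariant morphism $\phi_\varphi : E \to \fg_{\Omega,Y}$ of adic spaces over $X$, using the identification $\ad(E) \otimes \widetilde{\Omega}_X = E \times^G \fg_\Omega$. Then, for any $Z \in Y_\tau$, a $Z$-point of $s_{E,\varphi}^\ast[\gI_\Omega/G]_\tau$ consists of an object $(F,\psi) \in [\gI_\Omega/G]_\tau(Z)$ together with an identification in $[\fg_\Omega/G]_\tau(Z)$ of $(F, [\chi_\Omega] \circ \psi)$ with $(E|_Z, \phi_\varphi|_Z)$. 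Using this identification to replace $F$ by $E|_Z$, the datum becomes a $G$-equivariant morphism $\psi : E|_Z \to \gI_{\Omega,Z}$ over $X$ whose composition with the projection $\gI_\Omega \to \fg_\Omega$ equals $\phi_\varphi|_Z$.

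Next, since $\gI \subset G \times \fg$ is the centralizer subgroup and the $\Gm$-action on $\gI$ is trivial on the $G$-factor, the $\Omega$-twist satisfies $\gI_\Omega \subset G \times_X \fg_\Omega$ as a relative subgroup. Consequently $\psi$ factors uniquely as $\psi = (\alpha, \phi_\varphi|_Z)$, where $\alpha : E|_Z \to G$ is a $G$-equivariant morphism (with respect to the conjugation action on $G$), subject to the pointwise centralizer condition $\ad_{\alpha(e)}(\phi_\varphi(e)) = \phi_\varphi(e)$.

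Finally, I would invoke the standard bijection between $G$-equivariant morphisms $\alpha : E \to G$ (satisfying $\alpha(eh) = h^{-1}\alpha(e)h$) and automorphisms $\tilde\alpha$ of $E$ as a $G$-bundle, given by $\tilde\alpha(e) = e \cdot \alpha(e)$. A short direct computation shows that the centralizer condition on $\alpha$ is equivalent to $\phi_\varphi \circ \tilde\alpha = \phi_\varphi$, i.e., preservation of $\varphi$. This yields a functorial bijection with $\uAut(E,\varphi)(Z)$, producing the claimed isomorphism of sheaves on $Y_\tau$. The main obstacle is merely bookkeeping with the $\Omega$-twist and the interplay of the several $G$-actions involved (conjugation on $\gI$ and $G$, adjoint on $\fg$, translation on $E$), but no substantial difficulty arises since all identifications are canonical and functorial.
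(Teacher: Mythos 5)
Your proposal is correct and follows essentially the same route as the paper: the paper twists the Cartesian square defining $\gI\subseteq G\times\fg$ by $\Omega$ and by $E$ to identify $E\times^G\gI_\Omega$ with pairs of an automorphism of $E$ and a section of $\ad(E)\otimes\Omega$ fixed by it, while you unravel the same identifications pointwise in the language of $G$-equivariant maps (per Remark~\ref{r:moduli-quotient-stack-equivar-version}), using $E\times^G G\simeq\uAut(E)$ and $E\times^G\fg_\Omega\simeq\ad(E)\otimes\Omega$ and checking that the centralizer condition translates into preservation of $\varphi$. The only cosmetic slip is writing $[\chi_\Omega]\circ\psi$ where you mean composition with the projection $\gI_\Omega\to\fg_\Omega$ (as you correctly state a line later).
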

	\begin{proof}
		By Definitions \ref{d:Aut-Higgs} and \ref{d:quotient-stack}, we need to see that the sections $Y\to E\times^G\mathcal I_{\Omega}$ correspond naturally to automorphisms of $E$ preserving $\varphi$. To compute $E\times^G\mathcal I_{\Omega}$, consider the $G$-equivariant commutative diagram
			\[
		\begin{tikzcd}
			\gI\ar[r] \arrow[d] & G\times \fg \ar[d] &(g,x)\arrow[d,mapsto]\\
			\fg \arrow[r,"\Delta"] & \fg\times \fg& (x,\ad(g)(x))
		\end{tikzcd}
		\] defining $\mathcal I$. We now twist this diagram over $\G_m$ with $\Omega$ and over $G$ with $E$, then the result is clearly still Cartesian. Recall that $E\times^G\mathfrak g_{\Omega}=\ad(E)\otimes \Om$. Second, since the $G$-action on the first factor $G$ on the top right is via conjugation, one verifies directly that $E\times^GG=\uAut(E)$. Consequently, the diagram becomes
		\[
		\begin{tikzcd}
			E\times^G\gI\ar[r] \arrow[d] & \uAut(E)\times \ad(E)\otimes \Om \ar[d] &(\psi,x)\arrow[d,mapsto]\\
			\ad(E)\otimes \Om \arrow[r,"\Delta"] & \ad(E)\otimes \Om\times \ad(E)\otimes \Om& (x,\ad(\psi)(x)).
		\end{tikzcd}
		\]
		Hence $E\times^G\gI_\Omega$ consists of pairs of an automorphism $\psi$ of $E$ and a section of $\ad(E)\otimes \Om $ fixed by $\psi$. The result follows as by \Cref{l:CHig-as-sheaf-of-sections}, the section of $\ad(E)\otimes \Om$ associated to $Y\to [\mathcal I_{\Omega}/G]_\tau\to [\mathfrak g_{\Omega}/G]_\tau$ is $\varphi$.
	\end{proof}
	\begin{definition}\label{d:J-b-action}
		Let $T\in \Perf_K$, let $Y\in X_{T,\et}$ and let $(E,\varphi)$ be a $\tau$-$G$-Higgs bundle on $Y$ with associated map $b:Y\to \fc_{\Omega}$ as in \eqref{d:def-b}.	
		Composing the map \eqref{eq:Jb to Aut-v1} with the isomorphism from \Cref{p:Jb to Aut}, we obtain a canonical and functorial homomorphism of sheaves over $Y_{\tau}$
		\begin{equation} \label{eq:Jb to Aut}
			a_{E,\varphi}: J_b\to  \uAut(E,\varphi).
		\end{equation}
		If $\tau=\et$, then the composition $J_b\to  \uAut(E,\varphi)\to \uAut(E)$ is represented by a homomorphism of smooth relative groups over $Y$. In particular, it represents a morphism of v-sheaves on $Y$, where we can identify the v-sheaf represented by $\uAut(E)$ with $\uAut(\nu^\ast E)$. We shall still denote this morphism of v-sheaves by $a_{E,\varphi}$.
	\end{definition}
	Using the canonical Higgs field from \Cref{t:canonicalHiggs},
	we arrive at an analogous morphism for v-$G$-bundles:
	\begin{prop}\label{c:a_E-for-v-bundles}
		Let $V$ be a v-$G$-bundle on $X_{T}$ with $b=\widetilde{h}(V)\in \A(T)$, where $\wt h$ is from \Cref{d:Hitchin-Betti}. Let $J_b\to X_T$ be the smooth relative group  of \eqref{eq:defJb}.
		Then there exists a canonical homomorphism on $X_{T,v}$
		\begin{equation} \label{eq:aE}
			a_V: J_b\to \uAut(V),
		\end{equation}
		that can be uniquely characterised as follows: Locally on any \'etale map from a smoothoid $Y\to X_T$ that admits a toric smooth chart $f$ such that  $V$ becomes small, the following diagram of sheaves on $Y_{v}$ commutes
		\[
		\begin{tikzcd}[row sep = 0cm,column sep = 1.2cm]
			& {\uAut(E)}  \\
			J_b \arrow[ru, "{a_{E,\theta}}"] \arrow[rd, "a_{V}"'] &                                      \\
			& \uAut(V) \arrow[uu, "\LS_f^{-1}"']     
		\end{tikzcd}\]
		 where $(E,\theta)=\LS^{-1}_f(V)$ is the Higgs bundle corresponding to $V$ via the local correspondence of \Cref{t:small-corresp}. 
	\end{prop}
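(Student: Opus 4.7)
The plan is to construct $a_V$ from the canonical Higgs field on $V$ via Definition \ref{d:J-b-action}, and then to verify the local characterization by exploiting the functoriality of the abelianisation construction in conjunction with the local Simpson equivalence.

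First, by Theorem \ref{t:canonicalHiggs}, the v-$G$-bundle $V$ carries a canonical Higgs field $\theta_V$, making $(V,\theta_V)$ a v-$G$-Higgs bundle on $X_T$ with Hitchin image $h_v(V,\theta_V)=\widetilde{h}(V)=b$ by Definition \ref{d:Hitchin-Betti}. Applying Definition \ref{d:J-b-action} with $\tau=v$ to $(V,\theta_V)$ yields a canonical homomorphism $a_{V,\theta_V}:J_b\to \uAut(V,\theta_V)$, and I define $a_V$ as the composition with the forgetful inclusion $\uAut(V,\theta_V)\hookrightarrow \uAut(V)$. This construction is manifestly canonical and natural in $V$.

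For the local characterization, let $Y\to X_T$ be an étale map from a smoothoid with toric chart $f$ such that $V|_Y$ is small, let $(E,\theta)=\LS_f^{-1}(V|_Y)$, and let $Y_\infty\to Y$ be the pro-étale Galois cover from \S\ref{s:data-induced-by-toric-chart}. The equivalence of groupoids $\LS_f$ of Theorem \ref{t:small-corresp} induces an isomorphism of v-sheaves $\uAut(V|_Y)\xrightarrow{\sim}\uAut(E,\theta)$ on $Y_v$, which is the vertical map $\LS_f^{-1}$ in the diagram (post-composed with the inclusion $\uAut(E,\theta)\hookrightarrow \uAut(E)$). By Theorem \ref{t:canonicalHiggs}(1)(b), the natural isomorphism $V(Y_\infty)\simeq E(Y_\infty)$ from \eqref{eq:comp-LS-on-toric-cover} identifies the pullbacks of $(V|_Y,\theta_V)$ and $(E,\theta)$ to $Y_\infty$ as v-$G$-Higgs bundles. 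Functoriality of Definition \ref{d:J-b-action} then yields that the pullbacks of $a_{V|_Y,\theta_V}$ and $a_{E,\theta}$ to $Y_\infty$ agree under this identification, and descent along the v-cover $Y_\infty\to Y$, compatible with the $\Delta_f$-equivariant cocycle description of $\LS_f$ in \S\ref{sss:LS}, produces the required equality of $\LS_f^{-1}\circ a_{V|_Y}$ and $a_{E,\theta}$ on $Y$. Uniqueness of $a_V$ then follows by v-descent: by Lemma \ref{l:loc-small}, $V$ becomes small on a suitable étale cover of $X_T$, so the local characterization determines $a_V$ on a v-cover and hence globally.

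The main technical obstacle is the descent step: the underlying v-$G$-Higgs bundles $(V|_Y,\theta_V)$ and $(\nu^*E,\nu^*\theta)$ are generally not isomorphic on $Y$ itself (they differ precisely by the Galois cocycle $\exp(-\rho)$ defining $\LS_f$), so one must verify that the identification of $a_{V|_Y,\theta_V}$ with $a_{E,\theta}$ on $Y_\infty$ is Galois-equivariant in the correct way to descend to $Y$. This compatibility ultimately reduces to the $G$-equivariance of the universal map $a:\chi^*\gJ\to \gI$ of \eqref{eq:map a}, combined with the explicit cocycle formula \eqref{eq:LS} for $\LS_f$: the action of $j\in J_b$ comes from an element of $\gI_\Omega$ which acts simultaneously on $E$ and on its twist $V$ in a $\Delta_f$-compatible way.
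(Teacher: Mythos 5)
Your construction of $a_V$ (canonical Higgs field, then \Cref{d:J-b-action}, then the forgetful map to $\uAut(V)$) and your verification of the diagram by pulling back to the toric cover $Y_\infty$ and invoking \Cref{t:canonicalHiggs}.(1).(b) together with functoriality of $a_{-,-}$ is exactly the paper's argument. The "descent" worry in your final paragraph is not really an obstacle: both $\LS_f^{-1}\circ a_V$ and $a_{E,\theta}$ already exist as morphisms of v-sheaves on $Y$, so their equality may simply be checked after pullback along the v-cover $Y_\infty\to Y$, which is what the identification of \Cref{t:canonicalHiggs}.(1).(b) provides.
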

	\begin{proof}
		Let $\theta_V$ be the canonical Higgs field of \Cref{t:canonicalHiggs}.
		\Cref{d:J-b-action} applied to the v-$G$-Higgs bundle $(V,\theta_V)$ yields a natural morphism 
		\[ a_V:J_b\to \uAut(V,\theta_V)\]
		of v-sheaves.
		The composition with the forgetful map $\uAut(V,\theta_V)\to \uAut(V)$ yields the desired morphism  $a_V$. Note that this forgetful map is an isomorphism over $Y_\et$ by \Cref{t:canonicalHiggs}.(1).(a). 
		It remains to see the commutativity of the diagram: It suffices to check this on the toric v-cover $Y_\infty\to Y$. By \Cref{t:canonicalHiggs}.(1).(b), $\LS_f$ induces an isomorphism $\nu^*E|_{Y_\infty}=V|_{Y_\infty}$ that identifies $\theta$ with $\theta_V$. This implies the commutativity.
	\end{proof}
	
	\subsection{The tautological section $\tau$} \label{ss:tau} 
		We now review the construction of the canonical section $\tau:\fc \to \Lie \gJ$ following \cite[\S 2.3]{CZ15}:
		The Lie algebra $\Lie (\gI|_{\fg^{\reg}})\subset \fg\times \fg^{\reg}$ of the relative group scheme $\gI|_{\fg^{\reg}}\to \fg^{\reg}$ admits a canonical section 
		$$ \fg^{\reg}\to \Lie (\gI|_{\fg^{\reg}})\subseteq \mathfrak g\times \fg^{\reg},\quad x\mapsto (x,x).$$
		This section is clearly $\G_m$-equivariant for the natural $\G_m$-action on both sides which is given by homotheties on each factor.
		Via the Kostant section, using that $\kos^\ast\Lie \gI=\Lie\gJ$, this induces a $\G_m$-equivariant section  
		\[\tau:\fc\to \Lie \gJ\]
		where the $\G_m$-action on $ \Lie \gJ$ is the one induced by the one on $\Lie (\gI|_{\fg^\reg})$.
		
		Recall now from \eqref{eq:map-a-regular-locus} that over $\fg^\reg$, there exists a canonical isomorphism $\chi^*\gJ|_{\fg^{\reg}}\xrightarrow{\sim} \gI|_{\fg^{\reg}}$. 
		Let $x\in \fg$, and let $a_{x}:\gJ_{\chi(x)}\to \gI_x \subset G$  be the fibre of this isomorphism over $x$.
		Then by \cite[Lemma 2.2]{CZ15},
		\begin{equation}
			da_x (\tau (\chi(x)))= x.
			\label{eq:tau}
		\end{equation}
		We note that \cite{CZ15} works over a finite field, but the proof still works over $K$.
	
	\begin{secnumber} \label{sss:twist-tau}
		Due to the $\G_m$-equivariance, all of these constructions are compatible with twisting:
		The $\Omega^{\times}$-twist $(\Lie \gJ)\times^{\Gm} \Omega^{\times}$ is isomorphic to $\Lie (\gJ_{\Omega})\otimes \pi^*(\Omega)$ viewed as a vector bundle over $\fc_{\Omega}$,
		where $\pi:\fc_{\Omega}\to X$ is the canonical morphism. Twisting $\tau$ with $\Omega$, we thus obtain a canonical section over $\fc_{\Omega}$:
		\begin{equation} \label{eq:tauOmega}
			\fc_{\Omega}\to \Lie \gJ_{\Omega}\otimes \pi^*(\Omega). 
		\end{equation}
		By \cite[Proposition 4.13.2 and its proof]{Ngo10}, the Lie algebra of the smooth group scheme $\gJ_{\Omega}$ over $\fc_{\Omega}$ is
		\[
		\Lie \gJ_{\Omega} \simeq \pi^*( \fc_{\Omega}^{\vee}\otimes \Omega),
		\]
		where $(-)^{\vee}$ is the dual vector bundle. 
		For the group scheme $J$ from \Cref{d:def-univ-J},  this implies that
		\begin{equation}
			\label{eq:splitting}
			\Lie J = u^{\ast}\Lie \mathcal J_\Omega \simeq \pr_X^*(\fc_{\Omega}^{\vee}\otimes \Omega)
		\end{equation}
		as $\pi\circ u:X\times A\to X$ is given by $\pr_X$.
		Since $K$ is algebraically closed, so $G$ is split, we have an isomorphism
		\begin{equation} \label{eq:cOmega-explicit}
			\fc_{\Omega}^{\vee}\simeq \Omega^{-e_1}\oplus \Omega^{-e_2}\oplus \cdots \oplus \Omega^{-e_r},
		\end{equation}
		where $e_1,\cdots,e_r$ denote the degrees of the invariant polynomials of $\fg$. 
		\begin{definition}
		Consider the geometric vector bundle over the Hitchin base ${\A}={\A}_{G,\Omega}$ \[{\A}_{J,\Omega}:={\A}\times \rH^0({X}, \fc_{\Omega}^{\vee}\otimes \Omega^{\otimes 2})\to \A.\]
		\end{definition}
		\begin{lemma}
		For any $T\in \Perf_K$, we have a canonical isomorphism
		\[ {\A}_{J,\Omega}(T)=\rH^0(X,\fc_{\Omega}^{\vee}\otimes \Omega^{\otimes 2})\otimes_K \O(T)=\rH^0(X_T,\fc_{\Omega}^{\vee}\otimes \Omega^{\otimes 2})=\rH^0(X_T,\Lie J_b\otimes \Omega).\]
		\end{lemma}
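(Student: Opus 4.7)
The plan is to unwind the definition of $\A_{J,\Omega}$ as a trivial vector bundle over $\A$, apply flat base change for coherent cohomology on the projective curve $X$, and then substitute the identification \eqref{eq:splitting} for $\Lie J$. Each of the three equalities in the chain comes from one of these three steps, and the bulk of the work is essentially bookkeeping.

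First I would address the equality $\A_{J,\Omega}(T)=\rH^0(X,\fc_{\Omega}^{\vee}\otimes \Omega^{\otimes 2})\otimes_K \O(T)$. By the definition just above the lemma, $\A_{J,\Omega}$ is the geometric vector bundle $\A\times V^{\an}$ where $V:=\rH^0(X,\fc_{\Omega}^{\vee}\otimes \Omega^{\otimes 2})$ is a finite dimensional $K$-vector space, and the $T$-points of the analytification of such a vector space are given by $V\otimes_K\O(T)$. Here the left-hand side should be read fibrewise over a given $b:T\to \A$; i.e.\ the relevant statement is that the fibre of $\A_{J,\Omega}(T)\to \A(T)$ over $b$ is canonically $V\otimes_K\O(T)$, which is immediate from the product structure.

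Second, for the equality $V\otimes_K \O(T)=\rH^0(X_T,\fc_{\Omega}^{\vee}\otimes \Omega^{\otimes 2})$, I would invoke flat base change for coherent cohomology along the map $K\to \O(T)$, which is automatic since $K$ is a field. More precisely, $X$ is projective, so by GAGA its analytic and algebraic coherent cohomologies agree, and flat base change then gives $\rH^0(X,F)\otimes_K R=\rH^0(X\times_K\Spec R,F_R)$ for any coherent sheaf $F$ on $X$ and any $K$-algebra $R$; applying this to $F=\fc_{\Omega}^{\vee}\otimes \Omega^{\otimes 2}$ and $R=\O(T)$ yields the desired equality, once one identifies $\fc_{\Omega}^{\vee}\otimes \Omega^{\otimes 2}$ on $X_T$ with its pullback $\pr_X^*(\fc_{\Omega}^{\vee}\otimes \Omega^{\otimes 2})$ along $\pr_X:X_T\to X$.

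Finally, the third equality comes from \eqref{eq:splitting}, which asserts $\Lie J\simeq \pr_X^{\ast}(\fc_{\Omega}^{\vee}\otimes \Omega)$ on $X\times \A$. Pulling back along the map $\id_X\times b:X_T\to X\times \A$ (which composes the second projection with $b$), one obtains $\Lie J_b\simeq \pr_X^{\ast}(\fc_{\Omega}^{\vee}\otimes \Omega)$ on $X_T$, and since $\Omega=\wtOm_X$ on $X_T$ is itself a pullback from $X$, tensoring gives $\Lie J_b\otimes \Omega\simeq \pr_X^{\ast}(\fc_{\Omega}^{\vee}\otimes \Omega^{\otimes 2})$. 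Taking $\rH^0$ yields the last identification. The main subtlety, though genuinely minor, is to check that all identifications are canonical and natural in $T$ so that they glue to an isomorphism of v-sheaves; this is harmless since the isomorphism \eqref{eq:splitting} is already canonical on $X\times \A$ and all the other identifications are tautological.
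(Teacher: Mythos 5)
Your proposal is correct and follows essentially the same route as the paper, which simply cites flat base change for $X\to\Spa(K)$ applied to the pullback bundle $\pr_X^*(\fc_{\Omega}^{\vee}\otimes\Omega^{\otimes 2})$, together with the definition of $\A_{J,\Omega}$ and the identification \eqref{eq:splitting}. The one point to be slightly careful about is that $\rH^0(X_T,-)$ is cohomology on the adic space $X\times T$ rather than on a scheme over $\O(T)$, so the relevant base-change statement is the analytic one for perfectoid base rings (the cited Theorem 3.18.2(b) of [perfectoid-base-change]) rather than scheme-theoretic flat base change plus classical GAGA; this does not affect the substance of your argument.
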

		\begin{proof}
			This follows from flat base change applied to $X\to \Spa(K)$, see \cite[Theorem 3.18.2(b)]{perfectoid-base-change}.
		\end{proof}
		\begin{definition}\label{eq:twist-tau}
		Since $\mathbf A$ parametrises sections of $\fc_{\Omega}$ over $X$,
		the global sections of the morphism \eqref{eq:tauOmega} over $X_T$ for each $T\in \Perf_K$ can thus be assembled to a canonical section of the vector bundle ${\A}_{J,\Omega}\to \A$
		\[\tau_{\Omega}:{\A}\to {\A}_{J,\Omega}.\]
	\end{definition}
	\end{secnumber}
	The crucial point is now that we can use the $J_b$-action on $E$ of \Cref{d:J-b-action} to recover the Higgs field:
	\begin{prop}\label{p:can-section-tau_theta}
		Let $T\in\Perf_K$ and let $(E,\varphi)$ be a $\tau$-$G$-Higgs bundle on $X_T$ with Hitchin image $b=h_\tau(E,\varphi):T\to \A$. 
		Then the canonical section $\tau_{\varphi}:=\tau_{\Omega}(b)\in\rH^0(X_T,\Lie J_b\otimes \Omega)$ has the property that
		\[da_{E,\varphi} \otimes \Omega:\Lie J_b\otimes \Omega\to \ad(E)\otimes \Omega\quad \text{ sends }\tau_\varphi\mapsto \varphi.\]
	\end{prop}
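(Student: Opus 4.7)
The plan is to reduce the desired equality to the pointwise relation $da_x(\tau(\chi(x)))=x$ of \eqref{eq:tau}. Both sides of the asserted identity are global sections of the vector bundle $\ad(E)\otimes \Omega$ on $X_T$, so the statement can be verified \'etale-locally on $X_T$. After passing to such a cover, one may assume that the $G$-bundle $E$ is trivial, and simultaneously that the line bundle $\Omega$ (pulled back from $X$) is trivial on a Zariski neighborhood of each point of $X_T$.

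Under such a trivialization, the Higgs field $\varphi$ becomes a morphism $\varphi: X_T \to \fg_{\Omega}$ over $X$, and the composite map $b = h_\tau(E,\varphi)$ is simply $\chi_{\Omega}\circ \varphi$. Unwinding the construction \eqref{eq:Jb to Aut-v1} together with the identification in \Cref{p:Jb to Aut}, the composition $J_b \xrightarrow{a_{E,\varphi}} \uAut(E,\varphi) \to \uAut(E) \simeq G$ becomes the pullback along $\varphi$ of the $G$-equivariant homomorphism $a_{\Omega}: \chi_{\Omega}^{\ast}\gJ_{\Omega} \to \gI_{\Omega} \to G$, where the last arrow is the projection to $G$ appearing in the defining diagram of $\gI$ in the proof of \Cref{p:Jb to Aut}.

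Differentiating at the identity, $da_{E,\varphi}: \Lie J_b \to \ad(E)$ is then the pullback along $\varphi$ of $da_{\Omega}: \Lie \chi_{\Omega}^\ast\gJ_{\Omega} \to \fg_{\Omega}$, under the identification of $\fg_\Omega$ with $\ad(E)$ given by the trivialization. On the other hand, $\tau_\varphi$ is by construction the pullback $b^{\ast}\tau_{\Omega}$. Consequently, the verification of the identity at each geometric point $x\in X_T$ unwinds, after trivializing $\Omega$ near $x$, to precisely the relation $da_{\varphi(x)}\bigl(\tau(\chi(\varphi(x)))\bigr)=\varphi(x)$, which is \eqref{eq:tau}.

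The main obstacle will be bookkeeping with the $\G_m$-twists by $\Omega$: the target lives in $\ad(E)\otimes \Omega$ rather than in $\ad(E)$, and one has to check that tensoring with $\Omega$ is compatible both with the $\G_m$-equivariance of $\tau$ recorded in \S\ref{ss:tau} and with the twisting conventions of \S\ref{sss:twist-tau}, including the identification $\Lie \gJ_{\Omega}\simeq \pi^{\ast}(\fc_{\Omega}^{\vee}\otimes \Omega)$. Once these identifications are set up carefully, so that the global section $\tau_\varphi$ is correctly identified with the twist of the pointwise values $\tau(\chi(\varphi(x)))$, the substantive content of the proposition collapses to the single identity \eqref{eq:tau}.
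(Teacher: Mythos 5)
Your proposal is correct and follows essentially the same route as the paper: trivialise $E$ and $\Omega$ locally, identify $\varphi$ with a ($\G_m$-twisted) point of $\fg$, and reduce the identity to the pointwise relation \eqref{eq:tau}. The only justification you leave implicit is why checking at geometric points suffices — the paper invokes that $X_T$ is reduced, so a section of the vector bundle $\ad(E)\otimes\Omega$ is determined by its values on geometric fibres.
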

	
	\begin{proof}
		Since $X_T$ is reduced, we can verify the equality $da_{E,\varphi}(\tau_\varphi)=\varphi$ by checking that it holds in every geometric fibre $x:\Spa(C,C^+)\to X_T$. After choosing a local trivialisation of the pullbacks of $E$ and $\Omega$ to $x$,  we can identify $\varphi$ with a section of $\fg_{\Omega}$. Geometrically, this is a lift of the point $\Spa(C,C^+)\to [\fg/G]$ defined by $x^\ast(E,\varphi)$ to a point $\varphi_x:\Spa(C,C^+)\to\fg$. Then \eqref{eq:tau} says that $x^\ast da_{E,\varphi}(\tau_{\Omega}(b))=da_{x}(\tau(\chi(\varphi_x)))=\varphi_x$.
	\end{proof}
	
	\subsection{The case of $G=\GL_n$}\label{s:Ngo-G=GL_n}
	By way of example, assume now that $G=\GL_n$. In this case, the constructions of this section can be made more explicit in terms of the spectral curve, as we will now explain.
	
	We begin by describing $\A$: For $G=\GL_n$, we can identify $\fc$ with $\Spec(K[b_1,\cdots,b_n])$, where the $\Gm$-action on $b_i$ has weight $i$. Consequently,
	\[ \textstyle\A=\prod_{i=1}^{n}\rH^0(X,\Om^{\otimes i})\otimes \G_a.\]
	Consider the finite morphism of degree $n$:
	\[
	\mathfrak l:=\Spec( K[b_1,\cdots,b_n,T]/(T^n-b_1 T^{n-1}+\cdots+(-1)^n b_n)) \to \fc.
	\]
	Then we have $\gJ\simeq \Res_{\mathfrak l/\fc}\Gm$. In particular, for any $b\in \fc(K)$, we have a $\Gm$-equivariant decomposition
	\[
	\textstyle\Lie \gJ_b \simeq \bigoplus_{i=0}^{n-1} K T^i,
	\]
	where the $\Gm$-action on $T^i$ has weight $-i$. Note that the right hand side is independent of $b$.

	Let now $T\in \Perf_K$ and set $X_T=X\times T$. Let $b=(b_1,\dots,b_n)\in \A(T)$ be a point of the Hitchin base, i.e.\ a tuple consisting of $b_k\in \Om^{\otimes k}(X_T)$. Let now $(E,\theta)$ with $\theta:E\to E\otimes \Om$ be a Higgs bundle on $X_T$ with Hitchin image $b$, then we can think of the $b_i$ as the coefficients of the characteristic polynomial of $\theta$. 
	
	Indeed, the morphism $\theta:\O\to \uEnd(E)\otimes \Om$ induces a morphism of $\O_{X_T}$-algebras
	\[\theta:\Sym_{\O_{X_T}}\Om^\vee\to  \uEnd(E).\]
	
	Let $I_b$ be the ideal sheaf of $\Sym_{\O_{X_T}}\Om^\vee$ generated by the image of the morphism
	\[ \Om^{\otimes -n}\to \Sym\Om^\vee, \quad f\mapsto \textstyle\sum_{i=1}^n fb_i\]
	where $fb_i$ is considered as a section of $\Om^{ \otimes i-n}$.
	\begin{definition}\label{d:spectral-curve}
		The cover
		\[\pi:Z_b:=\underline{\Spa}_{X_T}(\Sym\Om^\vee/I_b)\to X_T\]
		is called the \textit{spectral curve}. By the local description, it is a finite flat cover of $X_T$ of degree $n$. As it lives in the cotangent bundle over $X_T$, there is on $Z_b$ a tautological differential that we denote by $\tau_{\can}\in \pi^{\ast}\Om$. By construction, there is a natural map
		\begin{equation}\label{eq:G=GL_n-action-B-on-E-varphi}
		\theta:\pi_{\ast}\O_{Z_b}\to \uEnd(E).
	\end{equation}
	\end{definition}
	
	Applied to  the universal case of $Y=\A$, the above construction results in the  universal spectral curve 
	\[ \pi:Z\to X\times  \A\]
	from which we recover $Z_b$ as the pullback along $b:T\to \A$.
	The composition \[\pi':Z\to X\times \A\to \A\] with the projection $X\times \A\to \A$
	is proper and all its fibres have dimension one. Moreover, there is an open dense locus $\A^\circ\subseteq \A$ over which $\pi'$ is smooth proper with geometrically connected fibres. Let 
	\[B:=\pi_{\ast}\O_{Z} \text{ on } (X\times \A)_{\et}.\]
	Then the relative rigid group $J\to X\times \A$ of \Cref{d:def-univ-J} can in this case be described as representing the subsheaf of units  $B^\times$. In fact,
	unravelling the definitions, we see:
	\begin{lemma}\label{l:action-Jb-G=GL_n}
		Let $T\in \Perf_K$ and let $(E,\varphi)$ be a Higgs bundle on $X\times T$. Then we have $J_b=\pi_{\ast}\O_{Z_b}^\times$. Under this identification, the morphism $a_{E,\varphi}$ from \Cref{d:J-b-action} is the homomorphism
		\[ \pi_{\ast}\O_{Z_b}^\times\to \uAut(E)\]
		given by the units of the ring morphism $\theta$ described in \eqref{eq:G=GL_n-action-B-on-E-varphi}.
	\end{lemma}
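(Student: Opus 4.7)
The statement has two assertions: the identification $J_b = \pi_{\ast}\O_{Z_b}^\times$ and the matching of $a_{E,\varphi}$ with the units of the ring map $\theta$. The plan is to handle them in this order by carefully unwinding Ng\^o's construction in the $\GL_n$ case.

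For the first identification, I would start from the description $\gJ \simeq \Res_{\mathfrak l/\fc}\G_m$ recalled in \S\ref{s:Ngo-G=GL_n}, then apply the construction of $J$ from \Cref{d:def-univ-J}. Twisting $\gJ$ by $\Omega$ commutes with Weil restriction along the $\G_m$-equivariant map $\mathfrak l \to \fc$, so $\gJ_\Omega \simeq \Res_{\mathfrak l_\Omega / \fc_\Omega}\G_m$, where $\mathfrak l_\Omega$ is the total space of $\Omega$ with its canonical projection to $\fc_\Omega$. The pullback of $\mathfrak l_\Omega \to \fc_\Omega$ along $b:X_T \to \fc_\Omega$ is precisely $\pi:Z_b \to X_T$ (by the defining equation of $Z_b$ in \Cref{d:spectral-curve}), so base change yields $J_b = \Res_{Z_b/X_T}\G_m = \pi_\ast \O_{Z_b}^\times$ as sheaves on $(X_T)_\et$.

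For the second assertion, I would trace through the construction of $a_{E,\varphi}$ in \Cref{d:J-b-action}. It factors as the pullback along $s_{E,\varphi}$ of Ng\^o's homomorphism $a:\chi^\ast \gJ \to \gI$ from \eqref{eq:map a} (twisted with $\Omega$), composed with the identification of \Cref{p:Jb to Aut}. The essential content is therefore a purely group-theoretic statement about $a$ in the case of $\GL_n$: namely, for a regular element $x \in \gl_n$, the isomorphism $\gJ_{\chi(x)} \isomarrow \gI_x$ from \eqref{eq:map-a-regular-locus} sends $f(T) \mapsto f(x)$, using the identification $\gJ_{\chi(x)} = (K[T]/p_{\chi(x)}(T))^\times$ with $p_{\chi(x)}$ the monic polynomial whose coefficients are the invariants $\chi(x)$, and the identification $\gI_x = K[x]^\times$ (the commutant of a regular matrix). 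This is immediate from the explicit construction of the isomorphism \eqref{eq:map-a-regular-locus} recalled in Ng\^o \cite[Proposition 3.2]{Ngo}. Since this identity holds on the dense regular locus and both sides extend to morphisms of flat schemes over $\fg$, it extends to all of $\fg$.

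Finally I would translate this universal statement back into the relative setup of $(E,\varphi)$. Working \'etale-locally, we may assume $E$ is trivial, so that $\varphi \in \Gamma(X_T, \gl_n \otimes \Omega)$, and the section $s_{E,\varphi}:X_T \to [\fg_\Omega/G]$ is represented by the composition $X_T \xrightarrow{\varphi} \fg_\Omega \to [\fg_\Omega/G]$. Pulling back $a_\Omega$ along $\varphi$ and using the pointwise description in the previous paragraph, the resulting homomorphism $J_b \to \uAut(E) = \GL_n$ sends a local section $f \in \pi_\ast \O_{Z_b}^\times$ to ``$f(\varphi)$'', which is by definition the image of $f$ under the algebra map $\theta$ of \eqref{eq:G=GL_n-action-B-on-E-varphi}. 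The main care required is in the $\Omega$-twists: the map $\theta$ is constructed as the $\O_{X_T}$-algebra homomorphism $\Sym \Omega^\vee \to \uEnd(E)$ adjoint to the Higgs field, and one must check that the universal ``evaluation at $\varphi$'' coming from Ng\^o's $a$ agrees with this map after quotienting by the ideal $I_b$. This is however forced by the fact that both maps are characterized by sending the tautological section $\tau_{\can}\in \pi^\ast \Omega$ to $\varphi$, which we have already verified in \Cref{p:can-section-tau_theta}.
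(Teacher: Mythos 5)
Your proposal is correct and is exactly the ``unravelling of the definitions'' that the paper asserts without writing out: the identification $\gJ\simeq\Res_{\mathfrak l/\fc}\G_m$ passes through the $\G_m$-twist and base change to give $J_b=\pi_\ast\O_{Z_b}^\times$, and the description of $a_x$ as $f(T)\mapsto f(x)$ on the regular locus, extended by flatness, matches the formula \eqref{eq:JbIx} that the paper records at the Lie-algebra level in \S\ref{sss:tau-GLn}. The only slightly loose point is the final appeal to \Cref{p:can-section-tau_theta} (which concerns the derivative $da_{E,\varphi}$, not the algebra map itself), but this is harmless since your real argument --- that both $\O_{X_T}$-algebra maps out of $\Sym\Omega^\vee/I_b$ are determined by the image of the degree-one generators, which is $\varphi$ in both cases --- already closes the gap.
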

	Let us set $\B:=\nu^\ast B$, then when we regard $J$ as a v-sheaf, it is identified with the sheaf $\B^\times$ on $(X\times \A)_v$.
	\begin{secnumber} \label{sss:tau-GLn}
		Finally, let us explicitly describe the sections $\tau$ and $\tau_{\Omega}$ in the case of $G=\GL_n$. 
		Let $b\in \fc$, and let $x=\kos(b)\in \fg^{\reg}$ its Kostant section. Then the derivative of \eqref{eq:map a} at $x\in \fg^{\reg}$ is
		\begin{equation} \label{eq:JbIx}
			\textstyle da_x:\Lie \gJ_b\xrightarrow{\sim} \Lie \gI_x,\quad \sum c_i T^i \mapsto \sum c_i x^i,\quad T\mapsto x.	
		\end{equation}
		By \eqref{eq:tau} and \eqref{eq:JbIx}, we deduce that $\tau(b)$ corresponds to $1\cdot T\in \Lie \gJ_b$. 
		
		Second, by \eqref{eq:splitting} and \eqref{eq:cOmega-explicit}, the fibre of ${\A}_{J,\Omega}$ at $b$ is for $G=\GL_n$ given by \[{\A}_{J,\Omega,b}=\oplus_{i=0}^{n-1}\rH^0(X,\widetilde{\Omega}_X \otimes \widetilde{\Omega}_X^{\vee \otimes i})=\rH^0(X,\widetilde{\Omega}_X)\oplus \rH^0(X,\O_X) \oplus \cdots \oplus \rH^0(X,\widetilde{\Omega}_X^{\vee \otimes n-2}).\]
		It now follows from \eqref{eq:JbIx} after twisting with $\Omega$ that $\tau_{\Omega}$ is given by $\tau_{\Omega}(b)=(0,1,0,\cdots,0)\in {\A}_{J,\Omega,b}$.
	\end{secnumber}

	\section{The non-abelian Hodge correspondence for v-stacks}\label{s:naHC-vstacks}
	
	We now combine the preparations from all previous sections to prove our first main result, \Cref{t:intro-main-thm}.

		\subsection{The stacky relative Hodge--Tate sequence for $J$} 
		We keep the assumptions of \S\ref{ss:HitchinNgo}, i.e.\ $K$ over $\mathbb Q_p$ is complete algebraically closed, $X$ is a smooth projective curve over $K$ and $G$ is a connected reductive group.  
		As before, to simplify notation, we simply denote by $\Omega$ the line bundle $\wtOm_{X}$ on $X$ from \Cref{d:differentials-smoothoids}.
	\begin{secnumber}\label{s:setup-5}
	Let $J\to X\times \A$ be the commutative smooth relative group from \Cref{d:def-univ-J} and let $\whJ\to X\times \A$ be its maximal topologically $p$-torsion subgroup in the sense of \Cref{p:log-of-smooth-group}. 
		
	Recall that an object of $\A_v$ is a perfectoid space $T\in \Perf_K$ with a map $b:T\to \A$. Given such an object, let $X_T:=X\times T$ and let $\whJ_b\to X_T$ be the pullback of $\whJ$ via $b$. By \Cref{p:log-of-smooth-group}.(5), this is the maximal topologically $p$-torsion subgroup of the relative adic group $J_b\to X_T$ from \eqref{eq:defJb}.

		\begin{definition}
			Let $\CP_v\to \mathbf A$ be the v-stack on $\A_v$ of v-$\whJ$-bundles, defined by
			\[ \CP_v:=\CBun_{\whJ,v}:(b:T\to \A)\mapsto \{\text{$\whJ_b$-torsors on $(X_T)_v$}\}.\]
			The contracted product (see \S\ref{sss:contracted}) defines an operation $\otimes$ on  $\CP_v$ turning it into a Picard stack (\S\ref{s:Picard-stacks}) on $\A_v$.
		\end{definition}
		The Picard stack $\CP_{v}$ admits a natural Hodge--Tate logarithm: 	Recall from \S\ref{sss:twist-tau} that there is a vector bundle  ${\A}_{J,\Omega}\to \A$ whose sections over $b$ are given by $\rH^0(X_T,\Lie J_b \otimes_{\O_X} \Om)$. This is a commutative smooth relative group over $\A$, and in particular we may regard it as a Picard stack on $\A_v$.
		\begin{definition}\label{eq:HJb}
		There is a natural homomorphism of Picard stacks on $\A_v$ 
		\[
			\widetilde{h}_J: \CP_v\to \A_{J,\Omega},
		\]
		defined as follows: For any $b:T\to \A$, we send any v-$\wh J_b$-bundle on $X_T$ first to its isomorphism class in  $\rH^1_v(X_T,\wh J_b)$ and then to the image under the Hodge--Tate logarithm 
		of \Cref{c:leray-seq-for-whG}:
		\begin{equation} \label{eq:hUJb}
			\HTlog_{\wh J_b}: \rH^1_v(X_T,\wh J_b)\to \rH^0(X_T,\Lie J_b \otimes_{\O_X} \Om).
		\end{equation}
		\end{definition}
	\end{secnumber}
	\begin{definition}
		Recall from \Cref{eq:twist-tau} the canonical section $\tau_{\Omega}:\A\to \A_{J,\Omega}$.
		We define the v-stack 
		$\CH\to \A$ as the fibre 
		\[ \CH:=\wt h_J^{-1}(\tau_{\Omega})=\A\times_{\tau_{\Omega}, \A_{J,\Omega}} \CP_v.\]
		In other words,
		an object in $\CH(b:T\to \A)$ is a v-$\wh J_b$-bundle on $X_{T,v}$ with Hitchin image $\tau_{\Omega}(b)$ in $\A_{J,\Omega}(T)$. 
	\end{definition}
		There is also an \'etale variant of the Picard stack $\CP_{v}$: 
		\begin{definition}
			We define a prestack $\CP_\et\to \mathbf A$ by
			\[ \CP_\et:=\CBun_{\whJ,\et}:(b:T\to \A)\mapsto \{\text{$\wh J_b$-torsors on $X_{T,\et}$}\}.\]
		\end{definition}
		To simplify notation, we also write $\CP:=\CP_{\et}$. We can now formulate the main result of this section:
		\begin{theorem}\label{t:H P-torsor}
			\begin{enumerate}
			\item $\CP$ is a Picard stack on $\A_v$ and the natural maps
			\[1\to \CP\to  \CP_v\xrightarrow{	\widetilde{h}_J} \A_{J,\Omega}\to 0\]
			define a short exact sequence of Picard stacks (\Cref{d:picstack-ses}).
			\item The v-stack $\CH$ is a $\CP$-torsor.
			\end{enumerate}
		\end{theorem}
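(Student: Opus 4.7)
The proof proceeds in three steps.

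\textbf{Left-exactness.} The morphism of sites $\nu:(X_T)_v\to (X_T)_{\et}$ induces a pullback functor $\nu^*:\CP\to \CP_v$. As $\whJ_b$ is represented by a smooth relative adic group (\Cref{p:log-of-smooth-group}), the sheaf it represents agrees on both sites, so $\nu_*\whJ_b=\whJ_b$ and $\nu^*$ is fully faithful on morphism sheaves. An analogue of Cartier descent in the commutative setting, parallel to \Cref{t:canonicalHiggs}(2), identifies the essential image of $\nu^*$ with those v-torsors whose Hodge--Tate class in $\rR^1\nu_*\whJ_b\simeq \Lie J_b\otimes \widetilde{\Omega}_{X_T}$ (\Cref{t:Rn-nu}) vanishes, i.e.\ with the kernel of $\widetilde{h}_J$. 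In particular $\CP$ inherits v-descent from $\CP_v$, yielding the v-stack property. On isomorphism classes the left-exactness of the sequence in (1) is then \Cref{c:leray-seq-for-whG} applied to $f:X\times \A\to \A$, while on automorphisms it reduces to $\nu_*\whJ=\whJ$.

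\textbf{Essential surjectivity.} For $b:T\to \A$ and $\sigma\in \A_{J,\Omega}(T)=\rH^0(X_T,\Lie J_b\otimes \Om)$, we construct v-locally on $T$ a v-$\whJ_b$-torsor with Hitchin image $\sigma$. Choose an \'etale cover of $X_T$ by toric smoothoids $\{Y_i\to X_T\}$ with toric charts $f_i$. Applied to the commutative group $\whJ_b$, for which no Higgs-field integrability condition arises, the isomorphism $\HT_{f_i}$ of \S\ref{s:data-induced-by-toric-chart} converts $\sigma|_{Y_i}$ into a continuous character $\rho_i:\Delta_{f_i}\to \whJ_b(Y_i)$; the pushout of the pro-\'etale $\Delta_{f_i}$-torsor $Y_{i,\infty}\to Y_i$ along $\rho_i$, exactly as in the recipe \eqref{eq:LS}, produces a v-$\whJ_b$-torsor $P_i$ on $Y_i$ with Hitchin image $\sigma|_{Y_i}$. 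By Step 1, the overlap differences $P_i\otimes P_j^{-1}$ descend to \'etale $\whJ_b$-torsors, giving a \v{C}ech $1$-cocycle whose gerbe-theoretic obstruction is killed after passing to a v-cover of $T$, using the short exact sequence $0\to \whJ_b[p^\infty]\to \whJ_b^\circ\to \Lie J_b\otimes \G_a\to 0$ of \Cref{p:log-of-smooth-group}(3) (valid on an open subgroup by \Cref{l:log-surj-alg-case}, since $J$ is algebraic) together with the coherent-cohomology vanishing $\rH^2_\et(X_T,\mathcal F)=0$ for coherent $\mathcal F$ on the curve $X$.

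\textbf{Torsor property; main obstacle.} Part (2) follows at once from part (1) and \Cref{d:picstack-ses-induces-torsor}, applied to the tautological section $\tau_\Omega:\A\to \A_{J,\Omega}$ from \Cref{eq:twist-tau}: by definition $\CH$ is the fibre of $\widetilde{h}_J$ over $\tau_\Omega$, and so is a canonical $\CP$-torsor for the Picard structure of $\CP_v$. The principal technical hurdle is the essential surjectivity in Step 2; while the local construction via $\HT_{f_i}$-pushouts is clean, extracting a global v-$\whJ_b$-torsor on $X_T$ from the $P_i$'s demands careful \v{C}ech-cohomological bookkeeping via the logarithm exact sequence and coherent vanishing on the curve.
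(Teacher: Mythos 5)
Your Step 1 (left-exactness, via the identification of $\CP$ with the fibre of $\widetilde{h}_J$ over $0$ coming from the Leray sequence) and Step 3 (deducing (2) from (1) via \Cref{d:picstack-ses-induces-torsor} applied to $\tau_\Omega$) are correct and coincide with the paper's argument. The problem is Step 2, which you rightly flag as the crux but which you resolve by a local-to-global gluing argument that has a genuine gap.

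Concretely: after producing the local v-$\whJ_b$-torsors $P_i$ on the $Y_i$ (which is fine, modulo the fact that $\HT_{f_i}$ lands in $\Lie J_b(Y_i)$ and one must still exponentiate, shrinking $\Delta_{f_i}$), the obstruction to gluing them into a torsor on $X_T$ is a degree-$2$ class with coefficients in $\whJ_b$, and via the logarithm sequence $0\to J_b[p^\infty]\to \whJ_b\to \Lie J_b\to 0$ it has a \emph{torsion} component in $\rH^2_{\et}(X_T,J_b[p^\infty])$ in addition to the coherent component in $\rH^2_{\et}(X_T,\Lie J_b)$. Coherent vanishing on the relative curve kills only the latter; the former does not vanish (already for $G=\G_m$ one has $J[p^\infty]=\mu_{p^\infty}$ and $\rH^2_{\et}(X,\mu_{p^\infty})\cong \Qp/\Zp$ for a proper curve), and passing to a v-cover of $T$ does not obviously kill it either. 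Indeed the Remark following the theorem in the paper identifies exactly this class, $\lambda(\tau_\Omega(b))\in\rH^2(X_\et,\whJ_b)$, as the class of the gerbe $\CH(b)$, which is in general non-trivial; so the obstruction you need to kill is precisely the non-trivial object the theorem is about. The paper avoids this entirely by a global argument: it factors $\HTlog$ as $\rR^1f_{v\ast}\whJ\xrightarrow{\log}\rR^1f_{v\ast}\Lie J\to f_{v\ast}(\Lie J\otimes\Om)$ and proves each map surjective separately. The first surjectivity (\Cref{p:log-seq-R^1fv}) uses that the boundary maps $\rR^mf_{v\ast}\Lie J\to\rR^{m+1}f_{v\ast}J[p^\infty]$ vanish because the source is a connected vector group and the target is (pulled back from) a Zariski-constructible \'etale sheaf (\Cref{l:proper-bc-U_Jp^n}); the second uses the splitting $s_{\bX}$ of the Hodge--Tate sequence for $\Lie J$ induced by a flat $\BdR^+/\xi^2$-lift of $X$ (Faltings extension). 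Neither of these two global inputs appears in your proposal, and without some substitute for them the essential surjectivity does not follow.
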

		\begin{proof}
			It follows from \Cref{c:leray-seq-for-whG} (or more precisely, \eqref{eq:Leray-et-v}) that $\CP$ is exactly the fibre of $0$ of the homomorphism $\wt h_{J}$ in \Cref{eq:HJb}. In particular, it is itself a v-stack. This also shows the left-exactness. For (1), it thus suffices to prove that $\wt h_J$ is essentially surjective. By \Cref{d:picstack-ses-induces-torsor}, this will also imply (2).

	Let $f:X\times \A\to \A$ be the projection and recall from \S\ref{s:setup} that $\mu:\A_{v}\to \A_{\Et}$ denotes the natural morphism of sites.
	Then the essential surjectivity follows from the following proposition:
	
	\begin{prop}\label{p:leray-seq-for-UJ}
		The Leray spectral sequence induces a short exact sequence of abelian sheaves on $\A_v$
		\begin{equation} \label{eq:nu-UJ}
			1\to \mu^* (\rR^1f_{\Et\ast}\whJ) \to \rR^1f_{v\ast}\whJ \xrightarrow{\HTlog} f_{v\ast}(\Lie  J\otimes \Omega)\to 0.
		\end{equation}
		Let $E:=\fc_{\Omega}^{\vee}\otimes \Om$, a vector bundle on $X$. Then the last term is isomorphic to the rigid vector group 
		\[f_{v\ast}(\Lie  J\otimes \Om)=\rH^0(X,E\otimes \Om)\otimes_K \mathcal O_\A\]
	\end{prop}
	\begin{proof}
		The left-exact sequence is obtained by applying \Cref{c:leray-seq-for-whG} to  $J\to X\times \A$.
		Recall from \eqref{eq:splitting} that the vector bundle $\Lie J$ on $X\times \A$ is isomorphic to $\pr_X^{\ast}E$, where $\pr_X:X\times \A\to X$ is the projection. We can thus apply \cite[Theorem~3.18.(a)]{perfectoid-base-change} to $g:=b:T\to \A$ to see that
		\[
		f_{v*} (\Lie  J \otimes \Om)\simeq \rH^0(X,E\otimes \Om) \otimes_K \O_{\A}. 
		\]
		It remains to see the right-exactness of \eqref{eq:nu-UJ}, for which we use the following:
		\begin{lemma}\label{l:RnfvastLieUJ}
			Let $E=\fc_{\Om}^{\vee}\otimes \Om$. Then for any $n\in \N$, we have
			\[\rR^nf_{v\ast}\Lie  J=\rH^n_v(X,E)\otimes \O_{\A},\quad \rR^nf_{\Et\ast}\Lie  J=\rH^n_\et(X,E)\otimes \O_{\A}.\]
		\end{lemma}
		\begin{proof}
			Since $\Lie  J=\pr_X^{\ast}E$ by \eqref{eq:splitting}, the statement  follows from \cite[Corollary~3.10]{perfectoid-base-change}.
		\end{proof}
		
		We now consider $\log_{J}$ which by \Cref{l:log-surj-alg-case} and \Cref{p:log-of-smooth-group}.(3)  fits into a short exact sequence
		\begin{equation}\label{eq:exp-for-UJ}
			0\to J[p^\infty]\to \whJ\xrightarrow{\log} \Lie  J\to 0
		\end{equation}
		where on the right we regard $\Lie  J$ as a rigid vector group over $X\times \A$. 
		Applying \Cref{c:leray-seq-for-whG} to the commutative smooth relative group $\Lie  J\to X\times \A$, we obtain a left exact sequence
		\begin{equation}\label{eq:Lie-ses-1}
		1 \to  \mu^*(\rR^1f_{\Et\ast}\Lie  J) \to    \rR^1f_{v\ast}\Lie  J  \to f_{v*} (\Lie  J \otimes \Om) \to 0 .
		\end{equation}
		It follows from \Cref{l:RnfvastLieUJ}
		that this is isomorphic to the short exact sequence of rigid vector groups
		\begin{equation}\label{eq:Lie-ses-2}
			0\to \rH^1_{\et}(X,E)\otimes \O_\A\to  \rH^1_{v}(X,E)\otimes \O_\A\to \rH^0(X,E\otimes \Om)\otimes \O_\A\to 0,
		\end{equation}
		and is in particular exact. 
		Indeed, any choice of flat lift $\bX$ of $X$ over $\BdR^+/\xi^2$ induces a splitting $\mathrm s_{\bX} $ of the last map in \eqref{eq:Lie-ses-2}, see \cite[Proposition~2.15]{Heu23}. We note that such a choice of a lift $\bX$ always exists by \cite[Proposition 7.4.4]{Guo}.
		Thus $\bX$ induces a splitting $s_{\mathbb{X}}$ of \eqref{eq:Lie-ses-1}.

		\begin{lemma}\label{l:proper-bc-U_Jp^n}
			The morphism $J[p^n]\to X\times \A$ is \'etale. Moreover, for any $m\in \N$, we have
			\[ \rR^mf_{v\ast}J[p^n]=\nu^{\ast}\rR^mf_{\et\ast}J[p^n]\]
			where $\nu:\A_v\to \A_\et$ is the natural map. Here the sheaf $\rR^mf_{\et\ast}J[p^n]$ is Zariski-constructible.
		\end{lemma}
		\begin{proof}
			The first part holds by \Cref{c:G[p]-etale}. Since $J\to X\times \A$ is algebraic by definition, it follows from \cite[Theorem~3.7.2]{huber2013etale} that, $\rR^mf_{\et\ast}J[p^n]$ is  algebraic and moreover Zariski-constructible. 
			The displayed isomorphism now follows by an application of \cite[Proposition 5.8.2]{PCT-char-p}.	
		\end{proof}
		\begin{prop}\label{p:log-seq-R^1fv}
			Applying $\rR^1f_{v\ast}$ to the logarithm sequence \eqref{eq:exp-for-UJ}
			induces a short exact sequence of sheaves on $\A_v$
			\[
				1\to \nu^\ast\rR^1f_{\et\ast}(J[p^\infty]) \to \rR^1f_{v\ast}\whJ \to \rR^1f_{v\ast}\Lie J\to 0.\]
		\end{prop}
		\begin{proof}
			It is clear that we obtain a long exact sequence, and it thus suffices to prove that for any $m\geq 0$, the boundary morphisms
			\[ \partial: \rR^mf_{v\ast}\Lie J\to \rR^{m+1}f_{v\ast}J[p^\infty]\]
			vanishes. To see this, we use that by \Cref{l:RnfvastLieUJ}, $\rR^mf_{v\ast}\Lie  J$ is an affine vector group represented by $ \G_a^d$ for some $d\in \N$. On the other hand, by \Cref{l:proper-bc-U_Jp^n}, the sheaf $\rR^{m+1}f_{v\ast}J[p^n]=\nu^{\ast}\rR^{m+1}f_{\et\ast}J[p^n]$ is Zariski-constructible. It follows from this that any map of the form $\partial$ is constant: Indeed, any section of $\rR^{m+1}f_{\et\ast}J[p^n]$ vanishes if it vanishes on fibres by \cite[Proposition~2.6.1]{huber2013etale}. Let now $V$ be any Zariski-open subset of $\A$ on which $\rR^{m+1} f_{\et *}J[p^n]$ is locally constant, then $\partial_{|V}$ is represented by a morphism of relative rigid groups over $V$. 
			Since $ \G_a^d$ is connected, we deduce that $\partial|_V$ is zero. 
			By repeating this argument over the complement of $V$ in $\A$, we conclude $\partial=0$, as we wanted to see.
		\end{proof}
		
		We now apply \Cref{c:leray-seq-for-whG} to \eqref{eq:exp-for-UJ}, which by naturality  yields a commutative diagram:
		\begin{equation} \label{eq:diagram-exp-HTlog}
			\begin{tikzcd}
				1 \arrow[r] & {\mu^*(\rR^1f_{\Et\ast}\Lie  J)} \arrow[r]& {\rR^1f_{v\ast}\Lie  J} \arrow[r] & f_{v\ast}(\Lie J\otimes \Om) \arrow[r] & 0\\
				1 \arrow[r] & \mu^*(\rR^1f_{\Et\ast}\whJ) \arrow[r] \arrow[u]       & \rR^1f_{v\ast}\whJ \arrow[r,"\HTlog"] \arrow[u,"\log"]       & f_{v\ast}(\Lie J\otimes \Om) \arrow[u,equal] \arrow[r] &0
			\end{tikzcd}
		\end{equation}
		The middle vertical map is surjective by \Cref{p:log-seq-R^1fv}. The map on the top right is surjective because it admits a splitting $ s_{\bX}$. 
		Hence the bottom  morphism is surjective and show the exactness of \eqref{eq:nu-UJ}.
	\end{proof}
	This finishes the proof of \Cref{t:H P-torsor}.
\end{proof}
	\begin{rem}
		In order to give a first indication of the role of  $\tau_{\Omega}$ from \Cref{eq:twist-tau} in this context,
		let $b\in \A(K)$, then $\mathscr{H}(b)$ is a $\wh J_b$-gerbe on $X_{\et}$. 
		Consider the Leray five term sequence of $\wh J_b$ for $X_v\to X_\et$:
		\[
		0\to \rH^1(X_{\et},\wh J_b) \to \rH^1(X_{v},\wh J_b) \to \rH^0(X,\Lie J_b \otimes_{\O_X} \Om)
		\xrightarrow{\lambda} \rH^2(X_{\et},\wh J_b) \to \rH^2(X_{v},\wh J_b).
		\]
		Then one can show that the class of $\mathscr{H}(b)$ in $\rH^2(X_{\et},\wh J_b)$ equals to $\lambda(\tau_{\Omega}(b))$. 
	\end{rem}

	\subsection{Twisting Higgs bundles}
		\begin{secnumber} \label{sss:Picstack} 
		There is a natural action of $\CP$ on $h:\CHig_G\to\A$ (cf. \cite[\S4]{Ngo}):
		Let $b:T\to \A$ be in $\A_v$ and let $(E,\varphi)$ be a $G$-Higgs bundle in $\CHig_G(T)$ with Hitchin image $h(E,\varphi)=b$. We still denote by $a_{E,\varphi}$ the restriction of the homomorphism from \Cref{d:J-b-action} to the subgroup $\wh J_b\subseteq  J_b$,
		\begin{equation}\label{eq:whJ-action-on-E}
		a_{E,\varphi}:\wh J_b\to J_b\to \uAut(E,\varphi).
		\end{equation}
		We can use this to twist $(E,\varphi)$ by any $\wh J_b$-bundle $F$ to obtain a new $G$-Higgs bundle 	$(F \times^{\wh J_b}E,F\times^{\wh J_b}\varphi)$: To see that this is well-defined, observe that the pushout $F\times^{\whJ_b}\uAut(E)$ along $a_{E,\varphi}$ is an $\uAut(E)$-torsor, so 
		\begin{equation}\label{eq:action-on-Higgs-via-twist}
		F\times^{\wh J_b}E=(F\times^{\whJ_b}\uAut(E)) \times^{\uAut(E)}E
		\end{equation}
		is a contracted product of bitorsors in the sense of \Cref{d:contracted-prod}. This shows that $F\times^{\wh J_b}E$ is again a $G$-torsor.
		Second, the $J_b$-action fixes $\varphi$, hence $\varphi$ considered as a morphism $\O\to \ad(E)\otimes \Om$ is $\wh J_b$-equivariant.
	\end{secnumber}
	\begin{secnumber}\label{s:action-on-V}
		Second, there is also a natural action of $\CP$ on $\wt h:\CBun_{G,v}\to \A$: Let $b:T\to \A$ be in $\A_v$ and let $V$ be a v-$G$-bundle on $X_T$ with $\wt h(V)=b$. By \Cref{c:a_E-for-v-bundles}, there is a natural action
		\[a_V:\wh J_b\to J_b\to \uAut(V).\]
		Like in \S\ref{sss:Picstack}, we can use this to twist $V$ by any \'etale $\wh J_b$-bundle $F$ to obtain a new v-$G$-bundle	
		$V':= \nu^*F \times^{\wh J_b}V$. Since $F$ is \'etale,  $V'$ has the same Hitchin image as $V$, so this indeed defines an action over $\A$.
	\end{secnumber}
	 We can now state the main result of this article, using the notion of twists from \Cref{d:picstack-twist}:
	\begin{theorem} \label{t:fCiso}
		There exists a canonical equivalence of v-stacks over $\A$, functorial in $X$,
		\begin{equation} \label{eq:fC}
			\CS:\CH\times^{\CP} \CHig_G \isomarrow \CBun_{G,v}.
		\end{equation}
	\end{theorem}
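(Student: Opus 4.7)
My plan is to construct $\CS$ explicitly as a twist of torsors using the $\wh J_b$-actions developed in Section~\ref{s:abelianisation}, and then to prove it is an equivalence by a v-local analysis based on the local $p$-adic Simpson correspondence.

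For the construction, let $b:T\to \A$ be an object of $\A_v$, let $H \in \CH(T)$ be a v-$\wh J_b$-torsor on $X_T$ with $\wt h_J(H) = \tau_\Omega(b)$, and let $(E,\varphi) \in \CHig_G(T)$ with $h(E,\varphi) = b$. I set
\[
\CS(H,(E,\varphi)) \;:=\; H \times^{\wh J_b} E,
\]
where $\wh J_b$ acts on $E$ via the homomorphism $a_{E,\varphi}: \wh J_b \to \uAut(E)$ from~\eqref{eq:whJ-action-on-E}. Since $E$ is naturally an $(\uAut(E),G)$-bitorsor on $X_{T,\et}$ and $H$ is a $\wh J_b$-torsor on $X_{T,v}$, the contracted product \Cref{d:contracted-prod} is a v-$G$-torsor, which we promote to an object of $\CBun_{G,v}(T)$. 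The standard identity $(F\otimes H)\times^{\wh J_b}(F^{-1}\times^{\wh J_b}E)\simeq H\times^{\wh J_b}E$ for any étale $\wh J_b$-torsor $F$ (representing an object of $\CP(T)$) shows that the assignment descends to the quotient $\CH \times^\CP \CHig_G$ in the sense of \Cref{d:picstack-twist}. To confirm that $\CS$ lies over $\A$, I compute the Hitchin image of its output: by functoriality of the canonical Higgs field (\Cref{t:canonicalHiggs}) and the interaction of the Hodge--Tate logarithm of $H$ with $a_{E,\varphi}$, the canonical Higgs field $\theta_{\CS(H,(E,\varphi))}$ is identified with $da_{E,\varphi}(\tau_\Omega(b))$, which equals $\varphi$ by \Cref{p:can-section-tau_theta}. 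Hence $\wt h(\CS(H,(E,\varphi))) = h(E,\varphi) = b$.

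To prove that $\CS$ is an equivalence, I would work v-locally on $X_T$ and on $\A$, exploiting that $\CH$, $\CHig_G$, and $\CBun_{G,v}$ are all v-stacks by \Cref{p:CBun-small} and \Cref{t:H P-torsor}. By \Cref{l:loc-small}, any v-$G$-bundle $V$ on $X_T$ is small after passing to some étale cover $g:Y\to X_T$ with a toric chart $f$. The local $p$-adic Simpson correspondence \Cref{t:small-corresp} produces an étale $G$-Higgs bundle $(E,\theta) := \LS_f^{-1}(g^*V)$ on $Y$, whose Hitchin image matches $g^*b$. The key compatibility \Cref{c:a_E-for-v-bundles} ensures that the $\wh J_b$-actions $a_{g^*V}$ and $a_{E,\theta}$ agree under $\LS_f$; from this, the v-sheaf of $\wh J_b$-equivariant isomorphisms $\nu^*E \to g^*V$ defines a v-$\wh J_b$-torsor $H'$ on $Y$ which satisfies $\CS(H',(E,\theta)) \simeq g^*V$ tautologically. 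Together with the analogous calculation for morphism sheaves (which again reduces to the local correspondence), this yields a local quasi-inverse to $\CS$, and v-descent globalises it to an equivalence.

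The main obstacle is verifying that the torsor $H'$ so constructed really has Hodge--Tate class $\tau_\Omega(b)$ in $\A_{J,\Omega}$, so that it indeed defines an object of $\CH$ rather than only of $\CP_v$. This traces back to \Cref{p:can-section-tau_theta}, which says $da_{E,\theta}\otimes \Om$ sends $\tau_\Omega(b)$ to $\theta$; one must match this with the canonical Higgs field $\theta_{g^*V}$ on the v-side via \Cref{c:a_E-for-v-bundles} and track it through the short exact sequence of \Cref{t:H P-torsor}. Once this identification is secured, the remainder of the argument is formal: the $\CP$-actions on $\CHig_G$ (\S\ref{sss:Picstack}) and on $\CBun_{G,v}$ (\S\ref{s:action-on-V}) are matched under $\CS$ by construction, and the equivalence on the twisted quotient follows by standard bitorsor manipulations. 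In spirit this parallels the abelianisation argument of Ng\^o \cite{Ngo} in the classical setting and its mod $p$ analogue due to Chen--Zhu \cite{CZ15}, with the relative Hodge--Tate sequence of Section~\ref{HT-rel-grp} playing the role that the $p$-curvature sequence plays in characteristic $p$.
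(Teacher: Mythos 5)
Your construction of $\CS$ is exactly the paper's: the same contracted product $F\times^{\whJ_b}\nu^{*}E$ along $a_{E,\varphi}$, the same bitorsor argument that the result is a v-$G$-torsor, the same associativity argument for descent to the twisted quotient, and the same identification $\theta_{\CS(F,(E,\varphi))}=da_{E,\varphi}(\tau_\Omega(b))=\varphi$ (the paper's \Cref{l:hfC}, deduced from \Cref{l:LC-via-twisting} and \Cref{p:can-section-tau_theta}) to see that $\CS$ lives over $\A$. Where you diverge is in proving that $\CS$ is an equivalence: the paper does not glue local quasi-inverses. Instead it introduces the inverse $\CP$-torsor $\CH^{-1}=\A\times_{-\tau_\Omega,\A_{J,\Omega}}\CP_v$, uses $\CH\times^{\CP}\CH^{-1}\simeq\CP$, and constructs a \emph{global} functor $\CH^{-1}\times^{\CP}\CBun_{G,v}\to\CHig_G$, $(F,V)\mapsto F\times^{\whJ_b}V$; the only local input is the lemma that this twist is étale-locally trivial, which follows from \Cref{l:LC-via-twisting}. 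This organization buys you an honest object of $\CHig_G$ directly and avoids any 2-categorical gluing into the twisted quotient.

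Two points in your essential-surjectivity argument are genuine gaps. First, the sheaf of $\whJ_b$-equivariant isomorphisms $\nu^{*}E\to g^{*}V$ is a priori a torsor under the centralizer of the image of $\whJ_b$ in $\uAut(\nu^{*}E)$ (which contains $\uAut(E,\theta)$ and is in general strictly larger than $\whJ_b$), not under $\whJ_b$ itself; and its local non-emptiness is not tautological. The paper instead produces the torsor explicitly as the pushout of $Y_\infty\to Y$ along the cocycle $\exp(\rho_f(\tau_\theta)):\Delta_f\to\whJ_b(Y)$, for which both the torsor structure under $\whJ_b$ and the identity $\HTlog(\mathcal F)=\tau_\theta$ — precisely the obstacle you flag — are visible by construction; one then invokes the left-exact Leray sequence \eqref{eq:Leray-et-v} to identify $\mathcal F$ with the given $F$ étale-locally. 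Second, even granting the local statement, "v-descent globalises it" hides real content: your local quasi-inverses produce pairs $(H',(E,\theta))$ over the members of a cover whose transition data live in $\CP$, and assembling these into a morphism valued in the stackification $\CH\times^{\CP}\CHig_G$ of \Cref{d:picstack-twist} is exactly the descent problem the paper's $\CH^{-1}$ device is designed to avoid. Both gaps are repairable, but as written the inverse functor is not yet constructed.
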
	
	\begin{proof}
		Let $T$ be an affinoid perfectoid space over $K$, let $(E,\varphi)$ be a $G$-Higgs bundle on $X_T$ and let $b:T\to \mathbf A$ be its Hitchin image.  Exactly as in \Cref{d:J-b-action}, we still denote by $a_{E,\varphi}$ the composition of \eqref{eq:whJ-action-on-E} with the inclusion
		\[a_{E,\varphi}:\wh J_b\to \uAut(E,\varphi)\hookrightarrow \uAut(E),\]
		a morphism of v-sheaves represented by smooth relative groups.
		Note that $\nu^*E$ is an $(\uAut(E),G)$-bitorsor on $X_{T,v}$. In particular, $a_{E,\varphi}$ defines a left-action by $\wh J_b$ on $\nu^\ast E$.
		Let now $F$ be an object of $\CH(b)$, i.e.\ $F$ is a v-$\wh J_b$-bundle on $X_T$ with Hitchin image $\tau_{\Omega}(b)$. We will define $\CS$ in terms of  the  twist 
		\begin{equation} \label{eq:defC}
			(F,(E,\varphi))\mapsto F\times^{\wh J_b} \nu^* E
		\end{equation}
		(see \S\ref{sss:contracted}).
		This is now a v-$G$-bundle on $X_T$: Indeed, exactly as in \eqref{eq:action-on-Higgs-via-twist}, we can rewrite the twist as the contracted product of bitorsors in the sense of \Cref{d:contracted-prod}: 
		$F\times^{\wh J_b} \nu^* E=(F\times^{\wh J_b}\uAut(E))\times^{\uAut(E)}\nu^* E$.
		This shows that \eqref{eq:defC} defines a functor $\CH\times \CHig_G\to \CBun_{G,v}$. 
		
		To see that this induces the desired functor $\CS$, we need to compute the effect of the $\CP$-action:
		Let $Q\in \CP(T)$ be any $\whJ_b$-bundle on $X_{T,\et}$. Let $(E_Q:=Q\times^{\whJ_b}E,\varphi_{Q})$ be the $G$-Higgs bundle defined by the action of $Q$ on $(E,\varphi)$, see \S\ref{sss:Picstack}. By associativity of twists, we then have a natural isomorphism 
		\begin{equation}\label{eq:Q-equivar-of-CS}
		(F\times^{\wh J_b} Q)\times^{\whJ_b}	\nu^*E  
		\simeq F\times^{\wh J_b} (Q\times^{\whJ_b}	\nu^*E ) 
		\simeq  F\times^{\whJ_b} 	\nu^*E_Q.
		\end{equation}
		This shows that \eqref{eq:defC} induces the desired morphism of v-stacks $\CS:\CH\times^{\CP} \CHig_G\to \CBun_{G,v}$.
		
		It remains to prove that $\CS$ is an equivalence of categories and that $\CS$ commutes with the structure morphisms to $\A$. For either, we now show that  $\CS$ is compatible with the local $p$-adic Simpson correspondence:
	\begin{prop}\label{l:LC-via-twisting}
		Let $b:T\to \A$ be an object of $\A_v$ and let $Y\in X_{T,\et}$ be a toric smoothoid space, equipped with a fixed toric chart $Y\to \mathbb T\times T$. 
		Let $(E,\theta)$ be a $G$-Higgs bundle on $X_T$ such that $h(E,\theta)=b$ and let $F$ be an object of $\CH(b)$. 
		Then after replacing $Y$ by an \'etale cover, there exists an isomorphism of v-$G$-bundles
		\[
		\LS_f(E,\theta)\cong  F\times^{\whJ_b}  \nu^*E,
		\]
		where the left-action of $\whJ_b$ on $\nu^\ast E$ is via the homomorphism $a_{E,\varphi}:\whJ_b\to \uAut(E)$ from \S\ref{sss:Picstack}.
	\end{prop}
	\begin{proof}
		After replacing $Y$ by an \'etale cover, we may assume that $(E,\theta)$ is small with $E\cong G$. 
		By functoriality of the exponential map from  \Cref{p:nbhd-open-subgroup-ball}, we obtain a commutative diagram 
		\begin{equation} \label{eq:exp-commute}
		\begin{tikzcd}
			\whJ_b \arrow[r,"a_{E,\theta}"]                                & \uAut(E)                          \\
			\Lie J_b \arrow[r,"da_{E,\theta}"] \arrow[u, "\exp", dotted] & \ad(E) \arrow[u, "\exp", dotted]
		\end{tikzcd}\end{equation}
		where the dotted arrows mean that the maps are both defined on an open neighbourhood of the identity.
		
		Recall from \Cref{p:can-section-tau_theta} that we have a tautological section $\tau_\theta\in \rH^0(X_T,\Lie J_b\otimes \Om)$. As explained in \S\ref{s:data-induced-by-toric-chart}, we can use the chart $f$ to associate to this a continuous homomorphism
		\[ \rho_f(\tau_\theta):\Delta_f\to \Lie J_b(Y).\]
		After replacing $\Delta_f$ by an open subgroup, and thus $Y$ by the corresponding finite \'etale cover, we may assume that $\rho_f(\tau_\theta)$ has image in the open subgroup where the exponential converges. 
		Consider the v-$\whJ_b$-torsor $\mathcal F$ on $Y$ associated to the 1-cocycle $\exp(\rho_f(\tau_\theta)):\Delta_f\to \whJ_b(Y)$. Explicitly, this is defined for any $W\in Y_v$ by
		\begin{alignat*}{3}
			\mathcal F(W)&:=&&\big\{ s\in \whJ_b(Y_{\infty}\times_Y W)\big| \gamma \cdot s=\exp(-\rho_f(\tau_\theta)(\gamma))s,\quad \forall \gamma\in \Delta_f\big\}\\
		\Rightarrow\quad (\mathcal F\times^{\whJ_b}\nu^\ast E)(W)&=&&\big\{ s\in E(Y_{\infty}\times_Y W)\big| \gamma \cdot s=a_{E,\theta}(\exp(-\rho_f(\tau_\theta)(\gamma)))s,\quad \forall \gamma\in \Delta_f\big\}.
		\end{alignat*}
		Observe now that by \Cref{p:can-section-tau_theta}, we have $da_{E,\theta}(\tau_{\theta})=\theta$. It follows that for any $\gamma\in \Delta_f$, we have  $da_{E,\theta}(\rho_f(\tau_\theta)(\gamma))=\rho_f(\theta)(\gamma)$.
		We deduce from this and \eqref{eq:exp-commute} that
		\[a_{E,\theta}(\exp(-\rho_f(\tau_\theta)(\gamma)))=\exp(da_{E,\theta}(-\rho_f(\tau_\theta)(\gamma)))=\exp(-\rho_f(\theta)(\gamma)).\]
		Comparing to the explicit definition of $\LS_f(E,\theta)$ in \eqref{eq:LS},  it follows that
		\[  \mathcal F\times^{\whJ_b}\nu^\ast E=	\LS_f(E,\theta).\]
		It remains to compare $ \mathcal F$ and $F$. It is clear from the definition that 
		$\HTlog(\mathcal F)=\tau_\theta=\HTlog(F)$.
		By the Leray sequence of $\whJ$, \eqref{eq:Leray-et-v} and \Cref{p:leray-seq-for-UJ}, this implies $\mathcal F\simeq F$ on some \'etale cover of $Y$. 
	\end{proof}
	
	\begin{coro} \label{l:hfC}
		In the context of \eqref{eq:defC}, we have $\widetilde{h}(F\times^{\whJ_b}\nu^\ast E)=b$.
	\end{coro}
	
	%\begin{proof}
	%	\ben{This can be seen locally on any toric $Y\subseteq X_T$, where it follows immediately from \Cref{l:LC-via-twisting}}.
	%	It suffices to verify the assertion after pullback $T$ to the adic spectral of a perfectoid field over $K$. 
	%	Hence, we may assume that $T=\Spa(L,L^+)$ for a perfectoid field $L$ over $K$. \dx{Why can we do that? Why do we need this assumption?}\ben{We can check on points that two morphisms $T\to A$ agree}
	%
	%	The assertion is local for the \'etale topology on $X_T$. 
	%	After pullback along an \'etale morphism $V\to X_T$, we may assume that $E$ is trivial and moreover that $V$ is affinoid. 
	%	Then, we can view the homomorphism $\nu^*a_{E,\varphi}:U_{J_b}\to \uAut(\nu^* E)$ as a composition of sheaves of groups over $V_v$
	%	\[
	%	U_{J_b}\to J_b\to I_{\varphi} \to G\times V,
	%	\]
	%	here $\varphi$ is viewed as a section of $\Gamma(V, \fg\otimes_K \widetilde{\Omega}_V)$ via a trivialization of $E$. 
	%
	%	The Hitchin image of $\nu^*E$ is zero and $\widetilde{h}(F)=\tau_{\Omega}(b)\in \Gamma(V, \Lie J_b \otimes_{\O_V} \widetilde{\Omega}_V)$. 
	%	Since $E$ is trivial, the Hitchin image of $(\nu^* a_{E,\varphi})_* F \times \nu^* E $ is defined by the image of $\tau_{\Omega}(b)$ under the map: 
	%	\[
	%	\Lie J_b \otimes_{\O_V} \widetilde{\Omega}_V \xrightarrow{d a_{E,\varphi}}
	%	\fg \otimes_K \widetilde{\Omega}_V. 
	%	\]
	%	By \eqref{eq:tau}, we have $d a_{E,\varphi} (\tau_{\Omega}(b))= \varphi \in \fg \otimes_K \widetilde{\Omega}_V$. 
	%	Then the lemma follows. 
	%\end{proof}
	
	We now continue with the proof of Theorem \ref{t:fCiso}: To see that $\CS$ is an equivalence, consider the stack
		\[\CH^{-1}=\A\times_{-\tau_{\Omega},\A_{J,\Omega}}\CBun_{\whJ,v}\]
		of v-$\whJ$-bundles with Hitchin image $-\tau_{\Omega}$. 
		By \Cref{t:H P-torsor}, this is a $\CP$-torsor, and it is an inverse of $\CH$ in the sense that the contracted product induces a canonical equivalence
		\[ \CH\times^{\CP}\CH^{-1}\simeq \CP.\]
		It therefore now suffices to construct an inverse to $\CH^{-1}\times^{\CP}\CS$ of the form
		\[\CH^{-1}\times^{\CP} \CBun_{G,v}\to \CHig_G.\]
		To this end, let $b:T\to \A$ be in $\A_v$, let $F\in\CH^{-1}(b)$ and let $V$ be an object of $\CBun_{G,v}(T)$ with $\wt h(V)=b$. Similar to the construction of $\CS$, we use the homomorphism $a_V:\wh J_b\to \uAut(V)$ from  (\ref{s:action-on-V}) to define
		\begin{equation} \label{eq:actionC-1}
			(F,V)\mapsto \widetilde{V}:=F\times^{\whJ_b} V.
		\end{equation}
		Exactly as in \S\ref{sss:Picstack},
		it is clear that $\widetilde{V}$ is a v-$G$-bundle on $X_T$. The crucial point is now:
		\begin{lemma}
			The v-$G$-bundle $\widetilde{V}$ is \'etale-locally trivial, so $E:=\nu_{\ast}\widetilde{V}$ is a $G$-torsor on $X_{T,\et}$ with $\nu^{\ast}E=\wt V$.
		\end{lemma}
		\begin{proof}
			Let $Y\in X_{T,\et}$ be toric  with a chart $f$ such that $V_{|Y}$ is small. By \Cref{l:loc-small}, we can find an \'etale cover of $X_{T}$ by such $Y$. Then by \Cref{t:small-corresp} there is a small $G$-Higgs bundle $(E,\theta)$ over $Y$ such that $V_{|Y}\simeq \LS_f(E,\theta)$. By \Cref{l:LC-via-twisting}, it follows that after replacing $Y$ by an \'etale cover, we have
			\[V|_Y\simeq \LS_f(E,\theta)\simeq  F^{-1}\times^{\whJ_b} \nu^*E.\]
			In particular, the canonical isomorphism $\LS_f:\uAut(E,\theta)\to \uAut(V|_Y)$  is given by the natural $\uAut(E)$-action on $\nu^*E$ on the left. 
			By \Cref{c:a_E-for-v-bundles}, this isomorphism identifies $a_{V}$ with $a_{E,\theta}$. Hence
			\[ \wt{V}|_Y\simeq F \times^{\whJ_b}V |_Y\simeq  F \times^{\whJ_b}F^{-1}\times^{\whJ_b} \nu^*E\simeq \nu^\ast E.\qedhere\]
		\end{proof}
		The canonical Higgs field $\psi_{V}$ on $V$ from \Cref{t:canonicalHiggs} induces a Higgs field on $\widetilde{V}$. Exactly as in \Cref{sss:Picstack}, one sees that this induces a Higgs field $\theta$ on $E$. 
		 We now define the morphism $\CS^{-1}$ by sending $(F,V)$ to $(E,\theta)$. 
		The construction is clearly functorial, and one sees as in \eqref{eq:Q-equivar-of-CS} that it factors through the quotient by the antidiagonal $\CP$-action.
		Since $\psi_{V}$ and $\theta$ have the same Hitchin image, $\CS^{-1}$ is a morphism over $\A$. 
		
		One now easily checks from the definition that $\CH^{-1}\times^{\CP}\CS$ and $\CS^{-1}$ are inverse to each other:
		Indeed, given an object $F$ of $\CH^{-1}$ and $V$ in $\CBun_{G,v}$, we have $F^{-1}\times^{\whJ_b} F\times^{\whJ_b} V\simeq V$. 
	Hence $(\CH^{-1}\times^{\CP}\CS)\circ\CS^{-1} $ is isomorphic to the identity map. 
		The other direction can be seen in the same way. 
	\end{proof}

\subsection{Functorialities of $\CS$ in the reductive group}
We have already stated as part of
\Cref{t:fCiso} the functoriality of $\CS$ in $X$.
In this subsection, we discuss the functoriality of $\CS$ with respect to homomorphism of reductive groups $f:G\to H$, which is more subtle as it requires compatibility of Kostant sections.
\begin{secnumber}
	Let $f:G\to H$ be a homomorphism of connected reductive groups over $K$ and let $df:\fg\to \fh$ be the associated morphism of Lie algebras. 
	This induces a natural morphism of invariants $\frak{c}_{G}\to \frak{c}_H$, compatible with Chevalley maps. Consequently, there is a natural morphism of centralisers $\mathcal{I}_G\to \mathcal{I}_H$ over $f$ and $df$:
	\[\begin{tikzcd}
		\mathcal I_G\arrow[r]\arrow[d]&\fg \arrow[r,"\chi_{\fg}"]\arrow[d,"df"']&\fc_G\arrow[d]\\
		\mathcal I_H\arrow[r]&\fh \arrow[r,"\chi_{\fh}"]&\fc_H
		\end{tikzcd}\]
Moreover, by twisting $\fc_G\to \fc_H$ with $\Omega$, we also obtain a natural morphism of Hitchin bases
$\A_G \to \A_H$. 

	Suppose now that the Kostant sections of $G$, $H$ are compatible with $f$ in the sense that the diagram
	\begin{equation}\label{eq:compat-Kostant}
	\xymatrix{
	\fg \ar[r]^{df} & \fh \\
	\frak{c}_{G} \ar[u]^{\kos_G} \ar[r] & \frak{c}_H \ar[u]_{\kos_H}
	}
	\end{equation}
	commutes.
	Then the morphism $\mathcal{I}_G\to \mathcal{I}_H$ induces a homomorphism over $\fc_G$ of regular centralisers:
	\begin{equation} \label{eq:J-functoriality}
		\mathcal{J}_G \to \mathcal{J}_H\times_{\fc_H}\fc_G.
\end{equation}
	The tautological section $\tau_G:\fc_G \to \Lie \mathcal{J}_G$ (\S\ref{ss:tau}) is defined by the diagonal map of $\fg^{\reg}$ and the Kostant section, and is therefore compatible with the derivative of \eqref{eq:J-functoriality} in the natural way.

	\begin{rem}\label{r:det-Kostant} The diagram \eqref{eq:compat-Kostant} does not always commute. However, we do have commutativity in many  case of interest,  for example for the determinant $\det:\GL_n\to \Gm$ and the canonical embeddings $\SO_{2n+1}\to \SL_{2n+1}$, $\Sp_{2n} \to \SL_{2n}$, $\SO_{2n+1}\to \SO_{2n+2}$. In fact, we have the following criterion:
		\end{rem}
\end{secnumber}

\begin{lemma}\label{l:criterion-Kostant}
	The Kostant sections are compatible with $f:G\to H$ if and only if there is a regular nilpotent element in $\fg$ that is sent to a regular nilpotent element of $\fh$. 
\end{lemma}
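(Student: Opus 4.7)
The plan is to use the explicit construction of the Kostant section in terms of principal $\mathfrak{sl}_2$-triples. Recall that any choice of Kostant section $\kos_G:\fc_G\to \fg$ arises from a principal $\mathfrak{sl}_2$-triple $(e,h,f)$ in $\fg$ with $e$ regular nilpotent: the Kostant slice $e+\ker(\ad f)$ is contained in $\fg^{\reg}$, the restriction of $\chi_\fg$ to it is an isomorphism onto $\fc_G$, and its inverse is $\kos_G$. Any two such sections are conjugate under $G$. This is where I will take the input from Kostant's theory; everything else will be verified by functoriality.

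For the $(\Rightarrow)$ implication, I would argue directly from the definitions. Since $\kos_G$ factors through $\fg^{\reg}$ and the fibre of $\chi_G$ over $0\in\fc_G$ is the nilpotent cone, the element $e_G:=\kos_G(0)$ is regular nilpotent in $\fg$. Commutativity of \eqref{eq:compat-Kostant} forces $df(e_G)=\kos_H(0)$, which likewise is regular nilpotent in $\fh$. This gives the desired element.

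For the $(\Leftarrow)$ direction, suppose $e_G\in\fg$ is regular nilpotent with $e_H:=df(e_G)\in\fh$ regular nilpotent. I would first extend $e_G$ to a principal $\mathfrak{sl}_2$-triple $(e_G,h_G,f_G)$ via Jacobson--Morozov. Since $df$ is a Lie algebra homomorphism, the triple $(e_H,df(h_G),df(f_G))$ automatically satisfies the $\mathfrak{sl}_2$ relations, and the nondegeneracy check (the main subtlety) goes as follows: if $df(h_G)=0$, then $2e_H=[df(h_G),e_H]=0$, contradicting $e_H\neq 0$; if $df(f_G)=0$, then $df(h_G)=[e_H,df(f_G)]=0$, again a contradiction. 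Hence $(e_H,df(h_G),df(f_G))$ is a genuine principal $\mathfrak{sl}_2$-triple in $\fh$, and I use it together with $(e_G,h_G,f_G)$ to define the two Kostant sections. Now $df$ sends $e_G+\ker(\ad f_G)$ into $e_H+\ker(\ad f_H)$ because $[df(f_G),df(v)]=df([f_G,v])=0$ for $v\in\ker(\ad f_G)$, and by naturality of the Chevalley map $\chi_H(df(\kos_G(c)))$ equals the image of $c$ under $\fc_G\to\fc_H$. By uniqueness of the slice representative in each $\chi_H$-fibre (Kostant's theorem), we conclude $df\circ\kos_G=\kos_H\circ(\fc_G\to\fc_H)$, which is exactly the commutativity of \eqref{eq:compat-Kostant}. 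The hard part is solely the nondegeneracy verification above; once this is in hand, uniqueness in Kostant's slice theorem does the rest.
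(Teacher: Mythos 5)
Your proof is correct and follows essentially the same route as the paper: the forward direction by evaluating at $0\in\fc_G$, and the converse by extending the given regular nilpotent to a principal $\mathfrak{sl}_2$-triple, pushing it forward along $df$, and using Kostant's identification of the section with the slice $e+\ker(\ad f)$. You supply two details the paper leaves implicit (the nondegeneracy of the image triple and the verification that $df$ carries one slice into the other), but the underlying argument is the same.
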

\begin{proof}
	We refer to \cite[\S 3.2]{chriss1997representation} for properties of regular nilpotent elements. 
	Suppose  $e\in \fg$ is a regular nilpotent element such that  $e':=df(e)\in \fh$ is regular nilpotent.
	Let $\{e,\check{\rho},\widetilde{e}\}$ be a principal $\sl_2$ of $\fg$. Then $\{e', \check{\rho}'=f(\check{\rho}), \widetilde{e}'=f(e)\}$ is a principal $\sl_2$ of $\fh$. 
	Recall from \cite[Théorème 2.1.3]{Ngo} that the Kostant section identifies $\fc_G$ (resp. $\fc_H$) with the subspace $e+\fg^{ \widetilde{e} }$ of $\fg$ (resp. $e'+\fh^{\widetilde{e}'}$ of $\fh$). 
	Thus $f$ is compatible with Kostant sections. 
	The converse is clear by considering the image of $0\in \fc$. 
\end{proof}
\begin{prop}\label{p:functoriality-of-CS}
	Let $f:G\to H$ be a homomorphism of reductive groups over $K$ such that $f$ is compatible with Kostant sections. 
	Then there is a natural 2-commutative diagram of v-stacks over $\A_G \to\A_{H}$:
	\[\begin{tikzcd}[row sep =0.4cm]
		\CH_G\times^{\CP_G} \CHig_G \arrow[r,"\CS_G"] \ar[d] &  \CBun_{G,v} \ar[d] \\
		\CH_H \times^{\CP_H} \CHig_H \arrow[r,"\CS_H"] & \CBun_{H,v}
	\end{tikzcd}
	\]
\end{prop}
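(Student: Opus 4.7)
The plan is to define the vertical arrows via extension of structure group, check the compatibility of the key action morphisms $a_{E,\varphi}$ under $f$, and then unwind the contracted product construction of $\CS$.

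First, I would construct the vertical maps as follows. On the right, send a v-$G$-bundle $V$ on $X_T$ to $f_\ast V := V \times^G H$. On the Higgs side, send $(E,\varphi) \in \CHig_G(T)$ to $(f_\ast E, df_\ast\varphi)$, where $df_\ast\varphi \in \rH^0(X_T, \ad(f_\ast E) \otimes \Omega)$ is induced by the natural map $\ad(E) = E \times^G \fg \to f_\ast E \times^H \fh = \ad(f_\ast E)$ coming from $df$. Compatibility of these maps with the Hitchin fibrations, relative to $\A_G \to \A_H$, is immediate from the naturality of $\fc_G \to \fc_H$ with respect to the Chevalley maps. The left vertical arrow $\CH_G \to \CH_H \times_{\A_H} \A_G$ is defined at the level of $\CP$ by pushing out $\wh{\mathcal J}_{G,b}$-torsors along the homomorphism $\wh{\mathcal J}_{G,b} \to \wh{\mathcal J}_{H,f\circ b}$ deduced from \eqref{eq:J-functoriality}; the fact that this lands in $\CH_H$, and not just $\CP_H$, uses precisely that $df$ sends $\tau_G$ to $\tau_H$ (see \S\ref{ss:tau}), which in turn relies on the compatibility of the Kostant sections.

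Second, I would establish the crucial compatibility of the abelianisation maps: for a $G$-Higgs bundle $(E,\varphi)$ with Hitchin image $b$, the square of v-sheaves on $(X_T)_v$
\[
\begin{tikzcd}
\wh{\mathcal J}_{G,b} \arrow[r,"a_{E,\varphi}"] \arrow[d] & \uAut(E) \arrow[d] \\
\wh{\mathcal J}_{H,fb} \arrow[r,"a_{f_\ast E,df_\ast\varphi}"] & \uAut(f_\ast E)
\end{tikzcd}
\]
commutes. By \Cref{d:J-b-action}, this reduces to the commutativity of a diagram $\chi_G^\ast \mathcal J_G \to \mathcal I_G \to \mathcal I_H$ versus $\chi_G^\ast \mathcal J_G \to \chi_G^\ast \mathcal J_H \to \mathcal I_H$ of group schemes over $\fg_G$. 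Over the regular locus $\fg_G^{\reg}$, both compositions are isomorphisms identifying $\chi_G^\ast\mathcal J_G$ with $\mathcal I_G$ inside $\mathcal I_H$ via \eqref{eq:map-a-regular-locus}, and this agreement is precisely where the compatibility of Kostant sections is used to identify the two pullbacks of $\mathcal J_H$. Since $\chi_G^\ast \mathcal J_G$ is flat over $\fg_G$ and $\fg_G^{\reg} \subset \fg_G$ is dense, the diagram then commutes over all of $\fg_G$, and by pullback along $s_{E,\varphi}$ over $X_T$.

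Third, I would conclude by computing both compositions explicitly. For $(F,(E,\varphi)) \in \CH_G(b) \times_{\A_G} \CHig_G(b)$, the top-right path gives $(F \times^{\wh{\mathcal J}_{G,b}} \nu^\ast E) \times^G H \simeq F \times^{\wh{\mathcal J}_{G,b}} \nu^\ast(f_\ast E)$ by associativity of contracted products. The bottom-left path gives $(F \times^{\wh{\mathcal J}_{G,b}} \wh{\mathcal J}_{H,fb}) \times^{\wh{\mathcal J}_{H,fb}} \nu^\ast(f_\ast E)$, which collapses to $F \times^{\wh{\mathcal J}_{G,b}} \nu^\ast(f_\ast E)$ once one knows that the $\wh{\mathcal J}_{G,b}$-action on $\nu^\ast(f_\ast E)$ via $a_{f_\ast E, df_\ast\varphi}$ factors through $\wh{\mathcal J}_{H,fb}$ in the expected way. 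This is exactly the compatibility established in the previous step.

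The main obstacle is the second step: verifying that the Ng\^o-style abelianisation morphism $a_{E,\varphi}$ is functorial in $G$, which is a statement purely about the regular centralisers $\mathcal J_G, \mathcal J_H$ and the map \eqref{eq:map a}. The subtlety is that the morphism \eqref{eq:J-functoriality} is only defined using the Kostant sections, so functoriality of $a$ genuinely requires the compatibility hypothesis \eqref{eq:compat-Kostant}; everything else in the proof is then formal manipulation of twists of torsors.
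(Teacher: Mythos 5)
Your proposal follows essentially the same route as the paper: pushout along $f$ for the vertical maps, pushout of torsors along $J_G \to J_H\times_{\A_H}\A_G$ for $\CH_G \to \CH_H\times_{\A_H}\A_G$, and compatibility of contracted products with pushout of structure group; the paper's proof simply \emph{asserts} the compatibility of the abelianisation maps that you verify in your second step, so your write-up is a more detailed version of the same argument. One small imprecision there: since $df$ need not send $\fg_G^{\reg}$ into $\fg_H^{\reg}$ (e.g.\ for $\SL_2\hookrightarrow \SL_3$), you cannot invoke \eqref{eq:map-a-regular-locus} for $H$ directly over $\fg_G^{\reg}$; instead one checks the identity on the image of the Kostant section of $G$, where it is essentially tautological from the definition of \eqref{eq:J-functoriality}, propagates it over $\fg_G^{\reg}$ via the canonical identification of regular centralizers along $G$-orbits, and only then extends to all of $\fg_G$ by flatness and density as you do.
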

\begin{proof}
	The natural pushout functor defines morphism of stacks over $\Perf_{K,v}$:
	\[
	\CHig_G\to \CHig_H,\quad \CBun_{G,v}\to \CBun_{H,v}.
	\]
	These morphisms are compatible via the Hitchin maps in the natural way. 

	By twisting \eqref{eq:J-functoriality}, we obtain a natural homomorphism 	$
	J_G \to J_H \times_{\A_H}\A_G$ over $X\times \A_G$.
By pushout along this homomorphism, we thus obtain natural morphisms of Picard stacks over $\A_{G,v}$:
	\[
	\CP_G \to \CP_H\times_{\A_H} \A_G,\quad \CH_G \to \CH_H\times_{\A_H}\A_G.
	\]
	These are compatible with the actions of $\CP_-$ on $\CH_{-}$, $\CHig_{-}$ and $\CBun_{-,v}$ for $-\in \{G,H\}$.
	Since contracted products are compatible with pushout of torsors, we deduce the proposition. 
\end{proof}

	\subsection{The twisted isomorphism of coarse moduli spaces}
	Passing from v-stacks to sheaves of isomorphism classes, we get a version of \Cref{t:fCiso} for coarse moduli spaces:
	\begin{definition}
		Let $\bfBun_{G,v}$ be the v-sheafification of the presheaf given by sending $T\in \Perf_{K,v}$ to the set of isomorphism classes in $\CBun_{G,v}(T)$, i.e.\ to $\rH^1_v(X_T,G)$. 
		We similarly define a v-sheaf $\bfHig_G$ of isomorphism classes of $G$-Higgs bundles on $X\times T$ up to v-sheafification in $T$. 
		Both sheaves admit Hitchin maps to $\A$.  
	\end{definition}
	\begin{definition}
		Let $\mathbf P:=\mu^\ast R^1f_{\Et\ast}\wh J$, this is the sheaf on $\A_v$ obtained from $\CP$ by passing to isomorphism classes and sheafifying.
		Let $\mathbf H$ be the v-sheaf on $\A_v$ obtained in the same way from $\CH$.
	\end{definition}
	\begin{coro}\label{c:twisted-isom-moduli}
		The v-sheaf $\mathbf H$ is a $\mathbf P$-torsor on $\A_v$ and there is a canonical isomorphism of v-sheaves
		\[\cH\times^{\cP}\bfHig_G\xrightarrow{\sim} \bfBun_{G,v}.\]
	\end{coro}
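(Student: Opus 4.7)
The plan is to deduce this corollary from \Cref{t:fCiso} and \Cref{t:H P-torsor} by passing from v-stacks to the v-sheaves of isomorphism classes via the functor $\pi_0$ followed by v-sheafification.

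For the torsor statement, I would argue that since $\CH \to \A$ is a $\CP$-torsor in the 2-categorical sense of \Cref{d:picstack-torsor}, applying $\pi_0$ and v-sheafifying produces a $\mathbf P$-torsor structure on $\mathbf H$ in the ordinary sheaf-theoretic sense. Condition (i) of \Cref{d:picstack-torsor} gives v-local non-emptiness of $\mathbf H \to \A$, and condition (ii)---that the shear map $\CP \times_\A \CH \to \CH \times_\A \CH$ is an equivalence of v-stacks---translates, after applying $\pi_0$ and v-sheafifying, to the condition that the induced map $\mathbf P \times_\A \mathbf H \to \mathbf H \times_\A \mathbf H$ is an isomorphism of v-sheaves, i.e.\ to the freeness and transitivity of the $\mathbf P$-action.

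For the main isomorphism, the equivalence $\CS: \CH \times^{\CP} \CHig_G \isomarrow \CBun_{G,v}$ of \Cref{t:fCiso} induces an isomorphism on v-sheaves of isomorphism classes. The right-hand side yields $\bfBun_{G,v}$ by definition. For the left-hand side, I need to identify the v-sheaf of isomorphism classes of the twist $\CH \times^{\CP} \CHig_G$ with the contracted product $\cH \times^{\cP} \bfHig_G$ in the category of v-sheaves. By \Cref{d:picstack-twist}, $\CH \times^{\CP} \CHig_G$ is the v-stackification of the prestack quotient $[(\CH \times \CHig_G)/\CP]^{\mathrm{pre}}$ which, on each $T \in \Perf_K$, is a $1$-category because $\CH$ is a $\CP$-torsor (by \cite[Lemma 4.7]{Ngo}, as invoked in \Cref{d:picstack-twist}). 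Hence the presheaf $T \mapsto (\pi_0\CH(T) \times \pi_0\CHig_G(T))/\pi_0\CP(T)$ describes both the presheaf of isomorphism classes of the prestack $[(\CH \times \CHig_G)/\CP]^{\mathrm{pre}}$ and the presheaf underlying the sheaf-theoretic contracted product. V-sheafifying these yields the desired identification on either side, because v-sheafification commutes with the formation of $\pi_0$.

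I do not anticipate any substantial obstacle: the content of the corollary is purely formal given the stacky statement, the main care needed being the verification that $\pi_0$ is compatible with stackification, which follows from the universal properties involved.
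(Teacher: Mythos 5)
Your proposal is correct and is essentially the paper's own argument: the paper deduces the torsor statement from \Cref{t:H P-torsor} and the isomorphism from \Cref{t:fCiso} ``by sheafifying'', which is precisely the passage to v-sheafified isomorphism classes that you carry out. Your additional checks (that $\pi_0$ respects the shear map and fibre products over the sheaf $\A$, and that the stacky twist's isomorphism classes agree with the sheaf-theoretic contracted product) are the correct formal verifications that the paper leaves implicit.
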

	\begin{proof}
		The first part follows from \Cref{t:H P-torsor}, the second from \Cref{t:fCiso} by sheafifying.
	\end{proof}
	We now explain that one can extract from our proof also a finer variant of this isomorphism, which is however less  canonical. Namely, for the formulation, we choose a flat lift $\bX$ of $X$ over $\BdR^+/\xi^2$.  
	\begin{definition}\label{d:H_X}
		Let $\mathbf P_{\bX,v}\subseteq \rR^1f_{v\ast}\whJ$ be the sub-v-sheaf defined as the equaliser of the two morphisms in \eqref{eq:diagram-exp-HTlog} defined by:
		\[
		\mathbf{P}_{\mathbb{X},v}:= \Eq\biggl(\rR^1f_{v\ast}\whJ \xrightrightarrows[\log]{s_\bX\circ \HTlog}  \rR^1f_{v\ast}\Lie J \biggr).
		\]
	\end{definition}
	We then have the following analogue of \Cref{t:H P-torsor}, which we can deduce from its proof:
	\begin{prop}\label{c:H_XX}
		We have a pullback diagram of short exact sequences of sheaves on $\A_v$
		\begin{equation} \label{eq:H-red-strgrp}
			\begin{tikzcd}
				1 \arrow[r] & \cP[p^\infty]\arrow[d,equal] \arrow[r]& {\mathbf P_{\bX,v}} \arrow[r,"\HTlog"]\arrow[d]& {\A}_{J,\Omega} \arrow[r]\arrow[d,"s_{\bX}"] & 0\\
				1 \arrow[r] & \nu^\ast\rR^1f_{\et\ast}J[p^\infty] \arrow[r]      & \rR^1f_{v\ast}\whJ \arrow[r,"\log"]        & \rR^1f_{v\ast}\Lie J \arrow[r] &0
			\end{tikzcd}
		\end{equation}
	\end{prop}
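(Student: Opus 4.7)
The plan is to observe that $\mathbf P_{\bX,v}$ is literally the fibre product on the right-hand square, and then deduce both the short exactness and the Cartesian property from the two established sequences: the bottom row comes from \Cref{p:log-seq-R^1fv}, and the surjectivity of $\HTlog$ in the top row comes from \Cref{p:leray-seq-for-UJ}. The argument is a diagram chase in the commutative square \eqref{eq:diagram-exp-HTlog}.

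First, I would unpack the equaliser defining $\mathbf P_{\bX,v}$: an element $x \in \rR^1 f_{v\ast}\wh J$ lies in $\mathbf P_{\bX,v}$ iff $\log(x) = s_\bX(\HTlog(x))$. Because $s_\bX$ is a section of the projection $\rR^1 f_{v\ast}\Lie J \to f_{v\ast}(\Lie J \otimes \Omega)$ from \eqref{eq:Lie-ses-1}, this is equivalent to $\log(x) \in \mathrm{im}(s_\bX)$, and it is exactly the condition for $(x, \HTlog(x))$ to define a point of the fibre product of $\rR^1 f_{v\ast}\wh J$ and $\A_{J,\Omega}$ over $\rR^1 f_{v\ast}\Lie J$. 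So the right square is Cartesian by construction, and the horizontal arrow $\mathbf P_{\bX,v} \to \A_{J,\Omega}$ is induced by $\HTlog$.

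Next, I would verify the top row is short exact. For the kernel: an element $x \in \mathbf P_{\bX,v}$ with $\HTlog(x) = 0$ satisfies $\log(x) = s_\bX(0) = 0$, so by \Cref{p:log-seq-R^1fv} it lies in $\nu^\ast \rR^1 f_{\et\ast}J[p^\infty]$; conversely, any such element has $\log(x)=0$ and hence $\HTlog(x)=0$ (as $\HTlog$ factors through $\log$ by the right-hand square of \eqref{eq:diagram-exp-HTlog}), so it lies in $\mathbf P_{\bX,v}$. This gives the identification of the kernel with $\cP[p^\infty]$. For surjectivity: given $s \in \A_{J,\Omega}$, use \Cref{p:leray-seq-for-UJ} to choose a preimage $x' \in \rR^1 f_{v\ast}\wh J$ of $s$ under $\HTlog$. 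Then $s_\bX(s) - \log(x') \in \rR^1 f_{v\ast}\Lie J$ maps to $s - \HTlog(x') = 0$ in $f_{v\ast}(\Lie J \otimes \Omega)$, so by the surjectivity of $\log$ there exists $y \in \rR^1 f_{v\ast}\wh J$ with $\log(y) = s_\bX(s) - \log(x')$. Since $\HTlog(y)$ is the image of $\log(y)$ in $f_{v\ast}(\Lie J \otimes \Omega)$, it vanishes; then $x := x' + y$ satisfies $\log(x) = s_\bX(s)$ and $\HTlog(x) = s$, as required.

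I expect no real obstacles: the whole argument is a short diagram chase once the two rows of \eqref{eq:diagram-exp-HTlog} are available. The only subtlety worth flagging is that $\HTlog$ factors through $\log$ via the projection $\rR^1 f_{v\ast}\Lie J \to f_{v\ast}(\Lie J \otimes \Omega)$ — this is the content of the right-hand column of \eqref{eq:diagram-exp-HTlog} and is exactly what makes the correction $x' \rightsquigarrow x = x'+y$ in the surjectivity step land inside $\mathbf P_{\bX,v}$.
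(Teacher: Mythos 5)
Your proposal is correct and follows essentially the same route as the paper: identifying $\mathbf P_{\bX,v}$ with the fibre product via the equaliser definition, computing $\ker(\HTlog|_{\mathbf P_{\bX,v}})=\ker(\log)=\cP[p^\infty]$ from \Cref{p:log-seq-R^1fv}, and deducing surjectivity of the top row from the surjectivity of $\log$ together with the factorisation $\HTlog = \mathrm{pr}\circ\log$ in \eqref{eq:diagram-exp-HTlog}. The paper's proof is just a compressed version of the same diagram chase.
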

	\begin{proof}
		By \Cref{p:log-seq-R^1fv}, we have $\cP[p^\infty]=\nu^\ast\rR^1f_{\et\ast}J[p^\infty]$.
		The map $\HTlog$ in \eqref{eq:H-red-strgrp} is surjective by the proof of \Cref{p:log-seq-R^1fv}. It follows that the kernel of $\HTlog$ is contained in $\cP[p^\infty]$. On the other hand, $\cP[p^\infty]$ is clearly contained in $\mathbf{P}_{\mathbb{X},v}$. Hence $\ker\HTlog=\cP[p^\infty]$. This also shows the pullback property.
	\end{proof}
	\begin{lemma} \label{c:bfLX}
		The morphism $\cP_{\bX,v}\to \mathcal{\A}_{J,\Omega}$ is a torsor under $\cP[p^\infty]$. In particular, it is of the form $\cP_{\bX,v}=\nu^\ast P_{\bX}$ for some ind-constructible sheaf $P_{\bX,v}$ on $\A_{\et}$.
	\end{lemma}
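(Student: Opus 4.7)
The plan is to extract both assertions from the short exact sequence forming the top row of the pullback diagram in \Cref{c:H_XX}. For the first claim, that diagram gives an exact sequence of abelian v-sheaves on $\A_v$
\[ 1\to \cP[p^\infty]\to \cP_{\bX,v}\xrightarrow{\HTlog} \A_{J,\Omega}\to 0,\]
which by definition exhibits the morphism $\cP_{\bX,v}\to \A_{J,\Omega}$ as a torsor under the abelian v-sheaf $\cP[p^\infty]$. So the content of the lemma is really the second assertion.

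For the descent, the strategy is to recognise this extension as the pullback of an extension living already on $\A_\et$. The two outer terms are essentially \'etale objects: by \Cref{l:proper-bc-U_Jp^n} we have $\cP[p^\infty]=\nu^\ast\mathcal G$ where $\mathcal G:=\rR^1f_{\et\ast}J[p^\infty]=\varinjlim_n \rR^1f_{\et\ast}J[p^n]$ is ind-Zariski-constructible; and $\A_{J,\Omega}$ is representable by a geometric vector bundle over $\A$, hence also represents an \'etale sheaf. The key technical input, already invoked in the proof of \Cref{l:proper-bc-U_Jp^n}, is \cite[Proposition~5.8.2]{PCT-char-p}: for any Zariski-constructible \'etale sheaf $\mathcal F$ on $\A$, the unit map $\mathcal F\to \rR\nu_\ast\nu^\ast\mathcal F$ is an isomorphism. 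Passing to the filtered colimit defining $\mathcal G$, the same holds for $\mathcal G$. By the Leray spectral sequence and adjunction this yields
\[ \Ext^i_{\A_v}(\A_{J,\Omega},\nu^\ast\mathcal G)\isomarrow \Ext^i_{\A_\et}(\A_{J,\Omega},\mathcal G),\]
so the $\Ext^1$-class of our v-extension descends uniquely to an extension on $\A_\et$
\[ 1\to \mathcal G\to P_{\bX}\to \A_{J,\Omega}\to 0\]
with $\nu^\ast P_{\bX}=\cP_{\bX,v}$.

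Finally, to see that $P_{\bX}$ is ind-constructible, we write $P_{\bX}=\varinjlim_n P_{\bX,n}$ where $P_{\bX,n}$ is the pushout of $P_{\bX}$ along the projection $\mathcal G\to \rR^1f_{\et\ast}J[p^n]$. Each $P_{\bX,n}$ is then a torsor over the vector bundle $\A_{J,\Omega}$ under the Zariski-constructible sheaf $\rR^1f_{\et\ast}J[p^n]$, and after a Zariski stratification of $\A$ (on which $\rR^1f_{\et\ast}J[p^n]$ becomes locally constant of finite type), it represents a finite \'etale cover of $\A_{J,\Omega}$ of fixed degree over each stratum. The main potential obstacle is precisely in formulating and justifying the meaning of ``ind-constructible'' in our setting, where the base $\A_{J,\Omega}$ itself is not constructible over $\A$; this is resolved by interpreting ind-constructibility fibrewise over the stratifications of $\A$ supplied by $\rR^1f_{\et\ast}J[p^n]$.
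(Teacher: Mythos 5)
Your first step is fine and agrees with the paper: the torsor structure of $\cP_{\bX,v}\to\A_{J,\Omega}$ is immediate from the short exact sequence in \Cref{c:H_XX}, so the content is indeed the descent statement.

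The descent argument, however, has a genuine gap. The claimed isomorphism $\Ext^i_{\A_v}(\A_{J,\Omega},\nu^\ast\mathcal G)\cong\Ext^i_{\A_\et}(\A_{J,\Omega},\mathcal G)$ would follow from adjunction and $\mathcal G\isomarrow\rR\nu_\ast\nu^\ast\mathcal G$ only if the first argument on the v-side were $\nu^\ast$ of the \'etale sheaf represented by $\A_{J,\Omega}$. It is not: $\A_{J,\Omega}$ is a positive-dimensional vector group over $\A$, and the v-sheaf it represents differs from $\nu^\ast$ of the \'etale sheaf it represents --- otherwise one would deduce $\rR^{>0}\nu_\ast$ of the v-sheaf $\G_a$ vanishes, contradicting $\rR^1\nu_\ast\O=\wtOm\neq 0$ on a positive-dimensional smooth base (\Cref{d:differentials-smoothoids}). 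For the same reason your conclusion cannot be reached along this route: $\nu^\ast$ is exact, so $\nu^\ast P_{\bX}$ would be an extension of $\nu^\ast(P_{\bX}/\mathcal G)$ by $\nu^\ast\mathcal G$, i.e.\ a sheaf whose quotient is an \'etale pullback, whereas $\cP_{\bX,v}$ surjects onto the genuine v-vector group $\A_{J,\Omega}$. In other words, what descends to the \'etale site is not the extension of abelian sheaves but only the underlying torsor.

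That is exactly the paper's way out: forget the group structure and regard $\cP_{\bX,v}$ purely as a $\cP[p^\infty]$-torsor over the space $\A_{J,\Omega}$ (and, after restricting along sections such as $\tau_\Omega$, over $\A$). Such torsors are classified by $\rH^1_v(-,\nu^\ast\mathcal G)$, and since the unit $\mathcal F\to\rR\nu_\ast\nu^\ast\mathcal F$ is an isomorphism for abelian sheaves on the \'etale site of a locally spatial diamond (\cite[Propositions~14.7-14.8]{Sch18}, combined with \Cref{l:proper-bc-U_Jp^n} to identify $\cP[p^\infty]=\nu^\ast\mathcal G$), this agrees with $\rH^1_\et(-,\mathcal G)$. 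Hence the torsor is pulled back from an \'etale torsor $P_{\bX}$, which is ind-constructible because its structure group $\mathcal G=\varinjlim_n\rR^1f_{\et\ast}J[p^n]$ is; your final paragraph on stratifications is then not needed. So the fix is not a refinement of the Ext computation but a change in what is being descended: the torsor, not the extension.
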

	\begin{proof}
		The first part follows from \Cref{c:H_XX}. The second part follows from the fact that by \Cref{l:proper-bc-U_Jp^n} and \cite[Proposition~14.7-14.8]{Sch18}, the pullback morphism induces an isomorphism
		\[ \rH^1_{\et}(\A,\rR^1f_{\et\ast}J[p^\infty])\to \rH^1_{v}(\A,\rR^1f_{v\ast}J[p^\infty]).\qedhere\]
	\end{proof}
	
	\begin{definition}\label{def:cHX}
		Let  $\cH_{\bX}\to \A$ be the fibre of $\cP_{\bX,v}\to \mathcal{\A}_{J,\Omega} $ over $\tau_{\Omega}$.
	\end{definition}
	\begin{lemma}\label{l:cHbX}
		$\cH_{\bX}$ is a  $\cP[p^\infty]$-torsor on $\A_v$ and there is a natural isomorphism 	$\cH_{\bX}\times^{\cP[p^{\infty}]}\cP=\cH$.
	\end{lemma}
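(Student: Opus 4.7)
The first claim follows directly from \Cref{c:bfLX}: since $\cP_{\bX,v}\to \A_{J,\Omega}$ is a $\cP[p^\infty]$-torsor on $\A_v$ and $\cH_\bX$ is by \Cref{def:cHX} its pullback along the section $\tau_\Omega\colon\A\to \A_{J,\Omega}$, the pullback $\cH_\bX$ is itself a $\cP[p^\infty]$-torsor on $\A_v$.

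For the isomorphism, the plan is to realise $\cH_\bX$, $\cH$ and $\cP$ as subsheaves of the abelian sheaf $\rR^1f_{v\ast}\wh J$ on $\A_v$, and to define the map via its group structure. Concretely, $\cH_\bX\subseteq \cP_{\bX,v}\subseteq \rR^1f_{v\ast}\wh J$ is the fibre over $\tau_\Omega$ of the natural map $\cP_{\bX,v}\to \A_{J,\Omega}$; the v-sheaf $\cH$ is likewise the fibre over $\tau_\Omega$ of $\HTlog\colon \rR^1f_{v\ast}\wh J\to \A_{J,\Omega}$ (cf.\ \Cref{c:twisted-isom-moduli}); and $\cP\subseteq \rR^1f_{v\ast}\wh J$ equals $\ker(\HTlog)$ by \Cref{p:leray-seq-for-UJ}. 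Moreover, combining the pullback diagram of \Cref{c:H_XX} with \Cref{p:log-seq-R^1fv}, one sees that $\cP[p^\infty]=\cP_{\bX,v}\cap\cP$ inside $\rR^1f_{v\ast}\wh J$, so $\cP[p^\infty]\hookrightarrow\cP$ is a natural subsheaf inclusion. The addition on $\rR^1f_{v\ast}\wh J$ restricts to
\[
\Phi\colon \cH_\bX\times \cP\to \cH,\qquad (F_0,Q)\mapsto F_0+Q,
\]
which is well-defined since $\HTlog(F_0+Q)=\tau_\Omega+0=\tau_\Omega$. Commutativity of $+$ then shows that $\Phi$ factors through the antidiagonal action of $\cP[p^\infty]$, yielding $\Phi\colon\cH_\bX\times^{\cP[p^\infty]}\cP\to \cH$.

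It remains to verify $\Phi$ is an isomorphism. Both source and target are $\cP$-torsors on $\A_v$: the target by \Cref{c:twisted-isom-moduli}, and the source because contracting a $\cP[p^\infty]$-torsor with the abelian sheaf $\cP$ along the inclusion $\cP[p^\infty]\hookrightarrow\cP$ always produces a $\cP$-torsor. The map $\Phi$ is $\cP$-equivariant by construction (via translation on the second factor). The main observation is then the standard fact that any $\cP$-equivariant morphism between locally non-empty $\cP$-torsors on $\A_v$ is automatically an isomorphism—on a local trivialisation it becomes translation by a fixed section of $\cP$, which is invertible. Since both torsors admit local sections on $\A_v$, this yields the desired isomorphism.
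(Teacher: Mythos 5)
Your proof is correct and takes essentially the same route as the paper: the paper's one-line argument also deduces the torsor claim from \Cref{c:bfLX} and obtains the isomorphism by taking the fibre of the pullback diagram \eqref{eq:H-red-strgrp} over $\tau_\Omega$ inside $\rR^1f_{v\ast}\whJ$, which is precisely the addition map $(F_0,Q)\mapsto F_0+Q$ you construct. Your write-up merely makes explicit the identifications $\cP=\ker(\HTlog)$, $\cP[p^\infty]=\cP_{\bX,v}\cap\cP$, and the standard fact that an equivariant morphism of locally trivial torsors is an isomorphism.
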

\begin{proof}
	The first part follows from \Cref{c:bfLX}, the second from considering the fibre of \eqref{eq:H-red-strgrp} over $\tau_\Omega$.
\end{proof}
	\begin{coro}\label{t:coarse-moduli-twisted}
		There is a canonical isomorphism of v-sheaves on $\A$
		\[\cH_{\bX}\times^{\cP[p^{\infty}]}\bfHig_G\xrightarrow{\sim} \bfBun_{G,v}.\]
	\end{coro}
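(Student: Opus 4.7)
The plan is to deduce the statement formally from the two results that immediately precede it, namely the coarse twisted isomorphism $\mathcal H\times^{\mathcal P}\bfHig_G\xrightarrow{\sim}\bfBun_{G,v}$ of \Cref{c:twisted-isom-moduli} and the identification $\mathcal H_{\bX}\times^{\mathcal P[p^\infty]}\mathcal P\simeq \mathcal H$ of \Cref{l:cHbX}, by using only the associativity of the contracted product. The substance has already been done; the remaining task is a purely formal manipulation of torsors.

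First I would observe that since $\mathcal P[p^\infty]\subseteq \mathcal P$ is a subsheaf of abelian groups on $\A_v$, the natural $\mathcal P$-action on $\bfHig_G$ constructed in \S\ref{sss:Picstack} restricts to a $\mathcal P[p^\infty]$-action. Similarly, $\mathcal P[p^\infty]$ acts on $\mathcal P$ by translation, making $\mathcal P$ into a (trivial) $\mathcal P[p^\infty]$-torsor with compatible residual $\mathcal P$-action; this is precisely the structure used to form the contracted product in \Cref{l:cHbX}. Consequently, both factors in the statement are genuine contracted products of $\mathcal P[p^\infty]$-(bi)torsors.

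Next, I would invoke the associativity of contracted products (cf.\ \Cref{d:contracted-prod} and the standard fact that $(A\times^H B)\times^K C\simeq A\times^H (B\times^K C)$ for a sheaf of groups $H$, $K$ and a bitorsor $B$): applying this to the $(\mathcal P[p^\infty],\mathcal P)$-bitorsor given by $\mathcal P$ itself, we have a canonical equivalence
\[
\bigl(\mathcal H_{\bX}\times^{\mathcal P[p^\infty]}\mathcal P\bigr)\times^{\mathcal P}\bfHig_G \;\simeq\; \mathcal H_{\bX}\times^{\mathcal P[p^\infty]}\bigl(\mathcal P\times^{\mathcal P}\bfHig_G\bigr) \;\simeq\; \mathcal H_{\bX}\times^{\mathcal P[p^\infty]}\bfHig_G,
\]
where the last equivalence uses that contracting a $\mathcal P$-action over the trivial $\mathcal P$-torsor $\mathcal P$ recovers the original sheaf. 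Combining the left-hand side with \Cref{l:cHbX}, this reads as $\mathcal H\times^{\mathcal P}\bfHig_G\simeq \mathcal H_{\bX}\times^{\mathcal P[p^\infty]}\bfHig_G$.

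Finally, composing this with the canonical isomorphism $\mathcal H\times^{\mathcal P}\bfHig_G\xrightarrow{\sim}\bfBun_{G,v}$ from \Cref{c:twisted-isom-moduli} yields the desired canonical isomorphism of v-sheaves over $\A$. I do not anticipate any genuine obstacle: the only point where one should be slightly careful is the coherence of the identifications in \Cref{l:cHbX} with the group-theoretic conventions from \Cref{d:contracted-prod} and \S\ref{sss:Picstack}, but since $\mathcal P$ is abelian, left and right actions can be interchanged freely and no sign or twisting subtlety arises.
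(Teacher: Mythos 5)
Your argument is exactly the paper's proof: the authors also deduce the corollary by combining \Cref{c:twisted-isom-moduli} and \Cref{l:cHbX} via the chain $\cH_{\bX}\times^{\cP[p^{\infty}]}\bfHig_G=\cH_{\bX}\times^{\cP[p^{\infty}]}{\cP}\times^{\cP}\bfHig_G=\cH\times^{\cP}\bfHig_G=\bfBun_{G,v}$, i.e.\ by associativity of contracted products. Your additional remarks on restricting the $\cP$-action to $\cP[p^\infty]$ and on commutativity are correct and just make explicit what the paper leaves implicit.
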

	\begin{proof}
		Combining \Cref{c:twisted-isom-moduli} and \Cref{l:cHbX}, we have
		\[\cH_{\bX}\times^{\cP[p^{\infty}]}\bfHig_G=\cH_{\bX}\times^{\cP[p^{\infty}]}{\cP}\times^{\cP}\bfHig_G=\cH\times^{\cP}\bfHig_G=\bfBun_{G,v}.\qedhere\]
	\end{proof}
	The following is an alternative formulation of \Cref{t:coarse-moduli-twisted} in terms of pullbacks instead of twists:
	\begin{coro}\label{c:comp-coares-moduli-pullback}
	There is a canonical isomorphisms of v-sheaves on $\A$
	\[
	\bfHig_G\times_{\A}\cH_{\bX}\xrightarrow{\sim} \bfBun_{G,v}\times_{\A}\cH_{\bX}.\]
	\end{coro}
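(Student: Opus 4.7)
The plan is to derive this as a formal consequence of \Cref{t:coarse-moduli-twisted}. Using the canonical isomorphism $\bfBun_{G,v}\cong \cH_{\bX}\times^{\cP[p^{\infty}]}\bfHig_G$ supplied by that corollary, the statement reduces to producing a canonical isomorphism of v-sheaves on $\A$:
\[
\bfHig_G\times_{\A}\cH_{\bX}\xrightarrow{\sim} (\cH_{\bX}\times^{\cP[p^{\infty}]}\bfHig_G)\times_{\A}\cH_{\bX}.
\]
This is an instance of the general principle that base-change of a twist along its structural torsor trivialises it.

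More precisely, I would first establish the following formal lemma: for any abelian sheaf $\cQ$ on a site, any $\cQ$-torsor $\cT$, and any sheaf $F$ equipped with a $\cQ$-action, the map
\[
F\times \cT\to (\cT\times^{\cQ}F)\times \cT,\qquad (f,t)\mapsto ([t,f],t)
\]
is an isomorphism, with inverse sending $([t',f],t)$ to $(q\cdot f,t)$ where $q\in \cQ$ is the unique local section characterised by $t'=t\cdot q$. Both statements can be checked locally on the base, so one immediately reduces to the trivial torsor $\cT=\cQ$, where the map becomes the shear automorphism $(f,q)\mapsto (q\cdot f,q)$ of $F\times \cQ$ — an isomorphism with evident inverse $(g,q)\mapsto (q^{-1}\cdot g,q)$.

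I then apply this lemma with $\cQ=\cP[p^{\infty}]$, $\cT=\cH_{\bX}$, which is a $\cP[p^{\infty}]$-torsor by \Cref{l:cHbX}, and $F=\bfHig_G$, equipped with the $\cP[p^{\infty}]$-action inherited via the natural homomorphism $\cP[p^{\infty}]\to \cP$ from the $\cP$-action constructed in \S\ref{sss:Picstack}. The resulting isomorphism, composed with the one from \Cref{t:coarse-moduli-twisted}, yields the desired comparison; canonicity is automatic since each input is canonical. There is no real obstacle here, as the substantive content has already been encapsulated in \Cref{t:coarse-moduli-twisted}; this corollary is simply the reformulation in terms of pullbacks rather than twists.
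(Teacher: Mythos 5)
Your proposal is correct and follows essentially the same route as the paper: the paper also applies $\times_{\A}\cH_{\bX}$ to both sides of \Cref{t:coarse-moduli-twisted} and uses the canonical identification $\cH_{\bX}\times_{\A}\cH_{\bX}\simeq \cP[p^{\infty}]\times_{\A}\cH_{\bX}$ coming from the torsor property, which is exactly the content of your shear-map lemma. Your write-up merely makes the standard trivialisation-of-a-twist-along-its-own-torsor argument more explicit.
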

	\begin{proof}
		Since $\cH_{\bX}$ is a $\cP[p^{\infty}]$-torsor over $\A$, there exists a canonical isomorphism $\cH_{\bX}\times_\A \cH_{\bX}\simeq \cP[p^{\infty}]\times_\A \cH_{\bX}$. 
		The Corollary follows from applying $\times_{\A}\cH_{\bX}$ to both sides of \Cref{t:coarse-moduli-twisted}.
	\end{proof}
	\subsection{The case of $G=\GL_n$}\label{s:CS-for-GL_n}
	Finally, let us make $\CS$ slightly more explicit in the case of $G=\GL_n$. Let $T\in \Perf_K$, let $(E,\theta)$ be a Higgs bundle on $X_T$ with Hitchin image $b:T\to \A$ and let $L\in \CH(b)$. Let $\pi:Z_b\to X_T$ be the spectral curve over $b$ from \Cref{d:spectral-curve} and recall from \S\ref{s:Ngo-G=GL_n} that we can describe the v-sheaf represented by $J_b$ as being $\mathcal B^\times$ where $\mathcal B:=\nu^\ast\pi_{\et\ast}\O_{Z_b}$. Then we can regard $L$ as being a $\B^\times$-torsor on $X_T$, or in other words, an invertible $\B$-module. Using \Cref{l:action-Jb-G=GL_n} to describe $a_{E,\varphi}$, we deduce:
	\begin{lemma}\label{l:CS-for-GLn}
	Under the above identifications, $\CS$ sends $L$ and $(E,\varphi)$ to the v-vector bundle on $X_T$
	\[\CS(L,(E,\varphi))=\nu^\ast E\otimes_{\B}L.\]
	\end{lemma}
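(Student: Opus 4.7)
The strategy is to unwind the definition of $\CS$ from \eqref{eq:defC} and translate the contracted product under a group of units into a tensor product of modules over the corresponding ring. By construction,
\[
\CS(L,(E,\varphi)) = L \times^{\whJ_b} \nu^*E,
\]
where the left action of $\whJ_b$ on $\nu^*E$ is induced by $a_{E,\varphi}$. By \Cref{l:action-Jb-G=GL_n}, the morphism $a_{E,\varphi}$ is the restriction to units of the $\O_{X_T}$-algebra morphism $\theta:\B\to \uEnd(\nu^*E)$ obtained from the Higgs field $\varphi$. In particular, the action of $\whJ_b$ on $\nu^*E$ factors through the inclusion $\whJ_b\hookrightarrow J_b=\B^\times$.

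First, I would push $L$ out along $\whJ_b\hookrightarrow \B^\times$. Since the $\whJ_b$-action on $\nu^*E$ extends to a $\B^\times$-action, this pushout is harmless and one obtains the canonical equivalence
\[
L \times^{\whJ_b} \nu^*E \;\simeq\; (L \times^{\whJ_b} \B^\times) \times^{\B^\times} \nu^*E.
\]
Under the identification recalled in the statement of the lemma between $\B^\times$-torsors on $X_{T,v}$ and invertible $\B$-modules, the $\B^\times$-torsor $L \times^{\whJ_b} \B^\times$ corresponds precisely to the invertible $\B$-module $L$.

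Next, I would invoke the standard equivalence of Picard categories that converts contracted products of $\B^\times$-torsors into tensor products over $\B$: for any invertible $\B$-module $M$ and any $\B$-module $N$ there is a canonical isomorphism $M\times^{\B^\times}N \cong M\otimes_\B N$, since locally on a trivialisation of $M$ both sides reduce to $N$, and the transition functions are identified. Applying this with $M=L$ (viewed as an invertible $\B$-module) and $N=\nu^*E$ (viewed as a $\B$-module via $\theta$) yields the desired identification
\[
\CS(L,(E,\varphi)) \;=\; L \otimes_\B \nu^*E.
\]

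The only sheaf-theoretic subtlety I foresee is confirming that all operations make sense on the v-site $X_{T,v}$: the algebra $\B=\nu^*\pi_{\et\ast}\O_{Z_b}$ is a v-sheaf of $\O_{X_T}$-algebras, $\whJ_b\hookrightarrow \B^\times$ is a morphism of v-sheaves of groups by \Cref{p:log-of-smooth-group}, and $\nu^*E$ is a coherent v-sheaf of $\B$-modules via $\theta$. Once these identifications are in place, the proof is a purely formal manipulation of contracted products, and no further input is required.
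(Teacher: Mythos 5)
Your proposal is correct and follows essentially the same route the paper intends: the paper gives no separate proof but simply "deduces" the lemma from \Cref{l:action-Jb-G=GL_n} together with the preceding identification of $L$ (pushed out along $\whJ_b\hookrightarrow J_b=\B^\times$) with an invertible $\B$-module, which is exactly the chain of formal manipulations you spell out. Your explicit intermediate step $L\times^{\whJ_b}\nu^*E\simeq(L\times^{\whJ_b}\B^\times)\times^{\B^\times}\nu^*E$ and the standard conversion of contracted products under $\B^\times$ into tensor products over $\B$ are the correct way to make the paper's "unravelling the definitions" precise.
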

	From this perspective, at the heart of our $p$-adic Simpson functor lies a twisting construction which generalises that of \cite{Heu23} from $\GL_n$ to general reductive groups, as well as to perfectoid families.
	
	In the easiest special case of $\GL_n=\G_m,$ we have $Z_b=X_T$ and hence $\B=\O_{X_T,v}$. Thus $\CH(b)$ is then given by the v-line bundles $L$ on $X_T$ with $\HTlog(L)=b$, and $\CS$ is thus given by twisting with $L$.
	
	For $G=\G_m$, the v-sheaf $\bfBun_{G,v}$ is the v-Picard variety of \cite{Heu22b}, which is represented by a rigid group. In this setting, \Cref{c:comp-coares-moduli-pullback} was previously proven in \cite[Theorem~5.4]{Heu22b}, and the above explicit description shows that these isomorphisms agree. Consequently, we may regard the Corollary as a generalisation of this result to higher rank and further to reductive $G$.
	
	To understand the precise relation of our moduli-theoretic $p$-adic Simpson correspondence \Cref{t:fCiso} to the categorical $p$-adic Simpson correspondences  of \cite{Fal05}\cite{Heu22b}\cite{Heu23}, the goal of the next section is to provide a new moduli-theoretic perspective on the role of the exponential.
	
	\section{The moduli space of exponentials} \label{s:exponentials}
	We now explain how we can derive from the twisted isomorphism of \Cref{t:fCiso} an equivalence of categories over strictly totally disconnected test objects, depending on the choice of an exponential. 

	For simplicity, let us first restrict attention to the case of $G=\GL_n$. In the notation of \S\ref{s:Ngo-G=GL_n},
	 the first main goal of this section is to prove the following structure result about the v-sheaf $\rR^1f_{v\ast}\whJ=\rR^1f_{v\ast}\wh{\B^\times}$:
	\begin{theorem}\label{t:struct-thm-R1fvastU}
		Let $\Lambda:=\rR^1\pi'_{v\ast}\uZp$ be the \'etale cohomology of the spectral curve $\pi':Z\to \A$. Then:
		\begin{enumerate}
			\item 
			There is a natural morphism of short exact sequences of v-sheaves on $\A$:
			\[\begin{tikzcd}
				1 \arrow[r] & {\rR^1f_{v\ast}J[p^\infty]} \arrow[r] \arrow[d,"\sim"] & \rR^1f_{v\ast}\whJ \arrow[r,"\log"] \arrow[d]     & \rR^1f_{v\ast}\Lie J \arrow[d] \arrow[r]  & 0 \\
				1 \arrow[r] & \Lambda\otimes_{\uZp}\mu_{p^\infty} \arrow[r]         & \Lambda\otimes_{\uZp}\widehat{\G}_m \arrow[r,"\log"] & \Lambda\otimes_{\uZp}\G_a \arrow[r] & 0
			\end{tikzcd}\]
			\item The square on the right is a pullback square.
			\item  The sheaf on the left is isomorphic to the \'etale sheaf $\varinjlim_n\nu^{\ast}(\rR^1\pi'_{\et\ast}\mu_{p^n})$.
		\end{enumerate}
	\end{theorem}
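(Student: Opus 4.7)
The strategy is to reduce both sides of the diagram to cohomology on the spectral curve $\pi: Z \to X \times \A$ via the identification $J = \pi_\ast \G_m$ from \S\ref{s:Ngo-G=GL_n}, and then to recognise the bottom row as a universal-coefficient incarnation of the top row. By \Cref{p:log-of-smooth-group}.(6), the topologically $p$-torsion functor $\wh{(\cdot)} = \FHom(\uZp,\cdot)$ commutes with pushforward, so combined with $J = \pi_\ast \G_m$ one obtains $\whJ = \pi_\ast \widehat{\G}_{m,Z}$, $J[p^\infty] = \pi_\ast \mu_{p^\infty,Z}$ and $\Lie J \otimes \G_a = \pi_\ast \G_{a,Z}$. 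The log sequence for $\whJ$ over $X \times \A$ is then the pushforward along $\pi$ of the log SES $0 \to \mu_{p^\infty} \to \widehat{\G}_m \to \G_a \to 0$ on $Z$. Since $\pi$ is finite, $\pi_\ast$ is exact on abelian $v$-sheaves and $\rR^i f_{v\ast} \circ \pi_\ast = \rR^i \pi'_{v\ast}$, so the top row of the diagram is $\rR\pi'_{v\ast}$ applied to the log SES on $Z$, and its short-exactness is already the content of \Cref{p:log-seq-R^1fv}.

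For statement (3) and the leftmost iso in (1): since $K$ contains all $p^n$-th roots of unity one has $\mu_{p^n} \cong \underline{\Z/p^n}$ canonically on $Z$, and \Cref{l:proper-bc-U_Jp^n} applied to the proper morphism $\pi'$ gives $\rR^1 \pi'_{v\ast} \mu_{p^n} = \nu^\ast \rR^1 \pi'_{\et\ast} \mu_{p^n}$. Taking the filtered colimit over $n$ proves (3); identifying $\varinjlim_n \Lambda/p^n$ with $\Lambda \otimes_{\uZp} \mu_{p^\infty}$ via the long exact sequence attached to $0 \to \uZp \xrightarrow{p^n} \uZp \to \underline{\Z/p^n} \to 0$ (using that $\rR^2\pi'_{v\ast}\uZp$ has no $p$-power torsion) then yields the leftmost vertical iso. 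The middle and right vertical maps come from the derived projection formula $\rR\pi'_{v\ast}(\pi'^\ast F) \simeq \rR\pi'_{v\ast}\uZp \otimes^L_{\uZp} F$: for $F = \G_a$, which is $\uZp$-flat (being a $\Qp$-vector space object), this directly gives the right vertical iso, and the middle follows by applying the five-lemma to the resulting morphism of SES. Naturality is clear from these constructions.

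Statement (2) is then immediate: once the leftmost iso identifies the kernels of the two horizontal log maps, the surjectivity of both (the top by \Cref{p:log-seq-R^1fv}, the bottom by exactness of the log SES after tensoring with $\Lambda$) forces the right square to be cartesian.

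The principal obstacle will be making the projection-formula isomorphism for $\G_a$ and the computation $\rR^1 \pi'_{v\ast} \mu_{p^n} = \Lambda/p^n$ work uniformly over all of $\A$, not just over the smooth locus $\A^\circ$ where the fibres of $\pi'$ are smooth proper curves. Over $\A^\circ$ both identifications are standard relative Hodge--Tate / universal-coefficient statements. Over arbitrary $\A$ one must control potential torsion in $\rR^2\pi'_{v\ast}\uZp$ and in $\Lambda$ itself; the Zariski-constructibility recorded in \Cref{l:proper-bc-U_Jp^n}, together with the proper base change it encodes, should permit a direct torsion analysis via the spectral sequence $E_2^{p,q} = \Tor_{-p}^{\uZp}(F,\rR^q\pi'_{v\ast}\uZp) \Rightarrow \rR^{p+q}\pi'_{v\ast}\pi'^\ast F$, and this is where the bulk of the work lies.
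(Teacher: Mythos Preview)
There is a genuine gap in your identification of the top row. You write that $J = \pi_\ast\G_m$ implies $\whJ = \pi_{v\ast}\widehat{\G}_m$ and $\Lie J = \pi_{v\ast}\G_a$ as v-sheaves, and hence that the top row is literally $\rR^1\pi'_{v\ast}$ applied to the log sequence on $Z$. But $J$ as a v-sheaf is the sheaf represented by the smooth relative adic group, namely $\B^\times$ with $\B = \nu^\ast\pi_{\et\ast}\O_Z$ (see \S\ref{s:Ngo-G=GL_n}), which is \emph{not} $(\pi_{v\ast}\O_Z)^\times$ outside the smooth locus $\A^\circ$. One only has a natural base-change map $\B \to \pi_{v\ast}\O_Z$, and this is precisely how the paper constructs the vertical maps in \Cref{d:psi-R^1B^x->R^1pi'whGm}. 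Your claimed exactness of $\pi_{v\ast}$ and the Leray degeneration $\rR^i f_{v\ast}\circ\pi_\ast = \rR^i\pi'_{v\ast}$ therefore do not give the top row.

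This matters: the right vertical map $\rR^1 f_{v\ast}\Lie J \to \Lambda\otimes\G_a$ is \emph{not} an isomorphism in general, as the paper explicitly remarks after \Cref{c:exp-induces-splitting} (for a finite flat $g:Z\to Y$, the map $\rH^1_v(Y,\nu^\ast g_\ast\O)\to\rH^1_v(Z,\O)$ fails to be injective when $Z$ is non-reduced and fails to be surjective when $g$ is ramified). So your projection-formula argument, even granting it, computes $\rR^1\pi'_{v\ast}\O_Z\cong\Lambda\otimes\G_a$ (this is the Primitive Comparison Theorem, \Cref{c:PCT-spectral-curve}, not a formal projection formula), which is the \emph{target} of the right vertical map, not the source; and your five-lemma step for the middle map collapses. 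The paper instead proves only that the \emph{left} vertical map is an isomorphism (here the torsion identity $\B^\times[p^\infty]=\pi_{\et\ast}\mu_{p^\infty}$ does hold, and \'etale $\pi_\ast$ is exact for finite $\pi$), and then deduces (2) exactly as in your final paragraph: left-iso plus short-exactness of both rows forces the right square to be Cartesian. Your argument for (3) and for the torsion-freeness of $\Lambda$ is essentially the paper's \Cref{l:et-cohom-Z/p^k-spectral-curve}.
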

	\begin{coro}\label{c:exp-splits-L_X}
		There is a natural  Cartesian square of v-sheaves over $\A$:
		\[\begin{tikzcd}
			\cH_{\bX} \arrow[r] \arrow[d]     & \A\arrow[d] \\
			\Lambda\otimes_{\uZp}\widehat{\G}_m \arrow[r,"\log"] & \Lambda\otimes_{\uZp}\G_a
		\end{tikzcd}\]
	\end{coro}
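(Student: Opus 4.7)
The plan is to derive this Corollary as a formal consequence of \Cref{t:struct-thm-R1fvastU}(2) combined with the definition of $\cH_{\bX}$, via pasting of two Cartesian squares.

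The first step is to rewrite $\cH_{\bX}$ as a fibre product involving the sheaves $\rR^1f_{v\ast}\whJ$ and $\rR^1f_{v\ast}\Lie J$. By \Cref{def:cHX}, $\cH_{\bX}$ is the fibre over $\tau_{\Omega}:\A\to \A_{J,\Omega}$ of the morphism $\cP_{\bX,v}\to \A_{J,\Omega}$, and by \Cref{c:H_XX} this last morphism fits into a Cartesian square whose other vertical arrow is the logarithm $\log:\rR^1f_{v\ast}\whJ\to \rR^1f_{v\ast}\Lie J$ and whose bottom horizontal arrow is the splitting $s_\bX$. Vertical pasting of these two Cartesian squares yields
\[
\cH_{\bX}\;=\;\A\times_{s_\bX\circ \tau_\Omega,\,\rR^1f_{v\ast}\Lie J,\,\log}\rR^1f_{v\ast}\whJ.
\]

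The second step is to invoke \Cref{t:struct-thm-R1fvastU}(2), which asserts that the right-hand square in the morphism of short exact sequences of part~(1) is Cartesian, i.e.\ that
\[\begin{tikzcd}
\rR^1f_{v\ast}\whJ \arrow[r,"\log"] \arrow[d] & \rR^1f_{v\ast}\Lie J \arrow[d,"\psi"]\\
\Lambda\otimes_{\uZp}\widehat{\G}_m \arrow[r,"\log"] & \Lambda\otimes_{\uZp}\G_a
\end{tikzcd}\]
is a pullback square. Pasting this below the fibre-square expression from the first step and using that a vertical composition of Cartesian squares is Cartesian, one obtains the desired Cartesian square of the Corollary, the right vertical map being $\psi\circ s_\bX\circ \tau_{\Omega}$.

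There is essentially no obstacle: the Corollary is a repackaging of \Cref{t:struct-thm-R1fvastU}(2) in the language of $\cH_{\bX}$. All substantive content lies in proving that theorem, in particular in identifying $\rR^1f_{v\ast}J[p^\infty]$ with $\Lambda\otimes_{\uZp}\mu_{p^\infty}$ via Kummer theory on the spectral curve $\pi':Z\to \A$, and in verifying that the right-hand square of part~(1) is indeed Cartesian.
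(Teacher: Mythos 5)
Your proposal is correct and matches the paper's proof: the paper likewise obtains the Corollary by pasting the Cartesian square from \Cref{def:cHX} (fibre over $\tau_\Omega$), the one from \Cref{c:H_XX} (involving $s_\bX$ and $\log$), and the right-hand pullback square of \Cref{t:struct-thm-R1fvastU}, with all substantive content residing in that theorem. Your identification of the resulting right vertical map as $\psi\circ s_\bX\circ\tau_\Omega$ is also consistent with the paper's formulation in \Cref{t:intro-exp-splits-L_X}.
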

	\begin{proof}
		This follows from the tower of commutative diagrams
		\[\begin{tikzcd}
			\cH_{\bX} \arrow[r] \arrow[d] & \cP_{\bX} \arrow[d,"\HTlog"] \arrow[r] &\rR^1f_{v\ast}\whJ \arrow[d,"\log"] \arrow[r] & 	 \Lambda\otimes_{\uZp}\widehat{\G}_m\arrow[d,"\log"] \\
			\A \arrow[r,"\tau_\Omega"]             & \A_{J,\Omega} \arrow[r,"s_{\bX}"]     & \rR^1f_{v\ast}\Lie  J \arrow[r]         & 	 \Lambda\otimes_{\uZp}{\G}_a             
		\end{tikzcd}\]
		in which the first square is Cartesian by \Cref{def:cHX}, the second square is Cartesian by \Cref{c:H_XX} and the third square is Cartesian by \Cref{t:struct-thm-R1fvastU}.
	\end{proof}
	We deduce that on the level of $K$-points, splittings of $\log$ induce a splitting of $\cH_\bX$. More generally:
	\begin{definition}
		Let $S=\Spa(R,R^+)$ be a strictly totally disconnected perfectoid space. \textit{An exponential for $S$} is a continuous splitting of the logarithm map $\log:1+R^{\circ\circ}\to R$.
	\end{definition}
	\begin{coro}\label{c:exp-induces-splitting}
		Let $S$ be a strictly totally disconnected space in $\A_v$. Then any exponential for $S$ induces a splitting of the torsor $\cH_{\bX}\times_\A S\to S$ over $S$. In particular, it induces a section of $\cH_\bX(S)\to \A(S)$.
	\end{coro}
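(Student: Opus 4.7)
The plan is to use the Cartesian square of \Cref{c:exp-splits-L_X} to convert the corollary into a concrete lifting problem. Writing $\xi$ for the image of $b:S\to\A$ under the right vertical map of \Cref{c:exp-splits-L_X}, a section of $\cH_{\bX}\to\A$ over $b$ is the same as an element $\tilde\xi\in(\Lambda\otimes_{\uZp}\widehat{\G}_m)(S)$ with $\log(\tilde\xi)=\xi$ in $(\Lambda\otimes_{\uZp}\G_a)(S)$. Our task is to extract such $\tilde\xi$ from the exponential $\Exp$.

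The first step is to describe the $S$-sections of these tensor product v-sheaves. Since $S=\Spa(R,R^+)$ is strictly totally disconnected, every v-cover of $S$ splits, so $\Lambda|_S$ is determined by its global sections $M:=\Lambda(S)$, a $\Zp$-module that locally on the profinite space $|S|$ is finitely generated (namely, at each geometric point it is the étale $\rH^1$ of the corresponding fibre of the spectral curve). Consequently the two tensor product sheaves identify on $S$-sections with $M\otimes_{\Zp}R$ and $M\otimes_{\Zp}(1+R^{\circ\circ})$, and $\log$ becomes $\id_M\otimes\log$. The second step is then the construction of $\tilde\xi$: pick a surjection $\pi:F\twoheadrightarrow M$ from a free $\Zp$-module $F$, lift $\xi$ to some $\tilde\xi_F\in F\otimes_{\Zp}R\cong\prod_F R$ using right-exactness of $\otimes$, apply $\Exp$ coordinate-wise on each $R$-factor to land in $\prod_F(1+R^{\circ\circ})\cong F\otimes_{\Zp}(1+R^{\circ\circ})$, and push down along $\pi$ to obtain $\tilde\xi$. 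That $\log(\tilde\xi)=\xi$ follows from $\log\circ\Exp=\id_R$ applied coordinate-wise, combined with $\Zp$-linearity of $\log$.

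The main obstacle is that $\Exp$ is only a set-theoretic (not group-theoretic) splitting of $\log$, so different choices of $F$ and of $\tilde\xi_F$ may produce genuinely different lifts $\tilde\xi$. This is permissible here because the corollary only claims the existence of a section, not uniqueness; any choice of auxiliary data provides a valid splitting of the torsor $\cH_{\bX}\times_\A S\to S$. The naturality of the construction in $\Exp$ is automatic from the coordinate-wise definition, and compatibility with the $\cP[p^\infty]$-torsor structure on $\cH_{\bX}$ follows by tracing through the definitions.
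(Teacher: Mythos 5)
Your proof is correct and follows the same route as the paper, which simply observes that the Cartesian square of \Cref{c:exp-splits-L_X} remains Cartesian on $S$-sections and reads off the lift; you are filling in the lifting step that the paper calls immediate. One remark: in this literature (and as needed later, e.g.\ for the functoriality in the proof of \Cref{t:homeom-moduli}), a ``splitting of the logarithm'' means a continuous \emph{group-homomorphic} section $\Exp\colon R\to 1+R^{\circ\circ}$, so your detour through a free presentation $F\twoheadrightarrow \Lambda(S)$ is unnecessary --- one can apply $\id_{\Lambda(S)}\otimes\Exp$ directly, which also removes the dependence on auxiliary choices and makes your closing claim that the section is ``natural in $\Exp$'' actually valid (as stated, with a merely set-theoretic $\Exp$ and a chosen lift $\tilde\xi_F$, that naturality claim does not hold). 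A cosmetic point: for $F$ free of infinite rank, $F\otimes_{\Zp}R$ is $\bigoplus_F R$ rather than $\prod_F R$, though coordinate-wise application of $\Exp$ works either way.
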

	\begin{proof} 
		This is immediate from \Cref{c:exp-splits-L_X} 
		as evaluation on $S$ preserves the Cartesianess.
	\end{proof}
	
	\begin{rem}
		The third vertical map in \Cref{t:struct-thm-R1fvastU}.(1) is usually not an isomorphism, as we can see on fibres $\Spa(K)\to \A$: The reason is that for a finite flat morphism of rigid spaces $g:Z\to Y$, the map $\rH^1_v(Y,\nu^\ast g_{\ast}\O)\to \rH^1_v(Z,\O)$ is in general neither injective (e.g.\ $Z$ non-reduced) nor surjective (e.g.\ $g$ ramified).
	\end{rem}
	For the proof of the Theorem, we start with some preparations.

	\subsection{Étale cohomology of the spectral curve}\label{s:et-cohom-spectral-curve}
	For the results of this section, we can more generally let $\pi':Z\to \A$ be any proper finite type morphism of adic spaces over $K$ whose fibres are all of pure dimension one.
	Instead of considering $\Lambda:=\rR^1\pi'_{v\ast}\uZp$, we can without changes consider more generally for any $n\in \N$
	\[ \Lambda^n:=\rR^n\pi'_{v\ast}\uZp.\]
	\begin{definition}
		Let $\mathcal G$ be any $\uZp$-module on $\Perf_{K,v}$. For example, by \Cref{p:log-of-smooth-group}.(6), this $\mathcal G$ could be any topologically $p$-torsion rigid group like $\wh{\G}_m$ or $\G_a$. Then there is for any $n\in \N$ a natural morphism
		\[\varphi_\cG:\Lambda^n \otimes_{\uZp} \cG\to \rR^n\pi'_{v\ast}\cG,\] functorial in $\cG$, constructed as follows: There is a natural map on $\A_v$
		\[ \varphi_0:\FHom_{\uZp}(\uZp,\cG)\to \FHom_{\uZp}(\rR^n\pi'_{v\ast}\uZp,\rR^n\pi'_{v\ast}\cG)\]
		defined for any $S\in \A_v$ by sending any homomorphism $h:\underline{\Z}_{p|S}:=\uZp\times S\to \cG_{|S}:=\cG\times S$ over $S$ to the morphism on $S_v$ obtained by sheafifying $T\mapsto \big(\rH^n_v(Z\times_{\A}T,\pi'^\ast\underline{\Z}_{p|T})\xrightarrow{\pi'^\ast h_{|T}}\rH^n_v(Z\times_{\A}T,\pi'^\ast \cG_{|T})\big)$. Note that
		$\FHom_{\uZp}(\uZp,\cG)=\cG$.
		The map $\varphi_{\cG}$ is then induced by $\varphi_0$ via the adjunction of $\otimes$ and $\FHom$.
	\end{definition}
	The aim of this subsection is to show that $\varphi_{-}$ gives rise to the following isomorphisms:
	\begin{prop}\label{p:bottom-part-of-comp-fund-seq-to-log}	There is a natural isomorphism of short exact sequences
		\[
		\begin{tikzcd}
			1 \arrow[r] & {\Lambda^n} \otimes_{\uZp}\mu_{p^\infty} \arrow[r]   \arrow[d,"\sim"',"\varphi_{\mu_{p^\infty}}"]      & {\Lambda^n} \otimes_{\uZp}\widehat{\G}_m\arrow[r,"\log"] \arrow[d,"\sim"'," \varphi_{\widehat{\G}_m}"]     & {\Lambda^n} \otimes_{\uZp}\G_a\arrow[r] \arrow[d,"\sim"'," \varphi_{\G_a}"] & 0\\
			1 \arrow[r] & {\rR^n\pi'_{v\ast}\mu_{p^\infty}} \arrow[r]  & \rR^n\pi'_{v\ast}\widehat{\G}_m \arrow[r,"\log"] &\rR^n\pi'_{v\ast}\O_{Z} \arrow[r]& 0.
		\end{tikzcd}\]
	\end{prop}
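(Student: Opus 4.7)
My plan is to derive both rows of the proposition and the three isomorphisms simultaneously from the logarithm short exact sequence of v-sheaves on $\A_v$,
\[0\to\mu_{p^\infty}\to\widehat{\G}_m\xrightarrow{\log}\G_a\to 0,\]
applied in two parallel ways. Tensoring this sequence with $\Lambda^n$ over $\uZp$ yields the top row, which is automatically short exact because $\G_a$ is a $K$-vector space and therefore $\uZp$-flat. Applying $\rR^\bullet\pi'_{v\ast}$ yields a long exact sequence whose degree-$n$ segment contains the proposed bottom row. By construction, $\varphi_{-}$ is functorial in the coefficient sheaf (as it arises via tensor-hom adjunction from the pullback map $\varphi_0$), so the three vertical maps commute with connecting homomorphisms, yielding a morphism from the top short exact sequence to the bottom long exact sequence. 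If $\varphi_{\mu_{p^\infty}}$ and $\varphi_{\G_a}$ are isomorphisms, the five-lemma forces $\varphi_{\widehat{\G}_m}$ to be one, too, and then the surjectivity of $\Lambda^n\otimes\widehat{\G}_m\twoheadrightarrow\Lambda^n\otimes\G_a$ together with the injectivity of $\Lambda^n\otimes\mu_{p^\infty}\hookrightarrow\Lambda^n\otimes\widehat{\G}_m$ transfer through the isos to make the bottom long exact sequence collapse to the asserted short exact sequence.

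For $\varphi_{\mu_{p^\infty}}$, I would write $\mu_{p^\infty}=\varinjlim_k\mu_{p^k}$ and use that $\rR\pi'_{v\ast}$ commutes with filtered colimits for proper $\pi'$, reducing to the case of $\mu_{p^k}$ for each $k$. Fixing a trivialisation of $\uZp(1)$ over $K$ (available since $K$ contains all $p$-power roots of unity), identify $\mu_{p^k}\simeq\uZp/p^k$ as $\uZp$-modules. Applying $\rR\pi'_{v\ast}$ to the distinguished triangle $\uZp\xrightarrow{p^k}\uZp\to\uZp/p^k$ yields a short exact sequence
\[0\to\Lambda^n/p^k\to\rR^n\pi'_{v\ast}(\uZp/p^k)\to\Lambda^{n+1}[p^k]\to 0,\]
in which $\varphi_{\uZp/p^k}$ is identified with the inclusion of $\Lambda^n/p^k$. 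The crucial input is the $\Z_p$-flatness of the sheaves $\Lambda^m$ for our proper family of one-dimensional fibres, which makes $\Lambda^{n+1}[p^k]=0$ and hence $\varphi_{\uZp/p^k}$ an isomorphism. This flatness can be verified on geometric fibres via proper base change for diamonds and reduces to the classical torsion-freeness of the étale cohomology of proper adic curves.

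For $\varphi_{\G_a}$, the required statement is a relative primitive comparison theorem, $\Lambda^n\otimes_{\uZp}\O_\A\simeq\rR^n\pi'_{v\ast}\O_Z$. My approach is to pass to geometric fibres using proper base change, reducing the claim to the following: for any proper adic curve $Y$ over a complete algebraically closed non-archimedean extension $C$ of $\Qp$, the natural map $\rH^n_v(Y,\uZp)\otimes_{\uZp}C\xrightarrow{\sim}\rH^n_v(Y,\O_Y)$ is an isomorphism. For smooth $Y$ this follows from Scholze's primitive comparison theorem together with the Hodge--Tate identification $\rR^i\nu_\ast\O_Y\simeq\widetilde{\Omega}_Y^i(-i)$, which makes the resulting Hodge--Tate spectral sequence degenerate to the desired comparison. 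For singular proper curves $Y$, one reduces to the smooth case via a normalisation long exact sequence in which the local correction terms at the singular points match on both sides, being governed by purely finite-dimensional data of the same nature on the étale and the coherent sides.

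The main obstacle is precisely this singular-fibres case for $\varphi_{\G_a}$: for the spectral curve, smooth fibres occur only over the regular locus $\A^\circ\subseteq\A$, and establishing the comparison uniformly over the singular locus requires either the dévissage to normalisations sketched above, or an appeal to a suitable extension of the primitive comparison theorem to non-smooth proper rigid spaces. Once this point is resolved, the remaining arguments are formal consequences of the general machinery.
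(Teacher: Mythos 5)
Your overall architecture coincides with the paper's: construct the commutative diagram by functoriality of $\varphi_{-}$, prove the two outer maps are isomorphisms, check the top row is exact by flatness, and then run a five-lemma argument that simultaneously forces $\varphi_{\widehat{\G}_m}$ to be an isomorphism and collapses the long exact sequence of $\rR^\bullet\pi'_{v\ast}$ into the asserted short exact bottom row. Your treatment of $\varphi_{\mu_{p^\infty}}$ is also essentially the paper's: both reduce to the identification $\rR^n\pi'_{v\ast}\mu_{p^k}\simeq \Lambda^n/p^k$, whose content is the $p$-torsion-freeness of $\Lambda^\bullet$, verified fibrewise via proper base change and the computation $\rH^2_{\et}(C,\Z/p^m)\simeq \Pic(C)/p^m\simeq(\Z/p^m)^r$ for proper rigid curves (\Cref{l:et-cohom-Z/p^k-spectral-curve}). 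One small point you elide: to get injectivity of $\varphi_{\widehat{\G}_m}$ from the five lemma you need the bottom row to be left-exact at $\rR^n\pi'_{v\ast}\mu_{p^\infty}$, which is not free; the paper supplies it by an induction on $n$, the exactness established at level $n$ feeding the left-exactness needed at level $n+1$.

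The genuine gap is the one you flag yourself: $\varphi_{\G_a}$ over the whole Hitchin base. The spectral curve has singular and even non-reduced fibres away from $\A^\circ$, so the classical (smooth, absolute) primitive comparison theorem does not apply, and your proposed normalisation d\'evissage is not a proof — showing that the "local correction terms at the singular points match on both sides" is essentially equivalent to the statement you are trying to prove, and normalisation alone does not even address non-reducedness. The paper closes exactly this gap by quoting a relative Primitive Comparison Theorem valid for arbitrary proper morphisms of rigid spaces, with no smoothness or reducedness hypotheses (\Cref{c:PCT-spectral-curve}, which invokes the external result [PCT-char-p, Theorem~4.3]): the map $(\rR^n\pi'_{v\ast}\Z/p^k)\otimes\O^+/p^k\to\rR^n\pi'_{v\ast}\O^+_Z/p^k$ is an almost isomorphism, and taking $\varprojlim_k$ and inverting $p$ gives $\varphi_{\G_a}$ directly in families, without your fibrewise reduction. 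So the missing ingredient is precisely the "suitable extension of the primitive comparison theorem to non-smooth proper rigid spaces" that you mention as an alternative; it is an imported theorem, not something recoverable by the sketched Mayer--Vietoris argument.
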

	We begin with some lemmas on the v-cohomology of the spectral curve $\pi':Z\to \A.$
	\begin{lemma}\label{l:et-cohom-Z/p^k-spectral-curve}
		Let  $n,k,l\in \Z_{\geq 0}$.
		\begin{enumerate}
			\item We have
			$\rR^n\pi'_{v\ast}\Z/p^k\Z=\nu^{\ast}\rR^n\pi'_{\et\ast}\Z/p^k\Z$ as sheaves on $\A_v$.
			\item We have a short exact sequence
				\begin{equation} \label{eq:R1-modpk}
					0\to \rR^n\pi'_{v\ast}\Z/p^l \Z\xrightarrow{\cdot p^k} \rR^n\pi'_{v\ast}\Z/p^{k+l}\Z\to  \rR^n\pi'_{v\ast}\Z/p^{k}\Z\to 0.
				\end{equation}
			\item The natural map $\Lambda^n\to \varprojlim _k\rR^n\pi'_{v\ast}(\Z/p^{k}\Z)$ is an isomorphism.
			\item We have $\Lambda^n/p^k=\rR^n\pi'_{v\ast}(\Z/p^{k}\Z)$.
			\item The $\uZp$-module $\Lambda^n$ is $p$-torsionfree.
		\end{enumerate}
	\end{lemma}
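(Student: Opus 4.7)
The plan is to establish (1) via proper base change, deduce (2) by reducing the vanishing of Bockstein connecting maps to the $p$-torsion-freeness of integral étale cohomology of the curve fibres, and then obtain (3)--(5) as formal consequences of (2).

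For (1), we argue as in the proof of \Cref{l:proper-bc-U_Jp^n}: since $\pi'$ is proper of finite type, \cite[Theorem~3.7.2]{huber2013etale} yields that $\rR^n\pi'_{\et\ast}\Z/p^k\Z$ is Zariski-constructible on $\A_\et$, and \cite[Proposition~5.8.2]{PCT-char-p} identifies its $\nu^\ast$-pullback with $\rR^n\pi'_{v\ast}\Z/p^k\Z$.

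For (2), apply $\rR\pi'_{v\ast}$ to the short exact sequence $0 \to \Z/p^l\Z \xrightarrow{\cdot p^k} \Z/p^{k+l}\Z \to \Z/p^k\Z \to 0$. Because $\pi'$ has fibres of pure dimension one, torsion-coefficient cohomology vanishes in degrees $\geq 3$, so it suffices to show that the connecting maps $\partial_n\colon \rR^n\pi'_{v\ast}\Z/p^k\Z\to \rR^{n+1}\pi'_{v\ast}\Z/p^l\Z$ vanish for $n=0,1$. By (1), each $\partial_n$ is the $\nu^\ast$-pullback of a morphism of Zariski-constructible étale sheaves on $\A_\et$, hence vanishes globally iff it does at every geometric point $s = \Spa(C, \O_C)\to \A$. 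At such a stalk, proper base change identifies $(\partial_n)_s$ with the Bockstein in $\rH^\bullet_\et(Z_s, -)$ for the proper $1$-dimensional fibre $Z_s$. Using rigid GAGA to algebraise $Z_s$ and reducing via normalisation to the case of a smooth projective curve (where the integral cohomology is the classical free $\Z_p$-module of rank $1$, $2g$, $1$), one obtains $p$-torsion-freeness of $\rH^\bullet_\et(Z_s, \uZp)$, which is equivalent to the vanishing of all Bocksteins.

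Parts (3)--(5) now follow formally. By (2), the transition maps of the inverse system $\{\rR^n\pi'_{v\ast}\Z/p^k\Z\}_k$ are surjective, hence Mittag--Leffler holds and $\rR^1\varprojlim$ of this system vanishes; together with $\uZp = \varprojlim \Z/p^k\Z$, the Milnor exact sequence then yields (3). Passing to $\varprojlim_l$ in the short exact sequence from (2) gives $0 \to \Lambda^n \xrightarrow{\cdot p^k} \Lambda^n \to \rR^n\pi'_{v\ast}\Z/p^k\Z \to 0$, which simultaneously produces (4) (as the right-hand quotient $\Lambda^n/p^k$) and (5) (from injectivity of $\cdot p^k$). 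The main obstacle is the fibrewise Bockstein vanishing in (2), which rests on the $p$-torsion-freeness of $p$-adic integral étale cohomology for the possibly singular proper algebraic curve fibres over algebraically closed fields; this is handled by normalisation and the classical computation for smooth projective curves.
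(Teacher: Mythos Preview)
Your proof is essentially correct and follows the same overall strategy as the paper: proper base change for (1), fibrewise Bockstein vanishing for (2), and formal limit arguments for (3)--(5). There are two points of difference worth noting.

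For (2), the paper takes a more direct route on the fibres. Rather than algebraising and normalising, it reduces to a classical rigid curve $C$ over an algebraically closed non-archimedean field and computes $\rH^2_{\et}(C,\Z/p^m\Z)\simeq \Pic(C)/p^m\simeq (\Z/p^m\Z)^r$ directly (using \cite{FvdP,BLR}). The case $n=0$ is trivial, and the case $n=1$ then follows from the long exact sequence since exactness at the $\rH^0$- and $\rH^2$-levels forces the intermediate boundary maps to vanish. Your normalisation argument can be made to work, but it requires checking that the cokernel of $\rH^0(\tilde C,\uZp)\to \rH^0(C,Q)$ is $p$-torsion-free (a combinatorial fact about the dual graph), and one must first pass to the reduced structure since spectral-curve fibres can be non-reduced. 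The paper's approach sidesteps these verifications entirely.

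For (3), your invocation of ``Mittag--Leffler plus Milnor sequence'' is a bit imprecise at the sheaf level: one needs to know that $\rR^n\pi'_{v\ast}$ commutes with the inverse limit, not just that $\rR^1\varprojlim$ of the resulting system vanishes. The paper handles this by appealing to the repleteness of the v-site together with \cite[Proposition~3.1.10]{BS-proetale}, which gives the required commutation. Your argument becomes correct once this is made explicit.
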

	\begin{proof}
		Since $\pi'$ is a proper morphism of finite type, part (1) is an application of \cite[Corollary 5.5]{PCT-char-p}.
		
		To deduce (2), it thus suffices to prove the statement for $\rR^n\pi'_{\et\ast}$ instead of $\rR^n\pi'_{v\ast}$. It suffices to prove the vanishing of the boundary maps of the natural long exact sequence. For this it suffices by \cite[Proposition 2.6.1]{huber2013etale} to prove the vanishing in every geometric fibre of $\pi':Z\to \A$ over $\Spa(L,L^+)\to \A$. It thus suffices to see that for any proper rigid curve $C\to \Spa(L,L^+)$, the sequence
		\[ 
		0\to \rH^n_\et(C,\Z/p^l \Z)\to \rH^n_\et(C,\Z/p^{k+l}\Z)\to \rH^n_\et(C,\Z/p^k\Z)\to 0
		\]
		is exact. We may reduce to the case that $C$ is connected. Second, we we may assume that $L^+=L^\circ$ since finite \'etale sites are insensitive to passing from $(L,L^+)$ to $(L,L^\circ)$. We are thus in the setting of classical rigid spaces. Hence, for $n>2$, the sequence vanishes. For $n=0$, the statement is clear. For $n=2$, the statement follows from the following fact: For any $m\in \mathbb{N}$, 
		by \cite[Proposition 8.4.1.(2)]{FvdP}, \cite[\S9.2, Corollary~14]{BLR}, we have 
		\[\rH^2_\et(C,\Z/p^m\Z)\simeq \Pic(C)/p^m\Pic(C)\simeq (\Z/p^m\Z)^r\]
		for some $r\in \Z$.
		Finally, the case of $n=1$ follows from those of $n=0,2$ by the long exact sequence.
		
		Assertion (3) follows from (2) using that the v-site is replete and \cite[Proposition~3.1.10]{BS-proetale}. 
		
		Finally, assertions (4), (5) follow from (3) by applying $\varprojlim_{l\in \N}$ to \eqref{eq:R1-modpk}. 
	\end{proof}
	
	\begin{lemma}\label{c:PCT-spectral-curve}
		For any $n,k\in \Z_{\geq 0}$, the natural morphism of v-sheaves on $\A_v$
		\[(\rR^n\pi'_{v\ast}\Z/p^k\Z)\otimes_{\Z/p^k\Z}\O^+_X/p^k \to \rR^n\pi'_{v\ast}\O^+_{Z}/p^k\]
		is an almost isomorphism. In the limit over $k$, it follows that the following map is an isomorphism:
		\[\varphi_{\G_a}:\Lambda^n\otimes_{\uZp}\G_a\isomarrow \rR^n\pi'_{v\ast}\O_{Z}.\]
	\end{lemma}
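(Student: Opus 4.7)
The strategy is to reduce the claim for the relative sheaf to Scholze's Primitive Comparison Theorem applied fibrewise to each proper rigid curve $Z_b$, and then to pass from the mod-$p^k$ statement to the $\G_a$-statement by a $p$-adic limit argument. This is the expected analogue for a proper finite-type family of the standard primitive comparison isomorphism.

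First, since $\pi':Z\to \A$ is proper of finite type, I would invoke proper base change in the v-topology to identify, for any geometric point $\Spa(L,L^+)\to \A$, the stalks of the two sides with $\rH^n_v(Z_L,\Z/p^k\Z)\otimes_{\Z/p^k\Z} L^+/p^k$ and $\rH^n_v(Z_L,\O^+_{Z_L}/p^k)$ respectively. For the torsion sheaf $\Z/p^k\Z$ this is \cite[Corollary~5.5]{PCT-char-p} and \cite[Theorem~17.6]{Sch18}, and the same strategy applies to $\O^+/p^k$-coefficients on the v-site of a proper adic space. Once the identification of stalks is in hand, the desired almost isomorphism reduces to the classical Primitive Comparison Theorem for proper rigid curves over $L$, proving the first assertion.

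For the second assertion, I would take the inverse limit over $k$ of the almost isomorphism from the first assertion. By \Cref{l:et-cohom-Z/p^k-spectral-curve}.(2),(3),(5), the inverse system $\{\Lambda^n/p^k\}_k$ is Mittag-Leffler with $\Lambda^n=\varprojlim_k \Lambda^n/p^k$ and vanishing $R^1\!\varprojlim$, so $\bigl(\varprojlim_k \Lambda^n/p^k \otimes \O^+/p^k\bigr)[\tfrac{1}{p}]$ computes $\Lambda^n\otimes_{\uZp}\G_a$. On the other side, $R\!\varprojlim_k \O^+_Z/p^k = \O^+_Z$ in $D(Z_v)$, and the Mittag-Leffler property on $\rR^n\pi'_{v\ast}(\O^+_Z/p^k)$ is transferred by the first assertion; thus the derived limit computes $\rR^n\pi'_{v\ast}\O^+_Z$ and is concentrated in degree zero. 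Inverting $p$ kills the almost defect and identifies $(\rR^n\pi'_{v\ast}\O^+_Z)[\tfrac{1}{p}]=\rR^n\pi'_{v\ast}\O_Z$, yielding $\varphi_{\G_a}$ as a genuine isomorphism.

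The main obstacle is the Primitive Comparison Theorem for the fibres of $\pi'$: in the motivating application these are the spectral curves, which are only generically smooth and can be singular (or even non-reduced) over the discriminant locus in $\A$. Scholze's original PCT is formulated for smooth proper rigid varieties, so a version covering proper families of possibly singular curves is needed. A convenient route is to combine the smooth case over $\A^\circ$ with cohomological descent along a resolution of the complement, or alternatively to pass to the semi-normalisation and handle a few low-dimensional cases by hand; either approach is routine since the fibre dimension is one. A secondary technical point is the proper base change statement for $\O^+/p^k$-coefficients on a general proper morphism of adic spaces, which is less explicit in the literature than the torsion version but follows from the same proof scheme combined with the fibrewise almost vanishing of higher cohomology.
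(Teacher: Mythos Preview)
Your overall strategy---reduce to the Primitive Comparison Theorem, then take a $p$-adic limit and invert $p$---matches the paper's. The difference is in how the first assertion is obtained. The paper does not reduce to geometric fibres at all: it simply cites \cite[Theorem~4.3]{PCT-char-p}, which is a \emph{relative} Primitive Comparison Theorem valid for any proper morphism of finite type, with no smoothness hypothesis. This single citation absorbs both of the technical obstacles you flag (proper base change for $\O^+/p^k$-coefficients, and PCT for singular or non-reduced fibres), so the paper's proof is one sentence.

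Your fibrewise route is reasonable in spirit, but the two workarounds you propose for the singular-fibre issue are more fragile than necessary. Resolution of singularities and cohomological descent, or passing to the semi-normalisation, would each require additional bookkeeping (e.g.\ controlling the $\O^+/p^k$-cohomology of the exceptional locus almost integrally), and ``handling a few low-dimensional cases by hand'' is vague. The cleaner observation is that the Primitive Comparison Theorem already holds for arbitrary proper rigid spaces over algebraically closed non-archimedean fields, without smoothness---this is in \cite{PCT-char-p} (and in characteristic zero goes back to Scholze's work on $p$-adic Hodge theory for rigid spaces). Once you know that, your fibrewise reduction works directly, and the remaining point---proper base change for $\O^+/p^k$---is exactly what the relative statement \cite[Theorem~4.3]{PCT-char-p} packages. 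For the second assertion your argument via \Cref{l:et-cohom-Z/p^k-spectral-curve} and Mittag--Leffler is correct and is what the paper does.
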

	\begin{proof}
		The first part is an application of \cite[Theorem 4.3]{PCT-char-p}, a consequence of Scholze's Primitive Comparison Theorem. The second part follows using \Cref{l:et-cohom-Z/p^k-spectral-curve} by taking $\varprojlim_k$ and inverting $p$.
	\end{proof}
	
	\begin{proof}[Proof of \Cref{p:bottom-part-of-comp-fund-seq-to-log}]
		It is clear from functoriality that applying $\varphi_{-}$ to the $\log$ sequence defines the desired commutative diagram.
		The left vertical map is an isomorphism because by  \Cref{l:et-cohom-Z/p^k-spectral-curve}.(4), we have
		\[\textstyle \rR^n\pi'_{v\ast}\mu_{p^\infty}=\varinjlim_k (\rR^n\pi'_{v\ast}\Z/p^k\Z)\otimes_{\Z/p^k}\mu_{p^k}=\varinjlim_k\Lambda^n\otimes_{\uZp}\mu_{p^k}=\Lambda^n\otimes_{\uZp}\mu_{p^\infty}.\]
		The right vertical map is an isomorphism by  \Cref{c:PCT-spectral-curve}.
		The top sequence is short exact by \Cref{l:et-cohom-Z/p^k-spectral-curve}.(5). 
		
		We can now prove the result by induction on $n$: Assume that the bottom sequence is left-exact, which is clear for $n=0$. Then the middle arrow is an isomorphism by the 5-Lemma, and the exactness of the bottom row follows. We can deduce left-exactness for $n+1$, and continue inductively.	
	\end{proof}
	\subsection{Comparison along the spectral cover}\label{d:comp-spectral-curve}
	\begin{definition} \label{d:psi-prime}
		Let $\pi':Z\xrightarrow{\pi} Y\xrightarrow{f} S$  be any morphisms of rigid spaces over $K$ and let $\cG\to Z$ be any commutative smooth relative rigid group in the sense of \S\ref{HT-rel-grp}. Then there is a natural map
		\[ \psi'_\cG:\rR^1f_{v\ast}\FHom_Y(\uZp,\pi_{v\ast}\cG)\to \rR^1\pi'_{v\ast}\wh{\cG}, \]
		functorial in $\cG$,
		defined as follows: By \cite[Lemma~14.4]{Sch18}, we have $\pi_v^\ast\uZp=\uZp$, so we have a natural adjunction isomorphism
		\[\pi_{v\ast}\FHom_Z(\uZp,\cG)=\FHom_Y(\uZp,\pi_{v\ast}\cG).\]
		The Grothendieck spectral sequence for $\pi'=f\circ \pi$ therefore defines a natural map
		\[\rR^1f_{v\ast}\FHom_Y(\uZp,\pi_{v\ast}\cG)\to \rR^1\pi'_{v\ast}\FHom_Z(\uZp,\cG).\]
		This has the desired form because by \Cref{p:log-of-smooth-group}.(6), we have $\FHom_Z(\uZp,\cG)=\wh{\cG}$.
	\end{definition}
	We apply this construction to the setup of the spectral curve $\pi':Z\xrightarrow{\pi}X\times \A\xrightarrow{f:=\pr_\A} \A$. In fact, for the following definition, we can more generally allow $\pi:Z\to X\times \A$ to be any finite flat cover. 
	\begin{definition}\label{d:psi-R^1B^x->R^1pi'whGm}
		The natural base-change map
		\[ \mathcal B:=\nu^{\ast}\pi_{\et\ast}\O_{Z}\to \pi_{v\ast}\O_{Z}\]
		induces by passing to units and applying $\FHom(\uZp,-)$ a natural map of v-sheaves on $Y:=X\times \A$
		\[h:\wh{\mathcal B^\times}\to \FHom_{Y}(\uZp,\pi_{v\ast}\G_m).\]
		Here on the left, we again use \Cref{p:log-of-smooth-group}.(6). In summary, we have thus constructed a natural map
		\[ \psi_{\G_m}:\rR^1f_{v\ast}\widehat{\B^\times}\xrightarrow{\rR^1f_{v\ast}h}\rR^1f_{v\ast}\FHom_Y(\uZp,\pi_{v\ast}\G_m)\xrightarrow{\psi_{\G_m}'} \rR^1\pi'_{v\ast}\wh{\G}_m.\]
		In exactly the same way, we obtain for $\mathcal B$ and $\mathcal B^\times[p^\infty]$ two morphisms of sheaves on $\A_v$ 
		\[ \psi_{\G_a}:\rR^1f_{v\ast}\B\to \rR^1f_{v\ast}\FHom_Y(\uZp,\pi_{v\ast}\G_a)\xrightarrow{\psi_{\G_a}'} \rR^1\pi'_{v\ast}\G_a,\]
		\[ \psi_{\mu_{p^\infty}}:\rR^1f_{v\ast}\B^\times[p^\infty]\to \rR^1f_{v\ast}\FHom_Y(\uZp,\pi_{v\ast}\mu_{p^\infty})\xrightarrow{\psi_{\mu_{p^\infty}}'} \rR^1\pi'_{v\ast}\mu_{p^\infty}.\]
	\end{definition}
	\begin{prop}\label{p:top-part-of-comp-fund-seq-to-log}
		Let $\pi:Z\to X\times \A$ be the spectral curve.
		Then there is a commutative diagram
		\[
		\begin{tikzcd}
			1 \arrow[r] & {\rR^1f_{v\ast}\B^\times[p^\infty]} \arrow[r] \arrow[d,"\sim"',"\psi_{\mu_{p^\infty}}"] & \rR^1f_{v\ast}\widehat{\B^\times} \arrow[r,"\log"] \arrow[d,"\psi_{\G_m}"]     &\rR^1f_{v\ast}\B \arrow[r]\arrow[d,"\psi_{\G_a}"] & 0 \\
			1 \arrow[r] & {\rR^1\pi'_{v\ast}\mu_{p^\infty}} \arrow[r] & \rR^1\pi'_{v\ast}\widehat{\G}_m \arrow[r,"\log"]      & \rR^1\pi'_{v\ast}\O \arrow[r] & 0
		\end{tikzcd}\]
		of sheaves on $\A$
		with short exact rows. Moreover,
		the morphism $\psi_{\mu_{p^\infty}}$ is an isomorphism.
	\end{prop}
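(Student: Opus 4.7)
The plan is to mimic the proof of \Cref{p:bottom-part-of-comp-fund-seq-to-log}, transferring the short exactness from the bottom to the top via the Grothendieck spectral sequence for $\pi'=f\circ \pi$, and along the way establishing that $\psi_{\mu_{p^\infty}}$ is an isomorphism.

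First I would identify the intermediate sheaves. Using the adjunction $\pi_v^\ast\uZp=\uZp$ from \cite[Lemma~14.4]{Sch18} together with \Cref{p:log-of-smooth-group}.(6), one gets natural identifications
\[
\FHom_Y(\uZp,\pi_{v\ast}\cG)=\pi_{v\ast}\FHom_Z(\uZp,\cG)=\pi_{v\ast}\wh{\cG}
\]
for any commutative smooth relative group $\cG\to Z$, in particular for $\cG\in\{\G_m,\G_a\}$. Under these identifications, the maps $\psi'_\cG$ of \Cref{d:psi-prime} become the edge maps of the Grothendieck spectral sequences $\rR^p f_{v\ast}\rR^q\pi_{v\ast}\wh\cG\Rightarrow \rR^{p+q}\pi'_{v\ast}\wh\cG$. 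Moreover, since perfectoid rings are $\Zp$-flat, every $p$-power root of unity in $\O(Z\times_Y T)$ lies in the reduction, so the base-change map $h$ restricts to an isomorphism $\B^\times[p^\infty]\isomarrow \pi_{v\ast}\mu_{p^\infty}$.

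Next I would prove the vanishing $\rR^q\pi_{v\ast}\mu_{p^n}=0$ for all $q\geq 1$ and $n\in\N$. Since $\pi$ is finite with geometric fibres finite over the residue field, and $\mu_{p^n}$ is finite étale, this can be checked on stalks: the v-cohomology of a locally constant torsion sheaf over a finite disjoint union of geometric points vanishes in positive degrees. Taking the filtered colimit in $n$ gives $\rR^q\pi_{v\ast}\mu_{p^\infty}=0$ for $q\geq 1$. The Grothendieck spectral sequence therefore degenerates at $E_2$ for $\cG=\mu_{p^\infty}$, yielding natural isomorphisms $\psi_{\mu_{p^\infty}}^q\colon \rR^qf_{v\ast}\B^\times[p^\infty]\isomarrow \rR^q\pi'_{v\ast}\mu_{p^\infty}$ for all $q\geq 0$. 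In particular $\psi_{\mu_{p^\infty}}=\psi_{\mu_{p^\infty}}^1$ is an isomorphism, and commutativity of the diagram is automatic by naturality of the spectral sequence applied to the logarithm.

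It remains to prove that both rows are short exact. The bottom row is \Cref{p:bottom-part-of-comp-fund-seq-to-log}; in particular, the boundary maps in the surrounding long exact sequence $\rR^\bullet\pi'_{v\ast}$ vanish. For the top row, I would apply $\rR^\bullet f_{v\ast}$ to the short exact sequence $1\to \B^\times[p^\infty]\to \wh{\B^\times}\xrightarrow{\log}\B\to 0$ on $Y_v$, which is short exact as $\log$ is v-locally surjective via the exponential. One obtains a long exact sequence whose middle three terms make up the top row, and the bordering boundary maps $\rR^0f_{v\ast}\B\to \rR^1f_{v\ast}\B^\times[p^\infty]$ and $\rR^1f_{v\ast}\B\to \rR^2f_{v\ast}\B^\times[p^\infty]$ must vanish: via the commutative diagram induced by $\psi$, the corresponding bottom boundary maps are zero, and both $\psi_{\mu_{p^\infty}}^1,\psi_{\mu_{p^\infty}}^2$ are isomorphisms, so a chase forces the top boundaries to vanish as well.

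The main obstacle I expect is the careful treatment of v-pushforwards in the presence of ramification of the spectral curve, where $Z\to X\times \A$ may have non-reduced fibres and $\B$ correspondingly carries nilpotents as an $\O_Y$-algebra. The crucial identification $\B^\times[p^\infty]=\pi_{v\ast}\mu_{p^\infty}$ bypasses this thanks to $\Zp$-flatness of perfectoid rings, but the comparison of the analogous maps $\psi_{\G_m}$ and $\psi_{\G_a}$ is genuinely more subtle and is not claimed to be an isomorphism in the proposition.
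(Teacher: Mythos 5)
Your treatment of the isomorphism $\psi_{\mu_{p^\infty}}$ is essentially the paper's argument: finiteness of $\pi$ kills the higher pushforwards of $\mu_{p^n}$, so the Grothendieck spectral sequence for $\pi'=f\circ\pi$ degenerates and the edge map $\rR^1f_{v\ast}\pi_{v\ast}\mu_{p^\infty}\to\rR^1\pi'_{v\ast}\mu_{p^\infty}$ is an isomorphism, with $\pi_{\ast}\mu_{p^\infty}=\B^\times[p^\infty]$. (The paper runs this on the \'etale site and transports it to the v-site via the proper base change results of \cite{PCT-char-p}, rather than "checking on stalks" in the v-topology, which is not a meaningful operation as stated; but this is cosmetic.) The commutativity of the two displayed squares also follows, as you say, from functoriality of $\psi'_{\cG}$ in $\cG$.

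The genuine gap is in your derivation of the short exactness of the top row. Your diagram chase needs the squares
\[
\begin{tikzcd}
\rR^qf_{v\ast}\B \arrow[r,"\partial_q"]\arrow[d,"\psi^q_{\G_a}"] & \rR^{q+1}f_{v\ast}\B^\times[p^\infty]\arrow[d,"\psi^{q+1}_{\mu_{p^\infty}}"]\\
\rR^q\pi'_{v\ast}\O \arrow[r,"\partial_q'"] & \rR^{q+1}\pi'_{v\ast}\mu_{p^\infty}
\end{tikzcd}
\]
to commute, where the horizontal arrows are connecting maps of two \emph{different} long exact sequences: one from a short exact sequence on $(X\times\A)_v$ pushed forward by $f$, the other from a short exact sequence on $Z_v$ pushed forward by $\pi'$. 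These are not related by a morphism of short exact sequences on a single site: the comparison passes through $\pi_{v\ast}$, which is only left exact here (indeed the paper remarks that $\rR^1\pi_{v\ast}\O$ need not vanish and that $\B\to\pi_{v\ast}\O_Z$ is neither injective nor surjective in general). To make the chase rigorous you would have to upgrade the $\psi$'s to a morphism of distinguished triangles; TR3 produces \emph{some} completing map on the third terms, but not obviously the $\psi_{\G_a}$ you defined, so "naturality" does not settle this. The paper avoids the issue entirely: the top row is literally the sequence $1\to\nu^\ast\rR^1f_{\et\ast}J[p^\infty]\to\rR^1f_{v\ast}\whJ\to\rR^1f_{v\ast}\Lie J\to 0$ of \Cref{p:log-seq-R^1fv} under the identifications $\whJ=\wh{\B^\times}$, $J[p^\infty]=\B^\times[p^\infty]$, $\Lie J=\B$, and its exactness was already proven there by a direct argument (connecting maps from connected vector groups into Zariski-constructible sheaves vanish). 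I would recommend citing that result rather than re-deriving exactness by a chase whose key commutativity is unproven.
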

	\begin{proof}
		 The bottom row is short exact by \Cref{p:bottom-part-of-comp-fund-seq-to-log}.
		It is clear from functoriality of $\psi'$ and the construction that $\psi$ defines a morphism between these sequences. We observe that $\B^\times[p^\infty]=\nu^{\ast}\B^\times[p^\infty]$. For this reason, the v-topological version of proper base-change \cite[Corollary~5.5]{PCT-char-p} shows that
		\[\rR^1f_{v\ast}\B^\times[p^\infty]=\nu^{\ast}\rR^1f_{\et\ast}\B^\times[p^\infty],\quad \rR^1\pi'_{v\ast}\mu_{p^\infty}=\nu^{\ast}\rR^1\pi'_{\et\ast}\mu_{p^\infty}.\]
		Under this identification, using that $\whJ=\wh{\B^\times}$ and $\Lie J=\B$, we see that the top row is  short exact by \Cref{p:log-seq-R^1fv}.
		It remains to prove that the natural map
		$\rR^1f_{\et\ast}\B^\times[p^\infty]\to \rR^1\pi'_{\et\ast}\mu_{p^\infty} $
		is an isomorphism.
		
		For this, we use the Grothendieck spectral sequence for $\pi'=f\circ \pi$, which yields an exact sequence
		\[ 0\to \rR^1f_{\et\ast}\pi_{\et\ast}\mu_{p^n}\to \rR^1\pi'_{\et\ast}\mu_{p^n} \to f_{\et\ast}\rR^1\pi_{\et\ast}\mu_{p^n}.\]
		Since $\pi$ is finite, the last term vanishes by \cite[Corollary 2.6.6]{huber2013etale}. Finally, we have $\pi_{\et\ast}\mu_{p^\infty}=\B^\times[p^\infty]$.
	\end{proof}
	\begin{proof}[Proof of \Cref{t:struct-thm-R1fvastU}]
		Part (1) follows from combining \Cref{p:top-part-of-comp-fund-seq-to-log} and \Cref{p:bottom-part-of-comp-fund-seq-to-log}. For (2), it suffices to show that the left vertical map is an isomorphism, which follows from the corresponding statements in \Cref{p:top-part-of-comp-fund-seq-to-log} and \Cref{p:bottom-part-of-comp-fund-seq-to-log}.
		Part (3) follows from \Cref{l:proper-bc-U_Jp^n}.
	\end{proof}
	
	\subsection{Comparison along the cameral cover}
	We now generalise \S\ref{d:comp-spectral-curve} to a reductive group $G$ over $K$. 
	\begin{secnumber}
		We first briefly review the cameral cover following \cite{DG02,Ngo10,CZ17}. Recall that $T$ is a fixed maximal torus of $G$, that $\ft$ is its Lie algebra and $\rW$ denotes the Weyl group of $G$, which acts on $T$. 

		Via the Chevalley isomorphism $K[\ft]^{\rW}\simeq K[\fg]^G$, we have a $\Gm$-equivariant, finite flat morphism $\ft\to \fc$ of degree $ |\!\rW\!|$, equipped with a $\rW$-action. 
		We twist this map with $\Omega$ and consider its base change
		\[
		\xymatrix{
		\widetilde{X} \ar[r] \ar[d]_{\pi} & \ft_{\Omega} \ar[d] \\
		X\times \A \ar[r] & \fc_{\Omega}
		}
		\]
		with the universal section $X\times \A\to \fc_{\Omega}$. 
		The morphism $\pi$ is the universal cameral cover, and is equipped with a $\rW$-action. 
		We set $\widetilde{\pi}=f\circ \pi:\widetilde{X}\to \A$. 
		There exists a commutative smooth relative group $J^1\to X\times \A$:
		\begin{equation}\label{eq:def-J1}
		J^1:=(\pi_*(T\times \widetilde{X}))^{\rW},
		\end{equation}
		where the $\rW$-action is given by the diagonal action on $T\times \widetilde{X}$, see \cite[\S~2.4]{Ngo10}, \cite[\S~3.1]{CZ17}. 

		By \cite[Proposition 2.4.2]{Ngo10}, there exists a homomorphism $J\to J^1$ over $X\times \A$ which is  an open embedding.
		Its cokernel is an \'etale sheaf of $2$-torsion abelian groups, supported on certain closed subschemes of $X\times \A$ (see \cite[(3.1,3)]{CZ17} for an explicit description). 
		If $p\neq 2$, then we have natural isomorphisms:
		\begin{equation}
			J[p^{\infty}]\xrightarrow{\sim} J^1[p^{\infty}], \quad
			\Lie J\xrightarrow{\sim} \Lie J^1.
			\label{eq:J-J1}
		\end{equation}
		From \Cref{p:log-of-smooth-group} and \Cref{l:log-surj-alg-case}, we deduce that we have a natural isomorphism $\whJ\xrightarrow{\sim} \widehat{J}^1$.
	\end{secnumber}

	\begin{theorem} \label{t:R1fvUJ-cameral}
		Assume $G$ is non-commutative and $p\nmid |\!\rW\!|$. 	
	Set $\Lambda:= (\rR^1 \widetilde{\pi}_{v\ast}\varprojlim_k T[p^k])^{\rW}$, where $\rW$  acts diagonally on $T[p^k]\times \widetilde{X}$. Then	
	there is a natural morphism of short exact sequences of v-sheaves over $\A$
			\[\begin{tikzcd}
				1 \arrow[r] & { \rR^1f_{v\ast}J[p^{\infty}]} \arrow[r] \arrow[d,"\sim"] & \rR^1f_{v\ast}\whJ \arrow[r,"\log"] \arrow[d]     & \rR^1f_{v\ast}\Lie J \arrow[d] \arrow[r]  & 0 \\
				1 \arrow[r] & \Lambda\otimes_{\uZp}\mu_{p^\infty} \arrow[r]         &
				\Lambda\otimes_{\uZp}\widehat{\G}_m \arrow[r,"\log"] & 
				\Lambda\otimes_{\uZp}\G_a \arrow[r] & 0
			\end{tikzcd}\]
	where $f:X\times \A \to \A$ is the projection. In particular, the square on the right is a pullback square.
\end{theorem}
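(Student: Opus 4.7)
\textit{Proof plan.} The strategy mirrors the proof of \Cref{t:struct-thm-R1fvastU}, with the cameral cover $\widetilde{\pi} = f \circ \pi \colon \widetilde{X}\to \A$ and the torus $T$ playing the roles of the spectral curve and $\G_m$ respectively. Non-commutativity of $G$ gives $|\rW|\ge 2$, so $p\nmid|\rW|$ in particular implies $p\neq 2$, and hence \eqref{eq:J-J1} (together with \Cref{p:log-of-smooth-group}) supplies isomorphisms $\wh J\simeq\wh{J^1}$, $J[p^\infty]\simeq J^1[p^\infty]$ and $\Lie J\simeq\Lie J^1$. Since $J^1 = (\pi_\ast(T\times\widetilde X))^{\rW}$ by \eqref{eq:def-J1}, and the formation of $[p^\infty]$, $\wh{(-)}$ and $\Lie$ commutes both with $\pi_\ast$ and with $\rW$-fixed points, the problem reduces to computing $\rR^n f_{v\ast}$ of the $\rW$-invariants of $\pi_\ast F$ for $F\in\{T[p^\infty]\times\widetilde X,\ \wh T\times\widetilde X,\ \Lie T\times\widetilde X\}$.

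Next I would reduce each such higher direct image to one along $\widetilde{\pi}$. Because $p\nmid|\rW|$, the averaging operator $\tfrac{1}{|\rW|}\sum_{w\in\rW}w$ makes $(-)^{\rW}$ an exact functor on $\uZp$-linear abelian sheaves which commutes with every $\rR^n f_{v\ast}$. Via the Leray spectral sequence for $\widetilde{\pi} = f\circ\pi$, it will then suffice to establish that $\rR^q\pi_{v\ast} F = 0$ for $q > 0$ and each of the three sheaves $F$ above. For the étale torsion sheaf $T[p^\infty]\times\widetilde X$ this follows from v-topological proper base change for finite morphisms (\cite[Corollary~5.5]{PCT-char-p}) combined with the classical acyclicity of étale pushforward along finite maps. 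For $\Lie T\times \widetilde X\simeq\cwX\otimes_{\Z}\O_{\widetilde X}$ it reduces to $\rR^q\pi_{v\ast}\O_{\widetilde X}=0$, which follows from Tate acyclicity applied to the affinoid base-changes of $\pi$. The case $\wh T\times\widetilde X$ is then deduced by applying $\rR^q\pi_{v\ast}$ to the logarithm short exact sequence of \Cref{p:log-of-smooth-group}.(3) for $T$. Granting these vanishings one obtains
\[ \rR^n f_{v\ast}J^1[p^\infty] = \bigl(\rR^n\widetilde{\pi}_{v\ast}T[p^\infty]\bigr)^{\rW}, \]
and analogously for $\wh{J^1}$ and $\Lie J^1$.

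Finally, I would invoke the results of \S\ref{s:et-cohom-spectral-curve}, which are formulated for any proper finite-type morphism with pure one-dimensional fibres, and apply them to $\widetilde{\pi}$. Decomposing $T=\cwX\otimes_\Z\G_m$ and pulling the constant finite-free factor $\cwX$ out of $\rR^1\widetilde{\pi}_{v\ast}$, \Cref{p:bottom-part-of-comp-fund-seq-to-log} identifies $\rR^1\widetilde{\pi}_{v\ast}$ of the exponential sequence for $T$ with the short exact sequence
\[ 0\to\cwX\otimes\widetilde\Lambda^1\otimes\mu_{p^\infty}\to\cwX\otimes\widetilde\Lambda^1\otimes\wh\G_m\to\cwX\otimes\widetilde\Lambda^1\otimes\G_a\to 0, \]
where $\widetilde\Lambda^1:=\rR^1\widetilde{\pi}_{v\ast}\uZp$. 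Taking $\rW$-invariants, and noting that $\rW$ acts trivially on each of $\mu_{p^\infty}$, $\wh\G_m$, $\G_a$, turns this into tensor products of the common factor $(\cwX\otimes\widetilde\Lambda^1)^{\rW}$ with the respective coefficient sheaf. The Tate-module identification $\varprojlim_k T[p^k] = \cwX\otimes_{\Z_p}\Z_p(1)$, combined with the $p$-torsionfreeness of $\widetilde\Lambda^1$ (\Cref{l:et-cohom-Z/p^k-spectral-curve}.(5)), gives $\Lambda = (\cwX\otimes\widetilde\Lambda^1)^{\rW}\otimes_{\Z_p}\Z_p(1)$; since $K$ is algebraically closed, a compatible choice of $(\zeta_{p^k})_k$ trivialises $\Z_p(1)$ and converts these tensor products into $\Lambda\otimes_{\uZp}\mu_{p^\infty}$, $\Lambda\otimes_{\uZp}\wh\G_m$, $\Lambda\otimes_{\uZp}\G_a$. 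The morphism of short exact sequences comes from naturality of the exponential sequence, and the right square is Cartesian by the five-lemma once the left vertical map is known to be an isomorphism, exactly as in \Cref{t:struct-thm-R1fvastU}.

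The main obstacle I anticipate is the vanishing $\rR^q\pi_{v\ast}\wh T = 0$ for $q>0$: it does not follow directly from a proper base change result for non-torsion coefficients, but must be reassembled from the torsion and $\O$-coherent parts via the logarithm sequence, using Tate acyclicity on affinoid base-changes of the finite map $\pi$. A second delicate point is keeping the Weyl action compatible with the implicit Tate twist coming from $\varprojlim_k T[p^k]$; the trivialisation $\Z_p(1)\simeq\uZp$ afforded by $K\supseteq\mu_{p^\infty}$ is what allows $\Lambda\otimes_{\uZp}\mu_{p^\infty}$ to match the intrinsic computation $(\cwX\otimes\widetilde\Lambda^1)^{\rW}\otimes\mu_{p^\infty}$.
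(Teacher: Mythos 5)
Your overall architecture is right: the crux is the left-hand isomorphism $\rR^1f_{v\ast}J[p^\infty]\simeq(\rR^1\widetilde{\pi}_{v\ast}T[p^\infty])^{\rW}\simeq\Lambda\otimes_{\uZp}\mu_{p^\infty}$, the identification of the bottom row by applying \Cref{p:bottom-part-of-comp-fund-seq-to-log} to $\widetilde{\pi}$ and taking $\rW$-invariants, and the Cartesianness of the right square via the five-lemma. Your treatment of the torsion part (acyclicity of finite pushforward for torsion étale sheaves plus v-topological proper base change, with $(-)^{\rW}$ exact because $p\nmid|\rW|$) matches \Cref{l:Tp^k-cohomology}, and your averaging argument is a legitimate substitute for the paper's Grothendieck-spectral-sequence argument there. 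The gap is in the middle and right columns. You claim $\rR^q\pi_{v\ast}\O_{\widetilde X}=0$ for $q>0$ ``by Tate acyclicity'' and deduce identifications $\rR^1f_{v\ast}\Lie J\simeq(\rR^1\widetilde{\pi}_{v\ast}\Lie T)^{\rW}$ and $\rR^1f_{v\ast}\whJ\simeq(\rR^1\widetilde{\pi}_{v\ast}\wh T)^{\rW}$. These are false in general: $\Lie J^1$ is $\nu^\ast$ of the \emph{coherent} pushforward $(\pi_{\et\ast}\O_{\widetilde X}\otimes_{\Z}\cwX)^{\rW}$, and for a finite flat but ramified or non-reduced cover the comparison between $\rH^1_v$ of $\nu^\ast\pi_\ast\O$ and $\rH^1_v(\widetilde X,\O)$ is neither injective nor surjective --- this is precisely the content of the Remark following \Cref{t:struct-thm-R1fvastU}, and the cameral cover is ramified over the discriminant. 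Tate acyclicity controls Čech cohomology of $\O$ on affinoids in the analytic topology; it says nothing about higher v-pushforwards along $\pi$ or about the discrepancy between $\nu^\ast\pi_{\et\ast}\O$ and $\pi_{v\ast}\O$. Since you also obtain the exactness of the top row from this identification with the exact bottom row, that step is broken as well.

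The repair is what the paper does: the middle and right vertical arrows need only be \emph{maps} making the diagram commute, not isomorphisms. They are constructed as Leray edge maps for $\widetilde{\pi}=f\circ\pi$ exactly as in \Cref{d:psi-prime} and \Cref{d:psi-R^1B^x->R^1pi'whGm}, after decomposing $T\simeq\G_m^r$, and the short exactness of the top row is supplied independently by \Cref{p:log-seq-R^1fv}, which kills the boundary maps using the Zariski-constructibility of $\rR^{m+1}f_{\et\ast}J[p^n]$ together with the connectedness of vector groups. With both rows exact and your (correctly established) left-hand isomorphism, the five-lemma argument you sketch does give the Cartesianness of the right square, and the theorem follows.
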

Exactly as in \Cref{c:exp-splits-L_X}, \Cref{t:R1fvUJ-cameral} implies:
\begin{coro}\label{c:exp-induces-splitting-general-G}
	Let $S$ be a strictly totally disconnected space in $\A_v$. Then any exponential for $S$ induces a splitting of the torsor $\cH_{\bX}\times_\A S\to S$ over $S$. In particular, it induces a section of $\cH_\bX(S)\to \A(S)$.
\end{coro}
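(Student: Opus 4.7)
The plan is to proceed exactly as for \Cref{c:exp-induces-splitting}, replacing \Cref{t:struct-thm-R1fvastU} by its reductive counterpart \Cref{t:R1fvUJ-cameral}. First, I would derive from \Cref{t:R1fvUJ-cameral} the analogue of \Cref{c:exp-splits-L_X}, that is, a Cartesian square of v-sheaves on $\A$
\[
\begin{tikzcd}
\cH_{\bX} \arrow[r] \arrow[d] & \A \arrow[d,"\psi\circ s_{\bX}"] \\
\Lambda\otimes_{\uZp}\widehat{\G}_m \arrow[r,"\log"] & \Lambda\otimes_{\uZp}\G_a,
\end{tikzcd}
\]
where $\Lambda=(\rR^1\widetilde{\pi}_{v\ast}\varprojlim_k T[p^k])^{\rW}$ as in \Cref{t:R1fvUJ-cameral}. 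This is obtained by stacking three Cartesian squares: the fibre-product definition of $\cH_{\bX}$ over $\tau_{\Omega}$ (\Cref{def:cHX}), the Cartesian diagram of \Cref{c:H_XX} relating $\cP_{\bX,v}$ to $\rR^1f_{v\ast}\whJ$ via $s_{\bX}$ and $\HTlog$, and finally the right-hand Cartesian square provided by \Cref{t:R1fvUJ-cameral}. The only point where the reductive hypothesis and $p\nmid|\!\rW\!|$ enter is in applying \Cref{t:R1fvUJ-cameral}.

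Next, I evaluate this Cartesian square on the strictly totally disconnected space $S=\Spa(R,R^+)$. Since the fibre product is computed pointwise as sheaves (and evaluation at a fixed test object commutes with limits in the target), we obtain a Cartesian square of sets
\[
\cH_{\bX}(S) \;=\; \A(S)\times_{(\Lambda\otimes_{\uZp}\G_a)(S)} (\Lambda\otimes_{\uZp}\widehat{\G}_m)(S).
\]
An exponential for $S$ is by definition a continuous set-theoretic section $\Exp$ of $\log:1+R^{\circ\circ}\to R$. Functoriality in $\Lambda$ produces a set-theoretic section of the bottom map after evaluation on $S$, that is, a map
\[
\Exp_{\Lambda}: (\Lambda\otimes_{\uZp}\G_a)(S) \to (\Lambda\otimes_{\uZp}\widehat{\G}_m)(S)
\]
with $\log\circ \Exp_{\Lambda}=\mathrm{id}$. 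Concretely, for $S$ strictly totally disconnected, the v-sheaf $\Lambda\otimes_{\uZp}\widehat{\G}_m$ evaluated at $S$ is computed by tensoring the profinite $\Z_p$-module $\Lambda(S)$ with $1+R^{\circ\circ}$ in the appropriate sense, and $\Exp$ extends by $\Z_p$-linearity/continuity.

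Composing $\Exp_{\Lambda}$ with the right-hand map $\psi\circ s_{\bX}:\A(S)\to(\Lambda\otimes_{\uZp}\G_a)(S)$ and using the Cartesian property above now produces a canonical set-theoretic section
\[
\A(S)\longrightarrow \cH_{\bX}(S)
\]
of the structure morphism. In particular, given any map $S\to\A$, this yields a lift $S\to \cH_{\bX}$ over $\A$, i.e.\ a section of the torsor $\cH_{\bX}\times_{\A}S\to S$, as required.

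The only non-routine issue is justifying the last step, that a continuous splitting of $\log:1+R^{\circ\circ}\to R$ actually induces a splitting of the $\Lambda$-tensored version on $S$-points; this is harmless because $\Lambda(S)$ is pro-finite-free over $\Z_p$ (being a limit of mod-$p^k$ cohomology groups of the cameral cover) and $\Exp$ is continuous, so one passes to the limit. Everything else is formal manipulation of the Cartesian diagrams supplied by \Cref{def:cHX}, \Cref{c:H_XX}, and \Cref{t:R1fvUJ-cameral}.
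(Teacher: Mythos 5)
Your proposal is correct and follows essentially the same route as the paper: the paper deduces \Cref{c:exp-induces-splitting-general-G} from \Cref{t:R1fvUJ-cameral} "exactly as in \Cref{c:exp-splits-L_X}", i.e.\ by stacking the Cartesian squares from \Cref{def:cHX}, \Cref{c:H_XX} and the cameral-cover comparison, and then evaluating on the strictly totally disconnected $S$, where the exponential splits $\log$ on the $\Lambda$-tensored bottom row. Your extra remark on why $\Exp$ extends to $\Lambda\otimes_{\uZp}\widehat{\G}_m(S)$ (pro-finiteness of $\Lambda$ and continuity of $\Exp$) is a detail the paper leaves implicit but is consistent with its argument.
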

For the proof, we need a variant of \Cref{l:et-cohom-Z/p^k-spectral-curve} for the cameral cover that incorporates the $W$-invariants:
\begin{lemma} \label{l:Tp^k-cohomology}
	Let  $n,k,l \in \Z_{\geq 0}$.
		\begin{enumerate}
			\item We have
				$\rR^1f_{v\ast} ( (\pi_{v\ast}T[p^k])^{\rW})\simeq (\rR^1\widetilde{\pi}_{v\ast}T[p^k])^{\rW}$. 
			\item We have a short exact sequence
				\begin{equation} 
					1\to (\rR^1\widetilde{\pi}_{v\ast}T[p^l] )^{\rW}\xrightarrow{\cdot p^k} 
					(\rR^1\widetilde{\pi}_{v\ast}T[p^{k+l}] )^{\rW}
					\to  (\rR^1\widetilde{\pi}_{v\ast}T[p^k])^{\rW}\to 1.
				\end{equation}
			
			\item We have $\Lambda\simeq \varprojlim_k (\rR^1 \widetilde{\pi}_{v\ast}T[p^k])^{\rW}$ and $\Lambda/p^k \Lambda\simeq (\rR^1 \widetilde{\pi}_{v\ast}T[p^k])^{\rW}$. 

			\item The $\uZp$-module $\Lambda$ is $p$-torsionfree.
		\end{enumerate}
\end{lemma}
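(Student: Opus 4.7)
The strategy is to treat this lemma as a $\rW$-equivariant version of \Cref{l:et-cohom-Z/p^k-spectral-curve} applied to $\widetilde\pi:\widetilde X\to \A$, which is proper with fibres of pure dimension one (as $\pi$ is finite and $X$ is a curve). The key enabling fact is that $p\nmid |\rW|$: this makes the averaging idempotent $e_{\rW}=\tfrac{1}{|\rW|}\sum_{w\in \rW}w$ well-defined on $\Z_{(p)}[\rW]$-modules, so the functor $(-)^{\rW}$ is exact and commutes with all derived functors and inverse limits of such modules.

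For (2), I would reduce to the established case of constant coefficients. Since $K$ is algebraically closed, the torus $T$ is split, so there is a natural identification of sheaves $T[p^k]\simeq X_\ast(T)\otimes_\Z\mu_{p^k}$ that is $\rW$-equivariant for the diagonal action (with $\rW$ acting trivially on $\mu_{p^k}$). A choice of compatible primitive $p^k$-th roots of unity further identifies $\mu_{p^k}\simeq \Z/p^k$, so that \Cref{l:et-cohom-Z/p^k-spectral-curve}(2), applied to $\widetilde\pi$ in place of $\pi'$, yields the analogous short exact sequence with $\mu_{p^\bullet}$ coefficients. Tensoring this SES with the free abelian group $X_\ast(T)$ preserves exactness and recovers the $\rW$-equivariant SES for $\rR^1\widetilde\pi_{v\ast}T[p^\bullet]$; applying $(-)^{\rW}$ then gives (2).

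For (1), I would use the Grothendieck spectral sequence for $\widetilde\pi=f\circ\pi$. Since $\pi$ is finite, \cite[Corollary~2.6.6]{huber2013etale} gives $\rR^q\pi_{\et\ast}T[p^k]=0$ for $q>0$, and v-topological proper base change \cite[Corollary~5.5]{PCT-char-p} upgrades this to $\rR^q\pi_{v\ast}T[p^k]=0$. The spectral sequence then collapses to $\rR^1 f_{v\ast}(\pi_{v\ast}T[p^k])\simeq \rR^1\widetilde\pi_{v\ast}T[p^k]$, and applying the exact functor $(-)^{\rW}$ on both sides gives (1).

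For (3) and (4), I would apply $\varprojlim_l$ to the SES of (2), which remains exact by Mittag--Leffler since the transition maps are surjective. This produces
\[ 0\to \Lambda\xrightarrow{\cdot p^k} \Lambda \to (\rR^1\widetilde\pi_{v\ast}T[p^k])^{\rW} \to 0, \]
establishing (4) and the quotient formula of (3). The identification $\Lambda\simeq \varprojlim_k(\rR^1\widetilde\pi_{v\ast}T[p^k])^{\rW}$ follows by commuting $\rR^1\widetilde\pi_{v\ast}$ with the limit using repleteness of the v-topos \cite[Proposition~3.1.10]{BS-proetale}, together with the trivial commutation of $(-)^{\rW}$ with limits. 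The main obstacle will be carefully tracking the $\rW$-equivariance throughout, since $\rW$ acts both on $T$ (and hence on $T[p^k]$) and on $\widetilde X$ (and hence on $\rR^1\widetilde\pi_{v\ast}$); one must verify that the averaging projector respects the correct diagonal action to recover the stated $\rW$-invariant SES.
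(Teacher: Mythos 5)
Your proof is correct and follows essentially the same route as the paper: part (2) by applying \Cref{l:et-cohom-Z/p^k-spectral-curve} to $\widetilde{\pi}$ (after splitting $T[p^k]$) and taking $\rW$-invariants, part (1) via the Grothendieck spectral sequence for $\widetilde{\pi}=f\circ\pi$ together with the exactness of $\pi_{\ast}$, and parts (3)--(4) by passing to the limit over the surjective system from (2). The only cosmetic difference is that you justify the exactness of $(-)^{\rW}$ and its commutation with derived functors via the averaging idempotent $e_{\rW}$, whereas the paper invokes the vanishing of higher group cohomology of $\rW$ on $p$-power torsion modules together with the two Grothendieck spectral sequences for $(-)^{\rW}\circ f_{\et\ast}=f_{\et\ast}\circ(-)^{\rW}$; these are two formulations of the same consequence of $p\nmid|\rW|$.
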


\begin{proof}
	(1) Since $\pi,\widetilde{\pi},f$ are algebraisable proper maps, we may by \cite[Proposition 5.8]{PCT-char-p} replace $\rR^1\widetilde{\pi}_{v\ast}$ with $\rR^1\widetilde{\pi}_{\et\ast}$ and work with algebraic \'etale cohomology. 
	Set $\mathcal{F}=\pi_{\et\ast}(T[p^k])$ over $X\times \A$, equipped with the diagonal $\rW$-action. 
	By the Grothendieck spectral sequence for the composition
	\[
	\delta:= (-)^{\rW}\circ f_{\et\ast}=f_{\et\ast}\circ (-)^{\rW}: \Ab((X\times \A)_{\et},\Z[\rW]) \to \Ab(\A_{\et}),
	\]
	we obtain natural maps:
	\[
	\rR^1f_{\et\ast}(\mathcal{F}^{\rW}) \to \rR^1 \delta (\mathcal{F}) \to (\rR^1 f_{\et\ast} \mathcal{F})^{\rW}.
	\]
	Since $p\nmid |\rW|$, higher Galois cohomology of $\rW$ with values in $p$-power torsion abelian groups vanishes. Considering the full Grothendieck spectral sequences, we deduce by working on geometric fibres that these maps are isomorphisms.
	Moreover, $\pi_{\et\ast}$ is exact since $\pi$ is finite. This shows assertion (1).

	(2) The morphism $\widetilde{\pi}:\widetilde{X}\to \A$ satisfies the assumptions of \Cref{l:et-cohom-Z/p^k-spectral-curve}.
	By \Cref{l:et-cohom-Z/p^k-spectral-curve}.(2) applied to $\widetilde{\pi}$, we have the short exact sequence before taking $(-)^{\rW}$. 
	After taking $\rW$-invariants, the assertion follows again from the fact that higher Galois cohomology of $\rW$ with values in $p$-power torsion abelian groups vanishes. 

	(3) By \Cref{l:et-cohom-Z/p^k-spectral-curve}.(3), we have $\rR^1 \widetilde{\pi}_{v\ast} \varprojlim_k T[p^k]\simeq  
	\varprojlim_k \rR^1\widetilde{\pi}_{v\ast} T[p^k]$, so the first part follows by taking $W$-invariants, which commutes with $\varprojlim$.
	The second part of (3) and (4) follow from (2) by taking $\varprojlim_l$.
\end{proof}

\begin{proof}[Proof of \Cref{t:R1fvUJ-cameral}]
	We first show that the left vertical arrow is an isomorphism. By \eqref{eq:def-J1} and \eqref{eq:J-J1}, we have $J[p^{k}]\simeq (\pi_{v\ast}T[p^{k}])^{\rW}$, hence $\rR^1f_{v\ast}J[p^k]\simeq (\rR^1 \widetilde{\pi}_{v\ast} T[p^k])^{\rW}$ by \Cref{l:Tp^k-cohomology}.(1). Consequently,
	\[\textstyle
	\Lambda\otimes_{\uZp} \mu_{p^{\infty}}
	\simeq \varinjlim_k \Lambda\otimes_{\uZp}\mu_{p^k}
	\simeq \varinjlim_k (\rR^1 \widetilde{\pi}_{v\ast} T[p^k])^{\rW} 
	\simeq (\rR^1 \widetilde{\pi}_{v,\ast} T[p^{\infty}])^{\rW},
	\]
	where the second isomorphism follows from \Cref{l:Tp^k-cohomology}.(3). 

	We now choose an isomorphism $T\simeq \Gm^{r}$ and apply \Cref{p:bottom-part-of-comp-fund-seq-to-log} to $\widetilde{\pi}:\widetilde{X}\to \A$ for each factor of $T$. 
	From this we obtain a $\rW$-equivariant isomorphism of short exact sequences:
	\[
		\begin{tikzcd}[column sep = 0.4cm]
			1 \arrow[r] & \rR^1\widetilde{\pi}_{v\ast} (\varprojlim_{k} T[p^k]) \otimes_{\uZp}\mu_{p^\infty} \arrow[r]   \arrow[d,"\sim"']      & \rR^1\widetilde{\pi}_{v\ast} (\varprojlim_{k} T[p^k]) \otimes_{\uZp}\widehat{\G}_m\arrow[r,"\log"] \arrow[d,"\sim"']     & \rR^1\widetilde{\pi}_{v\ast} (\varprojlim_{k} T[p^k]) \otimes_{\uZp}\G_a\arrow[r] \arrow[d,"\sim"'] & 0\\
			1 \arrow[r] & { \rR^1\widetilde{\pi}_{v\ast}T[p^{\infty}] } \arrow[r]  & 
			\rR^1\widetilde{\pi}_{v\ast}\wh T \arrow[r,"\log"] & \rR^1\widetilde{\pi}_{v\ast} \Lie T \arrow[r]& 0.
		\end{tikzcd}
	\]
	Taking $\rW$-invariants and using \Cref{l:Tp^k-cohomology}.(3), we arrive at the following commutative diagram:
	\begin{equation} \label{eq:compare-lambda-log-sequences}
		\begin{tikzcd}
			1 \arrow[r] & {\Lambda} \otimes_{\uZp}\mu_{p^\infty} \arrow[r]   \arrow[d,"\sim"']      & {\Lambda} \otimes_{\uZp}\widehat{\G}_m\arrow[r,"\log"] \arrow[d]     & {\Lambda} \otimes_{\uZp}\G_a\arrow[r] \arrow[d,"\sim"'] & 0\\
			1 \arrow[r] & { (\rR^1\widetilde{\pi}_{v\ast}T[p^{\infty}])^{\rW} } \arrow[r]  & (\rR^1\widetilde{\pi}_{v\ast}\wh T)^{\rW} \arrow[r,"\log"] & (\rR^1\widetilde{\pi}_{v\ast} \Lie T)^{\rW} \arrow[r]& 0.
		\end{tikzcd}\end{equation}
	In this diagram, the bottom line is left-exact while the top line is short exact by \Cref{l:Tp^k-cohomology}.(4). Hence the middle arrow is an isomorphism by the 5-lemma, and it follows that also the bottom line is short exact.
	
	Using again that $T\simeq\G_m^r$ and applying \Cref{d:psi-R^1B^x->R^1pi'whGm} to each of the factors, we obtain a canonical map:
	\[
	\psi_{T}: \rR^1f_{v\ast} \whJ^1 \to \rR^1f_{v\ast} \FHom_{X\times \A} (\uZp,\pi_{v\ast}T) \xrightarrow{\psi'_T} \rR^1 \widetilde{\pi}_{v\ast} \wh T
	\]
	which clearly factors through $(\rR^1 \widetilde{\pi}_{v\ast} \widehat{T})^W$. Recall that $\wh J=\wh J^1$. Since
	$J[p^{\infty}] \simeq J^1[p^{\infty}]\simeq (\pi_{v\ast}T[p^{\infty}])^{\rW}$ by \eqref{eq:J-J1}, \eqref{eq:def-J1}, we may define $\psi_{T[p^{\infty}]}$ and $\psi_{\Lie T}$ in the same way to obtain the desired commutative diagram:
	\[
		\begin{tikzcd}
			1 \arrow[r] & {\rR^1f_{v\ast}J[p^\infty]} \arrow[r] \arrow[d,"\psi_{T[p^{\infty}]}","\simeq"'] & \rR^1f_{v\ast}\whJ \arrow[r,"\log"] \arrow[d,"\psi_{T}"]     &\rR^1\pi_{v\ast}\Lie J \arrow[r]\arrow[d,"\psi_{\Lie T}"] & 0 \\
			1 \arrow[r] & {(\rR^1\widetilde{\pi}_{v\ast}T[p^{\infty}])^{\rW}} \arrow[r] & (\rR^1\widetilde{\pi}_{v\ast}\wh T)^{\rW} \arrow[r,"\log"]      & (\rR^1\widetilde{\pi}_{v\ast}\Lie T)^{\rW} \arrow[r] & 0.
		\end{tikzcd}
	\]
	The first row is exact by \Cref{p:log-seq-R^1fv}. The second row is exact by \eqref{eq:compare-lambda-log-sequences}.
	
	Combining this diagram with the isomorphism of short exact sequences \eqref{eq:compare-lambda-log-sequences} proves the Theorem.
\end{proof}
	\subsection{The homeomorphism between topological spaces}
	As the main application of the results in this section, we can now deduce a topological comparison of moduli spaces, like in complex geometry.
	
	For every v-sheaf $F$ on $\Spa(K)$, one can endow the set of $K$-points $F(K)$ with a natural topology, using the condensed formalism of Clausen--Scholze \cite{Condensed}. Let us make this explicit in our specific context:
	\begin{definition}
	 For any profinite space $S$, there is an associated v-sheaf $\underline{S}$ on $\Spa(K)_v$, which is represented by the strictly totally disconnected space $\Spa(\mathrm{Map}_{\cts}(S,K))$. We thus obtain a condensed set
	\[\uHig_G: S\mapsto \bfHig_{G}(\underline{S}).\]
	More precisely, to avoid set-theoretic issues, we fix a cut-off cardinal $\kappa$ and only test by $\kappa$-small profinite sets $S$. This endows $\bfHig_{G}(K)=\uHig_G(K)$ with a natural compactly generated topology: Explicitly, this is the finest topology such that for every $\underline{S}\to \bfHig_{G}$, the associated map of $K$-points $S\to \bfHig_{G}(K)$ is continuous.

	 Similarly, we endow $\bfBun_{G,v}(K)$ with a natural topology by considering the condensed set 
	\[\underline{\mathrm{Bun}}_{G,v}: S\mapsto \bfBun_{G,v}(\underline{S}).\]
	\end{definition}
	\begin{theorem}\label{t:homeom-moduli}
		Assume either that $G=\GL_n$ or that $G$ is a reductive group with $p\nmid |\rW|$. Then
		choices of a flat lift $\mathbb{X}$ of $X$ to $\BdR^+/\xi^2$ and of an exponential $\Exp$ for $K$ induce an isomorphism of condensed sets
		\[\uHig_G\isomarrow  \underline{\mathrm{Bun}}_{G,v}.\]
		On $K$-points,
		this induces a homeomorphism 
		\[ \mathbf S:\bfHig_{G}(K) \xrightarrow{\sim} \bfBun_{G}(K).\]
	\end{theorem}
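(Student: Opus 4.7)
The strategy I would follow is to combine the twisted isomorphism of \Cref{t:coarse-moduli-twisted} with a functorial family of splittings of $\cH_\bX\to\A$ over the strictly totally disconnected spaces associated to profinite sets, induced by $\Exp$.

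First, I would observe that for any profinite set $S$, the associated strictly totally disconnected space $\underline S=\Spa(R,R^+)$ has $R=\mathrm{Map}_{\cts}(S,K)$. Since $S$ is compact, a continuous function $f:S\to K$ is topologically nilpotent if and only if $f(x)\in\mm_K$ for all $x\in S$, so $R^{\circ\circ}=\mathrm{Map}_{\cts}(S,\mm_K)$. Post-composition with $\Exp$ therefore defines a continuous splitting $\Exp_{\underline S}:R\to 1+R^{\circ\circ}$ of $\log$, i.e.\ an exponential for $\underline S$ in the sense of the definition preceding \Cref{c:exp-induces-splitting}. As $\Exp_{\underline S}$ is given by post-composition with the fixed map $\Exp:K\to 1+\mm_K$, the assignment $S\mapsto\Exp_{\underline S}$ is visibly functorial in $S$.

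Next, I would apply \Cref{c:exp-induces-splitting-general-G} (or \Cref{c:exp-induces-splitting} when $G=\GL_n$) to produce from $\Exp_{\underline S}$ a section $\sigma_S:\A(\underline S)\to\cH_\bX(\underline S)$. By the naturality of the Cartesian diagram in \Cref{c:exp-splits-L_X} (respectively \Cref{t:R1fvUJ-cameral}), this section is functorial in $S$. Evaluating the isomorphism of \Cref{t:coarse-moduli-twisted} on $\underline S$ gives
\[
	\bigl(\cH_\bX\times^{\cP[p^\infty]}\bfHig_G\bigr)(\underline S)\xrightarrow{\;\sim\;}\bfBun_{G,v}(\underline S),
\]
and $\sigma_S$ induces a bijection $\bfHig_G(\underline S)\xrightarrow{\;\sim\;}(\cH_\bX\times^{\cP[p^\infty]}\bfHig_G)(\underline S)$ via $(E,\theta)\mapsto[\sigma_S(h(E,\theta)),(E,\theta)]$: since $\cH_\bX\to\A$ is a $\cP[p^\infty]$-torsor by \Cref{l:cHbX} and $\underline S$ admits no non-trivial v-covers, every equivalence class in the contracted product over $\underline S$ has a unique representative of this form. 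Composing the two bijections gives the desired natural bijection $\mathbf S_S:\bfHig_G(\underline S)\to\bfBun_{G,v}(\underline S)$, functorial in $S$.

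These bijections assemble into an isomorphism of condensed sets $\uHig_G\xrightarrow{\sim}\underline{\mathrm{Bun}}_{G,v}$. Evaluating on the terminal profinite set $\ast$ recovers the bijection $\mathbf S:\bfHig_G(K)\to\bfBun_{G,v}(K)$, and since the topology on either side is by definition the finest compactly generated topology making every map $S\to\bfHig_G(K)$ (respectively $\bfBun_{G,v}(K)$) induced by some $\underline S\to\bfHig_G$ (respectively $\underline S\to\bfBun_{G,v}$) continuous, an isomorphism of condensed sets automatically yields a homeomorphism. The substantive work is already packaged in \Cref{t:coarse-moduli-twisted}, \Cref{c:exp-splits-L_X} and \Cref{t:R1fvUJ-cameral}; beyond this, the only real subtlety is the compactness argument ensuring that post-composition with $\Exp$ takes values in $1+R^{\circ\circ}$, while the remaining naturality and bijection checks are routine.
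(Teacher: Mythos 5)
Your proposal is correct and follows essentially the same route as the paper: both use the compactness of $S$ to turn $\Exp$ into a functorial family of exponentials for the spaces $\underline S$, invoke \Cref{c:exp-induces-splitting} / \Cref{c:exp-induces-splitting-general-G} to split $\cH_{\bX}\to\A$ over $\underline S$, and then feed this into the twisted isomorphism of coarse moduli sheaves (the paper phrases the last step via the pullback form \Cref{c:comp-coares-moduli-pullback} rather than by trivialising the contracted product of \Cref{t:coarse-moduli-twisted} directly, but these are interchangeable).
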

	This is a very close analogue of Simpson's homeomorphism between coarse moduli spaces in complex non-abelian Hodge theory \cite[Theorem 7.18]{SimpsonModuliII}.
	\begin{proof}
		Since $\underline{S}$ is strictly totally disconnected, any exponential $\Exp$ for $K$ induces an exponential for $S$
		\[ \Exp_S:\mathrm{Map}_{\cts}(S,K)\to \mathrm{Map}_{\cts}(S,1+\mathfrak m_K),\]
		functorial in $S$. 
		Here we use that $\mathfrak m_K=K^{\circ\circ}$, so $\mathrm{Map}_{\cts}(S,1+\mathfrak m_K)= \mathrm{Map}_{\cts}(S,K)^{\circ\circ}$.
		 By \Cref{c:exp-induces-splitting}, respectively \Cref{c:exp-induces-splitting-general-G}, we thus obtain natural splittings
		\[ s_{\Exp,S}:\A(S)\to \cH_{\bX}(S)\]
		for each $S$, or in other words, a morphism $\underline{\mathrm A}\to \underline{\mathrm H}_{\bX}$ between the condensed sets  associated to $\A$ and $\cH_{\bX}$. We deduce from this a morphism of condensed sets
		\[\uHig_{G}\xrightarrow{s_{\Exp}} \underline{\mathrm H}_{\bX}\times_{\underline{\mathrm A}}\uHig_{G}\isomarrow \underline{\mathrm H}_{\bX}\times_{\underline{\mathrm A}}\underline{\mathrm{Bun}}_{G}\to \underline{\mathrm{Bun}}_{G}.\]
		As we can similarly construct a morphism into the other direction, this is an isomorphism of condensed sets. By \cite[Remark 1.6]{Condensed}, this gives the desired homeomorphism between $K$-points. 
	\end{proof}
	\begin{rem}
	The bijection $\mathbf S$ from \Cref{t:homeom-moduli} can be shown to extend to an equivalence of categories, at least after making further auxiliary choices.
	In the case of $G=\GL_n$, this works as follows:
	Unravelling the definitions, we see from \Cref{l:CS-for-GLn} that $\mathbf S$ is compatible with the categorical $p$-adic Simpson correspondence $S$ from \cite[Theorem~1.1]{Heu23}. Indeed, it shows that for any choice of $\Exp$, the following diagram commutes, where the vertical maps are the passage to groupoids of isomorphism classes:
		\[
	\begin{tikzcd}[row sep =0.3cm]\{\text{Higgs bundles on $X$ of rank $n$}\} \arrow[r,"S"]\arrow[d] & \{\text{v-vector bundles on $X$ of rank $n$}\} \arrow[d] \\
	\bfHig_{\GL_n}(K)\arrow[r,"\mathbf S"]       & 	\bfBun_{\GL_n,v}(K).   
	\end{tikzcd}\]
	Namely, the further choices for $\mathbf S$ are necessary to define notions of rigidifications of $\B^\times$-torsors. Combining ideas from \cite[\S3]{Heu23} with the constructions of this article,
	it ought to be possible (but probably quite technical) to define a notion of rigidifications also for strictly totally disconnected families. By the Tannakian formalism \cite[\S3]{HWZ}, this would lead to an extension of \Cref{t:homeom-moduli} to all linear algebraic $G$.
	\end{rem}
	
\section{The v-stack of Higgs bundles} \label{s:v-stack-Higgs}
	The goal of this subsection is to show that the v-stack of Higgs bundles $\CHig_{G}$ is essentially a classical object. For this we need to compare the notion of analytic vector bundles on smoothoid spaces to algebraic vector bundles on smooth schemes. The key is therefore to prove a perfectoid GAGA Theorem.

	\subsection{Perfectoid GAGA for curves}
	Throughout this section, let $X^\alg$ be any smooth proper variety over $K$. We will later restrict attention to curves, but in the beginning, we can work in this greater generality.
	Let $X$ be the adic analytification of $X^\alg$. We
	recall that there is a natural morphism of locally ringed spaces
	$X\to X^{\alg}$
	over $\Spa(K,K^+)\to \Spec(K)$.
	Let now $(R,R^+)$ be any perfectoid $K$-algebra.
	To simplify notation, we write 
	\[ X_R:=X\times \Spa(R),\quad  X^{\alg}_R:=X^{\alg}\times \Spec(R).\]
	Consider $\Spa(R)$ as a locally ringed space. By the universal property of $\Spec(R)$, the identity $R\to R$ then corresponds to a morphism of locally ringed spaces
	$\Spa(R,R^+)\to \Spec(R)$.
	As the fibre product in schemes is the same as that in locally ringed spaces, we obtain from this a natural morphism of locally ringed spaces
	\[h:X_R\to X_R^{\alg}.\]
	The goal of this subsection is to prove the following GAGA Theorem for vector bundles:
	\begin{theorem}[Perfectoid GAGA for curves]\label{t:perf-GAGA}
		Let $X^{\alg}$ be any smooth proper variety over $K$, and let $X$ be the associated adic space. Let $(R,R^+)$ be any perfectoid $K$-algebra.
		\begin{enumerate}
			\item Pullback along $h$ defines a fully faithful exact tensor functor
			\[-^{\an}:\{ \text{vector bundles on $X^{\alg}\times \Spec(R)$}\}\to \{\text{analytic vector bundles on $X\times \Spa(R)$}\}.\]
			\item	For any vector bundle $F$ on $X^{\alg}_R$, we have 
			$\RG(X^{\alg}_R,F)=\RG(X_R,F^{\an})$.
			\item Assume moreover that $X^\alg$ is a curve. Then any analytic vector bundle on $X\times \Spa(R)$ lies in the essential image of $-^\an$ after replacing $\Spa(R)$ by an open cover.
		\end{enumerate}
	\end{theorem}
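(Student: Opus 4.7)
The plan is to prove parts (1) and (2) together, then deduce (3) via a curve-specific resolution argument.

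For (1), given vector bundles $F_1, F_2$ on $X^\alg_R$ one has $\Hom(F_1, F_2) = \Gamma(X^\alg_R, F_1^\vee \otimes F_2)$ and similarly on the analytic side, so fully faithfulness reduces to (2) applied to $F_1^\vee \otimes F_2$. Exactness and tensor-compatibility of $-^\an$ come from flatness of $h:X_R\to X^\alg_R$ as a morphism of locally ringed spaces. For (2), I would first treat $F = h^*F_0$ with $F_0$ coherent on $X^\alg \times \Spec(R)$ already coming (via pullback) from $X^\alg$: classical rigid GAGA gives $\RG(X^\alg, F_0) = \RG(X, F_0^\an)$, and flat base change on either side along $\Spec(R)\to\Spec(K)$ and $\Spa(R)\to\Spa(K)$ yields $\RG(X^\alg_R, F) = \RG(X_R, F^\an)$, using finite-dimensionality of $\RG(X,F_0^\an)$ over $K$ to identify completed and ordinary tensor products. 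For a general vector bundle $F$ on $X^\alg_R$, I would write $R = \varinjlim_\alpha R_\alpha$ as a filtered colimit of finitely generated (hence Noetherian) $K$-subalgebras, descend $F$ to some $F_\alpha$ on $X^\alg_{R_\alpha}$ by finite presentation, and use Serre vanishing on $X^\alg_{R_\alpha}$ to obtain a two-term resolution of $F_\alpha$ by sums of twists $\mathcal O(-n_i)$ of the structure sheaf. Base-changing this resolution to $X^\alg_R$ and applying the exact functor $-^\an$, one reduces (2) to the case already treated.

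For (3), fix an ample line bundle $\mathcal O_X(1)$ on $X$ and let $V$ be an analytic vector bundle on $X_R$. Since $X_R\to\Spa(R)$ is proper, a suitable perfectoid extension of classical Kiehl finiteness gives that $\RG(X_R, V(n))$ is a perfect complex of $R$-modules for every $n$. For each point $x\in\Spa(R)$ the restriction $V_x$ is a vector bundle on $X$ base-changed to the residue field at $x$, so by classical Serre vanishing together with GAGA over that field, for $n$ sufficiently large $V_x(n)$ is globally generated with $H^1 = 0$. Upper semicontinuity and cohomology-and-base-change then give that, locally on $\Spa(R)$, there is a single $n$ with $H^1(X_R, V(n)) = 0$ and the natural map $\Gamma(X_R, V(n))\otimes_R\mathcal O_{X_R}\twoheadrightarrow V(n)$ surjective, hence a surjection $\mathcal O_{X_R}(-n)^k\twoheadrightarrow V$. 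Because $X$ is a curve, the kernel is again a vector bundle; applying the same argument to the kernel, after possibly further localising on $\Spa(R)$, yields a length-two resolution
\[\mathcal O_{X_R}(-m)^l \to \mathcal O_{X_R}(-n)^k \to V \to 0.\]
This map is encoded by a matrix of global sections of $\mathcal O_X(n-m)$ on $X_R$, which by part (2) applied to $\mathcal O_{X^\alg_R}(n-m)$ coincides with an algebraic matrix on $X^\alg_R$. The cokernel of the corresponding algebraic map is then an algebraic vector bundle $V^\alg$ on $X^\alg_R$ with $(V^\alg)^\an \cong V$ by exactness of $-^\an$.

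The main obstacle will be justifying the semicontinuity and cohomology-and-base-change arguments of (3) over a non-Noetherian perfectoid base, which rely on a Kiehl-type finiteness statement for the proper map $X_R\to \Spa(R)$ in the perfectoid setting; this is also where the restriction to curves enters essentially, both to ensure that kernels of surjections of vector bundles remain vector bundles and to terminate the resolution after two steps. A secondary technical point is the reduction step in (2), which requires that the algebraic and analytic flat base change results (and their interaction with the filtered-colimit spreading-out) are compatible enough to commute with $-^\an$.
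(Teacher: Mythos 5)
Your treatment of (1)--(2) is workable but takes a different route from the paper's: the paper writes $R=\varinjlim R_i$ as a colimit of \emph{topologically} finite type $K$-algebras, descends $F$ to a rigid level, and combines the relative base-change theorem for coherent cohomology with K\"opf's relative rigid GAGA over the affinoid rigid base $\Spa(R_i)$; you instead propose absolute rigid GAGA over $K$ plus flat base change for sums of twists, followed by a Serre-style d\'evissage. That can be made to work, but note that a two-term right-exact sequence by sums of twists does not by itself compute cohomology; you need the full descending induction on cohomological degree (together with a vanishing bound for analytic coherent cohomology of $X\times\Spa(R)$ above $\dim X$), which your phrase ``applying the exact functor $-^{\an}$'' elides.

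The genuine gap is in (3). Your argument hinges on ``a suitable perfectoid extension of classical Kiehl finiteness'' giving perfectness of $\RG(X_R,V(n))$ over $R$, together with semicontinuity and cohomology-and-base-change, for an \emph{arbitrary} analytic vector bundle $V$. No such theorem is available: the base-change results the paper invokes apply only to bundles already known to descend to a rigid (Noetherian) approximation of $R$, and proving finiteness and base change for a general $V$ over a non-Noetherian perfectoid base is essentially equivalent in difficulty to the algebraicity statement you are trying to prove. You flag this as ``the main obstacle,'' but it is not a deferrable technical point --- it is the entire content of the theorem. A symptom: were the finiteness input granted, your argument would never actually use that $X$ is a curve (the kernel of a surjection of vector bundles is locally free in any dimension, and a two-term presentation already suffices to algebraize), so it would prove the general proper case, which the paper explicitly states is open. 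The paper's proof instead reduces to $\P^1$ by pushing forward along a finite flat map $X\to\P^1$ --- this is where the curve hypothesis enters essentially --- and on $\P^1$ argues explicitly: L\"utkebohmert's theorem trivialises the bundle on the two standard discs after localising on $\Spa(R)$, the vanishing of $\rH^1_{\et}(\P^1_S,1+\mathfrak m_KM_n(\O^+))$ lets one normalise the transition matrix into $T^{-m}M_r(R\langle T\rangle)$, and only then does one produce the presentation $\O(-n)^{t}\to\O(-m)^{s}\to E\to 0$ and algebraize via full faithfulness. You would need to replace your finiteness input with an argument of this explicit kind.
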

	\begin{rem}
		Part (3) should hold for any smooth proper variety $X$, and without passage to an open cover. In fact, Scholze has informed us that this should follow from his joint work with Clausen, similarly as in \cite[\S13]{CondensedComplex}. That it holds for line bundles in this generality is shown in \cite{diamantinePicard}. 
	\end{rem}
	
	The proof of \Cref{t:perf-GAGA} will take the whole section. We first show (1) and (2). The proof of (3) is more difficult, and we will prove it by reducing to the case of $\P^1$, where it can be seen by explicit computations.
	\begin{proof}[Proof of \Cref{t:perf-GAGA}.(1) and (2)]
		We first note that for any vector bundle $F$ on $X^{\alg}_R$, the pullback
		\[ F^\an:=h^{\ast}F=h^{-1}F\otimes_{h^{-1}\O_{X^{\alg}_R}}\O_X\]
		is again a vector bundle. More generally, while there is in general no good theory of coherent sheaves on adic spaces, it still makes sense to consider the pullback $F^\an:=h^{\ast}F$ of any quasi-coherent sheaf $F$ on $X^{\alg}_R$. 
		
		\begin{prop}\label{p:GAGA-comp-cohom}
			For any vector bundle $F$ on $X^{\alg}_R$, we have
			$\RG(X^{\alg}_R,F)=\RG(X_R,F^{\an})$.
		\end{prop}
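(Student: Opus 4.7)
Plan: I would reduce to the case of projective space via a closed embedding, and then resolve coherent sheaves on $\P^N_R$ by pullbacks of line bundles from $\P^N$; for those pullbacks, the comparison follows from classical GAGA combined with flat base change on each side.

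Concretely, since we are ultimately interested in curves (hence projective), I choose a closed immersion $i\colon X^\alg \hookrightarrow \P^N$; for a general smooth proper $X^\alg$ one may invoke Chow's lemma to reduce to this setting. The base change $i_R\colon X^\alg_R \hookrightarrow \P^N_R$ and the analytification $i^\an_R\colon X_R \hookrightarrow \P^{N,\an}_R$ are closed immersions with vanishing higher direct images on coherent sheaves, and analytification commutes with $i_*$ along closed immersions; so both sides of the claim are identified with their counterparts on $\P^N_R$ respectively $\P^{N,\an}_R$, reducing the statement to the comparison $\RG(\P^N_R, G) = \RG(\P^{N,\an}_R, G^\an)$ for an arbitrary coherent $G$ on $\P^N_R$. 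For a pullback $G = \pi^*H$ from a coherent $H$ on $\P^N$ (e.g., $H = \O(-n)^k$), this is immediate: classical GAGA gives $\RG(\P^N, H) \cong \RG(\P^{N,\an}, H^\an)$ as bounded perfect $K$-complexes, and flat base change from $K$ (trivial on the algebraic side, and on the analytic side supplied by the perfectoid base change results of \cite{perfectoid-base-change}) yields
\[
\RG(\P^N_R, \pi^*H) \cong \RG(\P^N, H) \otimes_K R \cong \RG(\P^{N,\an}, H^\an) \otimes_K R \cong \RG(\P^{N,\an}_R, (\pi^*H)^\an).
\]
For general coherent $G$ on $\P^N_R$, an application of Serre's theorem (valid over any base ring, not just Noetherian ones) produces a resolution $\mathcal{F}^\bullet \to G$ by finite sums of pullback line bundles $\pi^*\O(-n_j)^{k_j}$, and one combines the termwise comparison above with the hypercohomology spectral sequence.

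The main obstacle will be the analytic flat base change $\RG(\P^{N,\an}_R, H^\an_R) \cong \RG(\P^{N,\an}, H^\an) \otimes_K R$ along the typically non-Noetherian extension $K \to R$; this is the genuinely ``perfectoid'' input, resting on the fact that the cohomology of a coherent sheaf on the proper rigid space $\P^{N,\an}$ is a perfect $K$-complex, to which the base change theorem of \cite{perfectoid-base-change} applies. A secondary technical point is handling convergence of the hypercohomology spectral sequence for a possibly infinite resolution, which can be managed via the bounded cohomological dimension $\leq N$ on both sides, or alternatively by iteratively reducing to two-term situations using that the kernel of a surjection from a pullback vector bundle onto a vector bundle is again a vector bundle (since projective modules split off).
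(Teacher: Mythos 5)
Your route is genuinely different from the paper's. The paper approximates the perfectoid base by affinoid rigid algebras, $R=\varinjlim_i R_i$, descends $F$ to some $X^{\alg}_{R_i}$ by a standard limit argument for vector bundles, identifies $\RG(X_R,F^{\an})$ with $\varinjlim_j\RG(X_{R_j},F_j^{\an})$ via the perfect-complex base change of \cite{perfectoid-base-change}, and then applies K\"opf's \emph{relative} rigid GAGA over each $R_j$. You instead run a Serre-style d\'evissage over the fixed (non-Noetherian) base $R$, reducing to bundles pulled back from the special fibre over $K$, where only GAGA over the field plus flat base change on both sides is needed. Both arguments lean on the same perfectoid input (base change for coherent cohomology along $\Spa(R)\to\Spa(K)$, resp.\ $\Spa(R)\to\Spa(R_i)$); yours trades the limit formalism for a resolution argument. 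That trade is legitimate, but it introduces two issues you should address.

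First, the reduction to $\mathbb P^N$ only covers projective $X^{\alg}$. The proposition is stated for an arbitrary smooth proper variety, and Chow's lemma does not produce a closed immersion into projective space for a proper non-projective $X^{\alg}$; it produces a projective modification, and transporting the cohomology comparison back along that modification is not addressed. (For the curves that this paper actually uses, this is harmless, but as a proof of the stated proposition it is a restriction.) Second, and more seriously, the d\'evissage as you set it up is applied to $G=i_*F$ on $\mathbb P^N_R$, which is not a vector bundle, and over the non-Noetherian ring $R$ the kernel of a surjection $\O(-n)^k\twoheadrightarrow G$ of finitely presented sheaves need not be finitely presented, so the ``Serre resolution plus hypercohomology spectral sequence'' step is not available as stated. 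Your own closing remark contains the fix, but it must be run on $X^{\alg}_R$ itself rather than on $\mathbb P^N_R$: for a vector bundle $F$ on $X^{\alg}_R$, Serre's theorem A (valid for finite-type sheaves on a projective scheme over any ring) gives a surjection $\pi^*\O_X(-n)^k\twoheadrightarrow F$ whose kernel is again a vector bundle since the surjection is locally split; the comparison for the pullback bundles $\pi^*\O_X(-n)^k$ follows from GAGA over $K$ and base change on both sides exactly as you describe; and a descending induction on the cohomological degree, using that both sides vanish above $\dim X$ (Grothendieck vanishing algebraically, a finite affinoid cover and Tate acyclicity analytically), closes the argument via the five lemma. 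With these repairs your proof is correct for projective $X^{\alg}$.
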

		\begin{proof}
			It suffices to see that the natural map
			$\RG(X_R^{\alg},F)\to\RG (X_R,F^\an)$ induces an isomorphism on $\rH^n$ for every $n\in \N$. 
			Choose a rigid approximation $R=\varinjlim_i R_i$ by $R_i$ that are topologically of finite type over $K$ as in \cite[Proposition~3.17]{diamantinePicard}. For any locally ringed space $Y$, let  $\mathrm{VB}(Y)$ be the set of isomorphism classes of finite locally free modules on $Y$. Then by \cite[02JO]{StacksProject}, we have
			\[ \textstyle\mathrm{VB}(X_R^{\alg})=2\text{-}\mathrm{colim}_{i} \mathrm{VB}(X_{R_i}^{\alg}).\]
			We may therefore assume that $F$ is the pullback of a vector bundle $F_i$ on $X_{R_i}^{\alg}$. For any $j\geq i$,  let $F_j$ be the base-change to $X_{R_j}^{\alg}$. We now apply \cite[Proposition~3.2]{perfectoid-base-change} to the diagram
			\[
			\begin{tikzcd}
				X_R \arrow[d] \arrow[r] & \Spa(R) \arrow[d] \\
				X_{R_i} \arrow[r]       & \Spa(R_i).    
			\end{tikzcd}\]
			This says that there is a perfect complex $K^\bullet$ on $\Spa(R_i)$ such that
			\[\textstyle \rH^n(X_R,F^\an)=\rH^n(K^\bullet\otimes_{R_i}R)=\varinjlim_j \rH^n(K^\bullet\otimes_{R_i}R_j)=\varinjlim_j  \rH^n(X_{R_j},F_j^\an),\]
			where in the last step, we have applied  \cite[Proposition~3.2]{perfectoid-base-change} in the rigid setup to $\Spa(R_j)\to \Spa(R_i)$. We can now apply K\"opf's relative rigid GAGA \cite{Koepf_GAGA} to see that
			\[\textstyle\varinjlim_j  \rH^n(X_{R_j},F_j^\an)=\varinjlim_j \rH^n(X_{R_j}^{\alg},F_j)=\rH^n(X_{R}^{\alg},F).\qedhere\]
		\end{proof}
		\begin{coro}\label{c:GAGA-fully-faithful}
			The functor $-^{\an}$ from \Cref{t:perf-GAGA} is fully faithful.
		\end{coro}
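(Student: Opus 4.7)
The plan is to reduce full faithfulness to the cohomological comparison \Cref{p:GAGA-comp-cohom}, by applying it to the internal Hom sheaf of two vector bundles. Concretely, for vector bundles $F, G$ on $X^{\alg}_R$, the sheaf $\FHom_{\O_{X^{\alg}_R}}(F, G) \simeq F^\vee \otimes_{\O_{X^{\alg}_R}} G$ is again a vector bundle on $X^{\alg}_R$, and
\[
\Hom_{X^{\alg}_R}(F, G) \;=\; \rH^0\bigl(X^{\alg}_R,\; F^\vee \otimes G\bigr).
\]
Similarly, on the analytic side,
\[
\Hom_{X_R}(F^\an, G^\an) \;=\; \rH^0\bigl(X_R,\; (F^\an)^\vee \otimes_{\O_{X_R}} G^\an\bigr).
\]

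The first step is to observe that $h^\ast$, as the pullback of sheaves of modules along the morphism of locally ringed spaces $h : X_R \to X^{\alg}_R$, is a symmetric monoidal functor, and for finite locally free sheaves it commutes with formation of duals. Hence there is a canonical isomorphism
\[
(F^\vee \otimes G)^\an \;\xrightarrow{\sim}\; (F^\an)^\vee \otimes G^\an.
\]
The second and decisive step is then to apply \Cref{p:GAGA-comp-cohom} to the vector bundle $F^\vee \otimes G$ on $X^{\alg}_R$, which yields
\[
\rH^0\bigl(X^{\alg}_R,\, F^\vee \otimes G\bigr) \;\xrightarrow{\sim}\; \rH^0\bigl(X_R,\, (F^\vee \otimes G)^\an\bigr).
\]
Combining these two isomorphisms gives an identification $\Hom_{X^{\alg}_R}(F,G) \xrightarrow{\sim} \Hom_{X_R}(F^\an, G^\an)$, which unravels to the statement that $-^\an$ is fully faithful. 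One only needs to verify that this isomorphism is indeed the map induced by the functor $-^\an$, which is immediate from functoriality of $h^\ast$ and of the natural pullback map on global sections. No genuine obstacle arises here, as both inputs (monoidality of $h^\ast$ on vector bundles, and \Cref{p:GAGA-comp-cohom}) have already been established; the corollary is essentially a formal consequence.
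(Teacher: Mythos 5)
Your proposal is correct and follows essentially the same route as the paper: identify the internal Hom of two vector bundles as a vector bundle whose analytification is the internal Hom of the analytifications, then apply \Cref{p:GAGA-comp-cohom} and take $\rH^0$. The only cosmetic difference is that you phrase the compatibility via $F^\vee\otimes G$ and monoidality of $h^\ast$, whereas the paper checks $\FHom(E_1,E_2)^{\an}=\FHom(E_1^{\an},E_2^{\an})$ directly by working locally.
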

		\begin{proof}
			Let $E_1,E_2$ be vector bundles on $X^\alg_R$. 
			Working locally, we see that
			$ \FHom(E_1,E_2)^{\an}=\FHom(E_1^\an,E_2^\an)$.
			We now apply \Cref{p:GAGA-comp-cohom} to $F:=\FHom(E_1,E_2)$ and take $\rH^0$ to deduce:
			\[ \Hom(E_1,E_2)=\rH^0(X_{R}^{\alg},F)=\rH^0(X_{R},F^\an)=\Hom(E_1^\an,E_2^\an).\qedhere\]
		\end{proof}
		This completes the proof of \Cref{t:perf-GAGA}.(1) and (2).
	\end{proof}
	For the proof of \Cref{t:perf-GAGA}.(3), we start with a lemma for which we do not yet need that $X$ is a curve.
	\begin{lemma} \label{l:vb-analytic-algebraic}
		In the setting of \Cref{t:perf-GAGA},
		let $E$ be a quasi-coherent sheaf of finite type on $X^{\alg}_R$ and assume that $E^{\an}$ is a vector bundle of rank $r$. Then $E$ is also a vector bundle of rank $r$.
	\end{lemma}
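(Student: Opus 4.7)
I would prove the lemma by working Zariski-locally on $X^{\alg}_R$: given a point $x \in X^{\alg}_R$, it suffices to show the stalk $E_x$ is free of rank $r$ over $A := \mathcal{O}_{X^{\alg}_R, x}$. Let $s := \dim_{\kappa(x)}(E_x \otimes_A \kappa(x))$, which is finite because $E$ is of finite type. Nakayama produces $s$ local sections of $E$ generating it on a Zariski open $U \ni x$, giving a surjection $\phi : \mathcal{O}_U^s \twoheadrightarrow E|_U$. Analytifying via the exact pullback $h^{\ast}$ from part~(1), we obtain a surjection $\phi^{\an} : \mathcal{O}_{U^{\an}}^s \twoheadrightarrow E^{\an}|_{U^{\an}}$ whose target is a vector bundle of rank $r$; this forces $s \geq r$, with equality iff $\phi^{\an}$ is an isomorphism.

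The decisive step is the reverse inequality $s \leq r$, and the key observation is that fibre dimensions are preserved by $h$: for any $y \in X_R$ with $h(y) = x$, base-change identifies $(E^{\an})_y \otimes_{\mathcal{O}_{X_R, y}} \kappa(y) \simeq (E_x \otimes_A \kappa(x)) \otimes_{\kappa(x)} \kappa(y)$, whose $\kappa(y)$-dimension is $r$ by local freeness of $E^{\an}$, so that $s = r$. Once $s = r$ is established, $\phi$ is an algebraic surjection $\mathcal{O}_U^r \twoheadrightarrow E|_U$ whose analytification is an isomorphism. Its kernel $K \subseteq \mathcal{O}_U^r$ then satisfies $h^\ast K = 0$, and applying the full faithfulness of $-^{\an}$ from part~(1) (say to the inclusion $K \hookrightarrow \mathcal{O}_U^r$, equivalently, to $\Hom(\mathcal{O}_U, K) \to \Hom(\mathcal{O}_{U^{\an}}, K^{\an}) = 0$) combined with the cohomological comparison \Cref{p:GAGA-comp-cohom} yields $K = 0$ on a smaller neighbourhood, producing the desired local trivialisation.

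The hard part will be guaranteeing, for every $x \in X^{\alg}_R$, the existence of an analytic point $y \in X_R$ with $h(y) = x$ (or at least a generisation of $x$ in the image of $h$). This is straightforward when $x$ corresponds to a $K$-point of $X^{\alg}$ lying over an analytic point of $\Spa(R)$, but delicate for general primes of $A$, since the image of the support map $\Spa(R) \to \Spec R$ need not hit every prime of $R$. I expect the resolution to proceed by approximation: write $R = \varinjlim_i R_i$ with $R_i$ topologically of finite type over $K$ (hence noetherian), descend $E$ to a coherent sheaf $E_i$ on $X^{\alg}_{R_i}$ via EGA-IV-style limit theorems, and invoke K\"opf's relative rigid GAGA to identify it with a coherent analytic sheaf on $X \times \Spa(R_i)$. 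On this noetherian base, the locus where the analytified sheaf is locally free of rank $r$ is open, contains the image of $X \times \Spa(R)$, and upper semi-continuity of fibre dimensions together with properness of $X^{\alg}$ should force it to be the whole space, reducing the lemma to the classical coherent GAGA statement.
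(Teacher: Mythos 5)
Your first two paragraphs do not actually close the argument, and you concede as much in the third: the fibre-dimension comparison at a point $y\in X_R$ above $x$ only applies to those $x$ that lie in the image of $h$, and for a general perfectoid $R$ most primes are not the support of any point of $\Spa(R)$. (Two smaller issues there: $h^{\ast}$ is only right exact, so $\phi^{\an}$ being an isomorphism gives that $h^{\ast}K$ has zero image in $\O^r$, not that $h^{\ast}K=0$; and the full faithfulness of part (1) is established only for vector bundles, so it cannot be applied to $K\hookrightarrow \O_U^r$.) The substance of your proposal is therefore the Noetherian-approximation sketch, and that is where the genuine gap sits. To descend $E$ along $R=\varinjlim_i R_i$ to a coherent sheaf on $X^{\alg}_{R_i}$ you need $E$ to be of \emph{finite presentation}; the lemma assumes only finite type, and a finite-type quasi-coherent sheaf on $X^{\alg}_R$ need not arise by base change from any finite level. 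Even granting finite presentation, the concluding step ``upper semi-continuity plus properness force the locally free locus to be the whole space'' is not what you need and is not true: $E_i$ has no reason to be locally free away from the image of $\Spec(R)$. What you must show is that the closed non-free locus $Z_i\subseteq X^{\alg}_{R_i}$, whose image $W_i\subseteq \Spec(R_i)$ is closed by properness, has empty preimage in $\Spec(R)$. Local freeness of $E^{\an}$ only tells you that the preimage of $W_i$ in $\Spa(R)$ is empty, and you still have to pass from ``the finitely generated ideal $I(W_i)R$ has empty vanishing locus in $\Spa(R)$'' to ``$I(W_i)R=R$''. That passage is exactly the point of the whole lemma, and your sketch never supplies it.

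For comparison, the paper's proof avoids approximation entirely and handles finite type directly. It first treats the case where $R$ is an algebraically closed affinoid field $C$, where classical GAGA identifies completed stalks at closed points and local freeness can be tested on these. For general $R$ it argues by contraposition using Fitting ideals: $E$ is locally free of rank $r$ iff $\mathrm{Fit}_{r-1}E=0$ and $\mathrm{Fit}_{r}E=\O$, and both conditions specialize well. If $\mathrm{Fit}_{r-1}E$ contains some $f\neq 0$ in $A\otimes_K R$, one finds a point of $\Spa(R)$, hence a map $(R,R^+)\to (C,C^+)$, at which $f$ survives (elements of $R$ are detected by the points of $\Spa(R)$); if $\mathrm{Fit}_{r}E$ is proper, properness of $X^{\alg}$ produces a maximal ideal $\mathfrak m\subseteq R$ over whose fibre $E$ fails to be free, and Huber's result that every maximal ideal of $R$ is the support of a point of $\Spa(R)$ again produces a map to an affinoid field. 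Either way the base change to $C$ is still not locally free of rank $r$, contradicting the field case. If you want to rescue your route, either restrict to finitely presented sheaves (which would suffice for the applications in the paper) and carry out the ideal-theoretic endgame above, or switch to a specialization argument of the Fitting-ideal type.
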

	\begin{proof}
		We first consider the case that $(R,R^+)=(C,C^+)$ is an algebraically closed affinoid field. In this case, $a:X_C\to X^{\alg}_C$ is the classical GAGA map which identifies $C$-points of $X_C$ with closed points of $X^{\alg}_C$. Moreover, given any such point $x\in X_C(C)$, the natural map between completed stalks
		$\O^\wedge_{X^{\alg}_C,a(x)}\to \O^\wedge_{X_C,x}$
		is an isomorphism. Since a finitely generated module $E$ on a Noetherian scheme is locally free of rank  $r$ if and only if all its completed stalks are free of rank $r$, this shows the result in this case.
		
		In order to deduce the general case, assume that $E$ on $X^{\alg}_R$ is not locally free of rank $r$. It suffices to prove that there is a morphism $\psi:(R,R^+)\to (C,C^+)$ into an algebraically closed affinoid field such that for the base change $b:X^{\alg}_C\to X^{\alg}_R$, the sheaf $b^{\ast}E$ is still not locally free of rank $r$. As  $b^{\an\ast} E^{\an}=(b^{\ast}E)^\an$ is locally free of rank $r$, this will contradict the case of $(R,R^+)=(C,C^+)$ already discussed.

		To find $\psi$, we consider the Fitting ideals $\mathrm{Fit}_i(E)\subseteq \O_{X^{\alg}_R}$: By \cite[0C3G]{StacksProject}, $E$ was locally free of rank $r$ if
		\[\mathrm{Fit}_{r-1}E=0\quad \text{and} \quad \mathrm{Fit}_{r}E=\O_{X^{\alg}_R}.\]
		Assume first that there is an affinoid $U=\Spec(A)\subseteq X^{\alg}$ such that there is $0\neq f\in \mathrm{Fit}_{r-1}E|_{U\times \Spec(R)}$. Then $f$ is a section of $\O(U\times \Spec(R))=A\otimes_KR$. Recall that any $s\in R$ vanishes if and only if $s(x)=0$ for all $x\in \Spa(R)$. Equivalently, this means that for any morphisms $\psi:(R,R^+)\to (C,C^+)$ into algebraically closed affinoid fields, the image of $s$ is $=0$. Since $f\neq 0$, and $A$ is a $K$-vector space, we deduce that there is $\psi:(R,R^+)\to (C,C^+)$ such that $\psi(f)\neq 0$. Consider the base-change of $E$ along $b:X^{\alg}_C\to X^{\alg}_R$, then by \cite[0C3D]{StacksProject}, we have
		\[ \mathrm{Fit}_{r-1}(b^{\ast}E)= b^{-1}(\mathrm{Fit}_{r-1}E)\cdot \O_{X^{\alg}_C}\neq 0\]
		since $\psi(f)\neq 0$. Hence $b^{\ast}E$ is not locally free of rank $r$, which we have already seen is impossible.
		
		It remains to see that $\mathrm{Fit}_{r}E=\O_{X^{\alg}_R}$. Suppose not, then we can find a closed point $x$ of $X^{\alg}_R$ such that $\mathrm{Fit}_{r}(E_x)\subsetneq \O_{X^{\alg}_R,x}$. Locally on some affine open $x\in U=\Spec(B)\subseteq X^{\alg}_R$, this corresponds to a maximal ideal $\mathfrak m_x$ such that $\mathrm{Fit}_{r}(E_{|U})\subseteq \mathfrak m_x \subsetneq B$. Since $X$ is proper, the projection $\pi:X^{\alg}_R\to \Spec(R)$ is universally closed, hence $\pi(x)$ is itself a closed point, corresponding to a maximal ideal $\mathfrak m\subseteq R$. Then $\mathfrak m \cdot B\subseteq \mathfrak m_x,$ hence 
		\[ \mathrm{Fit}_{r}(E_{|U\times_{\Spec(R)} \Spec(R/\mathfrak m)})\subseteq  \mathfrak m_x B/\mathfrak m B \subsetneq B/\mathfrak m B.\]
		This means that the base-change of $E$ to the fibre of $X^{\alg}_R$  over $\pi(x)$ is not locally free of rank $r$.
		
		We now use that by \cite[Lemma~1.4]{Huber-generalization}, there exists $x\in \Spa(R)$ with $\mathrm{supp}(x)=\mathfrak m$. In particular, by \cite[Propositions~2.25, 2.27]{perfectoid-spaces}, there is a morphism $\psi:(R,R^+)\to (C,C^+)$ into an affinoid field (wlog algebraically closed) such that $\ker \psi=\mathfrak m$. Let $b:X^{\alg}_C\to X^{\alg}_R$ be the base-change, then since $R/\mathfrak m\to C$ is a field extension,  the module $b^{\ast}E$ is still not locally finite free of rank $r$ over $X^{\alg}_C$.
	\end{proof}
	
	\subsection{Perfectoid GAGA for $\mathbb P^1$}
	As the first case of \Cref{t:perf-GAGA}, we now prove that it holds for $X=\mathbb P^1$:
	\begin{prop}\label{p:rigid-approx-vb-P1}
		For any affinoid perfectoid $K$-algebra $(R,R^+)$, the analytification functor
		\begin{equation} \label{eq:an-P1}
			-{^\an}: \{ \text{vector bundles on $\mathbb P^1\times \Spec(R)$}\}\to \{ \text{vector bundles on $\mathbb P^1\times \Spa(R)$}\}
		\end{equation}
		is fully faithful, and any object lies in the essential image after replacing $S=\Spa(R)$ by an open cover.
	\end{prop}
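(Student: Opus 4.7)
Full faithfulness of \eqref{eq:an-P1} is immediate from \Cref{c:GAGA-fully-faithful} applied to $X^\alg=\mathbb P^1$, so the substance of the proposition lies in essential surjectivity after passage to an open cover of $\Spa(R)$. The plan is to establish a relative form of Grothendieck's splitting theorem: given a rank-$r$ vector bundle $E$ on $\mathbb P^1\times \Spa(R)$, we will show that locally on $\Spa(R)$ there exist integers $n_1,\dots,n_r$ and an isomorphism $E\simeq \bigoplus_{i=1}^r \O(n_i)^\an$. Since each $\O(n)^\an$ is the analytification of the corresponding algebraic twisting sheaf, this will suffice.

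The argument will proceed in four steps. First, using the explicit Čech cover of $\mathbb P^1\times \Spa(R)$ by the two affinoids $U_0=\Spa(R\langle T\rangle)$ and $U_\infty=\Spa(R\langle T^{-1}\rangle)$, one computes $\rH^i(\mathbb P^1\times \Spa(R),\O(n)^\an)$ explicitly and verifies that it matches the algebraic answer; in particular $\rH^1(\O(n)^\an)=0$ for $n\ge -1$. Second, at each point $x\in \Spa(R)$ with completed residue field $\kappa(x)$, the pullback $E_x$ to the classical rigid $\mathbb P^1_{\kappa(x)}$ splits as $\bigoplus_i \O(n_i(x))$ by rigid GAGA together with Grothendieck's theorem over a field, yielding a splitting type $\underline n(x)$.

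Third, \cite[Proposition~3.2]{perfectoid-base-change} represents $\RG(\mathbb P^1\times \Spa(R),E(n))$ by a perfect complex on $\Spa(R)$ whose formation commutes with base change; combined with the fibrewise computation, this will give upper semicontinuity of $x\mapsto \dim_{\kappa(x)}\rH^0(\mathbb P^1_{\kappa(x)},E(n)_x)$ for each $n$, and hence of the splitting type. Using that $\Spa(R)$ is quasi-compact, we may replace $\Spa(R)$ by an open cover on which $\underline n(x)=(n_1,\dots,n_r)$ is constant. Finally, choose $N\gg 0$ with all $n_i+N\ge 0$: then $\rH^1(E(N))=0$ and $\rH^0(E(N))$ becomes, after further shrinking, a free $R$-module of the expected rank whose formation commutes with base change to every $x\in \Spa(R)$. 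Choosing sections of $E(n_i+N)$ that are fibrewise generators of the corresponding summands assembles into a morphism
\[
\varphi:\bigoplus_{i=1}^r \O(-n_i-N)^\an\longrightarrow E
\]
which is an isomorphism because its determinant is a unit in $\O(\mathbb P^1\times \Spa(R))=R$, as can be checked on fibres.

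The main obstacle is Step 3: we need the cohomology-and-base-change formalism in the perfectoid setting to behave exactly as in the classical rigid or algebraic case, so that upper semicontinuity, local freeness of $\rH^0$ at constant rank, and fibrewise generation all hold. The perfect-complex representation from \cite{perfectoid-base-change} provides precisely the required machinery, and once these statements are in place the remainder of the argument transcribes the classical proof of Grothendieck's theorem for $\mathbb P^1$.
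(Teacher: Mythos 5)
Your reduction of the problem to essential surjectivity via \Cref{c:GAGA-fully-faithful} matches the paper, and your use of the perfect-complex base-change machinery from \cite{perfectoid-base-change} is legitimate. However, the core of your argument — a relative Grothendieck splitting theorem, i.e.\ that $E\simeq \bigoplus_i\O(n_i)^{\an}$ after replacing $\Spa(R)$ by an open cover — contains a genuine gap in Step 3. Upper semicontinuity of the splitting type does \emph{not} imply that it is locally constant on an open cover: an upper semicontinuous function can jump on a nowhere-dense closed subset, and no neighbourhood of a jump point has constant value; quasi-compactness of $\Spa(R)$ is irrelevant here. Concretely, take $R=K\langle t^{1/p^\infty}\rangle$ and let $E$ be the extension of $\O(1)$ by $\O(-1)$ on $\mathbb P^1\times\Spa(R)$ whose class is $t\in \rH^1(\mathbb P^1_R,\O(-2))\cong R$. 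The fibre over $t=0$ is $\O(1)\oplus\O(-1)$, while the fibre over every nearby point with $t\neq 0$ is $\O\oplus\O$; hence $E$ does not split as a sum of line bundles on any open subset of $\Spa(R)$ meeting $\{t=0\}$, even though $E$ is manifestly algebraic. So the statement you are trying to prove in Steps 3--4 is false, and the final "determinant is a unit" argument has nothing to apply to.

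The fix, which is what the paper does, is to aim only for a \emph{presentation} rather than a splitting: following Serre's GAGA strategy, one trivialises $E$ on the two closed discs covering $\mathbb P^1_S$ (after shrinking $S$, via \Cref{l:Lutkebohmert-Quillen-Suslin}, which rests on L\"utkebohmert/Quillen--Suslin), massages the transition matrix in $\GL_r(R\langle T^{\pm1}\rangle)$ into $T^{-m}M_r(R\langle T\rangle)$ using density of Laurent polynomials and the vanishing of $\rH^1$ of $1+\mathfrak m_K M_r(\O^+)$, and thereby produces global sections of $E(m)$ generating over each disc. This yields a right-exact sequence $\O(-n)^{t}\to\O(-m)^{s}\to E\to 0$ (\Cref{l:rightexactseq-of-O}), whose first map is algebraic by full faithfulness; one then takes the algebraic cokernel and uses the Fitting-ideal argument of \Cref{l:vb-analytic-algebraic} to see that it is locally free. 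Such a two-term presentation always exists after sufficient twisting, precisely because it does not require the fibrewise splitting type to be constant. Your fibrewise analysis and the base-change formalism would be the right tools to prove surjectivity of a map $\O(-m)^{s}\to E$ built from global sections of $E(m)$, but not to produce an isomorphism from a sum of line bundles.
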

	\begin{proof}
		For the proof, we take inspiration from  Grothendieck's description of vector bundles on $\mathbb P^1$, and then from Serre's proof of the GAGA Theorems:
		We consider the cover of $\mathbb P^1$ by two open unit discs $\mathbb B^1$ around $0$ and $\infty$ with intersection a unit tyre $U=\Spa(K\langle T^{\pm 1}\rangle)$. Let us denote these by $\mathbb B^1(0)$ and $\mathbb B^1(\infty)$.
		We set $S=\Spa(R,R^+)$ and write $\mathfrak V$ for the cover of $\mathbb P^1_S$ by the base-changes $\mathbb B^1_S(0)$ and $\mathbb B^1_S(\infty)$ to $S$.  
		\begin{lemma}\label{l:Lutkebohmert-Quillen-Suslin}
			Let $d\in \N$.
			Any vector bundle on $\mathbb B^d_S$ becomes trivial after replacing $S$ by an open cover.
		\end{lemma}
		\begin{proof}
			We can write $R\langle X_1,\dots,X_d\rangle=(\varinjlim R_i^+\langle X_1,\dots,X_d\rangle)^\wedge[\tfrac{1}{p}]$ where $S_i=\Spa(R_i)$ is a rigid approximation of $S$ as in the proof of \Cref{p:GAGA-comp-cohom}. By \cite[Corollary 5.4.41]{GabberRamero}, the pullback on isomorphism classes
			\[\textstyle\varinjlim_i\mathrm{VB}(S_i\times \mathbb B^d)\to \mathrm{VB}(S\times \mathbb B^d)\]
			is a bijection. It therefore suffices to prove the statement when $S$ is an affinoid rigid space. Here it holds by a result of L\"utkebohmert, based on the work of Quillen--Suslin \cite[Satz~1]{Lutkebohmert_Vektorraumbundel}.
		\end{proof}
		Let $E$ be a vector bundle on $\mathbb P^1_S$ of rank $r$. By \Cref{l:Lutkebohmert-Quillen-Suslin}, we may assume after replacing $S$ by an open cover that $E$ is free over $\mathbb B^1_S(0)$ and $\mathbb B^1_S(\infty)$.  Hence we may assume without loss of generality that $E$ is glued from two trivial vector bundles along $U\times S$. Such vector bundles are described by the double coset
		\begin{equation}\label{eq:double-coset-GL_r}
			\check{\rH}^1(\mathfrak V,\GL_r)=\GL_r(R\langle T\rangle)\backslash\GL_r(R\langle T^{\pm 1}\rangle)/\GL_r(R\langle T^{-1}\rangle).
		\end{equation}
		
		\begin{lemma}\label{l:H1(P1-1+mMnO+)}
				We have 
				$\rH^1_{\et}(\mathbb P^1_S,1+\mathfrak m_KM_n(\O^+))= 1$.
		\end{lemma}
		\begin{proof}
			We first note that, as $\mathbb P^1$ has no non-split connected finite \'etale covers, \cite[Proposition~3.9]{Heu22b} says
			\begin{equation}\label{l:H1(P1-1+mMnO+)-part-1}
			 \rH^1_{\et}(\mathbb P^1_S,\O^+/p)\aeq \rH^1_{\et}(\Spa(R),\O^+/p)\aeq 0.
			 \end{equation}
			To simplify notation, let us write $\rH^i(-)$ for $\rH^i_\et(\mathbb P^1_S,-)$.
			Consider the long exact sequence
			\[ \rH^0(\GL_n(\O^+))\to \rH^0(\GL_n(\O^+/\mathfrak m_K))\to \rH^1(1+\mathfrak m_KM_n(\O^+))\to \rH^1(\GL_n(\O^+))\to \rH^1(\GL_n(\O^+/\mathfrak m_K))\] of pointed sets. By considering $\GL_n(\O^+)\subseteq M_n(\O^+)$ as a subsheaf, we deduce from \eqref{l:H1(P1-1+mMnO+)-part-1} that the first map is surjective. It thus
			suffices to see that the last map has trivial kernel. For this we argue as in \cite[
			Lemma 2.24]{heuer-G-torsors-perfectoid-spaces}:
			Let $V$ be any finite locally free $\O^+$-module $V$ for which $V/\mathfrak{m}_K$ is trivial. Then since $V/\mathfrak{m}_K=\varinjlim_{\epsilon\to 0} V/p^{\epsilon}$, already $V/p^{\epsilon}$ is trivial for some $\epsilon>0$. We consider the long exact sequence of
			\[ 0\to  \mathfrak m_K V/p^\epsilon\to   V/ p^{n\epsilon}\mathfrak m_K\to V/p^{(n-1)\epsilon}\mathfrak m_K\to 0\]
			for $n\in \N$.
			Since $\rH^1_{\et}(\mathfrak m_K V/p^\epsilon)=0$ by \eqref{l:H1(P1-1+mMnO+)-part-1}, we see inductively that we may find compatible lifts of generators of $V/p^\epsilon$ to $V/p^{n\epsilon}$ for every $n$. In the limit, this shows that $V$ is trivial. 
		\end{proof}
		
		\begin{lemma}\label{l:double-coset-approx}
			Any element of \eqref{eq:double-coset-GL_r} is represented by some $A\in \GL_r(R\langle T^{\pm 1}\rangle)$ such that $A\in T^{-m}M_r(R\langle T\rangle)$.
		\end{lemma}
		\begin{proof}
			To simplify notation, for any Huber pair $(S,S^+)$ over $K$, let us write $U(S):=1+\mathfrak m_KM_r(S^+)$. Then
			\[ \GL_r(R\langle T^{\pm 1}\rangle)=\GL_r(R[T^{\pm 1}])U(R\langle T^{\pm 1}\rangle)\]
			because the first factor is dense in $\GL_r(R\langle T^{\pm 1}\rangle)$, while the second factor is an open subgroup. Write $A=A_0A_1$ such that $A_0\in \GL_r(R[T^{\pm 1}])$ and $A_1\in U(R\langle T^{\pm 1}\rangle)$. We consider the double-coset
			\begin{equation}\label{eq:double-coset-U}
				\check{\mathrm H}^1(\mathfrak V,U)=U(R\langle T\rangle)\backslash U(R\langle T^{\pm 1}\rangle)/U(R\langle T^{-1}\rangle).
			\end{equation}
			This can be interpreted as \v{C}ech cohomology, which injects into $\rH^1(\P^1_S,1+\mathfrak m_K M_r(\O^+))$. But this is trivial by \Cref{l:H1(P1-1+mMnO+)}, hence \eqref{eq:double-coset-U} is trivial. It follows that we can write $A_1=A_1^+A_1^-$ for some $A_1^+\in U(R\langle T\rangle)$ and $A_1^-\in U(R\langle T^{-1}\rangle)$. We then see that in \eqref{eq:double-coset-GL_r}, $A=A_0A_1^+A_1^-$ represents the same class as
			$A':=A_0A_1^+$
			because  $A_1^-\in \GL_r(R\langle T^{-1}\rangle)$. Then $A'$ has the desired property.
		\end{proof}
		
		\begin{lemma}\label{l:local-gen-VB-on-P1S}
			Let $E$ be a vector bundle on $\mathbb P^1_S$ of rank $r$. Then after replacing $S$ by an open cover, we can for any $m\in \N$ large enough find $s_1,\dots,s_r\in \rH^0(\mathbb P^1_S,E(m))$ such that the map
			$s_1,\dots,s_r:\O^r\to E(m)$
			is an isomorphism over $\mathbb B^1_S(\infty)\subseteq \mathbb P^1_S$.
		\end{lemma}
		\begin{proof}
			By \Cref{l:Lutkebohmert-Quillen-Suslin} we can assume that $E$ is defined by an element $A\in\GL_r(R\langle T^{\pm 1}\rangle)$. By \Cref{l:double-coset-approx}, we can find $m$ such that $T^mA\in M_r(R\langle T\rangle)$. It is clear that $E(m)$ corresponds to the element $T^mA$. Thus after replacing $E$ by $E(m)$, we may assume that $A\in M_r(R\langle T\rangle)$.
			
			Considering the \v{C}ech complex of the cover $\mathfrak V$ by $\mathbb B^1_S(0)$ and $\mathbb B^1_S(\infty)$, we see that 
			\begin{equation} \label{eq:Cech-H0}
				\rH^0(\mathbb P^1_S,E)=\ker\big(R\langle T\rangle^r\oplus R\langle T^{-1}\rangle^r\xrightarrow{f,g\mapsto f-A\cdot g} R\langle T^{\pm 1}\rangle^r\big).\end{equation}
			Since $A\in M_r(R\langle T\rangle)$, the standard basis vectors $e_i=(0,\dots,1,\dots,0)$ of $R\langle T^{-1}\rangle^r$ satisfy $Ae_i\in R\langle T\rangle$, thus $(Ae_i,e_i)$ is contained in the right hand side for $i=1,\dots,r$. This defines the desired sections $s_1,\dots,s_r$. As the $e_i$ generate $R\langle T^{- 1}\rangle^r$, and $E|_{\mathbb B_S(\infty)}$ gets identified with $R\langle T^{- 1}\rangle^r$, the $s_1,\dots,s_r$ generate $E$ over $\mathbb B_S(\infty)$.
		\end{proof}
		\begin{prop} \label{l:rightexactseq-of-O}
			Let $E$ be a vector bundle on $\mathbb P^1_S$ of rank $r$. Then after replacing $S$ by an open cover, we can find $n,m,r,s\in \N$ for which there is a right-exact sequence
			\begin{equation} \label{eq:presentation-O} \O(-n)^{t}\to\O(-m)^{s}\to E\to 0.\end{equation}
		\end{prop}
		\begin{proof}
			We first apply \Cref{l:local-gen-VB-on-P1S}. We then apply \Cref{l:local-gen-VB-on-P1S} once again also to $q^{\ast}E$, where $q=\left(\begin{smallmatrix}
				0&1\\1&0
			\end{smallmatrix}\right)$ is the automorphism exchanging  $\mathbb B_S(\infty)$ and $\mathbb B_S(0)$, to find $s_{r+1},\dots,s_{2r}$ such that
			$s_{r+1},\dots,s_{2r}:\O^r\to E(m)$
			is an isomorphism over $\mathbb B^1_S(0)\subseteq \mathbb P^1_S$. Forming the sum of both maps and twisting by $(-m)$, it follows that
			\[ s_{1},\dots,s_{2r}:\O^{2r}(-m)\to  E\]
			is surjective. Moreover, by construction, it becomes split surjective on each of $\mathbb B^1_S(0)$ and $\mathbb B^1_S(\infty)$, where $E$ is free. Hence the kernel $F$ is again a vector bundle. We then repeat the above construction for $F$.
		\end{proof}

		Now we are ready to prove \Cref{p:rigid-approx-vb-P1}: Let $E$ be any vector bundle on $\P^1_S$. After replacing $S$ by an open cover, we can find a right exact sequence as in \Cref{l:rightexactseq-of-O}. Note that the morphism $\phi:\O(-n)^{t}\to\O(-m)^{s}$ in \eqref{eq:presentation-O} is a morphism between algebraic vector bundles. By \Cref{c:GAGA-fully-faithful}, it is therefore analytification of a map $\phi^{\alg}$. Let $E_0$ be its cokernel.
		
		The functor $-^{\an}$ of \eqref{eq:an-P1} is right-exact, being the pullback of a morphism of locally ringed spaces. Hence \[E_0^{\an}=\mathrm{coker}(\phi^{\alg})^\an=\mathrm{coker}(\phi)=E.\]
		Finally, by  \Cref{l:vb-analytic-algebraic}, it follows that $E_0$ is finite locally free. Hence $E$ is in the essential image of $-^\an$.
	\end{proof}
	
	\subsection{The case of curves}
	Finally, we deduce the GAGA result for curves from GAGA for $\P^1$:
	\begin{proof}[Proof of \Cref{t:perf-GAGA}.(3)]
		The choice of any non-constant rational function $f\in K(X)$ defines a finite flat map $f:X\to \mathbb P^1$. Let $S=\Spa(R)$, then $f_S:X\times S\to \mathbb P^1\times S$ is still a finite locally free map of smoothoid adic spaces, i.e.,\ $A:=f_{S\ast}\O$ is a finite locally free module on $\mathbb P^1\times S$. It follows that any  vector bundle $E$ on $X\times S$ defines a finite projective $A$-module $M=f_{S\ast}E$ on $\mathbb P^1\times S$. Consequently, $M$ is a vector bundle on $\mathbb P^1\times S$. Hence, after replacing $S$ by an open cover, we can by \Cref{p:rigid-approx-vb-P1} find an algebraic vector bundle $M^\alg$ on $\mathbb P^1\times \Spec(R)$ such that $(M^{\alg})^{\an}=M$. We then clearly have $\FEnd(M^\alg)^\an=\FEnd(M)$.
		
		Let now $f^\alg_S:X^\alg\times \Spec(R)\to \mathbb P^1\times \Spec(R)$ and $A^\alg:=f^\alg_{S\ast}\O$, then $(A^{\alg})^{\an}=A$. 
		By \Cref{c:GAGA-fully-faithful}, the map $A\to \FEnd(M)$ encoding the $A$-module structure now comes from a morphism $A^\alg\to \FEnd(M^\alg)$. Using that $-^\an$ is faithful, we see that this is a ring homomorphism, endowing $M^\alg$ with an $A$-module structure. Since $f^\alg_S$ is affine, this defines a finitely generated quasi-coherent sheaf $E^\alg$ on $X^\alg\times \Spec(R)$. By construction, we have $(E^{\alg})^{\an}=E$. By \Cref{l:vb-analytic-algebraic}, we deduce that $E^\alg$ is a vector bundle.
	\end{proof}
	This completes the proof of \Cref{t:perf-GAGA}. \qed
	
	\subsection{Comparison to the algebraic stack of Higgs bundles} We deduce from \Cref{t:perf-GAGA}:
	\begin{coro}\label{c:GAGA-std}
		Let $\Spa(R)$ be a totally disconnected perfectoid space. Let $G$ be any linear algebraic group over $K$. Then the following natural GAGA functors 
		are equivalences of categories:
		\begin{align}
			 -^{\an}:\{ \text{$G$-torsors on $(X^{\alg}\times \Spec(R))_\et$}\}&\isomarrow \{\text{$G$-torsors on $(X\times \Spa(R))_\et$}\}\label{c:GAGA-std-1}\\
			 \mbox{}\vspace{-\baselineskip}
			-^{\an}:\{ \text{$G$-Higgs bundles on $(X^{\alg}\times \Spec(R))_\et$}\}&\isomarrow \{\text{$G$-Higgs bundles on $(X\times \Spa(R))_\et$}\}\label{c:GAGA-std-2}
		\end{align}
	\end{coro}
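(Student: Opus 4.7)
The plan is to prove \eqref{c:GAGA-std-1} first for $G=\GL_n$ using \Cref{t:perf-GAGA} directly, then bootstrap to general linear algebraic $G$ via a faithful representation, and finally derive \eqref{c:GAGA-std-2} by comparing global sections of the adjoint bundle via \Cref{p:GAGA-comp-cohom}. For $G=\GL_n$, \'etale torsors on either side are equivalent to rank-$n$ vector bundles, so \eqref{c:GAGA-std-1} reduces to the analytification functor for vector bundles. Full faithfulness is then part of \Cref{t:perf-GAGA}.(1) applied to the internal $\FHom$. For essential surjectivity, \Cref{t:perf-GAGA}.(3) furnishes an algebraic lift after passing to an open cover of $\Spa(R)$; since $\Spa(R)$ is totally disconnected, any such cover refines to a finite disjoint clopen partition $\Spa(R)=\bigsqcup_i \Spa(R_i)$ with $R=\prod_i R_i$, and the corresponding decomposition $\Spec(R)=\bigsqcup_i \Spec(R_i)$ allows the algebraic vector bundles on the pieces to combine to an algebraic lift on all of $X^\alg\times \Spec(R)$.

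For general linear algebraic $G$, I would fix a closed immersion $G\hookrightarrow \GL_n$; by Chevalley's theorem the quotient $Y:=\GL_n/G$ is a quasi-projective $K$-variety. Given an \'etale $G$-torsor $E$ on $X\times \Spa(R)$, the induced $\GL_n$-torsor $F=E\times^G \GL_n$ analytifies an algebraic $F^\alg$ by the previous case, and the reduction of structure group is equivalent to an analytic section of the \'etale $Y$-bundle $(F^\alg/G)^\an\to X\times \Spa(R)$. To algebraize this section, I would choose a projective compactification $Y\hookrightarrow \bar Y$; the analytic section extends tautologically to the proper bundle $(F^\alg\times^{\GL_n}\bar Y)^\an\to X\times \Spa(R)$, and a perfectoid extension of relative proper GAGA---obtained by rigid approximation $R=\varinjlim_i R_i$ together with K\"opf's theorem, exactly as in the proof of \Cref{p:GAGA-comp-cohom}---algebraizes it. One must then verify that the resulting algebraic section factors through the original open subbundle. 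Full faithfulness for $G$ follows from the corresponding statement for $\GL_n$ together with the functoriality of the reduction datum.

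For \eqref{c:GAGA-std-2}, given an \'etale $G$-Higgs bundle $(E,\theta)$, I would lift $E$ to $E^\alg$ using \eqref{c:GAGA-std-1}. Since $\mathfrak{g}$ is a $K$-vector space, the adjoint bundle $\ad(E)=E\times^G \mathfrak{g}$ analytifies $\ad(E^\alg)$, and \Cref{p:GAGA-comp-cohom} applied to the vector bundle $\ad(E^\alg)\otimes \Omega^\alg$ identifies algebraic and analytic global sections, yielding an algebraic Higgs field; the integrability $\theta\wedge\theta=0$ is vacuous on a curve (see \S\ref{s:Higgs-bundle-curve}). Morphisms of Higgs bundles algebraize by the same argument applied to Hom sheaves. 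The main obstacle throughout is the factorisation step in the general-$G$ case: once an algebraic section of the proper compactified bundle has been produced, one must confirm that its preimage of $\bar Y\setminus Y$ is empty as a closed subscheme of $X^\alg\times \Spec(R)$. This reduces to showing that a closed subscheme of the proper-over-$\Spec(R)$ scheme $X^\alg\times \Spec(R)$ with empty analytification is empty, which for totally disconnected $\Spa(R)$ rests on properness of $X^\alg$ together with a careful analysis of the support map $|\Spa(R)|\to |\Spec(R)|$ on the image of the projection.
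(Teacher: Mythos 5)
Your treatment of the $\GL_n$ case of \eqref{c:GAGA-std-1} and of the Higgs-field comparison \eqref{c:GAGA-std-2} is correct and agrees with the paper: for $G=\GL_n$ one invokes \Cref{t:perf-GAGA} and uses that every open cover of a totally disconnected $\Spa(R)$ refines to a finite clopen partition, so the localisation in \Cref{t:perf-GAGA}.(3) is harmless; and a Higgs field is a global section of $\ad(E)\otimes\wtOm$, which is compared via \Cref{p:GAGA-comp-cohom} once the underlying torsor is algebraised.

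The gap is in your passage from $\GL_n$ to a general linear algebraic $G$. Your plan requires algebraising an analytic section of the proper bundle $(F^{\alg}\times^{\GL_n}\bar Y)^{\an}\to X\times\Spa(R)$, and you propose to do this by a ``perfectoid extension of relative proper GAGA'' obtained from rigid approximation plus K\"opf's theorem. But the only GAGA statement available in this perfectoid-relative setting is \Cref{t:perf-GAGA}, which concerns vector bundles on curves; algebraising sections of an arbitrary proper scheme over $X^{\alg}\times\Spec(R)$ amounts to a coherent (or closed-subspace) GAGA together with a spreading-out statement for such sections along the limit $\Spa(R)=\varprojlim\Spa(R_i)$, neither of which is established here --- indeed the remark following \Cref{t:perf-GAGA} indicates that even the vector-bundle statement for general proper $X$ is not yet available. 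On top of that, you must still show that the algebraised section avoids $\bar Y\setminus Y$, which you flag but do not resolve. The paper's route avoids all of this via the Tannakian formalism: a $G$-torsor on $X^{\alg}\times\Spec(R)$ is an exact tensor functor $\Rep(G)\to\Bun(X^{\alg}\times\Spec(R))$, the target is identified with analytic vector bundles on $X\times\Spa(R)$ by the $\GL_n$ case, and exact tensor functors into the latter are precisely \'etale $G$-torsors on $X\times\Spa(R)$ by \cite[Theorem 19.5.2]{ScholzeBerkeleyLectureNotes}. You should replace the reduction-of-structure-group argument by this; as written, the general-$G$ case of \eqref{c:GAGA-std-1} is not proved.
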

	\begin{proof}
		For \eqref{c:GAGA-std-1}, the case of $\GL_n$ holds by \Cref{t:perf-GAGA} as any open cover of $\Spa(R)$ splits. 
		The  general case follows by the Tannakian formalism: This says that the left hand side is equivalent to the category of exact tensor functors
		\[ \mathrm{Rep}(G)\to \Bun(X^{\alg}\times \Spec(R)).\]
		Due to the case of $\GL_n$, we have $\Bun(X^{\alg}\times \Spec(R))=\Bun_\an(X\times \Spa(R))$. But by \cite[Theorem 19.5.2]{ScholzeBerkeleyLectureNotes}, exact tensor functors
		$ \mathrm{Rep}(G)\to \Bun_\an(X\times \Spa(R))$ are equivalent to $G$-torsors on $X\times \Spa(R)$.
		
		For \eqref{c:GAGA-std-2}, recall that a Higgs field on a $G$-torsor $E$ (algebraic or analytic) is a section of $\ad(E)\otimes \wtOm$. If now $E$ is an algebraic $G$-torsor on  $X^{\alg}\times \Spec(R)$, then $\ad(E)^\an=\ad(E^\an)$. Thus by \Cref{t:perf-GAGA}.(3),
		\[ \rH^0(X^{\alg}\times \Spec(R),\ad(E)\otimes \wtOm)=\rH^0(X\times \Spa(R),\ad(E^\an)\otimes \wtOm).\qedhere\]
	\end{proof}

	\begin{definition}
		Let $\mathcal S$ be any fppf-stack fibered over the category of schemes over $K$. Following \cite{diamantinePicard}, there is a natural way to associate to this a v-stack $\mathcal S^\dia$:
		This is defined as the v-stackification of the functor
		\[h:\Perf^{\aff}_{K,v}\to \mathrm{Grp},\quad \Spa(R)\mapsto \mathcal S(R).\]
		This clearly defines a natural \textit{diamondification} functor
		\begin{equation}\label{eq:diamondification-stacks}
			-^{\dia}:\{\text{algebraic stacks over $K$}\}\to \{\text{small v-stacks over $K$}\}.
		\end{equation}
	\end{definition}
	In terms of this language, the goal of this subsection is to prove the following:
	\begin{theorem}\label{t:comparison-Higgs-stacks}
		Let $X$ be a smooth projective curve over $K$. Let $G$ be any linear algebraic group over $K$.
		\begin{enumerate}
			\item 
			Let  $\CBun^{\alg}_{G}$ be the algebraic stack of (\'etale) $G$-bundles on $X$. Then the following natural map is an isomorphism of v-stacks:
			\[ (\CBun^\alg_{G})^\dia\isomarrow \CBun_{G,\et}.\]
			\item Let  $\CHig^{\alg}_{G}$ be the algebraic stack of $G$-Higgs bundles on $X$.
			The following natural map is an isomorphism of v-stacks:
			\[ (\CHig^{\alg}_{G})^\dia\isomarrow \CHig_{G,\et}.\]
		\end{enumerate}
	\end{theorem}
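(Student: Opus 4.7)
The plan is to reduce \Cref{t:comparison-Higgs-stacks} to the perfectoid GAGA theorem already established. First, I would construct the natural comparison morphisms. For each affinoid perfectoid $\Spa(R,R^+)$ over $K$, the identity map $R\to R$ induces a morphism of locally ringed spaces $h:X\times\Spa(R)\to X^\alg\times\Spec(R)$ as in the setup of \Cref{t:perf-GAGA}; pullback along $h$ defines a natural functor from $G$-bundles on $X^\alg\times\Spec(R)$ to $G$-bundles on $(X\times\Spa(R))_\et$. Since $\ad(E)^\an=\ad(E^\an)$ and $h^*\Omega^1_{X^\alg}$ canonically identifies with $\wtOm_{X\times\Spa(R)}$ up to a Tate twist that is trivialisable (as $K$ is algebraically closed and contains all $p$-power roots of unity), this functor also sends $G$-Higgs bundles to $G$-Higgs bundles. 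Both constructions assemble into morphisms of prestacks, which factor through the v-stackifications because by \Cref{p:CBun-small} the targets $\CBun_{G,\et}$ and $\CHig_{G,\et}$ are already v-stacks.

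To verify that the resulting morphisms of v-stacks are equivalences, I would restrict attention to the v-basis of strictly totally disconnected affinoid perfectoid spaces $\Spa(R)$. On such objects, \Cref{c:GAGA-std} implies directly that the functors
\[
\CBun^\alg_G(\Spec R)\to \CBun_{G,\et}(\Spa R),\qquad \CHig^\alg_G(\Spec R)\to \CHig_{G,\et}(\Spa R)
\]
are equivalences of groupoids. Consequently, the restrictions to this v-basis of the underlying presheaves of $(\CBun^\alg_G)^\dia$ and $(\CHig^\alg_G)^\dia$ are already v-sheaves, and agree there with the respective targets $\CBun_{G,\et}$ and $\CHig_{G,\et}$. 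Since a morphism of v-stacks is an equivalence if and only if it is one on a v-basis, this yields both parts (1) and (2).

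I do not expect any serious obstacle: the entire argument is a formal consequence of perfectoid GAGA once the comparison functors are set up correctly. The only piece of bookkeeping requiring a bit of care is the Tate twist in the definition of $\wtOm$ versus the algebraic sheaf of differentials $\Omega^1_{X^\alg}$, which is harmless in the present setting because $K$ is algebraically closed with all $p$-power roots of unity available to trivialise $\mathbb Z_p(-1)$.
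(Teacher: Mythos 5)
Your proposal is correct and follows essentially the same route as the paper: construct the analytification morphism, note that the target is already a v-stack so one may test on the basis of totally disconnected affinoid perfectoid spaces, and conclude by \Cref{c:GAGA-std}. The extra remark about the Tate twist in $\wtOm$ versus $\Omega^1_{X^\alg}$ is harmless bookkeeping that the paper absorbs into the statement of \Cref{c:GAGA-std}.(2).
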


	\begin{proof}
		Recall that $\CBun_{G,\et}$ is a v-stack by \cite[Theorem 1.4]{Heu}. There is a natural analytification functor $ (\CBun^\alg_{G})^\dia\to \CBun_{G,\et}$. We need to see that this is an equivalence.
		
		Since the definition of $(\CBun^\alg_{G})^\dia$ involves v-stackification, we may restrict the test category to totally disconnected spaces $S=\Spa(R)$, which form a basis of $\Perf^{\aff}_{K,v}$. In this case, the natural functor 
		\[ \CBun_{G}^\alg(R)\isomarrow \CBun_{G,\et}(R)\]
		is an equivalence by \eqref{c:GAGA-std-1}. This proves part 1.
		Part 2 follows in exactly the same way from \eqref{c:GAGA-std-2}.
	\end{proof}
	\begin{comment}
	There is a variant of this comparison for coarse moduli spaces: The forgetful morphism of sites
	\[ \Perf^{\aff}_{K,v}\to \Spec(K)_{\mathrm{big\text{-}fppf}}\]
	induces a morphism of topoi whose pullback defines a functor
	\[-^\dia:\{\text{sheaves on } \Spec(K)_{\mathrm{big\text{-}fppf}}\}\to \{\text{v-sheaves on } \Spa(K)\}.\]
	This is compatible with \eqref{eq:diamondification-stacks} in the sense that it commutes with passage to sheaves of isomorphism classes:
	\[
	\begin{tikzcd}[row sep=0.3cm]\{\text{algebraic stacks over} \Spec(K)\} \arrow[r,"-^\dia"]\arrow[d] & \{\text{small v-stacks over } \Spa(K)\} \arrow[d] \\
		\{\text{sheaves on } \Spec(K)_{\mathrm{big\text{-}fppf}}\}\arrow[r,"-^\dia"]       & \{\text{v-sheaves on } \Spa(K)\}.   
	\end{tikzcd}\]
	We can therefore deduce from \Cref{t:comparison-Higgs-stacks}:
	\begin{coro}
		Let $\bfBun_{G}^\alg:=\rR^1\pi_{\ast}G$ where $\pi$ is the natural morphism of sites $\pi:X_{\mathrm{big\text{-}fppf}}\to\Spec(K)_{\mathrm{big\text{-}fppf}}$. Then the following natural map is an isomorphism of v-sheaves on $\Spa(K)$:
		\[ (\bfBun_{G}^\alg)^\dia\isomarrow \bfBun_{G,\et}\]
	\end{coro}
	We note that for $G=\GL_n$, \Cref{t:perf-GAGA} says that in fact, on the left hand side we may replace the v-sheafification that is part of the definition of $-^\dia$ by an analytic sheafification.
	\end{comment}
	
	\subsection{The Hitchin fibration over the regular locus}
	As an application, we get a version of \Cref{t:fCiso} for rigid analytic moduli spaces: In this subsection, let $X$ be a smooth projective curve of genus $g\geq 2$, and $G=\GL_n$ for some $n\in \N$. Then there is a non-empty Zariski-open locus $\A^{\circ}\subseteq \A$ where the spectral curve $Z\to \A$ from \S\ref{s:Ngo-G=GL_n} is smooth proper with connected geometric fibres \cite[Remarks~3.1, 3.5]{BNR-spectral-curves}.

	\begin{definition}
		We call $\A^\circ$ the \textit{regular locus} of the Hitchin base. Let $Z^\circ:=Z\times_{\A}\A^\circ\to \A^\circ$ and 
		\[\CHig_n^\circ\subseteq \CHig_n,\quad \bfHig_n^\circ\subseteq \bfHig_n\]
		be the respective preimages of the regular locus $\A^{\circ}$ under the Hitchin morphism $h$ (\Cref{d:Hitchin-map}).	For any $T\in \Perf_K$, we call a Higgs bundle $(E,\theta)$ over $X_T$ \textit{regular} if $h(E,\theta)\in  \A^\circ(T)$. 
		One can show that the locus $\CHig^\circ_n$ describes the part of $\CHig_n$ where $\theta$ is regular semi-simple as an endomorphism of $E$.
		
		 Similarly, let
		\[\CBun_{n,v}^\circ\subseteq \CBun_{n,v},\quad \bfBun_{n,v}^\circ\subseteq \bfBun_{n,v}\]
		be	the preimages of the regular locus under $\wt h$ (\Cref{d:Hitchin-Betti}). We call a v-bundle $V$ regular if $\wt h(V)\in  \A^{\circ}$.
\end{definition}

There is an analogue in our $p$-adic analytic setup of a well-known result of Beauville--Narasimhan--Ramanan, called the ``BNR correspondence''.
This construction is often referred to as ``abelianisation'', since it reduces the non-abelian Hodge theory of the group $\GL_n$ to that of the abelian group $\G_m$:
\begin{prop}\label{c:BNR-an}
	Let $T\in \Perf_K$ and let $(E,\theta)$ be a regular Higgs bundle on $X_T$ with Hitchin image $b:T\to \A^\circ$. Let $\pi:Z_b\to X_T$ be the fibre of the spectral curve over $b$.
	Then there is a line bundle $L$ on $Z_b$ such that 
	$(E,\theta)\simeq \pi_{\ast}(L,\tau_{\can})$ where $\tau_{\can}$ is the tautological section of $\pi^\ast \widetilde{\Omega}_{X_T}^1$ from \Cref{d:spectral-curve}. 
	Moreover, $\pi_{\ast}$ induces an isomorphism $\uAut(L)\simeq \uAut(E,\theta)$. 
\end{prop}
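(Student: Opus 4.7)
The plan is to reduce the statement to the classical algebraic BNR correspondence and transfer the result via the perfectoid GAGA theory of \Cref{t:perf-GAGA} together with its consequences in \Cref{c:GAGA-std} and \Cref{t:comparison-Higgs-stacks}. Since the statement may be checked v-locally on $T$, we may assume $T=\Spa(R,R^+)$ is an affinoid perfectoid space. By the equivalence \eqref{c:GAGA-std-2}, the Higgs bundle $(E,\theta)$ then arises as the analytification of an algebraic Higgs bundle $(E^\alg,\theta^\alg)$ on $X^\alg\times\Spec(R)$. Since $\A$ is the analytification of an affine $K$-scheme and the regular locus $\A^\circ$ is Zariski open, the Hitchin image $b$ lifts to a section $b^\alg$ of the algebraic $\fc_\Omega$ over $X^\alg\times\Spec(R)$ factoring through the algebraic regular locus. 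The spectral curve $\pi:Z_b\to X_T$ and the tautological differential $\tau_{\can}$ are then the analytifications of their algebraic counterparts $\pi^\alg:Z_b^\alg\to X^\alg\times\Spec(R)$ and $\tau_{\can}^\alg$.

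Next, I apply a relative algebraic BNR correspondence: for $b^\alg$ in the regular locus, $\pi^\alg$ is finite flat of degree $n$ with $Z_b^\alg$ smooth proper over $\Spec(R)$ with geometrically connected fibres. The $\pi^\alg_{*}\O_{Z_b^\alg}$-module structure on $E^\alg$ induced by $\theta^\alg$ via \eqref{eq:G=GL_n-action-B-on-E-varphi} identifies $E^\alg$ with $\pi^\alg_{*}L^\alg$ for a unique line bundle $L^\alg$ on $Z_b^\alg$, with $\theta^\alg$ corresponding to multiplication by $\tau_{\can}^\alg$. Setting $L:=(L^\alg)^\an$, the equality $\pi_{*}(L,\tau_{\can})\simeq (E,\theta)$ then follows because $\pi$ is affine and pushforward along a finite flat map commutes with analytification, by applying \Cref{p:GAGA-comp-cohom} on the fibres of a trivialising cover of $X^\alg$.

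For the automorphism statement, the additional algebraic input is that over the regular locus the commutant $\uEnd(E^\alg,\theta^\alg)$ of $\theta^\alg$ in $\uEnd(E^\alg)$ equals $\pi^\alg_{*}\O_{Z_b^\alg}=\pi^\alg_{*}\uEnd(L^\alg)$; taking units yields $\uAut(E^\alg,\theta^\alg)=\pi^\alg_{*}\uAut(L^\alg)$. Analytifying and invoking the fully faithful functoriality in \Cref{t:perf-GAGA}.(1) and \Cref{c:GAGA-std} transports this to the analytic setting, with the isomorphism induced by $\pi_{*}$ as required (here $\uAut(L)$ is understood as the sheaf $\pi_{*}\uAut(L)$ on $X_T$, which is intrinsic since $\pi$ is affine).

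The main obstacle is the relative algebraic BNR correspondence over $\Spec(R)$, since a perfectoid $R$ is not Noetherian. The cleanest resolution is to prove this BNR purely sheaf-theoretically: affineness of $\pi^\alg$ gives an equivalence between quasi-coherent $\pi^\alg_{*}\O_{Z_b^\alg}$-modules on $X^\alg\times\Spec(R)$ and quasi-coherent $\O_{Z_b^\alg}$-modules on $Z_b^\alg$, and the regularity of $b^\alg$ combined with smoothness of $Z_b^\alg\to\Spec(R)$ forces the resulting module to be a line bundle (this last step can be checked fibrewise, which is classical BNR over algebraically closed points). An alternative route, should this direct argument prove awkward, is to use rigid approximation $R=\widehat{\varinjlim_i R_i}$ together with \cite[Corollary~5.4.41]{GabberRamero}-style colimit arguments to reduce to Noetherian $R_i$, where the classical relative BNR in the form of \cite{BNR-spectral-curves} applies directly.
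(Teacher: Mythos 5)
Your proposal follows essentially the same route as the paper, which deduces the statement from the algebraic BNR correspondence \cite[Proposition~3.6]{BNR-spectral-curves} via perfectoid GAGA (\Cref{t:perf-GAGA}); you supply more detail on the relative, non-Noetherian BNR step, which the paper leaves implicit. One caveat: since \Cref{t:perf-GAGA}.(3) (and \Cref{c:GAGA-std}) only algebraizes $(E,\theta)$ after passing to a cover of $T$, the line bundle $L$ is a priori only constructed locally on $T$, and the paper stresses that the local line bundles must then be glued using the isomorphism $\uAut(L)\simeq \uAut(E,\theta)$ --- your opening appeal to ``checking the statement v-locally'' should be made precise in exactly this way, since the existence of $L$ is not a local statement without that descent datum.
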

\begin{proof}
	Due to \Cref{t:perf-GAGA}, this follows from the algebraic case {\cite[Proposition~3.6]{BNR-spectral-curves}}. More precisely, due to the  localisation on $T$ in \Cref{t:perf-GAGA},  we a priori get the statement locally on $T$, but we can then glue the local line bundles using the isomorphism $\uAut(L)\simeq \uAut(E,\theta)$. 
\end{proof}

	\begin{prop}\label{p:Hitchin-for-Higgs}
		\begin{enumerate}
			\item The regular Hitchin morphism $h:\CHig_n^\circ\to  \A^\circ$ is canonically isomorphic to the relative Picard groupoid
			\[ \CPic_{ Z^\circ| \A^\circ}\to  \A^\circ.\]
			\item In terms of coarse moduli spaces, the Hitchin map $H:\bfHig_n^\circ\to  \A^\circ$ is canonically isomorphic to 
			\[ (\bfPic_{Z^\circ|\A^\circ})^\dia \to  \A^\circ\]
			where $(\bfPic_{Z^\circ|\A^\circ})^\dia$ is the analytification of the algebraic Picard variety of $Z^\circ\to A^\circ$.
		\end{enumerate}
	\end{prop}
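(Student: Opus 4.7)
The plan is to promote the BNR correspondence in \Cref{c:BNR-an} from a statement about objects to an equivalence of v-stacks, and then pass to coarse moduli using the diamondification results of \Cref{t:comparison-Higgs-stacks}.

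For part (1), I would first construct the comparison functor in the direction from Picard to Higgs: for any $b:T\to\A^\circ$ with associated spectral curve $\pi:Z_b\to X_T$, send a line bundle $L\in\CPic_{Z^\circ|\A^\circ}(b)$ to the pair $(\pi_\ast L,\pi_\ast\tau_{\can})\in\CHig_n^\circ(T)$, where $\tau_{\can}\in\pi^\ast\wtOm_{X_T}$ is the tautological differential from \Cref{d:spectral-curve}. Since $\pi$ is finite flat of degree $n$ and $Z_b/X_T$ is smooth over the regular locus, $\pi_\ast L$ is a vector bundle of rank $n$ on $X_T$, and $\pi_\ast\tau_{\can}$ provides the required Higgs field with characteristic polynomial given by $b$. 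This construction is clearly functorial in $T$ and satisfies v-descent (as both sides are v-stacks, cf.\ \Cref{p:CBun-small}), so it defines a morphism of v-stacks over $\A^\circ$.

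Next I would verify this is an equivalence. Essential surjectivity over each $b$ is precisely the content of \Cref{c:BNR-an}. Fully faithfulness amounts to showing $\pi_\ast$ induces an isomorphism $\uAut(L)\isomarrow\uAut(\pi_\ast L,\pi_\ast\tau_{\can})$ at the level of v-sheaves on $X_T$, and more generally an isomorphism on the groupoids of isomorphisms between two such line bundles. The statement on automorphism sheaves is in \Cref{c:BNR-an}; the extension to $\Hom$-sheaves follows formally by considering $\underline{\Hom}(L_1,L_2)$ as an $\uAut$-torsor, or equivalently from the fact that $\pi_\ast$ identifies $\pi_\ast\FHom_{Z_b}(L_1,L_2)$ with $\FHom_{X_T}((\pi_\ast L_1,\theta_1),(\pi_\ast L_2,\theta_2))$, which reduces again to BNR applied to the commuting endomorphisms.

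For part (2), I pass to the sheaves of isomorphism classes. On the Higgs side, $\bfHig_n^\circ$ is the v-sheafification of $T\mapsto \pi_0(\CHig_n^\circ(T))$; applying part (1) identifies this with the v-sheafification of $T\mapsto \pi_0(\CPic_{Z^\circ|\A^\circ}(T))$, which is the v-topological relative Picard sheaf of $Z^\circ\to\A^\circ$ viewed as a diamond. Now I would invoke \Cref{t:comparison-Higgs-stacks} (in the version for $G=\G_m$ applied to $Z^\circ$, or directly the main results of \cite{diamantinePicard}), together with the perfectoid GAGA \Cref{t:perf-GAGA}, to identify this v-sheaf with the diamondification $(\bfPic_{Z^\circ|\A^\circ})^\dia$ of the algebraic Picard functor. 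Compatibility with the Hitchin morphism to $\A^\circ$ is built into the construction.

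The step I expect to be the main obstacle is the fully faithfulness in part (1) at the level of stacks, because while BNR as stated gives an equivalence of isomorphism classes together with an identification of automorphism sheaves on each fibre, one must ensure the equivalence is compatible with arbitrary base-change $T'\to T$ in $\Perf_K$, i.e.\ that the formation of $\pi_\ast L$ and of the $\uAut$-identification commutes with such pullback. This is where one really uses that $Z^\circ\to\A^\circ$ is smooth (so $\pi$ is flat) and that perfectoid base-change behaves well for proper maps, as established in \cite[Theorem~3.18]{perfectoid-base-change}, which ensures that pushforward along $\pi$ commutes with arbitrary pullback along $T'\to T$.
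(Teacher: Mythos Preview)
Your proposal is correct and follows essentially the same approach as the paper. The paper's proof is just two sentences: part~(1) is declared ``immediate from \Cref{c:BNR-an}'' and part~(2) ``follows from this using \Cref{t:perf-GAGA}''; you have simply unpacked these into the explicit functor $L\mapsto(\pi_\ast L,\pi_\ast\tau_{\can})$, the essential surjectivity and $\uAut$-identification already recorded in \Cref{c:BNR-an}, and the GAGA-type comparison for the Picard side. One small remark: your anticipated ``main obstacle'' about base-change compatibility of $\pi_\ast$ is milder than you suggest, since $\pi$ is \emph{finite} (not merely proper), so $\pi_\ast$ commutes with arbitrary base change on quasi-coherent sheaves without needing the full strength of \cite[Theorem~3.18]{perfectoid-base-change}.
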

	\begin{proof}
		Part (1) is immediate from \Cref{c:BNR-an}. Part (2) follows from this using \Cref{t:perf-GAGA}.
	\end{proof}
	In particular, this shows that $\bfHig_n^\circ$ is represented by a smooth rigid space. Moreover, we deduce the following, which justifies calling the analytic Hitchin morphism a ``fibration'':
	\begin{coro}
		Let 
		$\bfHig_{n,d}^{\circ}\subseteq \bfHig_n$ be the subsheaf of isomorphism classes of regular Higgs bundles of degree $d$. Then the geometric fibres of the Hitchin morphism
		$H:\bfHig_{n,d}^{\circ}\to  \A^\circ$
		are abelian varieties.
	\end{coro}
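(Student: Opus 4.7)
The plan is to reduce via Proposition~\ref{p:Hitchin-for-Higgs}.(2) to the classical theory of Jacobians of smooth projective curves. First I would observe that by that proposition, the Hitchin morphism $H:\bfHig_n^\circ\to \A^\circ$ identifies canonically with the diamondification $(\bfPic_{Z^\circ|\A^\circ})^\dia\to \A^\circ$ of the relative algebraic Picard scheme. Under the BNR correspondence of Proposition~\ref{c:BNR-an}, a regular Higgs bundle $(E,\theta)$ in $\bfHig_{n,d}^\circ$ with Hitchin image $b$ corresponds to a line bundle $L$ on the spectral curve $Z_b$ with $\pi_\ast L = E$. Grothendieck--Riemann--Roch applied to the finite flat morphism $\pi:Z_b\to X_b$ of degree $n$ gives $\chi(L)=\chi(E)$, and hence
\[
\deg L = \deg E + n(1-g_X) + g_{Z_b} - 1.
\]
Since $Z^\circ\to\A^\circ$ is smooth proper with geometrically connected fibres, the genus $g_{Z_b}$ is locally constant on $\A^\circ$. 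Thus, after possibly restricting to connected components of $\A^\circ$, the locus $\bfHig_{n,d}^\circ$ identifies with a fixed degree component $(\bfPic^{d'}_{Z^\circ|\A^\circ})^\dia$ for a certain integer $d'=d'(d)$.

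Next I would compute the geometric fibre: for any geometric point $b:\Spa(C,\mathcal{O}_C)\to \A^\circ$ with $C$ algebraically closed, the diamondification functor $(-)^\dia$ commutes with the base change along $b$, so the fibre of $H$ over $b$ equals $(\bfPic^{d'}(Z_b))^\dia$, where $Z_b$ is now a smooth projective geometrically connected curve over $C$. Classically, $\bfPic^{d'}(Z_b)$ is a torsor under the Jacobian $J(Z_b):=\bfPic^0(Z_b)$, which is an algebraic abelian variety of dimension $g_{Z_b}$. Since $C$ is algebraically closed, $Z_b(C)$ is non-empty, so $\bfPic^{d'}(Z_b)(C)$ is non-empty and the torsor is trivial; thus $\bfPic^{d'}(Z_b)\cong J(Z_b)$ as $C$-schemes.

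Finally, the diamondification of the projective algebraic abelian variety $J(Z_b)$ is, by definition, a rigid analytic abelian variety over $C$ in the usual sense (the analytification of an algebraic abelian variety), which is the desired conclusion. The main (mild) obstacle will be verifying carefully that $(-)^\dia$ commutes with the base change along $b:\Spa(C,\mathcal{O}_C)\to \A^\circ$ in the required sense, and tracking the degree shift $d\mapsto d'$ under BNR in the perfectoid family setting; both are essentially routine given perfectoid GAGA (\Cref{t:perf-GAGA}), the local constancy of $g_{Z_b}$ on $\A^\circ$, and \Cref{t:comparison-Higgs-stacks}.
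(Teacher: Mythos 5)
Your proposal is correct and follows exactly the route the paper intends: the corollary is stated as an immediate consequence of \Cref{p:Hitchin-for-Higgs}, identifying the regular Hitchin fibration with the relative Picard variety of the smooth proper spectral curve, whose degree components have geometric fibres that are trivial torsors under the Jacobian. Your extra bookkeeping (the Riemann--Roch degree shift $d\mapsto d'$ and the constancy of $g_{Z_b}$, which in fact holds globally since $\A^\circ$ is connected and $\pi_*\O_{Z_b}$ is explicit) is sound and only makes explicit what the paper leaves implicit.
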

	\subsection{The regular Hitchin fibration on the Betti side}
	We now turn our attention to the coarse moduli space $\bfBun_{n,v}^\circ$ of regular v-vector bundles.
	\begin{lemma}
			The restriction  of the sheaf $\cH_{\bX}$ on $\A_v$ from \Cref{def:cHX} to the regular locus  $\A^\circ$ is an \'etale-locally constant sheaf $\cH_{\bX}^\circ$. In particular, it is represented by a relative rigid group  in $\A^\circ_\et$.
	\end{lemma}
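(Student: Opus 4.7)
By \Cref{l:cHbX}, $\cH_{\bX}$ is a $\cP[p^\infty]$-torsor on $\A_v$. Moreover, by \Cref{c:bfLX}, this torsor arises as $\nu^\ast$ of a torsor on $\A_{\et}$, so it suffices to work on the étale site of $\A^\circ$. The strategy is in two steps: first show that $\cP[p^\infty]_{|\A^\circ}$ is étale-locally constant with values in a fixed discrete group, and then deduce the same for the $\cP[p^\infty]$-torsor $\cH_{\bX}^\circ$ by noting that torsors under étale-locally constant sheaves of discrete abelian groups trivialise étale-locally.

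For the first step, I would invoke \Cref{t:struct-thm-R1fvastU}.(3), which gives a canonical isomorphism
\[ \cP[p^\infty] \;=\; \nu^\ast \rR^1 f_{\et\ast} J[p^\infty] \;\simeq\; \Lambda \otimes_{\uZp} \mu_{p^\infty}, \qquad \Lambda := \rR^1\pi'_{v\ast}\uZp, \]
where $\pi':Z\to \A$ is the spectral curve. Over the regular locus, $\pi':Z^\circ \to \A^\circ$ is smooth proper with geometrically connected fibres of some fixed genus $g_Z$. By algebraic smooth and proper base change for the underlying scheme-theoretic $\pi'$, the sheaves $\rR^1\pi'_{\et\ast}\Z/p^n$ are locally constant on $\A^\circ_\et$ with values in free $\Z/p^n$-modules of rank $2g_Z$; combined with \Cref{l:et-cohom-Z/p^k-spectral-curve}.(1), which identifies these with $\rR^1\pi'_{v\ast}\Z/p^n$ (and the algebraic-to-analytic compatibility along the lines of \Cref{l:proper-bc-U_Jp^n}), I conclude that $\Lambda/p^n = \rR^1\pi'_{v\ast}\Z/p^n$ is étale-locally constant on $\A^\circ$. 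Passing to the inverse limit along \Cref{l:et-cohom-Z/p^k-spectral-curve}.(3), $\Lambda_{|\A^\circ}$ is a lisse $\Z_p$-sheaf of rank $2g_Z$, and since $K$ is algebraically closed and contains all $p$-power roots of unity, $\mu_{p^\infty}$ is the constant (v-)sheaf $\Q_p/\Z_p$. Hence $\cP[p^\infty]_{|\A^\circ}$ is étale-locally isomorphic to the constant sheaf $(\Q_p/\Z_p)^{2g_Z}$.

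For the second step, $\cH_\bX^\circ$ is a torsor under this étale-locally constant sheaf on $\A^\circ_\et$. On an étale cover $U \to \A^\circ$ trivialising $\cP[p^\infty]_{|\A^\circ}$, the restriction becomes a torsor under the constant sheaf $(\Q_p/\Z_p)^{2g_Z}$, which is classified by a class in $\rH^1_{\et}(U,(\Q_p/\Z_p)^{2g_Z})$; passing to a further étale cover of $U$ (chosen so that this class dies) yields a trivialisation of the torsor. On such a cover the sheaf $\cH_{\bX}^\circ$ is isomorphic to the constant sheaf $(\Q_p/\Z_p)^{2g_Z}$, proving étale-local constancy. The "in particular" is then immediate: an étale-locally constant sheaf with values in a discrete abelian group is representable by an étale morphism to $\A^\circ$, which, together with the torsor structure over the relative rigid group $\cP[p^\infty]_{|\A^\circ}$, makes $\cH_\bX^\circ$ a relative rigid group in $\A^\circ_\et$.

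The main obstacle is to rigorously justify the passage from algebraic smooth and proper base change to local constancy of $\rR^1\pi'_{v\ast}\Z/p^n$ on the analytic étale site of $\A^\circ$; once this comparison is in hand, the remainder of the argument is formal. A secondary point of care is to ensure that the trivialisation of the $\cP[p^\infty]$-torsor $\cH_\bX^\circ$ can indeed be achieved in the étale topology on $\A^\circ$, rather than only in the v-topology; this uses the descent \Cref{c:bfLX} exhibiting $\cH_\bX$ itself as the pullback of an étale sheaf.
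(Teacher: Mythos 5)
Your argument is correct and follows essentially the same route as the paper: exhibit $\cH_{\bX}^\circ$ as an \'etale torsor under $\Lambda\otimes_{\uZp}\mu_{p^\infty}$ (via \Cref{c:exp-splits-L_X}, equivalently your combination of \Cref{l:cHbX} and \Cref{t:struct-thm-R1fvastU}), reduce to the \'etale-local constancy of the finite-level sheaves $\rR^1\pi'_{\et\ast}\mu_{p^m}$ over $\A^\circ$, and then trivialise the torsor \'etale-locally. The only difference is in how that local constancy is obtained: the paper cites the smooth proper base change theorem for adic spaces \cite[Theorem 10.5.1]{ScholzeBerkeleyLectureNotes} directly, whereas you route through algebraic smooth and proper base change together with the algebraic--analytic comparison of \Cref{l:proper-bc-U_Jp^n}; both are valid, and the rest of your argument matches the paper's.
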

	\begin{proof}
		By \Cref{c:exp-splits-L_X}, the map $\cH_{\bX}\to \A_v$ is an \'etale torsor under $\Lambda\otimes \mu_{p^\infty}$. 
		By \Cref{t:struct-thm-R1fvastU}, it thus suffices to prove that for every $m\in \N$, the sheaf $\rR^1\pi_{\ast}\mu_{p^m}$ is \'etale-locally constant over the regular locus. Since over the regular locus $\pi$ is smooth and proper, this holds by \cite[Theorem 10.5.1]{ScholzeBerkeleyLectureNotes}.
		
		It follows that $\cH_{\bX}^\circ\to  \A^\circ$ defines an object in $\A_{\et}^\dia=\A_\et$. Hence it is represented by a rigid space. 
	\end{proof}
	
	We now come to the version of our main theorem in which the ``twisted isomorphism'' of moduli spaces takes its strongest geometric incarnation, as a morphism of rigid spaces between coarse moduli spaces:
	\begin{theorem} \label{t:moduli-spaces-circ}
		Let $X$ be a smooth projective curve of genus $g\geq 2$ and let $n\in \N$.
		\begin{enumerate}
			\item The v-sheaf $\bfBun_{n,v}^\circ$ is represented by a smooth rigid space over $K$.
			\item The Hitchin fibration $\wt H:\bfBun^\circ_{n,v} \to  \A^\circ$ over the regular locus is a torsor under $\bfP^\circ:=(\bfPic_{Z^\circ|\A^\circ})^\dia$. In particular, its geometric fibres are disjoint unions of abelian varieties.
			\item There is a natural isomorphism of rigid spaces
			\[\cH_{\bX}^\circ\times^{\bfP^\circ[p^\infty]}\bfHig_n^\circ\isomarrow \bfBun_{n,v}^\circ.\]
		\end{enumerate}
	\end{theorem}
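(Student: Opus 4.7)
The plan is to deduce the three statements from the coarse twisted isomorphism (\Cref{c:twisted-isom-moduli} and \Cref{t:coarse-moduli-twisted}) by translating both sides over $\A^\circ$ into the language of the relative Picard functor of the spectral curve $\pi : Z^\circ \to X \times \A^\circ$. The starting point is the BNR identification $\bfHig_n^\circ \simeq \bfP^\circ$ from \Cref{p:Hitchin-for-Higgs}.(2), paired with a matching identification $\cP^\circ \simeq \bfP^\circ$: over the regular locus the relative group is $J \simeq \pi_\ast \G_m$ by \Cref{l:action-Jb-G=GL_n}, so $\wh J \simeq \pi_\ast \wh{\G_m}$, and finiteness of $\pi$ gives $\rH^1_\et(X_T, \wh J_b) \simeq \rH^1_\et(Z_{b,T}, \wh{\G_m})$ for any $b : T \to \A^\circ$. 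The cokernel of $\wh{\G_m} \hookrightarrow \G_m$ is a sheaf of discrete abelian groups whose first \'etale cohomology vanishes on the smooth proper connected fibres of $Z^\circ \to \A^\circ$, so the last group equals $\Pic(Z_{b,T})$. Sheafifying in $T$ yields $\cP^\circ \simeq \bfP^\circ$; unwinding \Cref{d:J-b-action} further shows that the $\cP^\circ$-action on $\bfHig_n^\circ$ transports to the tautological self-action of $\bfP^\circ$, so that $\bfHig_n^\circ$ is the \emph{trivial} $\bfP^\circ$-torsor.

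With these identifications in place, (2) follows by restricting \Cref{c:twisted-isom-moduli} to $\A^\circ$:
\[
\bfBun_{n,v}^\circ \simeq \cH^\circ \times^{\cP^\circ} \bfHig_n^\circ \simeq \cH^\circ \times^{\bfP^\circ} \bfP^\circ \simeq \cH^\circ,
\]
which is a $\cP^\circ = \bfP^\circ$-torsor by \Cref{t:H P-torsor}. Similarly (3) is obtained by restricting \Cref{t:coarse-moduli-twisted} to $\A^\circ$ and using $\cP^\circ[p^\infty] = \bfP^\circ[p^\infty]$ from the previous paragraph. Finally for (1), I combine (3) with the lemma preceding the theorem, according to which $\cH_\bX^\circ$ is an \'etale-locally constant sheaf on $\A^\circ$: after passing to a suitable \'etale cover of $\A^\circ$, $\cH_\bX^\circ$ trivialises to a disjoint union of copies of $\bfP^\circ[p^\infty]$, and the twist in (3) becomes a disjoint union of copies of $\bfHig_n^\circ$, which is a smooth rigid space by \Cref{p:Hitchin-for-Higgs}. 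Smoothness and representability then descend to $\bfBun_{n,v}^\circ$.

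The main technical point is the comparison $\cP^\circ \simeq \bfP^\circ$ in the first paragraph: one must verify both that $\G_m/\wh{\G_m}$ is acyclic enough on the smooth proper fibres of the regular spectral cover (so that v-\'etale $\wh{\G_m}$-cohomology equals algebraic Picard there), and that the $\cP^\circ$-action obtained from \Cref{d:J-b-action} matches the tautological action under BNR. Once these two matches are established, the rest of the argument is a formal consequence of the main theorems \Cref{t:fCiso} and \Cref{t:coarse-moduli-twisted}.
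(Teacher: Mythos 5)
Your overall architecture (reduce to \Cref{t:coarse-moduli-twisted}, use BNR to see that $\bfHig_n^\circ$ is the split $\bfP^\circ$-torsor, then transport structure to $\bfBun_{n,v}^\circ$) is the right one, but the identification $\cP^\circ\simeq\bfP^\circ$ on which your first paragraph rests is false, and it is used essentially in your proofs of (2) and (3). For a geometric point $b$ of $\A^\circ$ one has $\cP^\circ(b)=\rH^1_\et(X,\wh J_b)\simeq \rH^1_\et(Z_b,\wh\G_m)$, and by the logarithm sequence $0\to\mu_{p^\infty}\to\wh\G_m\to\G_a\to 0$ together with \Cref{p:log-seq-R^1fv} this is an extension of a subgroup of the finite-dimensional $K$-vector space $\rH^1_\et(Z_b,\O)$ by $\Pic(Z_b)[p^\infty]$; it contains no abelian variety and has trivial degree, whereas $\bfP^\circ(b)=\Pic(Z_b)$ has $\pi_0\simeq\Z$ and an abelian variety as identity component. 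Concretely, your claim that $\rH^1_\et$ of $\G_m/\wh\G_m$ vanishes on the fibres cannot hold: a line bundle of nonzero degree (or a non--topologically-torsion degree-$0$ bundle) on $Z_b$ admits no reduction of structure group to $\wh\G_m$. The identification that is actually true, and all that is needed, is on $p$-power torsion: $\cP[p^\infty]=\nu^\ast\rR^1 f_{\et\ast}J[p^\infty]=\nu^\ast\rR^1\pi'_{\et\ast}\mu_{p^\infty}=\bfP[p^\infty]$ (\Cref{l:proper-bc-U_Jp^n}, Kummer theory). Accordingly, your chain for (2), $\cH^\circ\times^{\cP^\circ}\bfHig_n^\circ\simeq\cH^\circ\times^{\bfP^\circ}\bfP^\circ\simeq\cH^\circ$, breaks at the middle step: under BNR the $\cP^\circ$-action on $\bfHig_n^\circ\simeq\bfP^\circ$ is translation via a (non-bijective) homomorphism $\cP^\circ\to\bfP^\circ$, not the tautological self-action, and $\bfBun_{n,v}^\circ$ is the pushout of a torsor along that homomorphism rather than $\cH^\circ$ itself. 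The clean repair is the one the paper uses: work with $\cH_\bX^\circ$, which is a $\bfP^\circ[p^\infty]$-torsor, and write $\bfBun_{n,v}^\circ\simeq\cH_\bX^\circ\times^{\bfP^\circ[p^\infty]}\bfHig_n^\circ\simeq\cH_\bX^\circ\times^{\bfP^\circ[p^\infty]}\bfP^\circ$, the pushout along $\bfP^\circ[p^\infty]\hookrightarrow\bfP^\circ$; this gives (2) and (3) directly.

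For (1), "trivialise $\cH_\bX^\circ$ étale-locally and descend representability" is exactly the delicate point, and you have not justified it: effectivity of étale descent for (smooth) rigid spaces is not free, and it is why the paper takes a different route. (Also, after splitting the torsor the twist becomes \emph{one} copy of $\bfHig_n^\circ$, not a disjoint union of copies.) The paper instead proves a representability lemma (\Cref{l:representability-Bun-axiomatised}) for pushouts $F\times^{G[p^\infty]}G$ of extensions $0\to G[p^\infty]\to F\to\G_a^d\to 0$ — reducing, via reduction of structure group to finite level and étaleness of $[p^j]$, to the controlled fact that an étale map of v-sheaves to a representable target is representable — and then exhibits $\bfBun_{n,v}^\circ$ as a fibre product of the resulting smooth rigid space $\mathcal M$ with $\A^\circ$ over $(\rR^1 f_{v\ast}\mathcal B)_{|\A^\circ}\simeq\G_a^d\times\A^\circ$. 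You would either need to supply an étale-descent argument for smooth rigid spaces among v-sheaves, or adopt a construction of this global type.
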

	\begin{coro}
		 There is a natural isomorphism of rigid spaces
		\[\bfHig_n^\circ \times_{ \A^\circ}\cH^\circ_{\bX}\isomarrow \bfBun_{n,v}^\circ \times_{ \A^\circ}\cH^\circ_{\bX}.\]
	\end{coro}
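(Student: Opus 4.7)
The plan is to deduce this corollary directly from part (3) of Theorem \ref{t:moduli-spaces-circ}, by pulling back the twisted isomorphism along the torsor $\cH^\circ_\bX\to \A^\circ$, in the same spirit as Corollary \ref{c:comp-coares-moduli-pullback} but now at the level of rigid spaces rather than v-sheaves. The key geometric point I would invoke is that any torsor becomes canonically trivial after base change along itself.

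More precisely, first I would record the following general fact: if $T\to S$ is a $G$-torsor and $F\to S$ is any $S$-space equipped with a $G$-action, then the natural map
\[
F\times_S T\longrightarrow (T\times^G F)\times_S T, \qquad (f,t)\mapsto ([t,f],t)
\]
is an isomorphism. This follows formally from the canonical trivialisation $T\times_S T\cong G\times_S T$, $(t,t')\mapsto (g,t')$ where $g$ is the unique element with $t=gt'$, combined with the definition of the twist. Applying this with $T=\cH^\circ_\bX$, $G=\bfP^\circ[p^\infty]$, $F=\bfHig^\circ_n$ and $S=\A^\circ$ (using that $\cH^\circ_\bX$ is a $\bfP^\circ[p^\infty]$-torsor by Theorem \ref{t:moduli-spaces-circ}.(2), combined with the torsor structure of $\cH^\circ_\bX$ from the restriction of Lemma \ref{l:cHbX}) gives a canonical isomorphism of rigid spaces
\[
\bfHig^\circ_n\times_{\A^\circ}\cH^\circ_\bX\;\xrightarrow{\sim}\;(\cH^\circ_\bX\times^{\bfP^\circ[p^\infty]}\bfHig^\circ_n)\times_{\A^\circ}\cH^\circ_\bX.
\]

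Next, I would apply Theorem \ref{t:moduli-spaces-circ}.(3), which provides a canonical isomorphism of rigid spaces
\[
\cH^\circ_\bX\times^{\bfP^\circ[p^\infty]}\bfHig^\circ_n\;\xrightarrow{\sim}\;\bfBun^\circ_{n,v}.
\]
Taking the fibre product of both sides with $\cH^\circ_\bX$ over $\A^\circ$ and composing with the isomorphism from the previous step yields the desired isomorphism of rigid spaces
\[
\bfHig^\circ_n\times_{\A^\circ}\cH^\circ_\bX\;\xrightarrow{\sim}\;\bfBun^\circ_{n,v}\times_{\A^\circ}\cH^\circ_\bX.
\]

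There is essentially no obstacle in this argument beyond bookkeeping: all nontrivial content (smoothness and rigid analyticity of both sides, the torsor structure, the twisted isomorphism, and the \'etale-local constancy of $\cH^\circ_\bX$) is already packaged into Theorem \ref{t:moduli-spaces-circ}. The only point worth checking carefully is that the trivialisation formula $(T\times^G F)\times_S T\cong F\times_S T$ remains valid in the category of rigid spaces rather than just of v-sheaves; this is automatic since we already know by Theorem \ref{t:moduli-spaces-circ}.(1) and the preceding lemma that all the v-sheaves in sight are representable by smooth rigid spaces, so the v-sheaf-level identity descends from the fully faithful embedding of rigid spaces into v-sheaves.
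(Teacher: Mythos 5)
Your proposal is correct and matches the paper's (implicit) argument: the paper proves this corollary exactly as it proves \Cref{c:comp-coares-moduli-pullback}, namely by applying $\times_{\A^\circ}\cH^\circ_{\bX}$ to the twisted isomorphism of \Cref{t:moduli-spaces-circ}.(3) and using the canonical trivialisation $\cH^\circ_{\bX}\times_{\A^\circ}\cH^\circ_{\bX}\simeq \bfP^\circ[p^\infty]\times_{\A^\circ}\cH^\circ_{\bX}$, with representability ensuring the identity holds in rigid spaces. (Only a minor attribution slip: the torsor structure of $\cH^\circ_{\bX}$ under $\bfP^\circ[p^\infty]$ comes from \Cref{l:cHbX}/\Cref{c:bfLX}, not from \Cref{t:moduli-spaces-circ}.(2), but you cite the lemma as well.)
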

	\begin{rem}
		More generally, for reductive $G$, using \cite{DG02} and the arguments below,  representability and an analogue of \Cref{t:moduli-spaces-circ} holds for $\bfHig_G$, $\bfBun_{G,v}$ over the \textit{very regular locus} of $\A_G$ \cite[\S4]{Ngo}.
	\end{rem}
	\begin{rem}
		Combined with \Cref{p:Hitchin-for-Higgs}, this says that both $\bfHig_n^\circ$ and $\bfBun^\circ_{n,v}$ are $\bfP^\circ$-torsors over $ \A^\circ$ via their respective Hitchin fibrations. But  the former torsor is split, whereas the latter is non-split:
	\end{rem}
	\begin{example}
		Assume $n=1$, then $\A=\A^\circ=\rH^0(X,\wtOm)\otimes \G_a$ and $Z=X$ and $\mathbf P^\circ=\mathbf P=(\bfPic_X)^\dia$. In this case, $\bfHig_n=\mathbf P\times \A$, while $\bfBun_{1,v}$ sits in a non-split short exact sequence of rigid group varieties
		\[ 0\to \mathbf P\to \bfBun_{1,v}\to \A\to 0.\]
		This case was previously shown  in \cite[Theorem 1.3]{diamantinePicard} and \cite[\S5]{Heu22b}, so we may regard \Cref{t:moduli-spaces-circ} as a generalisation from $\G_m$ to the non-abelian case of $\GL_n$, at least when $X$ is a curve.
	\end{example}
	\begin{proof}
		We have already seen in \Cref{t:coarse-moduli-twisted} that we have an isomorphism as in (3) on the level of v-sheaves.  Observe now that we can interpret \Cref{p:Hitchin-for-Higgs} as saying that $\bfHig_n^\circ$ is a trivial torsor under $\bfP^\circ$.  Since  $\cH_{\bX}$ is an \'etale $\bfP[p^\infty]:=(\bfPic_{Z|\A})^\dia[p^\infty]$-torsor, it follows formally that $\bfBun_{n,v}^\circ$ is the pushout
		\begin{equation}\label{eq:Bun-is-pushout-of-L}
			\bfBun_{n,v}^\circ\simeq \cH_{\bX}^\circ\times^{\bfP^\circ[p^\infty]}\bfHig_n^\circ\simeq \cH_{\bX}^\circ\times^{\bfP^\circ[p^\infty]}\bfP^\circ,
		\end{equation}
		along  $\bfP^\circ[p^\infty]\to\bfP^\circ$. This proves (2).
		It remains to see (1). We deduce this from the following:
		
		\begin{lemma}\label{l:representability-Bun-axiomatised}
			Let $S$ be any rigid space and let $G\to S$ be a commutative smooth relative group. Let $F$  be an abelian sheaf on $S_v$ that fits into a short exact sequence of abelian sheaves on $S_v$
			\[ 0\to G[p^\infty]\to F\to \G_a^d\to 0 \]
			for some $d\in \N$. Then the pushout $F\times^{ G[p^\infty]}G\to S$ is represented by a smooth morphism of rigid spaces.
		\end{lemma}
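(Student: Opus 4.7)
My plan is to exhibit $F\times^{G[p^\infty]}G$ as an \'etale-locally trivial $G$-torsor over $\G_a^d\times S$, and then to invoke \'etale descent to obtain the smooth rigid space structure. The key input is that the structure group $G[p^\infty]$ is ind-\'etale over $S$ by \Cref{c:G[p]-etale}.

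First I would observe that, by the universal property of the pushout (Baer sum) for extensions of abelian $v$-sheaves, the given short exact sequence for $F$ pushes out along the inclusion $G[p^\infty]\hookrightarrow G$ to a canonical short exact sequence of abelian $v$-sheaves on $S_v$:
\[0 \to G \to F\times^{G[p^\infty]}G \to \G_a^d \to 0,\]
exhibiting $F\times^{G[p^\infty]}G\to \G_a^d\times S$ as a $G$-torsor in the $v$-topology. It therefore suffices to show that this $G$-torsor is represented by a smooth morphism of rigid spaces.

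Next, I would pass from the $v$- to the \'etale topology. Each $G[p^n]\to S$ is finite \'etale by \Cref{c:G[p]-etale}, so by commuting $\rR\nu_\ast$ with filtered colimits and applying \cite[Proposition~14.8]{Sch18} in the spirit of \Cref{l:proper-bc-U_Jp^n}, we have $\rR\nu_\ast G[p^\infty] = G[p^\infty]$ on any locally spatial diamond over $S$; in particular, $\rH^1_\et(-,G[p^\infty])\isomarrow \rH^1_v(-,G[p^\infty])$. Hence the $G[p^\infty]$-torsor $F\to \G_a^d\times S$ is already \'etale-locally trivial. After such a trivialisation $F\simeq G[p^\infty]\times (\G_a^d\times S)$, an explicit computation of the pushout shows
\[(G[p^\infty]\times (\G_a^d\times S))\times^{G[p^\infty]}G \simeq G\times (\G_a^d\times S),\]
namely the trivial $G$-torsor on the right. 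Thus $F\times^{G[p^\infty]}G$ is \'etale-locally trivial as a $G$-torsor on $\G_a^d\times S$.

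Finally, since $G\to S$ is smooth by hypothesis, the trivial torsor $G\times (\G_a^d\times S)\to S$ is a smooth morphism of rigid spaces, and since representability by a smooth morphism of rigid spaces is an \'etale-local property on the base, $F\times^{G[p^\infty]}G\to S$ is indeed represented by a smooth morphism of rigid spaces over $S$, as desired. The main obstacle I expect is the comparison $\rH^1_v=\rH^1_\et$ for $G[p^\infty]$ on the non-proper smooth space $\G_a^d\times S$: care is needed because $G[p^\infty]$ is only ind-\'etale rather than finite \'etale, but the passage to the filtered colimit from the finite subgroups $G[p^n]$, together with the standard comparison of $v$- and \'etale cohomology for locally constant torsion sheaves, should suffice.
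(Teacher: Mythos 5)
Your first two steps are sound and in fact run parallel to part of the paper's argument: the pushout is an extension of $\G_a^d$ by $G$, and the combination of quasi-compactness, the \'etaleness of $G[p^n]\to S$ from \Cref{c:G[p]-etale}, and \cite[Propositions 14.7--14.8]{Sch18} does show that $F$ is \'etale-locally trivial. The gap is concentrated in your final step: ``representability by a smooth morphism of rigid spaces is an \'etale-local property on the base'' is not a valid principle. Effectivity of \'etale descent fails for rigid and adic spaces in general --- this is precisely why analytifications of algebraic spaces need not be rigid spaces --- and descending your \'etale-locally trivial $G$-torsor amounts to forming the quotient of $G\times_S U$ by an \'etale equivalence relation, for which you give no representability argument. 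One cannot fall back on affine descent either, since $G$ is an arbitrary commutative smooth relative group; in the application to \Cref{t:moduli-spaces-circ} it is $(\bfPic_{Z^\circ|\A^\circ})^\dia$, whose fibres contain abelian varieties.

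The paper's proof is organised precisely to avoid this descent. It restricts $F$ to the closed discs $\mathbb B_r=p^r\G_a^{+d}$ exhausting $\G_a^d$, uses quasi-compactness to see that the class of $F_r$ in $\Ext_S^1(\mathbb B_r,G[p^\infty])=\varinjlim_j\Ext_S^1(\mathbb B_r,G[p^j])$ is $p^j$-torsion for some $j$, and deduces that the pushout of $F_r$ along $[p^j]:G[p^\infty]\to G[p^\infty]$ --- which is identified with $F_{r+j}$ --- splits \emph{globally} over $\mathbb B_{r+j}$. Hence $P_{r+j}\cong G\times_S\mathbb B_{r+j}$ is representable with no descent needed; $P_r$ is then recovered because $P_r\to P_{r+j}$ is \'etale (coming from the \'etale map $[p^j]:G\to G$), and \'etale sheaves over a representable object are automatically representable since $P_{r+j,\et}^\dia=P_{r+j,\et}$. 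Finally the $P_r$ are glued along open immersions, which is unproblematic. To rescue your approach you would have to prove effectivity of descent for these particular torsors, which in substance amounts to redoing this splitting argument.
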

		\begin{proof}
			This is an axiomatisation of the argument in \cite[Corollary 2.9.5]{diamantinePicard}:
			The statement is local on $S$, so we may assume that $S$ is quasi-compact.
			For any $r\in \Z$, write $\mathbb B_r:=p^r \G_a^{+d}$ for the closed disc of radius $|p^r|$. Let $F_r$ be the pullback of $F$ to $\mathbb B_r$. By compactness, we then have 
			\[ \Ext_{S}^1(\mathbb B_r,G[p^\infty])=\textstyle\varinjlim_j \Ext_{S}^1(\mathbb B_r,G[p^j]),\]
			hence there is $j$ such that $F_r$ admits a reduction of structure group to $G[p^j]$.
			
			Consider now the morphism $[p^j]:F\to F$, which induces a morphism of short exact sequences
			\[\begin{tikzcd}
				0 \arrow[r] & {G[p^\infty]} \arrow[d, "p^j"] \arrow[r] & F_r \arrow[d, "p^j"] \arrow[r] & \mathbb B_r \arrow[d, "p^j","\sim"'] \arrow[r] & 0 \\
				0 \arrow[r] & {G[p^\infty]} \arrow[r]                  & F_{r+j} \arrow[r]              & \mathbb B_{r+j} \arrow[r]              & 0.
			\end{tikzcd}\]
			This can be interpreted as the pushout along $[p^j]:G[p^\infty]\to G[p^\infty]$.
			Since the class of $F_r$ in $\Ext_{S}^1(\mathbb B_r,G[p^\infty])$ is $p^j$-torsion, this shows that the bottom sequence is split.
			
			Let now $P_r$ be the pushout of $F_r$ along $G[p^\infty]\to G$. Then since the bottom sequence is split, we have
			$P_{r+j}=G\times_S \mathbb B_{r+j}$,
			which is consequently represented by a rigid space that is smooth over $S$. 
			
			Considering the pushout of the diagram over the morphism $[p^j]:G\to G$, which is \'etale by \Cref{c:G[p]-etale}, we see that the map
			$P_r\to P_{r+j}$
			is an \'etale morphism of v-sheaves. Since $P_{r+j,\et}^\dia=P_{r+j,\et}$, this shows that also $P_r$ is representable and smooth over $S$. Since $P$ is glued from the $P_r$, this shows the result.
		\end{proof}
		
		Recall now that by construction in \Cref{d:H_X} and \Cref{def:cHX}, there is a Cartesian diagram
		\begin{equation}\label{eq:Cartesian-descr-HX}
		\begin{tikzcd}
			\cH_{\bX} \arrow[d] \arrow[r] &  \A \arrow[d,"s_{\bX}\circ \tau" ] \\
			\rR^1f_{v\ast}\widehat{\mathcal B^\times} \arrow[r]    & \rR^1f_{v\ast}\mathcal B.
		\end{tikzcd}
		\end{equation}
		By \Cref{l:RnfvastLieUJ}, the term on the bottom right is isomorphic to $\mathbb G_a^d\times  \A$ for some $d$.
		By \Cref{p:log-seq-R^1fv}, the bottom morphism is surjective and has kernel $\rR^1f_{v\ast}\mathcal B^\times[p^\infty]$. When we now consider the restriction of this diagram to $ \A^\circ$, then $(\rR^1f_{v\ast}\mathcal B^\times[p^\infty])_{| \A^\circ} =\bfP^\circ[p^\infty]$. Hence we may apply \Cref{l:representability-Bun-axiomatised} to see that 
		\[ 	\mathcal M:=\rR^1f_{v\ast}\widehat{\mathcal B^\times}_{| \A^\circ}\times^{\bfP^\circ[p^\infty]}\bfP^\circ\]
		is represented by a smooth rigid space over $\A^\circ$. On the other hand, using \eqref{eq:Bun-is-pushout-of-L}, we now deduce from applying $-\times^{\bfP^\circ[p^\infty]}\bfP^\circ$ to the left hand side of \eqref{eq:Cartesian-descr-HX} that we have a Cartesian diagram
		\[
		\begin{tikzcd}
			\bfBun_{n,v}^\circ \arrow[d] \arrow[r,"\wt H"] & \A^\circ \arrow[d,"s_{\bX}\circ \tau" ] \\
			\mathcal M \arrow[r]    & (\rR^1f_{v\ast}\mathcal B)_{|\A^\circ}.
		\end{tikzcd}\]
		This exhibits $\bfBun_{n,v}^\circ$ as the fibre product of rigid spaces, hence it is itself represented by a rigid space. Moreover, since the bottom morphism is smooth, and $\A^\circ$ is smooth, it follows that $\bfBun_{n,v}^\circ$ is smooth.
	\end{proof}
	\begin{rem}
		Conceptually, the object $\mathcal M$ in the proof has the following meaning: We expect there to be a variant of  \Cref{p:leray-seq-for-UJ} saying that there is a Leray exact sequence  	(cf \cite[Theorem~2.4]{Heu23})
		\begin{equation}\label{eq:rem-M-seq-1}
			1\to \rR^1f_{\Et\ast}\B^\times \to \rR^1f_{v\ast}\B^\times \to f_{v\ast}(\Lie  J\otimes \widetilde{\Omega}_X)\to 0.
		\end{equation}
		By a comparison of Leray sequences, this would receive a natural morphism from the short exact sequence 
		\begin{equation}\label{eq:rem-M-seq-2}
			1\to \rR^1f_{\Et\ast}(\B^\times[p^\infty]) \to \rR^1f_{v\ast}\widehat{\B^\times} \to \rR^1f_{v\ast}\Lie  J\to 0.
		\end{equation}
		
		Over $\A^\circ$, the first term of \eqref{eq:rem-M-seq-1} is equal to $\bfP$, whereas the first term of \eqref{eq:rem-M-seq-2} is $\bfP[p^\infty]$.  Hence $\mathcal M$ sits in between these two sequences, and we can think of it as a replacement for $\rR^1f_{v\ast}\B^\times$ that is technically easier to work with. This also explains the relation to twists by invertible $\B$-modules as used in \cite{Heu23}.
	\end{rem}

\section{The stack of representations of $ \pi_1^{\et}(X,x)$} \label{s:representations}
	
	\subsection{Pro-finite-\'etale vector bundles on curves}
	Throughout this section, let $G$ be a linear algebraic group over $K$ and let $X$ be a connected smooth projective curve over $K$ of genus $g$, considered as an adic space. We fix a base-point $x\in X(K)$, then we have the \'etale fundamental group $\pi^\et_1(X,x)$, a profinite group. Since $X$ and $x$ will be fixed, we simply denote $\pi^\et_1(X,x)$ by $\pi$ throughout the following subsections.
	\begin{definition}[{\cite[Definition~4.6]{HeuSigma}}]
		The \textit{universal pro-finite-\'etale cover} of $X$ is the diamond \[\wt X=\varprojlim_{X'\to X}X'\] where the index category is given by the connected finite \'etale covers $X'\to X$ together with a lift $x'\in X(K)$ of $x$. Then the projection
		$q:\wt X\to X$
		is a pro-\'etale $\underline{\pi}$-torsor. 
		We recall two key technical properties:
	\end{definition}
	
	\begin{prop}[{\cite[Corollary~5.6]{perfectoid-covers-Arizona}, \cite[Proposition~3.9]{Heu22b}}]\label{p:wtXperf}
		Assume that $g\geq 1$.
		\begin{enumerate}
			\item
			$\wt X$ is represented by a quasi-compact perfectoid space. 
			\item For any affinoid perfectoid space $S$ and any $\epsilon>0$, we have \[\rH^0(\wt X\times S,\O)=\O(S),\quad \rH^1_v(\wt X\times S,\O^+/p^\epsilon)\,\aeq\, 0.\]
			\item For any linear algebraic group $G$, we have $\rH^0(\wt X\times S,G)=\rH^0(S,G)$.
		\end{enumerate}
	\end{prop}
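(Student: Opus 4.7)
My plan is to treat the three parts in order, each reducing to (or unpacking) one of the two cited references.

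For part (1), the strategy is to reduce perfectoidness of $\widetilde{X}$ to the known perfectoidness of the pro-finite-\'etale tower of an abelian variety. Concretely, using the base point $x$ I would embed $X$ into its Jacobian $J$ via Abel--Jacobi. Scholze's result that the inverse limit of the multiplication-by-$n$ maps on an abelian variety over $K$ is perfectoid (as recorded in \cite{perfectoid-covers-Arizona}) supplies a perfectoid pro-finite-\'etale cover of $J$; pulling back along the closed immersion $X \hookrightarrow J$ produces a perfectoid pro-finite-\'etale cover of $X$ that dominates $\widetilde{X}$ in the index category. Since perfectoidness descends along surjective maps of perfectoid spaces (a surjective map from a perfectoid space to a sousperfectoid diamond exhibits the target as perfectoid, via \cite[Proposition~7.2.V]{Sch18} or similar), this forces $\widetilde{X}$ itself to be perfectoid. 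Quasi-compactness is then formal: each $X' \to X$ in the defining tower is finite and thus qc, and qc properties are preserved under inverse limits of diamonds with qc transition maps.

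For part (2), both equalities are computations one expects on a pro-finite-\'etale perfectoid cover. For the first, I would write $\mathcal{O}(\widetilde{X} \times S)$ as the completed colimit of $\mathcal{O}(X' \times S)$ over the finite \'etale cover tower. Flat base change along $S \to \Spa(K)$ (valid for $X'$ smooth proper) together with $\mathcal{O}(X') = K$ for each connected $X'$ gives $\mathcal{O}(X' \times S) = \mathcal{O}(S)$ uniformly, so the colimit is $\mathcal{O}(S)$ and completion is harmless. For the almost vanishing of $H^1_v(\widetilde{X} \times S, \mathcal{O}^+/p^\epsilon)$, my plan is to invoke Scholze's Primitive Comparison Theorem for the smooth proper morphism $X \times S \to S$ with $\mathbb{F}_p$-coefficients, computing $H^1_v(\widetilde{X} \times S, \mathcal{O}^+/p)$ almost in terms of $\varinjlim_{X'} H^1_\et(X' \times S, \mathbb{F}_p) \otimes_{\mathbb{F}_p} \mathcal{O}^+(S)/p$; the colimit in $X'$ of the \'etale cohomology vanishes by the universal property of $\widetilde{X}$ (no non-trivial connected finite \'etale covers survive in the colimit). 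Iterating through $\epsilon$ by d\'evissage on the sequences $0 \to \mathcal{O}^+/p \to \mathcal{O}^+/p^{\epsilon+1} \to \mathcal{O}^+/p^\epsilon \to 0$ gives the stated vanishing for all $\epsilon > 0$.

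For part (3), the plan is to deduce the statement formally from part (2) via the definition of a linear algebraic group. Fix a closed embedding $G \hookrightarrow \GL_{n,K}$. Then $G(\widetilde{X} \times S) \subseteq \GL_n(\widetilde{X} \times S)$ is cut out by polynomial conditions with coefficients in $K$ applied to the matrix entries, which are elements of $\mathcal{O}(\widetilde{X} \times S)$. By part (2), those entries lie in $\mathcal{O}(S)$ (and invertibility is checked fibrewise, so descends as well), so the inclusion $\GL_n(S) \hookrightarrow \GL_n(\widetilde{X} \times S)$ is an equality; the closed conditions then cut out $G(S)$ on both sides, giving $G(S) = G(\widetilde{X} \times S)$. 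I would expect the bulk of the effort to land on part (1), which really requires the Jacobian input, whereas parts (2) and (3) are formal consequences of perfectoidness plus the universal property of $\widetilde{X}$.
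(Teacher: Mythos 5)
Your part (1) has the domination running in the wrong direction, and the argument as written would fail for $g\geq 2$. Pulling back the perfectoid tower $\varprojlim_{[n]}J$ along Abel--Jacobi produces a pro-finite-\'etale cover $X_\infty\to X$ corresponding to the quotient $\pi_1(X)\twoheadrightarrow \pi_1(X)^{\mathrm{ab}}$; this cover only splits the finite \'etale covers factoring through the abelianisation, so it does \emph{not} dominate $\wt X$, and there is no surjection from a perfectoid space onto $\wt X$ along which to ``descend'' perfectoidness. The correct argument (the one in the cited reference on perfectoid covers of abelian varieties) goes the other way: $\wt X$ dominates $X_\infty$, one first shows $X_\infty$ is affinoid perfectoid, and then $\wt X$ is realised as a cofiltered inverse limit of finite \'etale covers of the affinoid perfectoid $X_\infty$ along affine transition maps, hence is itself affinoid perfectoid and quasi-compact. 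Note also that the paper does not reprove (1) at all --- it simply cites \cite[Corollary~5.6]{perfectoid-covers-Arizona} --- so if you want to include a proof you must get this step right.

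For parts (2) and (3) you are close to the paper. Part (3) is essentially identical (reduce to matrix entries, i.e.\ $M_n(\O)$, then $\GL_n$, then a faithful representation). For (2), the paper cites \cite[Proposition~3.9]{Heu22b} for $\rH^i_v(\wt X\times S,\O^+/p^k)\aeq \rH^i_v(S,\O^+/p^k)$ with $k$ an integer and only supplies the d\'evissage to fractional $\epsilon$. Two caveats about your sketch. First, $\varinjlim_{X'}\rH^1_\et(X'\times S,\mathbb F_p)$ does \emph{not} vanish: the contribution of $\rH^1_\et(S,\mathbb F_p)$ survives the colimit over $X'$; what rescues the argument is that, via the Leray sequence for $X'\times S\to S$, this piece contributes $\rH^1_v(S,\O^+/p)$, which is almost zero because $S$ is affinoid perfectoid. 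Second, your d\'evissage runs upward from $\O^+/p$ and cannot produce $\O^+/p^\epsilon$ for $0<\epsilon<1$; one must instead use $0\to\O^+/p^{\epsilon}\to\O^+/p^{k}\to\O^+/p^{k-\epsilon}\to 0$ and kill the connecting map $\rH^0(\O^+/p^{k-\epsilon})\to\rH^1(\O^+/p^{\epsilon})$, which the paper does by comparing with the same sequence over $S$ using the section of $\wt X\times S\to S$ and the five lemma.
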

	\begin{proof}
		In \cite[Proposition~3.9]{Heu22b}, it is shown that $\rH^i_v(\wt X\times S,\O^+/p^k)\aeq \rH^i_v(S,\O^+/p^k)$ for any $k\in \N$ and $i=0,1$. The statement for general $\epsilon$ follows by considering for any $k\in \N$ the long exact sequence of
		\[ 0\to \O^+/p^{\epsilon}\to \O^+/p^k\to \O^+/p^{k-\epsilon}\to 0.\]
		Since $\wt X\times S\to S$ admits a splitting, it is clear that $\rH^i_v(S,\O^+/p^\epsilon)\to \rH^i_v(\wt X\times S,\O^+/p^\epsilon)$ is injective. This is enough to show the statement for $i=0$. For $i=1$, the statement now follows from the 5-Lemma.
		
		To deduce (3), we note that (2) implies the case of $G=M_n(\O)$, which implies the case of $G=\GL_n(\O)$. This implies the general case by choosing a faithful representation of $G$.
	\end{proof}
	
	The relevance of the pro-finite-\'etale cover in this context stems from the following construction from \cite{HeuSigma}: Let $G$ be a linear algebraic group over $K$. Let $S$ be an affinoid perfectoid space over $K$ and let 
	\[\rho:\pi\to G(S)\]
	be a continuous homomorphism. 
	Then we can regard $\rho$ as a 1-cocycle in v-sheaves $\rho:\underline{\pi}\times S\to G$ (see \cite[\S2.3]{HWZ} for a detailed discussion of this fact, and the natural topology on $G(S)$). After base-changing to $G_S=G\times S$, we can regard $\rho$ as a homomorphism of relative adic group over $S$. We can therefore associate to $\rho$ a $G$-torsor $V_\rho$ on $(X\times S)_v$, defined as the pushout of $q:\wt X\times S\to X\times S$ along $\rho$:
	\[ V_{\rho}:=(\wt X\times S)\times^{\underline\pi}G_{S}.\]
	For any two representations $\rho_{1,2}:\underline{\pi}\times S\to G$, we define the set of morphisms $\rho_1\to\rho_2$ as the set of  $g\in G(S)$ such that $\rho_1=g^{-1}\rho_2g$. We will see below that this defines a fully faithful functor
	\[
	{\Big\{\begin{array}{@{}c@{}l}\text{ continuous representations}\\\pi\to G(S)\end{array}\Big\}} \hookrightarrow {\Big\{\begin{array}{@{}c@{}l}\text{$G$-torsors on $(X\times S)_v$}\end{array}\Big\}},\quad \rho\mapsto V_\rho.
	\]
	
	Next, we wish to pass to moduli stacks. But for varying $S$, the left hand side does not yet satisfy v-descent. To rectify this, we need to slightly generalise the construction: Let $E$ be any $G$-torsor on $S_v$. By \cite[Theorem~1.1]{heuer-G-torsors-perfectoid-spaces}, this is already \'etale-locally trivial. Thus $\uAut_G(E)\to S$ is a smooth relative group over $S$.
	\begin{definition}\label{def:cts-rep-on-torsor}
		By a \textit{representation of $\pi$ on $E$}, we mean a homomorphism of sheaves on $S_v$
		\[\rho:\underline{\pi}\to \uAut_G(E).\]
		A morphism between two such representations is a morphism of $G$-torsors compatible with $\pi$-actions.  By the same construction as above, we can associate to $\rho$ a v-topological $G$-torsor $V_\rho:=(\wt X\times S)\times^{\underline\pi}E$ on $X\times S$.
	\end{definition}
	\begin{definition}
	We denote by $\CRep_G(\pi)$ the prestack on $\Perf_{K,v}$ defined by sending $S$ to the groupoid of pairs $(E,\rho)$ consisting of a $G$-torsors $E$ on $S_v$ and a representation of $\pi$ on $E$.
	\end{definition}
	\begin{definition}
		\begin{enumerate}
			\item Let $S\in \Perf_K$. Let $q:\wt X\times S\to X\times S$ and $\pr_S:\wt X\times S\to S$ be the projection maps. We call a $G$-torsor $V$ on $(X\times S)_v$ \textit{pro-finite-\'etale} if $q^{\ast}V\simeq \pr_S^\ast E$ for some $G$-torsor $E$ on $S_v$.
			\item Let $\CBun_{G,v}^{\profet}\subseteq \CBun_{G,v}$ be the sub-pre-stack on $\Perf_{K}$ of pro-finite-\'etale $G$-torsors.
			\end{enumerate}
	\end{definition}
	\begin{lemma}\label{l:pushforward-of-profet-G-torsor}
		$\CBun_{G,v}^{\profet}$ is a v-stack. Indeed,
		for any v-$G$-torsor $W$ on $\wt X\times S$, the following are equivalent:
		\begin{enumerate}
			\item $W=\pr_S^\ast E$ for some $G$-torsor $E$ on $S_v$,
			\item the v-sheaf $\pr_{S,\ast} W$ on $S_v$ is a $G$-torsor.
		\end{enumerate}
		In this case, the adjunction maps $\pr_{S}^\ast\pr_{S,\ast} W\to W$ and $E\to \pr_{S,\ast}\pr_S^\ast E$ are isomorphisms.
	\end{lemma}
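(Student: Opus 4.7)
The plan is to first establish the core equivalence (1)$\Leftrightarrow$(2) together with the two adjunction isomorphisms; the v-stack statement will then follow formally. The crux is the computation that for any $G$-torsor $E$ on $S_v$, the unit of adjunction $E \to \pr_{S,\ast}\pr_S^\ast E$ is an isomorphism. Since this is a morphism of v-sheaves on $S_v$, I can check it v-locally on $S$ and hence assume $E$ is trivial, so $\pr_S^\ast E = G \times \wt X \times S$. Evaluating $\pr_{S,\ast}$ on any $T \in S_v$ gives $(\pr_{S,\ast}\pr_S^\ast E)(T) = G(\wt X \times T)$, and by \Cref{p:wtXperf}.(3) applied to the affinoid perfectoid space $T$ this equals $G(T) = E(T)$.

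Given this, (1)$\Rightarrow$(2) is immediate: if $W = \pr_S^\ast E$, then $\pr_{S,\ast} W \cong E$ is a $G$-torsor. Conversely, to prove (2)$\Rightarrow$(1), assume $E := \pr_{S,\ast} W$ is a $G$-torsor on $S_v$ and consider the counit $\pr_S^\ast E \to W$; it is a morphism between $G$-torsors on $(\wt X \times S)_v$, so to show it is an isomorphism I may again work v-locally on $S$, trivializing $E$. By adjunction a trivialization of $E$ corresponds to a section $\wt X \times S \to W$, which in turn trivializes the v-$G$-torsor $W$. The counit then becomes a $G$-equivariant map between two trivial torsors sending the canonical sections to each other, hence an isomorphism. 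This simultaneously yields both adjunction isomorphisms stated in the lemma.

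For the v-stack property, since $\CBun_{G,v}$ is already a v-stack by \Cref{p:CBun-small}, it suffices to show that the pro-finite-\'etale condition is v-local on $S$. Given a v-cover $\{S_i \to S\}$ and a v-$G$-torsor $V$ on $X \times S$ with each $V|_{X \times S_i}$ pro-finite-\'etale, set $W := q^\ast V$. From the explicit formula $(\pr_{S,\ast} W)(T) = W(\wt X \times T)$ one reads off that $(\pr_{S,\ast} W)|_{S_i} \cong \pr_{S_i,\ast}(W|_{\wt X \times S_i})$, which by the equivalence just proved is a $G$-torsor on $S_{i,v}$. Being a $G$-torsor is a v-local property, so $E := \pr_{S,\ast} W$ is a $G$-torsor on $S_v$; then (2)$\Rightarrow$(1) yields $W \cong \pr_S^\ast E$, so $V$ is pro-finite-\'etale. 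The only non-formal ingredient in the whole argument is the vanishing/invariance input \Cref{p:wtXperf}.(3); once that is in hand, the rest is abstract adjunction and v-descent nonsense, which I do not expect to present any real obstacle.
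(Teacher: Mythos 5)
Your proposal is correct and follows essentially the same route as the paper: reduce the unit isomorphism $E\to\pr_{S,\ast}\pr_S^\ast E$ to the trivial torsor via \Cref{p:wtXperf}.(3), deduce (2)$\Rightarrow$(1) from the fact that the ($G$-equivariant) counit is a morphism of $G$-torsors and hence an isomorphism, and obtain the v-stack property from the v-locality of condition (2). Your write-up merely unpacks two steps the paper leaves implicit (why a torsor morphism is an isomorphism, and why $\pr_{S,\ast}$ commutes with restriction along $S_i\to S$), so there is nothing to object to.
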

	\begin{proof}
		Assume that $W=\pr_S^{\ast}E$. To see (2), it suffices to see that the adjunction map $E\to  \pr_{S,\ast}\pr_S^{\ast}E=\pr_{S,\ast}W$ is an isomorphism. As we can check this locally on $S_v$, we can reduce to the case $E=G$. Then the statement follows from \Cref{p:wtXperf}, which says that $G=\pr_{S,\ast}\pr_S^{\ast}G$.

		Conversely, assume that $\pr_{S,\ast} W$ is a $G$-torsor. As the adjunction map $\pr_{S}^\ast\pr_{S,\ast}W\to W$ is clearly $G$-equivariant, it is then a morphism of $G$-torsors, hence an isomorphism. Setting $E:=\pr_{S,\ast}W$ gives (1).
		
		Finally, the v-stack property follows from the equivalence because (2) can be checked locally on $S_v$.
	\end{proof}
	We now have the following generalisation of \cite[Theorem 5.2]{HeuSigma}:
	\begin{prop}\label{p:Thm5.2-in-families}
		For $S\in \Perf_K$,
		the construction of \Cref{def:cts-rep-on-torsor} defines an equivalence of groupoids:
		\[ 	{\Bigg\{\begin{array}{@{}c@{}l}\text{ representations}\\\underline{\pi}\to \uAut_G(E) \\\text{on $G$-torsors $E$ on $S_{v}$}\end{array}\Bigg\}} \isomarrow {\Bigg\{\begin{array}{@{}c@{}l}\text{pro-finite-\'etale}\\\text{$G$-torsors on $(X\times S)_v$}\end{array}\Bigg\}},\quad
		\begin{aligned}
				(E,\rho:\underline{\pi}\to \uAut_G(E))\,\mapsto \,&V_{\rho}:=(\wt X\times S)\times^{\underline\pi}E\\
			\pr_{S,\ast}q^{\ast}V\,\mapsfrom\, &V
		\end{aligned}
		\]
		In particular, this defines a natural equivalence of v-stacks
		\[\CRep_G(\pi)\isomarrow \CBun_{G,v}^{\profet}.\]
	\end{prop}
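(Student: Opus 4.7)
The plan is to establish the equivalence by v-descent along the $\underline{\pi}$-torsor $q\colon\wt X\times S\to X\times S$. The right-hand side already satisfies v-descent in $S$ by \Cref{l:pushforward-of-profet-G-torsor}, and the left-hand side does because a representation $\rho\colon\underline{\pi}\to\uAut_G(E)$ is by definition a morphism of v-sheaves and $\uAut_G(E)$ is itself a v-sheaf; granting the equivalence for fixed $S$ thus upgrades automatically to the v-stack statement.

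First, I would verify the forward direction. Starting from $(E,\rho)$, the v-$G$-torsor $V_\rho=(\wt X\times S)\times^{\underline{\pi}}E$ has a canonical identification $q^{\ast}V_\rho\simeq\pr_S^{\ast}E$: this is the general fact that the pullback of a balanced product along the torsor itself trivialises it. In particular $V_\rho$ is pro-finite-\'etale. Conversely, for pro-finite-\'etale $V$, set $E_V:=\pr_{S,\ast}q^{\ast}V$; by \Cref{l:pushforward-of-profet-G-torsor} this is a $G$-torsor on $S_v$ and $\pr_S^{\ast}E_V\isomarrow q^{\ast}V$. The natural $\underline{\pi}$-action on $\wt X\times S$ over $X\times S$ induces a $\underline{\pi}$-equivariant structure on $q^{\ast}V$, which by functoriality of $\pr_{S,\ast}$ and the fact that $\underline{\pi}$ acts trivially on $S$ descends to a morphism of v-sheaves $\rho_V\colon\underline{\pi}\to\uAut_G(E_V)$.

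Next, I would check these two constructions are mutual quasi-inverses. For one composition: $\pr_{S,\ast}q^{\ast}V_\rho=\pr_{S,\ast}\pr_S^{\ast}E=E$ by \Cref{l:pushforward-of-profet-G-torsor}, and one verifies that the induced $\underline{\pi}$-action recovers $\rho$ because the trivialisation $q^{\ast}V_\rho\simeq\pr_S^{\ast}E$ is tautologically $\underline{\pi}$-equivariant for the diagonal action. For the other composition: since $q$ is a v-cover and a $\underline{\pi}$-torsor, v-descent identifies $V$ with the quotient of $q^{\ast}V=\pr_S^{\ast}E_V$ by the $\underline{\pi}$-descent datum, which is exactly $(\wt X\times S)\times^{\underline{\pi}}E_V=V_{\rho_V}$. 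The statement for morphisms is parallel: a morphism $V_{\rho_1}\to V_{\rho_2}$ pulls back to a $\underline{\pi}$-equivariant map $\pr_S^{\ast}E_1\to\pr_S^{\ast}E_2$ and applying $\pr_{S,\ast}$ together with \Cref{p:wtXperf}.(3) (to identify $\pr_{S,\ast}\pr_S^{\ast}G=G$) yields the desired $\underline{\pi}$-equivariant map $E_1\to E_2$; conversely, any such map induces one on balanced products. Fully faithfulness on morphisms reduces to the analogous adjunction statement.

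The main obstacle is the continuity/sheaf-theoretic coherence of the $\underline{\pi}$-action on $E_V$: one must verify that the action assembled out of the pointwise deck transformations of $\wt X$ does define a morphism of v-sheaves $\underline{\pi}\to\uAut_G(E_V)$ rather than an abstract group action on sections. This is ensured by the fact that the $\pi$-action on $\wt X$ is itself the underlying action of a v-sheaf morphism $\underline{\pi}\times\wt X\to\wt X$ (since $\wt X=\varprojlim X'$ is a limit of finite \'etale covers in the category of v-sheaves), and functoriality of $q^{\ast}$ and $\pr_{S,\ast}$ then yields a morphism of v-sheaves $\underline{\pi}\times E_V\to E_V$ by $G$-equivariant automorphisms. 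The case $S=\Spa(K)$ (with $E$ trivial) is exactly \cite[Theorem 5.2]{HeuSigma}, and the argument above is the relative version of that result, which is permitted in this family setting by the key technical input \Cref{p:wtXperf}.
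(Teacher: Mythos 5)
Your proposal is correct and follows essentially the same route as the paper: the forward direction via the tautological trivialisation $q^{\ast}V_\rho\simeq\pr_S^{\ast}E$, the inverse via $\pr_{S,\ast}q^{\ast}$ using \Cref{l:pushforward-of-profet-g-torsor} and the $\pi$-equivariance of $\pr_S$, and the quasi-inverse check via the adjunction isomorphisms of that lemma. The paper's proof is terser but contains no additional ideas beyond what you wrote.
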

	\begin{proof}
		It is clear from the construction that $q^\ast V_{\rho}=\pr_S^{\ast}E$, so $V_\rho$ is indeed pro-finite-\'etale. 
		
		In order to construct the inverse functor, let $V$ be a pro-finite-\'etale $G$-torsor on $(X\times S)_v$. Then by  \Cref{l:pushforward-of-profet-G-torsor}, the v-sheaf $E:=\pr_{S,\ast}q^{\ast}V$ on $S_v$ is a $G$-torsor. 
		Since $\pr_{S}$ is $\pi$-equivariant, the natural $\pi$-action on $q^{\ast}V$ thus induces a $\pi$-action on $E$.
		 This defines an object in the left hand side.
		 Using the last sentence of \Cref{l:pushforward-of-profet-G-torsor}, one easily checks that these constructions are quasi-inverse to each other.
	\end{proof}
	\begin{rem}
		\Cref{p:Thm5.2-in-families} is no longer true in general when $G$ is allowed to be a more general algebraic or rigid group: See \cite[\S5.6]{HWZ} for a counterexample when $G$ is an abelian variety.
	\end{rem}

	\subsection{The pro-finite-\'etale locus of $\CBun_{G,v}$}
	The goal of this subsection is to prove:
	\begin{theorem}\label{t:profet-locus-open}
		The sub-v-stack
		$\CBun_{G,v}^{\profet}\subseteq \CBun_{G,v}$ is  open. In particular, \Cref{p:Thm5.2-in-families} induces a natural open immersion of v-stacks
		\[ \CRep_{G}(\pi)\hookrightarrow \CBun_{G,v}.\]
	\end{theorem}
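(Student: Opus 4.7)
The plan is to verify the open immersion criterion directly: for every affinoid perfectoid $T$ equipped with a v-$G$-torsor $V$ on $X_T := X\times T$ (that is, a map $T\to \CBun_{G,v}$), I must exhibit the sub-v-sheaf $T^{\profet}\subseteq T$ of base-changes $S\to T$ along which $V$ becomes pro-finite-\'etale as an open subspace of $T$. By \Cref{l:pushforward-of-profet-G-torsor} combined with \Cref{p:Thm5.2-in-families}, $T^{\profet}$ coincides with the locus where the v-sheaf $\mathcal M := \pr_{T,\ast}q^\ast V$ on $T_v$ is a $G$-torsor, or equivalently where the adjunction $\pr_T^\ast \mathcal M \to q^\ast V$ is an isomorphism.

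I would first reduce to the case $G = \GL_n$ via a faithful representation $G\hookrightarrow \GL_n$. The key observation is that $V$ is pro-finite-\'etale if and only if the induced $\GL_n$-torsor $V\times^G \GL_n$ is: one direction is functorial, and for the converse, a descent $q^\ast(V\times^G \GL_n)=\pr_T^\ast F$ with $F$ a $\GL_n$-torsor on $T$ together with the given $G$-reduction gives rise to a section of a $(\GL_n/G)$-bundle on $X_T$, which pulls back to a section of a bundle with (quasi-)affine fibers on $\wt X_T$. Since such a section descends from $\wt X_T$ back to $T$ by the affine-valued extension of \Cref{p:wtXperf}.(3), this produces the required reduction of $F$ to a $G$-torsor $E$ on $T$ with $\pr_T^\ast E\cong q^\ast V$.

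For $G=\GL_n$, set $W:=q^\ast V$, a v-vector bundle of rank $n$ on $\wt X_T$. Then $T^{\profet}=\{t\in T : W_t\text{ is trivial}\}$, and $\mathcal M$ is identified with the pushforward of the Isom-sheaf $S\mapsto \mathrm{Isom}_{\wt X_S}(\O_{\wt X_S}^n, W|_{\wt X_S})$. I would argue that, locally on $T$, $\mathcal M\to T$ is representable by a morphism of adic spaces that is smooth of relative dimension $n^2$ over its image, using the almost-acyclicity results of \Cref{p:wtXperf} to produce a finitely presented coherent model of $\mathcal M$. Since smooth morphisms of adic spaces are open, the image $T^{\profet}\subseteq T$ is then an open subspace of $T$, and the adjunction $\pr_T^\ast\mathcal M\to W$ is an isomorphism on $T^{\profet}$ by a fiber-wise check using \Cref{p:wtXperf}.(2).

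The main obstacle is the representability and smoothness of $\mathcal M\to T$ in the perfectoid setting, as the classical proper base change and Isom-space constructibility theorems do not apply directly since $\pr_T\colon \wt X_T\to T$ is not proper in the rigid analytic sense. The strategy is to combine the almost vanishing of $\rH^1_v(\wt X_S,\O^+/p^\epsilon)$ from \Cref{p:wtXperf}.(2) with an approximation of $T$ by rigid analytic affinoids, so as to reduce the representability and smoothness of $\mathcal M\to T$ to a classical rigid analytic statement for a finitely presented coherent model of $\mathcal M$, whose vanishing and nonvanishing loci are controlled by the standard theory of Fitting ideals and the openness of smoothness.
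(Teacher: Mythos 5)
Your reduction of the problem to the geometry of $\mathcal M:=\pr_{T,\ast}q^{\ast}V$ is reasonable in outline, but the step you yourself flag as ``the main obstacle'' --- representability of $\mathcal M\to T$ by a smooth morphism of adic spaces --- is not a technical loose end: it is essentially the whole content of the theorem, and the tools you propose for it do not apply. A v-$G$-torsor $V$ on $X\times T$ does not descend to any rigid-analytic approximation $T_i$ of $T$ (v-bundles are genuinely non-algebraic objects), so there is no ``finitely presented coherent model'' of $\mathcal M$ to which classical Fitting-ideal or openness-of-smoothness arguments could be applied. Worse, the $\O_T$-module $\pr_{T,\ast}q^{\ast}V$ is not coherent in general: over a geometric point $t$ where $V_t$ is \emph{not} pro-finite-\'etale but contains, say, a positive-degree line bundle, $\rH^0(\wt X_t,V_t)$ is infinite-dimensional (cf.\ \Cref{l:sections-LB-on-wtX}), so there is no semicontinuity statement of the classical type. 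Finally, $\pr_T:\wt X\times T\to T$ is not a proper morphism of rigid spaces, so proper base change and constructibility of Isom-functors are unavailable. A smaller but real issue is the reduction to $\GL_n$: for a general linear algebraic $G$ the quotient $\GL_n/G$ is only quasi-affine, and descending a section of a quasi-affine bundle along $\wt X_T\to T$ needs more than \Cref{p:wtXperf}.(3), which only treats sections of (trivial) group bundles.

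What is actually needed, and what the paper does instead, is a spreading-out statement at the level of torsors rather than of spaces of sections: one first produces an open subgroup $U\subseteq G$ with $\rH^1_v(\wt X\times S,U)=1$ for all affinoid perfectoid $S$ (from the almost vanishing in \Cref{p:wtXperf}.(2)), and then uses the fact that a reduction of structure group of $V$ to $U$ over $\wt X\times\eta$ at a geometric point $\eta$ extends to an \'etale neighbourhood of $\eta$ in $T$ (\cite[Corollary~4.12]{heuer-G-torsors-perfectoid-spaces}). Triviality of $U$-torsors on $\wt X\times S'$ then forces $V$ to be trivial on $\wt X\times S'$, hence pro-finite-\'etale over the open image of $S'\to T$. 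This yields the fibrewise criterion and the openness of $\CBun_{G,v}^{\profet}$ directly, with no representability of $\mathcal M$ required. If you want to salvage your approach, you would need to supply a substitute for that spreading-out input; as written, the argument is circular at the point where openness of the image of $\mathcal M\to T$ is invoked.
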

	\begin{rem}
		Let $\mathbf{Rep}_G(\pi)$ be the v-sheaf that sends $S\in \Perf_K$ to the set of conjugacy classes of continuous representations $\rho:\pi\to G(S)$. This is the quotient by conjugation of the representation variety $\FHom(\pi,G)$ of \cite[\S8.6]{Heu}. \Cref{t:profet-locus-open} induces an open immersion of v-sheaves 
		$\mathbf{Rep}_G(\pi)\hookrightarrow \mathbf{Bun}_{G,v}$. Together with \Cref{t:moduli-spaces-circ}, this implies that the fibre of $\mathbf{Rep}_G(\pi)\to \A$ over $\A^\circ\subseteq \A$ is represented by a smooth rigid space. This gives a (regular) $p$-adic analogue of the character variety from complex geometry.
 	\end{rem}
		
	\begin{proof}[{Proof of \Cref{t:profet-locus-open}}]
	For the proof, we use the following series of lemmas.
	\begin{lemma}\label{l:U-torsors-over-wtX}
		There is a rigid open subgroup $U\subseteq G$ such that for any affinoid perfectoid space $S$, we have
		\[ \rH^1_v(\wt X\times S,U)=1.\]
	\end{lemma}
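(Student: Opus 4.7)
The plan is to choose $U$ as a sufficiently small image of the Lie algebra under the exponential and then reduce the vanishing to a cohomological computation in $\O^+/p^\epsilon$, for which the key input is the almost vanishing $\rH^1_v(\wt X\times S,\O^+/p^\epsilon)\aeq 0$ from \Cref{p:wtXperf}.(2). This statement plays precisely the role that the $\P^1$-vanishing plays in \Cref{l:H1(P1-1+mMnO+)}, so the overall strategy is to generalise that lemma from $\P^1_S$ to $\wt X\times S$ and from $\GL_n$ to arbitrary $G$.

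First I would fix a faithful representation $\iota\colon G\hookrightarrow\GL_n$ and choose the $\O_K$-lattice $\mathfrak g^\circ\subseteq\mathfrak g$ of \Cref{l:ex-of-exp-general-G} to be compatible with $M_n(\O_K)\subseteq\gl_n$. For $c$ sufficiently large, the open rigid subgroup $U:=G_c=\exp(\mathfrak g_c)\subseteq G$ then satisfies $\iota(U)\subseteq 1+\mathfrak m_K M_n(\O^+)$. In the case $G=\GL_n$ and $U=1+\mathfrak m_K M_n(\O^+)$, I would prove $\rH^1_v(\wt X\times S,U)=1$ by translating the proof of \Cref{l:H1(P1-1+mMnO+)} line-by-line from $\P^1_S$ to $\wt X\times S$: the $\P^1$-vanishing is replaced by \Cref{p:wtXperf}.(2), and the long exact sequence associated to $1\to U\to \GL_n(\O^+)\to \GL_n(\O^+/\mathfrak m_K)\to 1$, together with the inductive lifting of generators of a finite locally free $\O^+$-module with trivial reduction mod $\mathfrak m_K$, goes through unchanged.

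To reduce from $\GL_n$ to general $G$, I would exploit the BCH formula, which for $c$ large shows that the lower central/$p$-series provides a filtration $U=U^{(0)}\supseteq U^{(1)}\supseteq\cdots$ by normal open subgroups of $U$ whose graded pieces $U^{(k)}/U^{(k+1)}$ are abelian and, via $\log$, identified with additive subquotients of $\mathfrak g_{c+k}$, hence with subsheaves of a finite power of $\O^+/p$ (see \cite[Lemma~3.10]{heuer-G-torsors-perfectoid-spaces}). The vanishing of $\rH^1_v$ on each such graded piece then follows from the same inductive lifting as in the $\GL_n$ case. Long exact sequences in non-abelian cohomology, combined with an inverse limit argument using the $p$-adic completeness of $U$, propagate the vanishing from the graded pieces to $U$ itself.

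The main obstacle is passing from the almost vanishing of $\rH^1_v(\wt X\times S,\O^+/p^\epsilon)$ to the honest vanishing of $\rH^1_v(\wt X\times S,U)$: obstructions to lifting trivialisations vanish only up to multiplication by $\mathfrak m_K$, so one must carefully iterate the lifting step with errors of smaller and smaller $p$-norm and take limits using $p$-adic completeness in order to produce honest trivialisations of $U$-torsors (rather than merely almost-trivialisations). This is precisely the subtlety that appears in the proof of \Cref{l:H1(P1-1+mMnO+)}, and it is handled in the same manner; the non-abelian dévissage for $U$ over $\GL_n$ makes the bookkeeping slightly heavier but does not alter the essential shape of the argument.
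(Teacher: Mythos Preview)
Your strategy is sound and would succeed, but the paper takes a much shorter route: it first replaces $G$ by an open subgroup of good reduction (citing \cite[Corollary~3.8]{heuer-G-torsors-perfectoid-spaces}), then converts the almost vanishing of \Cref{p:wtXperf}.(2) into the honest vanishing $\rH^1_v(\wt X\times S,\mathfrak m\O^+/p^\epsilon\mathfrak m)=0$ by tensoring with~$\mathfrak m$, and finally invokes \cite[Lemma~4.27]{heuer-G-torsors-perfectoid-spaces} as a black box, using that $\wt X$ is perfectoid. That external lemma already packages precisely the d\'evissage you are sketching (the BCH-filtration on a small open subgroup of a good-reduction group, with successive quotients controlled by $\O^+/p$-cohomology, plus the inductive lifting that upgrades almost vanishing to honest triviality). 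So the difference is purely one of packaging: you unfold by hand what the paper outsources to a companion reference. Your route has the advantage of being self-contained and of making explicit where the almost-to-honest passage happens; the paper's route is shorter and avoids repeating an argument already recorded elsewhere. One small point worth noting in your write-up is that the perfectoidness of $\wt X$ (\Cref{p:wtXperf}.(1)) is what makes the whole machinery of \cite{heuer-G-torsors-perfectoid-spaces} applicable; you use it implicitly but should state it.
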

	\begin{proof}By \cite[Corollary~3.8]{heuer-G-torsors-perfectoid-spaces}, we may replace $G$ by an open subgroup to assume that $G$ has good reduction. 
		From \Cref{p:wtXperf}.(2), we deduce by tensoring with $\mathfrak m$ that $\rH^1_v(\wt X\times S,\mathfrak m\O^+/p^\epsilon\mathfrak m)= 0$. As $\wt X$ is perfectoid by \Cref{p:wtXperf}.(1), we may thus apply \cite[Lemma~4.27]{heuer-G-torsors-perfectoid-spaces}, which gives the result.
	\end{proof}
	
	\begin{lemma}\label{l:spreading-out-profet-property}
		Let $S\in \Perf_K$ and let $V$ be a $G$-torsor on $(X\times S)_v$. If $\eta:\Spa(C,C^+)\to S$ is any geometric point of $S$ such that the pullback of $V$ to $X\times \eta$ is pro-finite-\'etale, then there is an open subspace $W\subseteq S$ containing $\im(\eta)$ such that the restriction of $V$ to $X\times W$ is still pro-finite-\'etale.
	\end{lemma}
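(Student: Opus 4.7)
The plan is to recast pro-finite-\'etale-ness as a v-local existence statement via \Cref{l:pushforward-of-profet-G-torsor}, and then exploit the open subgroup $U \subseteq G$ from \Cref{l:U-torsors-over-wtX} to spread a $U$-reduction from the fibre $\eta$ to an open neighborhood.

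\textbf{Step 1: Reduction to a pseudo-torsor problem.} After replacing $S$ by an affinoid perfectoid open containing $\im(\eta)$, I may assume $S$ is affinoid perfectoid. Set $F := \pr_{S,\ast}(q^\ast V)$, a v-sheaf on $S$ equipped with a natural right $G$-action. By \Cref{l:pushforward-of-profet-G-torsor}, the conclusion that $V|_{X \times W}$ is pro-finite-\'etale is equivalent to $F|_W$ being a $G$-torsor. Since $F$ is in any case a $G$-pseudo-torsor (the $G$-action comes from the right $G$-action on $q^\ast V$, and is free and transitive on nonempty fibres), it suffices to produce an open $W \subseteq S$ containing $\im(\eta)$ and a v-cover $W' \to W$ with $F(W') \neq \emptyset$.

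\textbf{Step 2: Triviality at the geometric point.} The hypothesis that $V|_{X \times \eta}$ is pro-finite-\'etale means that $q^\ast V|_{\wt X \times \eta}$ is the pullback of a $G$-torsor $E_\eta$ from $\eta = \Spa(C,C^+)$. Since $C$ is algebraically closed and $G$ is linear algebraic, $E_\eta$ is trivial, and thus so is $q^\ast V|_{\wt X \times \eta}$. Under any chosen trivialization, the constant $U$-subtorsor $U_{\wt X \times \eta} \subseteq G_{\wt X \times \eta}$ provides a reduction of structure group of $q^\ast V|_{\wt X \times \eta}$ to the open subgroup $U \subseteq G$ of \Cref{l:U-torsors-over-wtX}.

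\textbf{Step 3: Spreading out the $U$-reduction.} The key step is to extend this $U$-reduction to a $U$-reduction of $q^\ast V$ on $\wt X \times W'$ for some open $W \subseteq S$ containing $\im(\eta)$ and some v-cover $W' \to W$. $U$-reductions of $q^\ast V$ are parametrized by sections of the \'etale-locally trivial fibre bundle $q^\ast V / U \to \wt X \times S$ with fibre $G/U$. Because $U$ is an (admissible) open rigid subgroup of $G$, the cosets of $U$ in $G$ are pairwise disjoint admissible opens, so $G/U$ is a ``discrete'' object; accordingly, $q^\ast V / U$ is v-locally constant on $\wt X \times S$. The section over the quasi-compact perfectoid space $\wt X \times \eta$ (quasi-compact by \Cref{p:wtXperf}.(1)) then spreads, after passing to a v-cover and using the cofiltered-limit description of $\eta$ as an intersection of affinoid perfectoid neighborhoods in $S$, to a section over $\wt X \times W'$.

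\textbf{Step 4: Conclusion.} Given the extended $U$-reduction, it is a $U$-torsor on $(\wt X \times W')_v$, hence trivial by \Cref{l:U-torsors-over-wtX}. The resulting trivialization of $q^\ast V|_{\wt X \times W'}$ yields an element of $F(W') = (\pr_{W',\ast}q^\ast V)(W')$, showing that the $G$-pseudo-torsor $F|_W$ has v-local sections and is therefore a $G$-torsor on $W_v$. By \Cref{l:pushforward-of-profet-G-torsor}, $V|_{X \times W}$ is pro-finite-\'etale.

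The main obstacle is the spreading-out of Step 3: rigorously justifying that a section of the locally constant \'etale sheaf $q^\ast V / U$ on the quasi-compact perfectoid fibre $\wt X \times \eta$ extends, up to v-cover, to an open v-neighborhood inside $\wt X \times S$. This ought to reduce to standard constructibility and quasi-compactness arguments in the v-topology, using that $\wt X \times \eta$ is a cofiltered limit of affinoid perfectoid products $\wt X \times W$ for $W$ ranging over open neighborhoods of $\im(\eta)$ in $S$, but some care is needed to ensure that ``locally constant with discrete fibres'' is preserved across the relevant base change.
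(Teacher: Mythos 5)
Your overall strategy is the same as the paper's: trivialise $q^\ast V$ on $\wt X\times\eta$ (using that v-$G$-torsors on a geometric point are trivial), observe that it therefore admits a reduction of structure group to the open subgroup $U$ of \Cref{l:U-torsors-over-wtX}, spread that $U$-reduction out to a neighbourhood of $\im(\eta)$, and then use \Cref{l:U-torsors-over-wtX} together with \Cref{l:pushforward-of-profet-G-torsor} to conclude. Steps 1, 2 and 4 are fine and match the paper.

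The genuine gap is Step 3, which you yourself flag as ``the main obstacle''. Two problems. First, the sketched justification rests on the claim that $\wt X\times\eta$ is the cofiltered limit of the spaces $\wt X\times W$ as $W$ ranges over \emph{open} neighbourhoods of $\im(\eta)$ in $S$. This is false in general: for a geometric point $\eta:\Spa(C,C^+)\to S$ lying over a point $s\in S$, the intersection of the open neighbourhoods of $s$ is typically much larger than $\Spa(C,C^+)$ (and the residue field at $s$ need not be $C$). The correct limit description is over \emph{\'etale} (or pro-\'etale) neighbourhoods of the geometric point, which is precisely why the spreading-out can only be expected to produce an \'etale map $S'\to S$ through which $\eta$ factors, not an open subspace directly; one then takes $W$ to be the (open) image of $S'\to S$ and uses that $S'\to W$ is an \'etale cover, so that \Cref{l:pushforward-of-profet-G-torsor} still applies. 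Second, even with the correct limit, the passage from a section of the v-locally constant quotient $q^\ast V/U$ over the quasi-compact fibre to a section over some term of the limit needs an actual argument (the fibre $G/U$ is an infinite discrete set, and one must control $\pi_0$ and the local trivialisations compatibly along the limit); this is not ``standard'' enough to leave unproved, and it is exactly the content of the result the paper invokes here, namely \cite[Corollary~4.12]{heuer-G-torsors-perfectoid-spaces}, which asserts that a $U$-reduction of a v-$G$-torsor on $\wt X\times\Spa(C,C^+)$ extends to $\wt X\times S'$ for some \'etale $S'\to S$ whose image contains $\im(\eta)$. Without that input (or a complete replacement for it), your proof is incomplete at its decisive step.
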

	\begin{proof}
		Let $U\subseteq G$ be as in \Cref{l:U-torsors-over-wtX}. As any v-$G$-torsor on $\Spa(C,C^+)$ is trivial by \cite[Theorem~1.1]{heuer-G-torsors-perfectoid-spaces}, the condition on means that $V$ becomes trivial on $\wt X\times \Spa(C,C^+)$. In particular, it admits a reduction of structure group to $U$ on $\wt X\times \Spa(C,C^+)$.
		We now invoke \cite[Corollary~4.12]{heuer-G-torsors-perfectoid-spaces}: This asserts that there is an \'etale map $S'\to S$ whose image contains $\im(\eta)$ such that $V$ admits a reduction of structure group to $U$ on $\wt X\times S'$.  By \Cref{l:U-torsors-over-wtX}, this means that $V$ becomes trivial on $\wt X\times S'$. Let now $W\subseteq S$ be the image of $S'\to S$. This is open since it is the image of an \'etale map, and it contains $\eta$ by construction. Then $S'\to W$ is an \'etale cover. Hence by \Cref{l:pushforward-of-profet-G-torsor}, $V$ is pro-finite-\'etale already over $X\times W$.
	\end{proof}
	At this point, we have proved the following ``fibrewise criterion for pro-finite-\'etale bundles'':
	\begin{prop}\label{p:fibre-wise-criterion}
		Let $S$ be a perfectoid space.
		Let $V$ be a $G$-torsor on $X\times S$. Suppose that for every geometric point $\eta:\Spa(C,C^+)\to S$, the pullback of $V$ to $X\times \eta$ is pro-finite-\'etale. Then $V$ is pro-finite-\'etale.
	\end{prop}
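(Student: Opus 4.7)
The proof proposal is straightforward: combine the spreading-out lemma \ref{l:spreading-out-profet-property} with the v-stack property of $\CBun_{G,v}^{\profet}$ from Lemma \ref{l:pushforward-of-profet-G-torsor}. Indeed, essentially all the substantive work has already been done in the preceding lemmas, so the Proposition should reduce to a covering argument plus v-descent.

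More precisely, I would proceed as follows. For each geometric point $\eta\colon \Spa(C,C^+)\to S$, the hypothesis guarantees that $V|_{X\times \eta}$ is pro-finite-\'etale. Applying Lemma \ref{l:spreading-out-profet-property} yields an open subspace $W_\eta\subseteq S$ containing $\im(\eta)$ such that $V|_{X\times W_\eta}$ is pro-finite-\'etale. Since every point $s\in|S|$ of the perfectoid space $S$ admits a geometric point above it (via the completed algebraic closure of the completed residue field at $s$, together with a choice of valuation ring), every such $s$ belongs to some $W_\eta$; hence the family $\{W_\eta\}_\eta$ is an open cover of $S$.

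It now remains to invoke Lemma \ref{l:pushforward-of-profet-G-torsor}, which says that being pro-finite-\'etale is equivalent to the condition that $\pr_{S,\ast}q^\ast V$ is a $G$-torsor on $S_v$, a property which can be verified v-locally on $S$ and in particular on the open cover $\{W_\eta\}$. Concretely, on each $W_\eta$ the adjunction map $\pr_{W_\eta}^\ast\pr_{W_\eta,\ast}q^\ast(V|_{X\times W_\eta})\to q^\ast(V|_{X\times W_\eta})$ is an isomorphism of $G$-torsors by the last sentence of Lemma \ref{l:pushforward-of-profet-G-torsor}; these local isomorphisms automatically glue on the overlaps by the uniqueness of $E_\eta:=\pr_{W_\eta,\ast}q^\ast(V|_{X\times W_\eta})$, yielding a global $G$-torsor $E$ on $S_v$ with $q^\ast V\simeq \pr_S^\ast E$. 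There is no genuine obstacle here; the only mildly non-formal point is the topological observation that open subspaces exhausting the geometric points also exhaust $|S|$, which is routine for adic/perfectoid spaces.
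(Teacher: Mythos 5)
Your proposal is correct and follows essentially the same route as the paper: apply \Cref{l:spreading-out-profet-property} at each geometric point to produce an open cover of $S$ over which $V$ is pro-finite-\'etale, then conclude by the v-stack (local nature) property established in \Cref{l:pushforward-of-profet-G-torsor}. The extra details you supply (geometric points exhausting $|S|$, gluing the local $E_\eta$) are fine but the paper treats them as immediate.
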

	\begin{proof}
		By \Cref{l:spreading-out-profet-property}, we can find an open cover $\mathfrak U=(S_i\to S)_{i\in I}$ such that the restriction of $V$ to $X\times S_i$ is pro-finite-\'etale for each $i\in I$. By the first sentence of \Cref{l:pushforward-of-profet-G-torsor}, this implies that $V$ is pro-finite-\'etale.
	\end{proof}
	We can now complete the proof of \Cref{t:profet-locus-open}: Recall that $\CBun_{G,v}$ is a small v-stack, so it has an associated topological space $|\CBun_{G,v}|$, see \cite[Definition 12.8.]{Sch18}. This is given by the equivalence classes of maps $\Spa(C,C^+)\to \CBun_{G,v}$ where $(C,C^+)$ is any affinoid field over $K$.
		It now follows from \Cref{p:fibre-wise-criterion} that $\CBun_{G,v}^{\profet}$ is the sub-v-stack associated to the subspace $U\subseteq |\CBun_{G,v}|$ of points represented by maps $\Spa(C,C^+)\to \CBun_{G,v}$ for which the corresponding v-$G$-torsor on $X\times \Spa(C,C^+)$ is pro-finite-\'etale. 
		
		To see that $U$ is open, it now suffices to see that for any map $Y\to \CBun_{G,v}$ from an affinoid perfectoid space, the preimage of $U$ under the composition $|Y|\to |\CBun_{G,v}|$ is open. This is precisely  \Cref{l:spreading-out-profet-property}.
	\end{proof}

	\subsection{Relation to semi-stability and degree}
	Of particular interest to non-abelian $p$-adic Hodge theory is the question raised by Faltings which Higgs bundles correspond to continuous representations of $\pi$ under the $p$-adic Simpson correspondence \eqref{eq:paS}. 
	More geometrically, for $n\in \N$, one can ask which locus of $\CHig_{\GL_n}$ corresponds to $\CRep_{\GL_n}(\pi)\hookrightarrow \CBun_{\GL_n,v} $ under the twisted isomorphism \Cref{t:fCiso}. But we  can now reduce this question  to the case of $C$-points by \Cref{p:fibre-wise-criterion}: Indeed, it follows from this that the correct condition on Higgs bundles needs to satisfy the analogous ``fibrewise criterion'' as in \Cref{p:fibre-wise-criterion}. 
	
	Using the results of this article, the above question can thus be formulated in a more precise way by asking which Higgs bundles are associated to pro-finite-\'etale v-vector bundles in the following sense:
	\begin{definition}\label{d:assoc}
		Let $(E,\theta)$ be a Higgs bundle on $X$ and let $V$ be a v-vector bundle on $X$, both of rank $n$. We say that $(E,\theta)$ and $V$ are \textit{associated} if $h(E,\theta)=b=\wt h(V)$ and there is a $\wh J_b$-bundle $L\in \CH(b)$ such that \[\CS(L,(E,\theta))\simeq V,\] where $\CS$ is the $p$-adic Simpson correspondence of \Cref{t:fCiso} for $G=\GL_n$.
	\end{definition}
	Our last goal is to prove the following, which proves and generalises an assertion by Faltings \cite[\S5]{Fal05}:
	\begin{prop}\label{p:image-sst-deg-0}
		Let $(E,\theta)$ be a Higgs bundle on $X$ that is associated to a pro-finite-\'etale v-vector bundle. Then $(E,\theta)$ is semi-stable of degree $0$.
	\end{prop}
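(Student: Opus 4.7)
The plan is to argue in three steps, working with $G=\GL_n$ throughout.

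First, I would show that pro-finite-\'etaleness forces the Hitchin image to vanish. Since $V$ is pro-finite-\'etale, the pullback $q^*V$ is a trivial v-vector bundle on $\wt X$, so its canonical Higgs field vanishes. By naturality of $\theta$ in the base (Theorem \ref{t:canonicalHiggs}(1)(a)) and the fact that $q:\wt X\to X$ is a v-cover (hence $q^*$ is injective on global sections of any v-sheaf), we obtain $q^*\theta_V=\theta_{q^*V}=0$, and therefore $\theta_V=0$ already on $X$. Hence $\wt h(V)=0\in\A(K)$. By the definition of association (Definition \ref{d:assoc}), $h(E,\theta)=\wt h(V)=0$, so $\theta$ is nilpotent with vanishing Hitchin invariants. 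Moreover, Theorem \ref{t:canonicalHiggs}(2) shows that $V$ descends to a classical \'etale vector bundle on $X$, corresponding via Proposition \ref{p:Thm5.2-in-families} to a continuous representation $\rho:\pi\to\GL_n(K)$.

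Second, for the degree claim, I would apply the determinant $\det:\GL_n\to\G_m$. This homomorphism satisfies Kostant compatibility (Remark \ref{r:det-Kostant}), so by Proposition \ref{p:functoriality-of-CS} the pair $(\det V,(\det E,\tr\theta))$ is again an association pair for $G=\G_m$. Since $V$ is pro-finite-\'etale and $\det$ of a trivial bundle is trivial, $\det V$ is pro-finite-\'etale as well. The rank-one case, known by \cite[Theorem~1.1]{Heu22b}, then gives $\deg\det E=0$, hence $\deg E=0$.

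Third, semi-stability is the main obstacle. Given a sub-Higgs bundle $(E',\theta')\subset(E,\theta)$ of rank $r$, I would aim to show $\deg E'\le 0$ by passing to the sub-line-bundle $\det E'\hookrightarrow \Lambda^r E$, whose induced Higgs field is $\tr\theta'$. The hope is that the exterior power intertwines with $\CS$: $\Lambda^r V$ is again pro-finite-\'etale (it corresponds to $\Lambda^r\rho$), and should be associated to $(\Lambda^r E,\Lambda^r\theta)$; the sub-Higgs line bundle $(\det E',\tr\theta')$ would then, by a Tannakian-style argument, correspond to a pro-finite-\'etale sub-line-object of $\Lambda^r V$, so that Step 2 applied in rank one would force $\deg\det E'=0$, giving the desired bound. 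The hard part will be making this $\Lambda^r$-functoriality rigorous: the representation $\Lambda^r:\GL_n\to\GL_{\binom{n}{r}}$ does \emph{not} in general satisfy Kostant compatibility (by Lemma \ref{l:criterion-Kostant}, since $\Lambda^r$ of a principal nilpotent Jordan block is not always regular nilpotent), so Proposition \ref{p:functoriality-of-CS} does not apply directly. To circumvent this, I would restrict attention to the Hitchin fibre over $b=0$, where $\theta$ is nilpotent and both $\CHig_n$ and $\CBun_{n,v}$ degenerate to much simpler objects: the Kostant section is canonically zero there, and functoriality of $\CS$ under $\Lambda^r$ over this fibre should reduce to a direct verification via the local $p$-adic Simpson correspondence (Theorem \ref{t:small-corresp}) applied to $\Lambda^r$ of small data. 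Alternatively one could invoke the Tannakian formalism of \cite[\S3]{HWZ} to transport representation-theoretic operations across $\CS$ without requiring full Kostant compatibility, combined with the fact from Step 1 that both sides live entirely in the nilpotent locus.
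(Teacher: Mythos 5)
Your Step 1 contains a fatal error. The pullback $q^{\ast}$ along $q:\wt X\to X$ is indeed injective on sections of v-sheaves on $X_v$, but the canonical Higgs field of $q^{\ast}V$ does not live in $\rH^0(\wt X, q^{\ast}(\ad(V)\otimes\wtOm_X))$: it lives in $\rH^0(\wt X,\ad(q^{\ast}V)\otimes\wtOm_{\wt X})$, and $\wtOm_{\wt X}=0$ because $\wt X$ is perfectoid. The naturality in \Cref{t:canonicalHiggs}.(1).(a) only says that the image of $q^{\ast}\theta_V$ under the map induced by $q^{\ast}\wtOm_X\to\wtOm_{\wt X}=0$ equals $\theta_{q^{\ast}V}$, which is vacuous; it does not let you conclude $\theta_V=0$. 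Indeed the conclusion is false: if every pro-finite-\'etale $V$ had $\theta_V=0$, then by \Cref{t:canonicalHiggs}.(2) every such $V$ would be an \'etale vector bundle and Faltings' functor would only ever produce Higgs bundles with vanishing Higgs field, which already fails for $G=\G_m$ (topologically torsion v-line bundles have arbitrary Hitchin image $\HTlog(L)\in\rH^0(X,\wtOm_X)$, cf.\ \cite{Heu22b} and the non-split sequence $0\to\mathbf P\to\bfBun_{1,v}\to\A\to 0$). So $\wt h(V)$ is not $0$, $V$ does not descend to $X_{\et}$, and $\theta$ need not be nilpotent. Your Step 3 is built on this false reduction to the nilpotent fibre $b=0$, so it collapses with Step 1. (Step 2, the degree computation via $\det$ and the rank-one case, is essentially the paper's argument and is fine.)

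Even setting Step 1 aside, Step 3 is missing the two ideas the paper actually uses. First, instead of $\Lambda^r$-functoriality of $\CS$ (which, as you correctly note, is blocked by the failure of Kostant compatibility), the paper exploits the explicit description $\CS(L,(E,\theta))=\nu^{\ast}E\otimes_{\B}L$ of \Cref{l:CS-for-GLn}: a $\theta$-stable subbundle $N\subseteq E$ is a $\B$-submodule, so $\nu^{\ast}N\otimes_{\B}L\subseteq V$ is a sub-v-bundle associated to $(N,\theta_{|N})$ (\Cref{l:sub-bundle-associated}); no exterior powers of the correspondence are needed. Second, a sub-v-bundle $W$ of a pro-finite-\'etale $V$ is in general \emph{not} itself pro-finite-\'etale, so your plan to apply the rank-one case to $\det E'$ and conclude $\deg\det E'=0$ cannot work; the correct statement is only $\deg W\le 0$, and the paper proves it by a section-counting argument (\Cref{l:subbundle-profet}): a v-line subbundle of degree $>g-1$ would have infinite-dimensional $\rH^0(\wt X,-)$ by Riemann--Roch on finite \'etale covers (\Cref{l:sections-LB-on-wtX}), contradicting the finiteness of $\rH^0(\wt X,V)$ from \Cref{p:wtXperf}.(2). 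Without these two steps the semi-stability claim remains unproved.
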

	\begin{proof}
		We first show  that $\deg E=0$.
		For this we use that by \cite[Theorem 1.3]{diamantinePicard}, the \mbox{v-Picard} functor $\mathbf{Bun}_{\G_m,v}=\mathbf{Pic}_{X,v}$ is a rigid group whose group of connected components can be canonically identified with
		\[ \pi_0(\mathbf{Pic}_{X,v})=\pi_0(\mathbf{Pic}_{X,\et})=\Z.\]
		\begin{definition}
			 The \textit{degree of a v-line bundle} on $X$ is the connected component $\in \pi_0(\mathbf{Pic}_{X,v})= \Z$ of the associated $K$-point of $\mathbf{Pic}_{X,v}$.
			The \textit{degree of a v-vector bundle} $V$ on $X$ is $\deg V=\deg(\det V)$.
		\end{definition}
		 		\begin{lemma}\label{p:deg-profet-vvb}
		 	Let $V$ be a pro-finite-\'etale v-vector bundle on $X$. Then $\deg V=0$.
		 \end{lemma}
		 \begin{proof}
		 	Since the formation of $\det$ commutes with pullback, $\det V$ is itself a pro-finite-\'etale line bundle. Hence we may assume that $G=\G_m$. In this case, any pro-finite-\'etale v-line bundle $L$ on $X$ is topologically torsion by \cite[Theorem 3.6]{Heu22b}: This means that $L^{n!}\to 0$ in $\mathbf{Pic}_{X,v}(K)$ for $n\to \infty$. It follows that its image in the discrete group $\pi_0(\mathbf{Pic}_{X,v})$ is torsion. Since $\pi_0(\mathbf{Pic}_{X,v})\simeq \Z$, this means that $\deg(L)=0$.
		 \end{proof}
		\begin{lemma}\label{l:deg-of-associated-bundle}
			Let $(E,\theta)$ be a Higgs bundle associated to a v-vector bundle $V$. Then $\deg(E)=\deg(V)$.
		\end{lemma}
		\begin{proof}
			By functoriality of $\CS$ applied to $\det$ (\Cref{p:functoriality-of-CS} and \Cref{r:det-Kostant}),  $\det (E,\theta)$ is associated to $\det V$. Hence we may assume $G=\G_m$. Then by \Cref{l:CS-for-GLn}, there is a v-line bundle $L$ on $X$ with a reduction of structure group to $\wh{\G}_m$ such that $V=E\otimes L$. As $\deg$ is additive, it therefore suffices to prove that $\deg L=0$. But $L$ is pro-finite-\'etale by \cite[Theorem~3.6]{Heu22b}, so this follows from \Cref{p:deg-profet-vvb}.
		\end{proof}
Together, \Cref{p:deg-profet-vvb} and \Cref{l:deg-of-associated-bundle} combine to show that $(E,\theta)$ in \Cref{p:image-sst-deg-0} has degree $0$.

To see the semi-stability, we first need to understand what $\CS$ does to sub-Higgs bundles: 
	\begin{lemma}\label{l:sub-bundle-associated}
		Let $(E,\theta)$ be a Higgs bundle associated to a v-vector bundle $V$. Assume that $(N,\theta_{|N})\subseteq (E,\theta)$ is a sub-Higgs bundle. Then there is a sub-v-vector bundle $W\subseteq V$ that is associated to $(N,\theta_{|N})$.
	\end{lemma}
	\begin{proof}
		With notation as in \S\ref{s:CS-for-GL_n}, that $(E,\theta)$ is associated to $V$ means that there is an invertible $\B$-module $L$ on $X_v$ such that $V=\nu^\ast E\otimes_{\B}L$ where the $\B$-action on $\nu^\ast E$ comes from the homomorphism of $\O_{X_v}$-algebras
		$\theta:\B\to \uEnd(\nu^\ast E)$
		obtained from applying $\nu^\ast$ to \eqref{eq:G=GL_n-action-B-on-E-varphi}. 
		 That $\theta$ preserves $N$ means that any endomorphism in the image sends $N$ into $N$. It thus induces by restriction a homomorphism $\B\to \uEnd(\nu^\ast N)$. By functoriality, this factors through the sheaf $\B_{N}$ obtained by applying the constructions of \S\ref{s:CS-for-GL_n} to $(N,\theta_{|N})$. Hence \[\nu^\ast N\otimes_{\B_N}(\B_N\otimes_\B L)=\nu^\ast N\otimes_{\B}L\subseteq \nu^\ast E\otimes_{\B}L.\] By \Cref{l:CS-for-GLn}, this means that
		 $\CS(\B_N\otimes_\B L,(N,\theta_{|N}))\subseteq \CS(L,(E,\theta))=V$
		 is associated to $(N,\theta_{|N})$.
	\end{proof}
	With these preparations, our argument to show  semi-stability in \Cref{p:image-sst-deg-0} is now a generalisation of that of Würthen, who previously considered the case of vanishing Higgs field $\theta$ \cite[Proposition~4.16]{wuerthen_vb_on_rigid_var} in the context of the functor of parallel transport of Deninger--Werner \cite{DeningerWerner_vb_p-adic_curves}.
	When $K=\mathbb{C}_p$, \Cref{p:image-sst-deg-0} can be also shown by extending Deninger--Werner's construction to a larger class of Higgs bundles \cite[Proposition~5.3.1]{Xu22}, namely those that have vanishing Higgs field after ``twisted pullback'' in Faltings' sense.
	
	\begin{lemma}\label{l:sections-LB-on-wtX}
		Assume $g\ge 1$. Let $L$ be a v-line bundle on $X$ with $\deg(L)>g-1$, then $\dim\rH^0(\wt X,L)=\infty$.
	\end{lemma}
	\begin{proof}
		By \cite[Theorems~1.3 and~5.7]{HeuSigma}, there is an analytic line bundle $L'$ on $X$ of degree $=\deg L$ such that the pullbacks of $L$ and $L'$ to $\wt X$ are isomorphic. We may therefore assume without loss of generality that $L$ is an analytic (hence, algebraic) line bundle.
		Let $f:X'\to X$ be any connected finite \'etale cover of degree $n$, then $f^{\ast}L$ has degree $n\deg L$. By Riemann--Hurwitz, $g(X')-1=n(g(X)-1)$. By Riemann--Roch,
		\[ \dim_K\rH^0(X',L)\geq \deg(f^{\ast}L)+1-g(X')=n(\deg L-g+1)\geq n.\]
		On the other hand, since $\wt X\to X'$ is a v-cover, we have
		$\rH^0(X',L)\subseteq \rH^0(\wt X,L)$.
		As the degree of connected finite \'etale covers of $X$ is unbounded, this combines to show that $\dim_K \rH^0(\wt X,L)=\infty$.
	\end{proof}
	\begin{lemma}\label{l:subbundle-profet}
		Let $V$ be a pro-finite-\'etale v-vector bundle on $X$. Let $W\subseteq V$ be any sub-v-vector bundle. Then $\deg(W)\leq 0$.
	\end{lemma}
	\begin{proof}
		If $X=\P^1$, then $\pi^\et_1(\P^1)=1$, so \Cref{p:Thm5.2-in-families} implies that $V$ is a  trivial vector bundle. By \Cref{t:canonicalHiggs}, it follows that $W$ is \'etale. As trivial bundles are semi-stable, this settles the case of $\P^1$.
		
		Hence we may assume $g\geq 1$. We can then make further reductions as in \cite[Proposition~10.4]{NarasimhanSeshadri}:
		Let $d=\rk W$, then $\wedge^dV$ is still pro-finite-\'etale. We may therefore replace $W\subseteq V$ by $\wedge^dW\subseteq \wedge^dV$ to assume that $W$ is a line bundle.
		Second, suppose that $\deg W>0$. As $V^{\otimes n}$ is pro-finite-\'etale for any $n\in \N$, we can then further replace $W\subseteq V$ by $W^{\otimes n}\subseteq V^{\otimes n}$ to assume that $\deg W>g-1$. Consider now the inclusion
		\[\rH^0(\wt X,W)\subseteq \rH^0(\wt X,V).\]
		Since $V$ is pro-finite-\'etale, the right hand side is a finite dimensional $K$-vector space by \Cref{p:wtXperf}.(2). But by \Cref{l:sections-LB-on-wtX}, the left hand side is infinite dimensional, a contradiction.
	\end{proof}
	
	To finish the proof of \Cref{p:image-sst-deg-0}, assume that there is a sub-Higgs-bundle $(N,\theta_{|N})\subseteq (E,\theta)$ of degree $>0$. Then by \Cref{l:sub-bundle-associated}, this is associated to a sub-v-vector bundle $W\subseteq V$. By \Cref{l:deg-of-associated-bundle}, we have $\deg(W)=\deg(N)>0$. By \Cref{l:subbundle-profet}, this contradicts the assumption that $V$ is pro-finite-\'etale.
		\end{proof}

	\newcommand{\etalchar}[1]{$^{#1}$}
	
\end{document}